\journal{Physica D}
\DeclareMathOperator{\Real}{Re}
\DeclareMathOperator{\Imag}{Im}
\DeclareMathOperator{\grad}{grad}
\DeclareMathOperator{\divr}{div}
\DeclareMathOperator{\vol}{vol}
\DeclareMathOperator{\dvol}{dvol}
\DeclareMathOperator{\spn}{span}
\DeclareMathOperator*{\argmax}{argmax}
\newcommand{\ii}{\mathrm{i}}
\newtheorem{thm}{Theorem}
\newtheorem{prop}[thm]{Proposition}
\newtheorem{lemma}[thm]{Lemma}
\newdefinition{cor}[thm]{Corollary}
\newdefinition{exmp}[thm]{Example}
\newdefinition{rk}[thm]{Remark}
\newdefinition{defn}[thm]{Definition}
\begin{document}

\begin{frontmatter}

\title{Extraction and Prediction of Coherent Patterns in Incompressible Flows through Space-Time Koopman Analysis}

\author[]{Dimitrios Giannakis\corref{mycorrespondingauthor}}
\ead{dimitris@cims.nyu.edu}
\cortext[mycorrespondingauthor]{Corresponding author}
\author[]{Suddhasattwa Das}

\address{Courant Institute of Mathematical Sciences, New York University, New York, NY 10012, USA}

\begin{abstract}
We develop methods for detecting and predicting the evolution of coherent spatiotemporal patterns in incompressible time-dependent fluid flows driven by ergodic dynamical systems. Our approach is based on representations of the generators of the Koopman and Perron-Frobenius groups of operators governing the evolution of observables and probability measures on Lagrangian tracers, respectively, in a smooth orthonormal basis learned from velocity field snapshots through the diffusion maps algorithm. These operators are defined on the product space between the state space of the fluid flow and the spatial domain in which the flow takes place, and as a result their eigenfunctions correspond to global space-time coherent patterns under a skew-product dynamical system. Moreover, using this data-driven representation of the generators in conjunction with Leja interpolation for matrix exponentiation, we construct model-free prediction schemes for the evolution of observables and probability densities defined on the tracers. We present applications to periodic Gaussian vortex flows and aperiodic flows generated by Lorenz 96 systems.
\end{abstract}

\begin{keyword}
  Koopman operators, Perron-Frobenius operators, Lagrangian coherent structures, kernel methods, diffusion maps, nonparametric prediction
\end{keyword}

\end{frontmatter}


\section{Introduction}

The formation of coherent structures is a ubiquitous feature of time-dependent flows in both natural and engineered systems, including coherent jets and vortices in planetary atmospheres, oceanic currents and eddies, and chemical mixing, among many other examples  \cite{Ottino89,SamelsonWiggins06,Haller15}. Objectively identifying and predicting these patterns has received considerable attention in the mathematical, physical, and engineering disciplines, resulting in a diverse range of techniques to achieve these goals. While virtually all such techniques utilize dynamical systems theory, they generally emphasize fairly distinct aspects of that theory, namely the geometric/state-space perspective \cite{HallerYuan00,ShaddenEtAl05,Haller11,SerraHaller16}, or the operator-theoretic/probabilistic perspective \cite{FroylandEtAl07,FroylandEtAl10b,FroylandEtAl10,Froyland13,BudisicMezic12} (though this dichotomy is not rigid as there are methods that employ aspects of both approaches; e.g., \cite{FroylandPadberg09,Froyland15,KarraschKeller16,BanischKoltai17,FroylandJunge17}). Among the many references in the literature, here we mention explicitly the paper of Liu and Haller \cite{LiuHaller04} on coherent patterns of diffusive tracers in time-dependent flows. They show that the associated advection-diffusion equation for the tracer concentration field admits a finite-dimensional invariant manifold embedded in its solution space and spanned by a finite set of time-dependent modes, previously identified by Pierrehumbert \cite{Pierrehumbert94} as strange eigenmodes. We also mention the recent work of Froyland and Koltai \cite{FroylandKoltai17} who develop a method for recovering coherent patterns in time-periodic flows through a transfer operator defined on an extended state space where the dynamics are autonomous. Both \cite{LiuHaller04,FroylandKoltai17} have common aspects with the techniques presented below.    

In this work, we study the problem of coherent pattern identification and prediction in time-dependent, incompressible flows having a skew-product structure. In particular, we consider that the time-dependence of the velocity field is itself the outcome of a dynamical system with ergodic properties---this allows us to take advantage of geometrical and operator-theoretic properties of that dynamical system that would otherwise not be present in flows with an arbitrary time dependence. In particular, we consider the product space between the state space for the velocity field dynamics (hereafter, $ A $) and  the physical domain (hereafter, $ X $) where tracer motion takes place. Intuitively, that product space, $ M = A \times X $, can be through of as a ``space-time manifold'' with the component $ A $ playing the role of time (as it governs the non-autonomous aspect of tracer dynamics), and $ X $ the role of space. On $ M $, the dynamics is autonomous and measure-preserving, though not necessarily ergodic, and has a natural skew-product structure owing to the fact that the state of the velocity field influences the dynamics of the tracers but not vice-versa. Moreover, associated with these dynamics are Koopman and Perron-Frobenius operators \cite{EisnerEtAl15} governing the evolution of observables and probability measures defined on the tracers. 

Our approach for identifying and predicting coherent spatiotemporal patterns is based on data-driven approximations of these operators and their generators constructed from time-ordered velocity field snapshots. In particular, for the purpose of coherent pattern identification, we solve the eigenvalue problem for the Koopman generator of this skew-product system with a small amount of diffusion added for regularization. This extends recently developed approximation techniques for Koopman operators of ergodic dynamical systems \cite{GiannakisEtAl15,Giannakis17,DasGiannakis17} to the setting of skew-product systems governing the evolution of Lagrangian tracers. In our approach, the regularized operator is elliptic on the product state space (cf.\ \cite{LiuHaller04,FroylandKoltai17}, who consider parabolic operators), and has only discrete spectrum \cite{FrankeEtAl10}; its eigenfunctions correspond to near-invariant or quasiperiodic observables of the skew-product system, and can be visualized as spatiotemporal patterns for a given trajectory on $ A $.  If the Koopman generator has discrete spectrum, then the eigenfunctions of the regularized generator behave as small-viscosity perturbations of these eigenfunctions. On the other hand, if the Koopman generator has continuous spectrum, the regularized eigenfunctions acquire a singular behavior on the diffusion regularization parameter \cite{ConstantinEtAl08}, and generally exhibit intricate spatial structures characteristic of strange eigenmodes. 

As in \cite{GiannakisEtAl15,Giannakis17,DasGiannakis17}, a key ingredient of our approach is to employ kernel algorithms developed for machine learning \cite{BelkinNiyogi03,CoifmanLafon06,VonLuxburgEtAl08,BerryHarlim16,BerrySauer16b} to build an orthonormal basis for the Hilbert space of the dynamical system without requiring a priori knowledge of the geometry of the state space or the governing equations. Moreover, in the skew-product setting of interest here, we do not require  availability of explicit tracer trajectories. This data-driven basis is chosen such that its elements are optimally smooth, in sense of extremizing a Dirichlet energy functional measuring roughness of functions. We also employ this functional to order Koopman eigenfunctions; thus, we select coherent patterns not on the basis of timescale (though at least some of the low-energy eigenfunctions usually capture slow timescales), but on the basis of small roughness, and thus amenability to robust approximation from data. 

For the purpose of prediction of observables and probability measures, we employ our data-driven representation of the generator in conjunction with matrix exponentiation algorithms based on Leja polynomial interpolation \cite{CaliariEtAl04,KandolfEtAl14} to approximate the semigroup action. This allows us to perform model-free statistical prediction in skew-product systems with high numerical stability over long stepsizes. We demonstrate with numerical experiments in periodic and non-periodic flows that the skill of these forecasts compares well against Monte Carlo forecasts with the perfect model, despite our models utilizing no prior information about the equations of motion, and without having access to explicit tracer trajectories.

The plan of this paper is as follows. In Section~\ref{secKoopman}, we present our operator-theoretic framework for identification and prediction of coherent patterns. In Section~\ref{secExamples}, we illustrate this framework in periodic flows with Gaussian streamfunctions where the matrix elements of the generator can be evaluated analytically   (i.e., without the need for a data-driven basis). In Section~\ref{secDataDrivenBasis}, we describe the construction of our data-driven basis and the implementation of the techniques from Section~\ref{secKoopman} in that basis. In Section~\ref{secL96}, we present applications to a class of aperiodic tracer flows driven by Lorenz 96 (L96) systems \cite{QiMajda16}. The paper ends in Section~\ref{secConclusions} with concluding remarks and perspectives on future work. Proofs of some of the Lemmas in the main text, details on numerical implementation, and a discussion on the spectral properties of one of the flows studied in Section~\ref{secExamples} are included in Appendices. Movies illustrating the spatiotemporal evolution of Koopman eigenfunctions, observables, and probability densities are provided as supplementary online material.

\section{\label{secKoopman}Operator-theoretic framework for skew-product systems}

\subsection{\label{secPrelim}Notation and preliminaries}

Consider a continuous-time ergodic dynamical system $ (A, \mathcal{ B }( A ), \Phi_t, \alpha ) $ operating in a closed (i.e., smooth, compact, orientable, and boundaryless) manifold  $ A $ equipped with its Borel $ \sigma $-algebra $ \mathcal{ B }( A ) $ under the smooth map $ \Phi_t : A \mapsto A $, $ t \in \mathbb{ R } $, preserving a smooth probability measure $ \alpha $. This map is generated by a complete vector field $ u : C^\infty( A ) \mapsto C^\infty( A ) $ such that $ u( f ) = \lim_{t \to 0} ( f \circ \Phi_t - f ) / t $, and the divergence $ \divr_\alpha u $ with respect to $ \alpha $ is identically zero. Consider also a closed Riemannian manifold $ ( X, g_X ) $ equipped with a smooth metric $ g_X $ and a smooth probability measure $ \xi $ on its Borel $ \sigma $-algebra $ \mathcal{ B }( X ) $. In what follows, $ (A, \mathcal{ B }( A ), \Phi_t, \alpha ) $ will be the dynamical system governing the evolution of a time-dependent fluid flow and $ (X, \mathcal{ B }( X ), g_X, \xi ) $ the physical space where this flow takes place. In particular, we consider that there exists a mapping $ F : A \mapsto \mathfrak{ X } $ sending $ A $ to the vector space $ \mathfrak{ X } $ of $ C^\infty $ vector fields on $ X $, divergence-free with respect to $ \xi $,  such that $ v\rvert_a = F( a ) $ is the velocity field corresponding to the state $ a \in A $, and $ \divr_\xi v\rvert_a$ vanishes for all $ a \in A$.  We equip this space with the Hodge inner product,  $ \langle \beta_1, \beta_2 \rangle_{g_X} = \int_X g_X( \beta_1, \beta_2 ) \, \dvol_{g_X} $, $ \beta_1, \beta_2 \in \mathfrak{ X } $. We also assume that $ F $ is an embedding of $ A $ into $ \mathfrak{ X } $ (i.e., it possesses a smooth inverse on $ F( A ) $), so that an observation $ v\lvert_a \in F( A ) $ provides complete information about the underlying state $ a \in A $. 

Since $ F  $ is an embedding, $ A $ inherits a smooth Riemannian metric $ g_A $ such that $ g_A( \beta_1, \beta_2 ) = \langle F_* \beta_1, F_* \beta_2 \rangle_{g_X} $, where $ F_* : TA \mapsto T \mathfrak{ X } \simeq \mathfrak{ X } $ is the pushforward map on tangent vectors associated with $ F $. Moreover, because $ F $ is smooth and $ A $ and $ X $ are compact, there exists a unique map $ \Psi : \mathbb{ R } \times A \times X  \mapsto X $ such that for every $ a\in A $ and $ x \in X $, $ \gamma_{a,x}( t ) = \Psi( t, a, x ) $ is a curve on $ X $ defined for all $ t \in \mathbb{ R } $, and $ F( \Phi_t( a ) ) $ is a tangent vector to that curve at the point $ \Psi( t, a, x ) $. The curve $ \gamma_{a,x} $ corresponds to the path of  a passive tracer released from $ x \in X $ and advected by the time-dependent velocity field $ F( \Phi_t( a ) ) $. In what follows, we use the shorthand notation $ \Psi_t = \Psi( t, \cdot, \cdot ) $. Note that $ \Psi_t $ satisfies the cocycle property, $\Psi_s( \Phi_t( a ), \Psi_t( a, x ) ) = \Psi_{s+t}( a,x ) $, for all $ a \in A $, $ x \in X $, and $ s, t \in \mathbb{ R } $. 

Let now $ \tau > 0 $ be a fixed sampling interval such that the discrete-time dynamical system $ ( A, \mathcal{ B }( A ), \hat \Phi_n, \alpha ) $ with $ \hat \Phi_n = \Phi_{n\tau} $, $ n \in \mathbb{ Z } $, is also ergodic. We consider that we have available a dataset $ \{ v_0, v_1, \ldots, v_{N-1} \} $ consisting of time-ordered observations $ v_n = F( a_n ) $ of the velocity field corresponding to the states $ a_n = \hat \Phi_n( a_0 ) $ with $ n \in \{ 0, 1, \ldots, N - 1 \} $. Given such a dataset, and assuming no availability of explicit tracer trajectories, or prior knowledge of the evolution law $ \Phi_t $ and the structure of the underlying state space $ A $, in what follows we present techniques to (i)  identify coherent spatiotemporal patterns associated with the motion of passive tracers in $ X $ (Section~\ref{secCoherent}); (ii) predict the evolution of observables and probability densities defined on the tracers (Section~\ref{secPrediction}).

\begin{exmp}
  \label{exStream}
  Let $ A = \mathbb{ T }^1 $ and $ \Phi_t $ be a rotation with frequency $ \omega $, i.e., $ \Phi_t( a ) = a + \omega t \mod 2 \pi $ (note that we abuse notation by using the symbol $ a $ to represent both a point in $ A $ and its canonical angle coordinate). Let also  $ X = \mathbb{ T }^2 $ be a periodic two-dimensional domain equipped with the canonical (flat) metric $ g_X = dx_1 \otimes dx_1 + dx_2 \otimes dx_2 $, where $ x_1 $ and $ x_2 $ are canonical angle coordinates on $ \mathbb{ T }^2 $.  We also consider that $ X $  is equipped with the normalized Haar measure $ \xi = \vol_{g_X} / ( 2 \pi )^ 2 = dx_1 \wedge dx_2 / ( 2 \pi )^2 $ such that $ \xi( X ) = 1 $. In this setting, any smooth function $ \zeta : A \mapsto C^\infty( X ) $ gives rise to a time-periodic incompressible velocity field with period $ 2 \pi / \omega $, given in the  $ \{ x_1, x_2 \} $ coordinates by  
  \begin{equation}
    \label{eqStream}
    v\rvert_a = F( a ) = v^{(1)}( a) \partial_1 + v^{(2)}(a ) \partial_2, \quad v^{(1)}( a ) = - \partial_2 \zeta( a ), \quad  v^{(2)}( a ) =  \partial_1 \zeta(a), 
  \end{equation}   
  where $ \partial_j = \frac{ \partial \; }{ \partial x_j } $. That is, $ \zeta $ is a time-periodic streamfunction, and $ \divr_\xi v = \partial_1 v^{(1)}  + \partial_2 v^{(2)} $ is identically zero. Note that~\eqref{eqStream} can be expressed in coordinate-free form using exterior calculus \citep[][Section~8.2]{AbrahamEtAl88}, namely $ v\rvert_a = ( \delta \star \zeta( a ) )^\sharp $, where $ \star $ is the Hodge-star operator mapping functions to 2-forms, $ \delta $  the codifferential mapping 2-forms to 1-forms, and  $ ( \cdot )^\sharp $ the Riemannian dual of 1-forms. It should also be noted that not every time-periodic, divergence-free velocity field in $ \mathfrak{X} $ is of the class in~\eqref{eqStream}; in particular, \eqref{eqStream} does not include harmonic vector fields with vanishing vector Laplacian, $ \upDelta_\mathfrak{X} v = 0 $. Harmonic vector fields cannot be expressed in terms of a continuous streamfunction, but in the case of the 2-torus domain $ X $ driven by the periodic dynamics on $ A $ they take the form of a state-dependent (hence, time-periodic) free stream $ \chi_1( a)  \partial_1 + \chi_2( a ) \partial_2 $, where $ \chi_1 $ and $ \chi_2 $ are real-valued functions in $ C^\infty( A ) $. For the class of vector fields in~\eqref{eqStream}, the Riemannian metric induced on $ A $ by the embedding $ F $ is given in the canonical angle coordinate of $ a $ of $ A $ by  $ g_A =  \tilde g_A \, da \otimes da $, where 
\begin{equation}
  \label{eqStreamMetric}
  \tilde g_A = \left \langle \frac{ \partial F }{ \partial a }, \frac{ \partial F }{ \partial a } \right \rangle_{g_X}  = \int_{x_1,x_2=0}^{2\pi} \left[ \left( \frac{ \partial^2 \zeta }{ \partial a \, \partial x_1 } \right)^2 + \left( \frac{ \partial^2 \zeta }{ \partial a \, \partial x_2 } \right)^2 \right] \, dx_1 \, dx_2.
\end{equation}
We will return to this class of time-periodic flows in Section~\ref{secExamples}, where harmonic vector fields will also appear after a Galilean transformation. This example can be generalized to nonperiodic incompressible flows on nonperiodic domains by appropriate modifications of $ A $ and $ X $. 
\end{exmp}

\begin{rk} \label{rkAssumptions} A number of the assumptions stated above can be potentially  relaxed. In particular, it should be possible to extend our framework from smooth manifolds to more general topological spaces by replacing Laplace-Beltrami operators and other operators that depend on a Riemannian geometry by appropriate Hilbert-Schmidt kernel integral operators \cite{DasGiannakis17}. Such an extension is beyond the scope of the present work, although in what follows we provide justification (Remark~\ref{rkEpsilon}) and numerical evidence (Section~\ref{secL96}) that our framework is applicable when $ A $ is not a smooth manifold. Moreover, the embedding assumption on $ F $ can be generically relaxed by performing delay-coordinate maps \citep[][]{SauerEtAl91} on the velocity field data, and using that data as representatives of the underlying dynamical states in $ A $.   
\end{rk}

\subsection{Extended state space and the associated Koopman and Perron-Frobenius operators}

Our approach for identifying and predicting coherent patterns relies on data-driven approximations of Koopman and Perron-Frobenius operators characterizing the evolution of observables and probability measures, respectively, on the product manifold $ M = A \times X $. Intuitively, we think of $ M $ as a ``space-time manifold'' with $ X $ playing the role of physical space where the motion of tracers takes place and $ A $ the role of ``time'' where the velocity field evolves. On $ M $, the dynamics is autonomous (though not necessarily ergodic), and is governed by the flow $ \Omega_t : M \mapsto M $, $ t \in \mathbb{ R } $, having the skew-product form $ \Omega_t( a, x ) = ( \Phi_t( a ), \Psi_t( a, x ) ) $. This flow preserves the product measure $ \mu = \alpha \otimes \xi $ defined on the Borel $ \sigma $-algebra $ \mathcal{ B }( M ) = \mathcal{B}(A) \otimes \mathcal{B}(X) $ on  $M $. In what follows, we will employ the usual notation $ \langle f_1, f_2 \rangle = \int_M f_1^* f_2 \, d\mu $ for the inner product between complex-valued functions $ f_1, f_2 \in L^2(M,\mu ) $, and use whenever convenient the abbreviation $ H = L^2( M, \mu ) $. We also abbreviate $ H_A = L^2( A, \alpha )$ and $ H_X = L^2( X, \xi ) $, and use  $ \langle \cdot, \cdot \rangle_{H_A} $ and $ \langle \cdot, \cdot \rangle_{H_X} $ to denote the inner products of these spaces (defined analogously to $ \langle \cdot, \cdot \rangle $), respectively.

Given an arbitrary smooth Riemannian metric $ g $ on $ M $, we use the notation $ \langle \beta_1, \beta_2 \rangle_g = \int_M g( \beta_1^*, \beta_2 ) \, d\mu $ to represent the weighted Hodge inner product associated with $ g $ and $ \mu $ on vector fields $ \beta_1, \beta_2 \in L^2 TM( g, \mu ) $. We also denote the canonical projection maps from $ M $ into its factors by $ \pi_A : M \mapsto A $ and $ \pi_X : M \mapsto X $. Due to the product structure of $ M $, we have the decomposition $ T M = T^A M \oplus T^X M $, $ T^A M = \ker \pi_{X*} $, $ T^X M = \ker \pi_{A*} $, where $ \pi_{A*} : TM \mapsto TA $ and $ \pi_{X*} : TM \mapsto TX $ are the pushforward maps on tangent vectors associated with $ \pi_A $ and $ \pi_X $, respectively. In particular, every tangent vector $ \beta \in T_{(a,x)} M $ can be uniquely decomposed as $ \beta = \beta^A + \beta^X $ with $ \beta^A \in T^A_a M $ and $ \beta^X \in T^X_x M $.   

Given a point $  (a, x ) \in M $, the curve $ \Gamma_{a,x}( t ) = \Omega_t( a, x ) $ describes the joint evolution of the base dynamics initialized at state $ a \in A $ and the passive tracer released at the point $ x \in X $ and advected by the time-dependent velocity field $ F( \Phi_t( a ) ) $. Moreover, tangent to the family of curves $ \{ \Gamma_{a,x} \}_{(a,x)\in M }$ is a vector field $ w : C^\infty( M ) \mapsto C^\infty( M ) $ such that $ w( f ) = \lim_{t\to0} ( f \circ \Omega_t - f ) / t $. Due to the skew-product structure of $ \Omega_t $, we have $ w = w^A + w^X  $ where $ w^A \in T^A M $ and $ w^X \in T^X M  $ are the canonical lifts of $ u $ and $ v $ on $ M $, respectively; that is, $  w^A( f )( a, x ) = \lim_{t\to0} ( f( \Phi_t( a ), x ) - f( a, x ) ) / t $ and $ w^X( f )( a, x ) = \lim_{t\to0}( f( a, \Psi_t( a, x ) ) - f( a, x ) ) / t $. Because $ \Omega_t $ preserves $ \mu $, $ w $ has vanishing divergence with respect to that measure, $ \divr_\mu w = 0 $. Intuitively, one can think of $ w $ as an analog of the material derivative $ \frac{ D\; }{ D t } $ in fluid dynamics in $ \mathbb{ R }^d $ measuring the rate of change of observables on fluid parcels.  Noting that $ \frac{ D\; }{ D t} = \frac{ \partial\; }{ \partial t } + \vec v \cdot \vec \nabla $ (where $ \vec v \cdot \vec \nabla $ is the directional derivative associated with a time-dependent vector field $ \vec v $ in $ \mathbb{ R }^d $), we can also identify $ w^A $ with $ \frac{ \partial\; }{ \partial t } $ and $ w^X $ with $ \vec v \cdot \vec \nabla $.     

Next, we introduce Koopman and Perron-Frobenius operators governing the evolution of observables and probability densities, respectively, on $ A $ and $M $. In the case of $ A $, we consider complex-valued observables in $  H_A $, and define as usual the group of Koopman operators $ U_t :  H_A \mapsto H_A $, $ t \geq 0 $,  acting on observables by composition with $ f $, i.e., $ U_t f = f \circ \Phi_t $  \citep[][]{EisnerEtAl15,Mezic05}. Since $ \alpha $ is an invariant measure, $ U_t $ is unitary, $ U^{-1}_t = U_t^* $, and $ U_t^* $ is a Perron-Frobenius (transfer) operator governing the evolution of complex measures on $ \mathcal{B}(A)  $ with densities in $ H_A $; i.e., if $ \nu $ is such a measure with density $ \rho = d\nu/d\alpha $, $ U_t^* \rho $ is the density of the measure $ \nu \circ \Phi_t^{-1} $ with respect to $ \alpha $. By Stone's theorem, the unitary group $  \{ U_t \}_{t\in\mathbb{R}} $ is generated by a skew-adjoint operator $ \tilde u : D( \tilde u ) \mapsto H_A $ with dense domain  $ D( \tilde u ) \subset H_A $, such that $ \tilde u( f ) = \lim_{t\to 0} ( U_t( f ) - f ) / t $, and $ \tilde u( f ) = u( f ) $ if $ f \in C^\infty( A ) $ (i.e., $ \tilde u $ is a skew-adjoint extension of $ u $ viewed as a map from $ C^\infty( A ) $ to $ H_A $). Moreover, there exists a unique spectral measure $ E_{\tilde u} : \mathcal{ B }( \mathbb{ R } ) \mapsto P(H_A) $, where  $ \mathcal{ B }(\mathbb{ R }) $ is the Borel $\sigma $-algebra on $ \mathbb{ R } $ and $ P(H_A ) $ the set of orthogonal projections on $H_A$, such that $ \tilde u =  \int_\mathbb{R } \ii \omega \, dE_{\tilde u}( \omega ) $ and $ U_t =  \int_\mathbb{R} e^{\ii \omega t } \, dE_{\tilde u}( \omega ) $. Similarly, the Perron-Frobenius group $ \{ U^*_t \}_{t\in\mathbb{R}} $ is generated by $ - \tilde u $, and we have $ U^*_t =  \int_\mathbb{R} e^{- \ii \omega t } \, dE_{\tilde u}( \omega ) $. 

We employ an analogous construction in the case of the dynamics on the product space $ M $. In this case, we consider complex-valued observables in the Hilbert space $H  $ associated with the invariant product measure $ \mu $. When studying individual dynamical trajectories on  $M $, we can think of functions $ f : M \mapsto \mathbb{ C } $ (and thus equivalence classes of functions in $ H $) as spatiotemporal patterns. Specifically, given an initial state $ a \in A $, the spatiotemporal pattern $ \tilde f_a( t, x ) $ associated with $ f  $ is given by $ \tilde f_a( t, x ) = f( \Phi_t a, x ) $. In this setting, the group of Koopman operators $ W_t :  H \mapsto H $, $ t\in \mathbb{ R }  $, acts on $ H $ via composition with the skew-product map,  $ W_t f = f \circ \Omega_t $, so that $ W_t f(a, x ) $ corresponds to the value of $ f $ at the point reached at time $ t $ by a passive tracer released at the point $ x \in X $ and advected by the time-dependent velocity field $ F( \Phi_t( a ) ) $. Similarly, the group of Perron-Frobenius operators $ W_t^* $ governs the evolution of densities of measures on  $ \mathcal{ B }( M ) $ under $ \Omega_t $, so that if $ \nu $ is a measure with density $ \rho = d\nu/d\mu \in H $, $ \rho_t = W_t^* \rho $ is the density of $ \nu_t = \nu \circ \Omega_t^{-1} $. In the case that $ \nu $ is a probability measure, $ W_t^* \rho $ corresponds to a time-dependent probability density characterizing uncertainty with respect to both tracer position in $ X $ and the underlying state of the velocity field in $ A $. Moreover, the function $ \sigma_t = \int_A \rho_t( a, \cdot ) \, d\alpha $ is the density of a time-dependent probability measure on $ \mathcal{B}(X) $ which corresponds to a marginalization of $ \nu_t $ over $ A $. Note that unlike $ \rho_t $, $ \sigma_t $ does not evolve autonomously. 

Consider now the generator of the unitary group $ \{ W_t \}_{t\in\mathbb{R}} $. In direct analogy with $ \tilde u $, the generator $ \tilde w : D( \tilde w ) \mapsto L^2( M, \mu ) $ of  this group is a skew-adjoint extension of the vector field $ w $ defined on a dense domain $ D( \tilde w ) \subset H $, and the skew-adjointness of $ \tilde w $ is a consequence of the fact that $ \Omega_t $ preserves $ \mu $.  By Stone's theorem, we have
\begin{equation}
  \label{eqStoneW}
  \tilde w = \int_\mathbb{ R } \ii \omega \, dE_{\tilde w}( \omega ), \quad W_t  = \int_\mathbb{R} e^{\ii\omega t} \, dE_{\tilde w}( \omega ), \quad W^*_t  = \int_\mathbb{R} e^{-\ii\omega t} \, dE_{\tilde w}( \omega ), 
\end{equation}
where $ E_{\tilde w} : \mathcal{ B } ( \mathbb{ R } ) \mapsto P( H ) $ is a unique spectral measure associated with $ \tilde w $ taking values on the set $ P( H ) $ of orthogonal projections on $ H $. Thus, if $ \tilde w $ is known, it is possible to compute $ W_t $ and $ W^*_t $, and hence predict the evolution of arbitrary observables and measures for the skew-product tracer system.  
    
\subsection{\label{secCoherent}Identification of coherent spatiotemporal patterns}

\subsubsection{\label{secRaw}Coherent patterns as eigenfunctions of the generator}

We identify coherent spatiotemporal patterns in time-dependent fluid flows through  approximate eigenfunctions of the Koopman generator $ \tilde w $ at small corresponding eigenvalues. To motivate our approach, consider the eigenvalue problem $ \tilde w( z ) = \lambda z $, and suppose that this problem has a nonconstant solution $ z \in C^\infty( M )  $ at eigenvalue $ \lambda = 0 $ (note that any $ \mu $-a.e.\ constant function is an eigenfunction of $  \tilde w $ at eigenvalue zero). Because $ W_t z = e^{t\lambda} z = z $ (by \eqref{eqStoneW}),  such an eigenfunction is preserved on Lagrangian tracers. Correspondingly, the level sets $ \zeta_c = \{ ( a, x ) \in M \mid z( a, x ) = c \} $ of $ z $ create an ergodic quotient \citep[][]{Mezic05,BudisicEtAl12} of $ M $; that is, a partition into codimension 1 invariant sets on which tracers on $ M $ are trapped. More specifically, if $ ( a, x ) \in \zeta_c $, then $ \Omega_t ( a, x ) \in \zeta_c $ for all $ t \in \mathbb{ R } $ since $ z( \Omega_t( a, x ) ) = z( a,x ) $. 

In the case that $ \tilde w $ has eigenfunctions with nonzero corresponding eigenvalues, then these eigenfunctions also provide a useful notion of coherent spatiotemporal patterns that vary periodically on the tracers. In particular, because $ \tilde w $ is skew-adjoint, its eigenvalues are purely imaginary, and $ \omega = \Imag( \lambda) $ measures an intrinsic oscillatory frequency associated with eigenfunction $ z $. It therefore follows from~\eqref{eqStoneW} that $ W_t z( a, x ) = e^{\ii \omega t} z( a, x ) $, or, equivalently, $ z \circ \Omega_t = R_t \circ z $, where $ R_t $ is the rotation map on the complex plane such that $ R_t( z ) = e^{\ii \omega t } z $. In this case, the sets $ \zeta_c $ are not invariant, but the collection $\{  \zeta_c \}_{c \in \mathbb{ C }} $ (which forms a partition of $ M $) is invariant in the sense that $ \Omega_t \zeta_c = \zeta_{\Phi_t c} $. Such a partition is known as a harmonic partition \citep[][]{BudisicEtAl12}.   

\begin{rk}[\label{rkGroup}Group structure of Koopman eigenvalues and eigenfunctions]  Given any two eigenfunctions $ z, z' \in C^\infty( M ) $ of $ \tilde w $ with corresponding eigenvalues $ \lambda $ and $ \lambda' $, it follows from the Leibniz rule that $ \tilde w( z z' ) = \tilde w( z ) z' + z \tilde w( z ) = ( \lambda + \lambda' ) z z' $, so that $ z z' $ is an eigenfunction at eigenvalue $ \lambda + \lambda' $. Thus, the sets of such eigenvalues and eigenfunctions form commutative groups under addition of complex numbers and multiplication of smooth functions, respectively. In particular, one can generate these sets from any maximal subset of rationally independent eigenvalues and their corresponding eigenfunctions. The number of elements of such sets is at most equal to the dimension of $ M $.    
\end{rk}

The preceding discussion suggests that a useful definition for coherent spatiotemporal patterns on $ M $ is through eigenfunctions of $ \tilde w $ at zero or small eigenvalue. However, despite its simplicity and attractive theoretical properties, this definition is likely to suffer from at least two major shortcomings, namely that (i) for systems of sufficient complexity (i.e., weak mixing systems) $ \tilde w $ will have no nonconstant eigenfunctions, and (ii) even if $ \tilde w $ has nonconstant eigenfunctions, the set of these eigenfunctions will generally include functions of high roughness which will adversely affect the conditioning of numerical schemes for the eigenvalue problem for $ \tilde w $. 
\begin{exmp}
  \label{exRot}To illustrate the latter point, suppose that the dynamics on $ A $ and $ X $ were both circle rotations with $ \Phi_t( a ) = a + t \mod 2 \pi $, $ \Psi_t( a, x ) = x + \beta t \mod 2 \pi  $, and $ \beta $ an irrational number. Then, $ M = A \times X $ is a 2-torus, and the dynamics $ \Omega_t $ is ergodic for the Haar measure. Moreover, the eigenvalues and eigenfunctions of $ \tilde w $ are given by $ \lambda_{ij} = \ii \omega_{ij} = \ii ( i + j \beta ) $ and $ z_{ij}( a, x ) = e^{\ii ( i a + j x ) } $, respectively, with $ i, j \in \mathbb{ Z } $. Because $ \beta $ is irrational, the set $ \{ \omega_{ij} \}_{i,j\in\mathbb{Z}} $ is dense in the real line; that is, given an eigenfrequency $ \omega_{ij} $ (including $ \omega_{00} = 0$) and a positive number $ \delta $, there exist infinitely many eigenfrequencies in the interval $ ( \omega_{ij} - \delta, \omega_{ij}  + \delta ) $. Given, however,  any smooth Riemannian metric $ g $ on $ M $, most of these frequencies will correspond to highly oscillatory eigenfunctions with large gradient. For instance, if $ g $ is the flat metric on $ \mathbb{ T }^2 $, then $ \lVert \grad_g z_{ij} \rVert_g = ( i^2 + j^2 )^{1/2} $, and while $ \omega_{ij} $ can be made arbitrarily close to zero using arbitrarily large (and opposite signed) $ i, j \in \mathbb{ Z } $, the gradient of the corresponding eigenfunctions will be arbitrarily large.   
\end{exmp}
The behavior in Example~\ref{exRot} is typical of other systems too, and renders the approximation of the eigenvalues and eigenfunctions of the raw generator by numerical algorithms problematic---in particular, the classical spectral Galerkin methods for eigenvalue problems on Hilbert spaces \cite{BabuskaOsborn91} require certain coercivity (ellipticity) properties to hold which are not satisfied by skew-adjoint operators such as $ \tilde w $. Thus, as done elsewhere in the Koopman and Perron-Frobenius operator literature \cite{DellnitzJunge99,Froyland13,GiannakisEtAl15,Giannakis17,DasGiannakis17,FroylandKoltai17}, we will regularize the generator by adding a small amount of diffusion that renders the spectrum discrete and its approximation by spectral Galerkin methods well-posed.  

\subsubsection{\label{secLB}Laplace-Beltrami operators on $ M $ and the associated eigenfunction bases}

We employ the framework developed in \cite{GiannakisEtAl15,Giannakis17,DasGiannakis17} which involves regularizing the generator by means of the Laplace-Beltrami operator $ \upDelta : C^\infty(M) \mapsto C^\infty( M) $ associated with a Riemannian metric $ h $ on $ M $ chosen such that it has compatible volume form to the invariant measure, $ \vol_h = \mu $. This operator is defined as $ \upDelta = - \divr_h \grad_h $, where $ \grad_h : C^\infty( M ) \mapsto TM $ and $ \divr_h : TM \mapsto C^\infty( M ) $ are the gradient and divergence operators associated with $ h $, respectively. Note that since $ \vol_h = \mu $, we have $ \divr_h = \divr_\mu $ and for any function $ f \in C^\infty( M ) $ and any vector field $ \beta \in TM $, $ \langle f, \divr_\mu \beta \rangle = - \langle \grad_h, \beta \rangle_h $. As a result, $ \upDelta $ is a symmetric operator, and it is a standard result in analysis on Riemannian manifolds \cite{Rosenberg97} that there exists an orthonormal basis of $ H $ consisting of eigenfunctions $ \{ \phi_k \}_{k=0}^\infty $ of (an appropriate self-adjoint extension of) $ \upDelta $ corresponding to the eigenvalues $ \{ \eta_k \}_{k=0}^\infty $, where $ 0 = \eta_0 < \eta_1 \leq \eta_2 \leq \cdots \nearrow \infty $. An important property of these eigenfunctions is that they are stationary points of the Rayleigh quotient $ R( f ) = E( f ) / \lVert f \rVert^ 2 $ associated with the Dirichlet energy functional $ E( f ) = \int_A \lVert \grad_h f \rVert^2 \, d\mu $, and $ R( \phi_k ) = E( \phi_k ) = \lambda_k $. Intuitively, the Dirichlet energy provides a measure of roughness of functions on $ M $ associated with the Riemannian metric $ h $ and the invariant measure $ \mu $. Fixing a parameter $ \ell > 0 $, the set $ \{ \phi_0, \ldots, \phi_{\ell-1} \} $ has minimal roughness with respect to $ E $ among all $ \ell$-element orthonormal sets in $ H  $. In what follows, we will use the eigenfunctions $ \{ \phi_k \} $, or (in Section~\ref{secDataDrivenBasis}) approximate eigenfunctions computed from data via kernel algorithms, to construct Galerkin approximation spaces for a regularized generator (to be defined in Section~\ref{secReg}).       

Let $ \sigma_A = d \alpha / \dvol_{g_A} \in C^\infty( A ) $ and $ \sigma_X = d \xi / \dvol_{g_X} \in C^\infty( X ) $ be the densities of the probability measures $ \alpha $ and $ \xi $ relative to the Riemannian measures associated $ g_A $ and $ g_X $, respectively. In particular, $ \sigma_A  $ represents the sampling density on $ A $ with respect to the invariant measure of the dynamics and the observation map $ F $. Let also $ m_A $ and $ m_X $ be the dimensions of $ A $ and $ X $, respectively. In the  setting of interest involving the $ m $-dimensional product space $ M = A \times X $ with $ m = m_A + m_X $, it is natural to construct the metric $ h $ as the sum $ h = \pi_{A*} h_A  +  \pi_{X*} h_X  $, where $ \pi_{A*} h_A $ and $ \pi_{X*} h_X $ are pullbacks of the conformally transformed metrics $ h_A = \sigma_A^{2/m_A} g_A $ and $ h_X = \sigma_X^{2/m_X} g_X $ on $ A $ and $ X $, respectively. By construction, $ \vol h_A = \alpha $ and $ \vol h_X = \xi $, and moreover for any $ f \in C^\infty( M  ) $ we have $ \grad_h f = \grad_h^A f + \grad_h^X f $, where $ \grad_h^A f $ and $ \grad_h^X f $ are unique vector fields in $ T^A M $ and $ T^X M $, respectively. With this choice of metric, the Laplace-Beltrami operator on $ M $ becomes $ \upDelta =  \upDelta'_A +  \upDelta'_X $, where $ \upDelta'_A = - \divr_\mu \grad_h^A $  and $ \upDelta'_X = - \divr_\mu \grad_h^X $. Note that $ \upDelta'_A $ and $ \upDelta'_X $ are lifts of the Laplace-Beltrami operators $ \upDelta_A : C^\infty( A ) \mapsto C^\infty( A ) $ and $ \upDelta_X : C^\infty( X ) \mapsto C^\infty( X ) $ associated with $ h_A $ and $ h_X $, respectively,  in the sense that $ \upDelta'_A ( f_A \circ \pi_A ) = ( \upDelta_A f_A ) \circ \pi_A  $ and $ \upDelta'_X ( f_X \circ \pi_X ) = ( \upDelta_X f_X ) \circ \pi_X $ for any $ f_A \in C^\infty( A ) $ and $ f_X \in C^\infty( X ) $.  Moreover, $ \upDelta'_A $ and $ \upDelta'_X $ commute, and the eigenfunction basis associated with $ \upDelta $ has the tensor product form $ \phi_k = ( \phi^A_{i(k)} \circ \pi_A ) ( \phi^X_{j(k)} \circ \pi_X ) $, where $ \phi^A_{i(k)} $ and $ \phi^X_{j(k)} $ are eigenfunctions of $ \upDelta_A $ and $ \upDelta_X $, respectively. Here, the index ordering $ i(k )$, $ j( k) $ is chosen such that $ i( 0 ) = j( 0 ) = 0 $ and the Dirichlet energies $ E( \phi_k ) $ form a non-decreasing sequence with $k $ in accordance with our convention. In particular, denoting the eigenvalues  corresponding to $ \phi^A_i $ and $ \phi^X_j $ by $ \eta_i^A $ and $ \eta_j^X $, respectively, we have $ E( \phi_k ) = \eta_k = \eta^A_{i(k)} + \eta^X_{j(k)} $, where $ \eta_k $ is the eigenvalue corresponding to $ \phi_k $. For later convenience, we quote here formulas relating the gradient, divergence, and Laplace-Beltrami operators associated with $ h_A $ to the corresponding operators with respect to the ambient space metric $ g_A$, viz., 
\begin{equation}
  \label{eqConf}
  \begin{gathered}
    \grad_{h_A} f = \sigma_A^{2/m_A} \grad_{g_A} f, \quad \divr_{h_A} \beta = \frac{ 1 }{ \sigma_A } \divr_{g_A} ( \sigma_A \beta ), \\
    \upDelta_A f = \sigma_A^{-2/m_A} \left[  \upDelta_{A,g} f - \left( 1 - \frac{ 2 }{ m } \right) g^{-1}( d\log\sigma_A, df ) \right], 
  \end{gathered}
\end{equation}
where $ g^{-1} $ is the ``inverse metric'' associated with $ g $, and $ f $ and $ \beta $ are an arbitrary smooth function and a vector field on $ A $, respectively.  

\begin{exmp}\label{exStream2}In the case of the periodic flows discussed in Example~\ref{exStream}, the sampling density and the conformally transformed metric on $ A $ are given by $ \sigma_A = 1 / ( 2 \pi \tilde g_A^{1/2} ) $ and $ h_A = 1/ ( 2 \pi )^2 da \otimes da $, where $ a  $ is a canonical angle coordinate on $ A $. Moreover, since $ X $ is equipped with the canonical flat metric, the density $ \sigma_X $ is uniform, $ \sigma_X = 1 / ( 2 \pi )^2 $, and the metric $ h_X $ is also flat; that is, $ h_X = ( dx_1 \otimes dx_1 + dx_2 \otimes dx_2 ) / ( 2 \pi )^2 $, where $ x_1 $ and $ x_2 $ are canonical angle coordinates on $ X $.  With these Riemannian metrics, the Laplace-Beltrami eigenfunctions are the canonical Fourier functions, $ \phi_i^A(a) = e^{\ii i a } $ and $ \phi^X_j( x ) = e^{\ii ( j_1 x_1 + j_2 x_2 ) } $, where $i $, $ j_1 $, and $ j_2 $ are integers, and $ j = ( j_1, j_2 ) $. The corresponding eigenvalues are $ \eta^A_i = i^2 $ and $ \eta^X_j = j_1^2 + j_2^2 $ so that $ \eta_k = i^2 + j_1^2 + j_2^2 $. 
\end{exmp}

Besides $ H $, in what follows we will make use of Sobolev spaces \cite{Rosenberg97} on $M $ of higher regularity. In particular, we consider the space $ H^1 := H^1( M, h ) $ defined the completion of $ C^\infty( M ) $ with respect to the norm $ \lVert f \rVert_{H^1} = \sqrt{ \langle f, f \rangle_1 }_{H^1}$ induced by the inner product $  \langle f_1, f_2 \rangle_{H^1} = \langle f_1, f_2 \rangle + \langle \grad_h f_1, \grad_h f_2 \rangle_{h} $ with $ f, f_1, f_2 \in C^\infty( M ) $ and $ \langle \grad_h f_1, \grad_h f_2 \rangle_{h} = \int_M h( \grad_h f_1, \grad_h f_2 ) \, d\mu $. Note that the eigenfunctions $ \phi_k $ of $ \upDelta $ have unbounded norm $ \lVert \phi_k \rVert_{H^1} = ( 1 + \eta_k )^{1/2} $ with $ k $, and therefore $ \{ \phi_k \} $ is not a basis of $ H^1( M, h ) $. On the other hand, the rescaled eigenfunctions $ \varphi_k  $ defined as $ \varphi_0 = \phi_0 $ and $ \varphi_{k>0} = \phi_k / \eta_k^{1/2} $ have bounded $ H^1 $ norm, $ \Vert \varphi_0 \rVert_{H^1} = 1 $, $ \lVert \varphi_{k>0} \rVert_{H^1} = ( 1 + \eta_k^{-1} )^{1/2} $,  and therefore $ \{ \varphi_k \}_{k=0}^\infty $ is a basis of $ H^1 $. Moreover, for a function $ f = \sum_{k=0}^\infty c_k \varphi_k $ expanded in this basis, the Dirichlet energy with respect to $ h $ can be conveniently computed from the $ \ell^2 $ norm of the expansion coefficient sequence excluding $ c_0$; i.e., $ E( f ) = \sum_{k=1}^\infty \lvert c_k \rvert^2 $. In Section~\ref{secReg}, we will employ the $ \{ \varphi_k \} $ basis for spectral Galerkin approximation of the eigenvalues and eigenfunctions of our regularized Koopman generator. We will also make occasional use of the Sobolev space $H^2 := H^2(M,h) $ defined as the completion of $ C^\infty( M ) $ with respect to the norm $ \langle f, f \rangle_{H^2}  = \sqrt{ \langle f, f \rangle_{H^2} }$ induced by the inner  product $ \langle f_1, f_2 \rangle_{H^2} = \langle f_1, f_2 \rangle_{H^1} + \int_M \upDelta f_1 \, \upDelta f_2 \, d\mu $.     
\subsubsection{\label{secReg}Coherent spatiotemporal patterns with regularization}
   
Following \cite{GiannakisEtAl15,Giannakis17,DasGiannakis17}, we employ the Laplace-Beltrami operator $ \upDelta $ introduced in Section~\ref{secLB} to construct regularized generators $ L_+ : C^\infty( M ) \mapsto H $ and $ L_- : C^\infty( M ) \mapsto H $ defined as
\begin{equation}
  \label{eqL}
  L_{\pm} = \pm w - \theta \upDelta, \quad  \theta > 0.
\end{equation}
 \begin{prop}
   \label{propL}
  The operators $ L_\pm $ have the following properties:
  
  (i) They are dissipative, $ \Real \langle f,  L_\pm f \rangle \leq 0 $, for all $ f \in C^\infty( M ) $, and therefore have dissipative closures, $ \bar L_\pm  $. 

  (ii) They form a dual pair, in the sense that $ L_\pm^* $ extends $ L_\mp $; i.e., $ L_+ \subset L_-^* $ and $  L_- \subset L_+^* $.

  (iii) There exists a maximal, closed, dissipative operator $  L : D(  L ) \mapsto H $ with a maximal, dissipative adjoint such that
  \begin{displaymath}
    L_+ \subset \bar L_+ \subseteq  L \subset L_-^*, \quad L_- \subset \bar L_- \subseteq  L^* \subset L_+^*.
  \end{displaymath}

  (iv) Given a function $ f \in H^2 $, we have $ \lVert \bar L_{\pm} f \rVert \leq C \lVert f \rVert_{H^2} $, where $ C $ is a constant independent of $  f$; thus, $  H^2 \subseteq D( \bar L_+ ) \subseteq D(  L ) $ and $ H^2 \subseteq D( \bar L_-) \subseteq D( L^*) $.
 
  (v) The spectra $ \sigma(   L ) $ and $ \sigma(  L^* ) $ are purely discrete; i.e., $ \lambda \in \sigma(   L ) $ if and only if $ \lambda $ is an eigenvalue of $  L $, and  $ \lambda' \in \sigma( L ^* ) $ if and only if $ \lambda' $ is an eigenvalue of $ L^* $. Moreover, $ \lambda^* \in \sigma(   L^* )  $ if and only if $ \lambda \in \sigma(  L ) $, and if $ z $ is an eigenfunction of $  L $ at eigenvalue $ \lambda $, then $ z^* $ is an eigenfunction of $ L^* $ at eigenvalue $ \lambda^* $.   
\end{prop}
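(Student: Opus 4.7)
The overall plan is to exploit three structural facts: (1) the vector field $w$ is divergence-free with respect to $\mu$, so integration by parts turns $\pm w$ into a formally skew-symmetric piece that contributes nothing to $\Real \langle f, L_\pm f \rangle$; (2) $\upDelta$ is formally self-adjoint and nonnegative, contributing $-\theta \lVert \grad_h f \rVert_h^2$; (3) $L_\pm$ is a uniformly elliptic second-order operator with smooth coefficients on a closed manifold, so classical elliptic theory delivers compactness of the resolvent.

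For (i), I would compute $\Real \langle f, L_\pm f \rangle = \pm \Real \langle f, wf \rangle - \theta \Real \langle f, \upDelta f \rangle$ for $f \in C^\infty(M)$. Using $\divr_\mu w = 0$ and integration by parts on the compact boundaryless $M$, $\int_M f^* w(f) \, d\mu = -\int_M w(f^*) f \, d\mu = -\overline{\int_M f^* w(f) \, d\mu}$, so $\Real \langle f, wf \rangle = 0$. Similarly $\langle f, \upDelta f \rangle = \lVert \grad_h f \rVert_h^2 \geq 0$, giving $\Real \langle f, L_\pm f \rangle = -\theta \lVert \grad_h f \rVert_h^2 \leq 0$. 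Dissipativity on a dense domain implies closability, and the closures $\bar L_\pm$ remain dissipative. For (ii), the same integration by parts gives $\langle L_+ f_1, f_2 \rangle = \langle w f_1, f_2 \rangle - \theta \langle \upDelta f_1, f_2 \rangle = -\langle f_1, w f_2 \rangle - \theta \langle f_1, \upDelta f_2 \rangle = \langle f_1, L_- f_2 \rangle$ for all $f_1, f_2 \in C^\infty(M)$, which says exactly $L_- \subseteq L_+^*$ and, by interchanging $+ \leftrightarrow -$, $L_+ \subseteq L_-^*$.

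For (iii), I would apply the Phillips theorem on maximal dissipative extensions. Both $\bar L_+$ and $\bar L_-$ are densely defined, closed, and dissipative; by Zorn's lemma (or the explicit construction via the Cayley transform), $\bar L_+$ admits a maximal dissipative extension $L$ whose adjoint $L^*$ is also maximal dissipative. Because $\bar L_\pm \subseteq L_\mp^*$ from (ii), the extension $L$ is automatically sandwiched as claimed; $L^*$ then extends $\bar L_-$ by the same argument applied in the dual direction. For (iv), write $\lVert L_\pm f \rVert \leq \lVert w f \rVert + \theta \lVert \upDelta f \rVert$. Since $w$ is a smooth vector field on compact $M$, pointwise $|w(f)|^2 \leq \lVert w \rVert_h^2 \lVert \grad_h f \rVert_h^2$, so $\lVert w f \rVert^2 \leq (\sup_M \lVert w \rVert_h^2) E(f) \leq C_1 \lVert f \rVert_{H^1}^2$. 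Combined with $\lVert \upDelta f \rVert \leq \lVert f \rVert_{H^2}$ (by the definition of the $H^2$ norm), this gives the inequality on $C^\infty(M)$; density of $C^\infty(M)$ in $H^2$ then promotes it to $H^2 \subseteq D(\bar L_\pm)$.

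The main obstacle is (v). My plan is to show that the closure $\bar L_\pm$ is already a closed realization of a uniformly elliptic operator (the principal symbol is $\theta$ times that of $\upDelta$) on a smooth compact manifold, so standard elliptic regularity gives $D(\bar L_\pm) = H^2$ and the graph norm is equivalent to $\lVert \cdot \rVert_{H^2}$; this will force $\bar L_\pm = L$ and identify $L$ concretely. Then for any $\lambda$ in the resolvent set (which is nonempty by dissipativity: any $\lambda$ with $\Real \lambda > 0$ works by Lumer-Phillips), the resolvent $(L - \lambda)^{-1} : H \to H^2 \hookrightarrow H$ is compact by the Rellich-Kondrachov embedding $H^2(M) \hookrightarrow H$ on the compact manifold $M$. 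Compactness of one resolvent implies the spectrum of $L$ is purely discrete, consisting of eigenvalues of finite multiplicity accumulating only at infinity (this is the content invoked from \cite{FrankeEtAl10} for elliptic operators with drift). The correspondence $\lambda \in \sigma(L) \iff \lambda^* \in \sigma(L^*)$ is the general identity for closed operators with nonempty resolvent set. Finally, conjugation commutes with $\upDelta$ and with $w$ (whose coefficients are real when written against the real frames on $A$ and $X$), so if $Lz = \lambda z$ then $L z^* = -\theta \upDelta z^* + w(z^*) = (w(z) - \theta \upDelta z)^* = (\lambda z)^* = \lambda^* z^*$; interpreting this inside the extension $L^*$ yields the stated eigenfunction correspondence.
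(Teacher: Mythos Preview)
Your arguments for (i)--(iv) are essentially identical to the paper's: integration by parts using $\divr_\mu w = 0$ for dissipativity and the dual-pair relation, the Phillips/Lumer--Phillips framework for the maximal dissipative extension, and the bound $\lVert L_\pm f\rVert \le (\lVert w\rVert_\infty + \theta)\lVert f\rVert_{H^2}$ extended by density. For (v) the paper simply cites \cite{FrankeEtAl10}; your sketch (uniform ellipticity $\Rightarrow$ $D(\bar L_\pm)=H^2$ $\Rightarrow$ compact resolvent via Rellich--Kondrachov $\Rightarrow$ purely discrete spectrum) is precisely the content of that reference, so you are filling in what the paper omits rather than taking a different route.

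There is, however, a genuine gap in your last step. You correctly observe that conjugation commutes with the real operators $w$ and $\upDelta$, giving $L_+ z^* = (L_+ z)^* = \lambda^* z^*$; this shows $z^*$ is an eigenfunction of $L$ at $\lambda^*$. But you then write ``interpreting this inside the extension $L^*$,'' which is not justified: $L^*$ extends $L_- = -w - \theta\upDelta$, not $L_+$, and in general $L \ne L^*$ when $w\ne 0$. (Concretely, on $\mathbb T^1$ with $w=\partial_x$ one has $L e^{ikx}=(ik-\theta k^2)e^{ikx}$ while $L^* e^{-ikx}=(ik-\theta k^2)e^{-ikx}$, so $L^* z^* = \lambda z^*$, not $\lambda^* z^*$.) The spectral symmetry $\sigma(L^*)=\overline{\sigma(L)}$ follows from the general identity for closed operators, and your conjugation argument does establish that eigenvalues of $L$ come in complex-conjugate pairs; but the specific claim that $z^*$ is an eigenfunction of $L^*$ at $\lambda^*$ does not follow from what you wrote.
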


A proof of Proposition~\ref{propL} can be found in~\ref{appPropL}. It follows from Proposition~\ref{propL}(iii) and a classical result due to Phillips~\cite{Phillips59} that  $  L $ is the generator of a strongly continuous contraction semigroup $ \{ S_t \}_{t\geq 0} $ on $ H $; that is, $ S_0 = I $, and for all $ s, t \geq 0 $,  $ S_s \circ S_t = S_{s+t} $, $ \lVert S_t \rVert \leq 1 $, and the map $ t \mapsto S_t $ is continuous in the strong operator topology. Similarly, $ L^* $ is the generator of the adjoint semigroup, $ \{ S^*_t \}_{t\geq 0} $, which is also contractive and strongly continuous.  Note that, in general, $ L $ and $ L^*$ are nonnormal operators,  and therefore are not guaranteed to be spectrally decomposable as $ \tilde w $ and $ \tilde w^* $ are. In Section~\ref{secPrediction} we will employ these semigroups for prediction of observables and probability densities on Lagrangian tracers, respectively, but for now we focus on using the spectral properties of $ L $ to recover coherent spatiotemporal patterns. In particular, the fact that  $ \sigma( L  ) $ consists only of eigenvalues, in conjunction with the discussion in Section~\ref{secRaw}, motivates the following definition of coherent spatiotemporal patterns for Lagrangian tracer systems driven by ergodic flows (and for more general skew-product systems):
\begin{defn}[Coherent spatiotemporal patterns]
  \label{defCoherent}
  For fixed $ \theta > 0 $, identify coherent spatiotemporal patterns $ \{ z_0, z_1, \ldots \} $ in $ H $ as the eigenfunctions of $ L $ with minimal Dirichlet energy with respect to the metric $ h $ introduced in Section~\ref{secLB}. That is, obtain $ z_k $ from the solutions of the problem $ L z_k = \lambda_k z_k $, $ \lVert z_k \rVert = 1 $, ordered such that $ 0 = E_0 < E_1 \leq E_2 \leq \cdots \nearrow \infty $, where the $ E_k $ are Dirichlet energies given by  
  \begin{displaymath}
    E_k = E( z_k ) = \int_M \lVert \grad_h z_k \rVert^2 \, d\mu = \sum_{i=0}^\infty \eta_i \lvert  \langle \phi_i, z_k \rangle \rvert^2.
  \end{displaymath} 
  The quantities $ \gamma_k = \Real \lambda_k $ and $ \omega_k = \Imag \lambda_k $ correspond to the growth rate (guaranteed non-positive by Proposition~\ref{propL}(iii)) and oscillatory frequency of eigenfunction $ z_k $.     
\end{defn}

Intuitively, for small $ \theta $ we think of the $ z_k $ from Definition~\ref{defCoherent} as approximate eigenfunctions of the raw generator $ \tilde w $. In the special case that $ \tilde w $ has a pure point spectrum, i.e., there exists a smooth orthonormal basis of $ H $ consisting of eigenfunctions $ \{ \tilde z_k \} $ of $ \tilde w $, that intuition can be justified using regular perturbation expansions \cite{Giannakis17}. In particular, it can be verified through heuristic asymptotics that for each $ \tilde z_k $ with corresponding eigenvalue $ \ii \tilde \omega_k $, there exists an eigenfunction $ z_k $ of $ L $ with corresponding eigenvalue $ \lambda_k = \gamma_k + \ii \omega_k $ and a smooth function $ z'_k $ orthogonal to $ \tilde z_k $ such that $ z_k = \tilde z_k + \theta z'_k + O( \theta^2 ) $, $ \gamma_k = - \theta E_k + O( \theta^2 ) $, and $ \omega_k = \tilde \omega_k + O( \theta^2 ) $. Thus, in systems with pure point spectrum, the main effect of diffusion in $ L $ is to suppress the pathological eigenfunctions of $ \tilde w $ with large Dirichlet energy (see Example~\ref{exRot}), and as $ \theta \to 0 $  the eigenvalues of $ L $ converge to the eigenvalues of $ \tilde w $. In fact, in systems with pure point spectrum one can additionally construct a metric $ h $ (using Takens delay embeddings) such that $ w $ and $ \Delta $ commute \cite{Giannakis17,DasGiannakis17}. In such cases, $ \tilde w $ and $ L $ have common eigenfunctions, and we have $ z_k = \tilde z_k $, $ \gamma_k = - \theta E_k $ and $ \omega_k = \tilde \omega_k $ without approximation. 

On the other hand, in systems with  mixed or continuous spectra (or rough Koopman eigenfunctions if the vector field $ w $ is non-smooth), the relationship between the spectra of $ \tilde w $ and $  L $ becomes significantly more complicated. At the very least, the fact that $  L $ always has eigenfunctions whereas  $ \tilde w $ may have no eigenfunctions (if the system is weak-mixing) raises questions about the physical significance of the coherent patterns defined according to Definition~\ref{defCoherent}. In fact, Constantin et al.~\cite{ConstantinEtAl08} show that even if $ \tilde w $ has eigenfunctions, but these eigenfunctions are not in $ H^1 $, a phenomenon known as dissipation enhancement occurs whereby adding $ O( \theta ) $ diffusion to $ w $ causes a greater than $ O( \theta ) $ change in the real part $ \gamma_k $ of the eigenvalues of $  L $ controlling the decay of the evolution semigroup $ \{ S_t \} $. More specifically, expressed in our notation, the results of \cite{ConstantinEtAl08} imply that if $ \tilde w $ has no eigenfunctions in $ H^1 $, then for any $ C > 0 $ there exists $ \theta_0 > 0 $ such that $ \lvert \gamma_k \rvert \geq C \theta $ for all $ \theta \leq \theta_0 $. In other words, phenomena such as dissipation enhancement suggest that in systems of sufficiently high complexity the choice of the diffusion operator in~\eqref{eqL} may influence significantly the properties of the coherent patterns recovered via eigenfunctions of $ L $. Here, besides this note for caution, we do not pursue the topic of the choice of $ \upDelta $  (which exists more broadly than our study of Lagrangian tracer systems) in the presence of mixed or continuous spectrum further. However, the experiments in Sections~\ref{secExamples} and~\ref{secL96} suggest our choice of $ \upDelta $ as the Laplace-Beltrami operator associated with the metric $ h $ leads to reasonable coherent patterns even in systems where the spectrum is not purely discrete.     

\subsubsection{\label{secGalerkin}Galerkin approximation}

We compute weak solutions to the eigenvalue problem for $L $ using a spectral Galerkin method. To pass to a weak (variational) formulation, we take the $ L^2( M, \mu ) $ inner product of both sides of the equation $ L z = \lambda z $, $ z \in C^\infty( M ) $, by an arbitrary test function $ \psi \in C^\infty( M ) $, integrate by parts the diffusion term, $ \langle \psi, \upDelta z \rangle = \langle \grad_h \psi, \grad_h z \rangle_h $, and introduce the sesquilinear forms 
\begin{equation}
  \label{eqSesquiAB}
  \begin{gathered}
    A( \psi, z ) = W( \psi, z )  - \theta E( \psi, z ), \quad W( \psi, z ) = \langle \psi, w( z ) \rangle \quad E( \psi, z ) = \langle \grad_h \psi, \grad_h z \rangle_h, \\
    B( \psi, z ) = \langle \psi, z \rangle. 
  \end{gathered}
\end{equation}
Note that $ E( \psi, z ) = E^*( z, \psi )$ is the Dirichlet form associated with the heat semigroup generated by $ \upDelta $, and that $ W(\psi, z ) = -W^*(z,\psi) $ by skew-symmetry of $ w $.  

By the Cauchy-Schwartz inequality and the fact that  $ \lVert \cdot \rVert \leq \lVert \cdot \rVert_{H^1} $,  these forms satisfy the bounds 
\begin{equation}
  \label{eqBounds}
  \lvert W( \psi, z ) \rvert \leq \lVert w \rVert_\infty \lVert \psi \rVert_{H^1} \lVert z \rVert_{H^1}, \quad E( \psi, z ) \leq \lVert \psi \rVert_{H^1} \lVert z \rVert_{H^1}, \quad B( \psi, z )  \leq \lVert \psi \rVert_{H^1} \lVert z \rVert_{H^1},
\end{equation}
where $ \lVert w \rVert_{\infty} = \max_{m\in M} \lVert w\rvert_m \rVert_h $. Therefore, $ A $ and $ B $ can be uniquely extended to bounded sesquilinear forms on $H^1  \times H^1 $. Moreover, one can verify that $ A $ has the coercivity property $ \Real A( z, z ) = -\theta E( z, z ) \leq - C  \lVert z\rVert_{H^1} $, where $ z $ is any function orthogonal to constant functions and $ C = \theta \eta_1  / ( 1 + \eta_1)^{-1} $. Together, the boundedness and coercivity of $ A $ and $B $ imply that the following variational eigenvalue problem is well posed \cite{BabuskaOsborn91}:  
\begin{defn}[variational formulation of eigenvalue problem for coherent patterns] \label{defEig} Find $ z \in H^1  $ and $ \lambda \in \mathbb{ C } $ such that for all $ \psi \in H^1 $ the equality $ A( \psi, z ) = \lambda B( \psi, z ) $ holds, where $ A $ and $B $ are the sesquilinear forms on $ H^1  \times H^1  $ defined in~\eqref{eqSesquiAB}. 
\end{defn} 

We compute approximate solutions of the eigenvalue problem in Definition~\ref{defEig} through a spectral Galerkin method expressed in finite-dimensional subspaces of $ H^1 $ spanned by the $ \{ \varphi_i \} $ basis introduced in Section~\ref{secLB}. In particular, introducing the $ \ell $-dimensional subspaces $ H^1_\ell = \spn\{ \varphi_0, \ldots, \phi_{\ell-1} \} \subset H^1 $, we seek solutions of the equation $ A( \psi, z ) = \lambda B( \psi, z ) $ for $ \lambda \in \mathbb{ C }$ and $ \psi, z \in H^1_\ell $. This is equivalent to solving the matrix generalized eigenvalue problem
\begin{equation}
  \label{eqEig}
  \boldsymbol{A} \vec c = \lambda \boldsymbol{B} \vec c,
\end{equation}  
where $ \vec c $ is an $ \ell $-dimensional complex column vector, and $ \boldsymbol{A} $ and $ \boldsymbol{B} $ are $ \ell \times \ell $ matrices with elements
\begin{gather*}
  A_{ij} = A( \varphi_i, \varphi_j ) = W_{ij} - \theta E_{ij}, \quad 
  B_{ij} = B( \varphi_i, \varphi_j ) = \langle \varphi_i, \varphi_j \rangle, \\
  W_{ij} = \langle \varphi_i, w( \varphi_j ) \rangle, \quad E_{ij} = E( \varphi_i, \varphi_j ) = \langle  \varphi_i, \varphi_j \rangle.
\end{gather*}
Given a generalized eigenvector $ \vec c = ( c_0,\ldots, c_{\ell-1} )^\top $ from~\eqref{eqEig}, the corresponding approximate eigenfunction of $ L $ is given by $ z = \sum_{i=0}^{\ell-1} c_i \varphi_i $. Moreover, as stated in Section~\ref{secLB}, the Dirichlet energy of this eigenfunction is given by $ E( z ) = \sum_{i=1}^{\ell-1} \lvert c_i \rvert^2 $. In accordance with Definition~\ref{defCoherent}, we order all such solutions $ ( \lambda_k, z_k ) $ in order of increasing Dirichlet energy $ E_k = E( z_k ) $, and normalize each solution to unit $ L^2 $ norm, $ \lVert z_k \rVert^2 = \lvert c_0 \rvert^2 + \sum_{i=1}^{\ell-1} \lvert c_i \rvert^2 / \eta_i = 1 $. Convergence of the approximate solutions to the solutions of the full eigenvalue problem in Definition~\ref{defEig} follows from spectral approximation theory for variational eigenvalue problems \cite{BabuskaOsborn91}. A discussion on the numerical solution of~\eqref{eqEig} can be found in \ref{appKoopEig}.  
 
\begin{rk}
  \label{rkDirichlet} Since $ W( f, f ) = 0 $ for every $ f \in H^1 $, we can determine the Dirichlet energies of the eigenfunctions $ z_k $ directly from the real parts of the corresponding eigenvalues, i.e., $ E_k = \gamma_k = \Real \lambda_k $. This is useful when solving~\eqref{eqEig} via iterative solvers with an option to compute subsets of eigenvalues with maximal real parts (e.g., the ARPACK library \cite{LehoucqEtAl98}, which is also utilized by our Matlab code). As discussed in~\ref{appKoopEig}, in practice we find that a more rapid numerical convergence occurs if we select eigenvalues with minimal moduli as opposed to maximal real parts. So long that a sufficiently large number of eigenfunctions is computed,  these two approaches should yield consistent results for the leading solutions of~\eqref{eqEig} ordered in order of increasing $ E_k $. Given the computational cost of solving~\eqref{eqEig} at high $ \ell $ (which is typical for the tensor product basis used in this work), hereafter we always compute eigenvalues and eigenfunctions using the minimal $ \lvert \lambda_k \rvert $ option. This approach carries a risk of failing to detect certain eigenvalues of high oscillatory frequency $ \omega_k $ but low Dirichlet energy, and indeed in Section~\ref{secExamples} this behavior will occur (although the missed eigenfunctions will turn out to be spatially constant on $ X $, and therefore trivial from the point of view of coherent spatiotemporal patterns).         
\end{rk}

Next, we comment on the numerical conditioning of our scheme. By construction of the $ \{ \varphi_i \} $ basis, $ [ E_{ij} ]_{ij} $ is an $ \ell \times \ell $ diagonal matrix with $ E_{00} = 0 $ and $ E_{ii} = 1 $ for $ i \geq 1 $. In particular, the operator norm of this matrix is independent of $ \ell $ (it is equal to 1 for $ \ell \geq 1 $). Moreover, it follows from the left-hand equation in~\eqref{eqBounds} and the fact that the $ \varphi_i $ have bounded $ H^1 $ norm that the norm of the $ \ell \times \ell $ matrix $ [ W_{ij} ]_{ij} $ can also be bounded by an $ \ell $-independent constant. These facts imply that the norm of $ \boldsymbol{ A } $ is bounded by an $ \ell $-independent constant. In the case of $ \boldsymbol{B} $, we can bound its condition number by $ C \ell^{2/m} $ where the constant $ C  $ is independent of $ \ell $, and $ m =\dim M $; this follows from the facts that $  \lVert \varphi_0 \rVert = 1 $, $ \lVert \varphi_{i>0} \rVert  = \eta^{-1/2}_i $, and Weyl's law $ \eta_i = O( i^{2/m} ) $ for the asymptotic growth of the Laplace-Beltrami eigenvalues $ \eta_i $ on a closed $ m $-dimensional manifold as $ i \to \infty $. Together, these properties of $ \boldsymbol{A} $ and $ \boldsymbol{B} $ ensure the good numerical conditioning of the eigenvalue problem in~\eqref{eqEig}. 

\subsection{\label{secPrediction}Prediction of observables and probability densities}

As stated in Section~\ref{secPrelim}, the Koopman and Perron-Frobenius groups of operators $ W_t $ and $ W_t^* $ respectively govern the evolution of observables and densities in the $ L^2 $ space associated with the skew-product tracer system. In this Section, we present a technique for approximating the action of these operators on observables and densities in an analytic or $ L^2 $ class. 

\subsubsection{\label{secBoundedObs}Computing the semigroup action for bounded analytic and $ L^2 $ observables}

Consider first the problem of computing $ W_t f $ for a general observable $ f \in H$. As in the case of the eigenvalue problem for coherent patterns (Section~\ref{secReg}), we relax this problem to the problem of computing the action $ S_t f $ of the contraction semigroup $ S_t $ generated by the regularized operator $ L $ in proposition~\ref{propL}. An advantage of using this semigroup is numerical stability due to the fact that $ L $ is dissipative; a disadvantage is that $ L $ is nonnormal  (unlike the raw generator $ \tilde w $), and therefore not necessarily decomposable via a spectral integral analogous to~\eqref{eqStoneW}. A difficulty common to both $ \tilde w $ and $ L $ is that they are unbounded operators, which means that  we cannot evaluate the corresponding $ W_t $ and $ S_t $ operators via series expansions for arbitrary observables in $ H $. To address this issue we restrict attention to a class of analytic observables with respect to $ L $ \citep[][Section~VIII.5]{ReedSimon80}. Specifically, introducing the space $ D^\infty( L ) = \bigcap_{n=0}^\infty D( L^n ) \subset H $, we consider the space $ B_c( L )  $, $ c \geq 0 $, consisting of all observables $ f \in D^\infty( L ) $ satisfying the bound $ \lVert L^n f \rVert \leq c_1 c^n \lVert f \rVert $ for all $ n \geq 0 $ and some constant $ c_1 >0 $. We also define $ B( L ) = \bigcup_{c\geq 0} B_c( L ) $. Observables in $ B( L )$ are known as bounded analytic vectors \cite{Nelson59,StochelSzafraniec97}. For such observables, the sum $ \sum_{n=0}^\infty t^n \lVert L^n f \rVert / n! $ converges for all $ t \geq 0$, and therefore the action of  the semigroup can be evaluated by means of the series expansion
\begin{equation}
  \label{eqSt}
  S_t f  = \sum_{n=0}^\infty \frac{ t^n }{ n! } L^n f. 
\end{equation} 

We now further restrict attention to bandlimited observables  in $ B( L ) $ which can be represented through finite expansions of the eigenfunctions $ z_k$ of $ L $. In particular, let $ Z_r = \{ z_0, z_1, \ldots, z_{r-1}\} $ be the $ r$-dimensional subspace of $H $ spanned by the leading $ r + 1 $ eigenfunctions of $ L $ (ordered in order of increasing Dirichlet energy in accordance with Definition~\ref{defCoherent}). Let also $ C_r = \max_{k\in\{0, \ldots, r\}} \lvert \lambda_k \rvert $. We then have:
\begin{lemma}
  \label{lemmaBL}
  (i) The subspace $ Z_r $ lies in $ B_{C_r}( L ) $.

  (ii) If the eigenfunctions $ z_0, z_1, \ldots $, form a basis of $  H $, and the corresponding eigenvalues $ \lambda_0, \lambda_1, \ldots $ have no accumulation points, then for every $ f \in B( L ) $ there exists $ r $ such that $ f $ lies in $ Z_r $.  
\end{lemma}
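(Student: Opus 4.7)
\noindent\textbf{Proof proposal for Lemma~\ref{lemmaBL}.}

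For part (i), the plan is a direct finite-dimensional estimate. Any $f\in Z_r$ can be written uniquely as $f = \sum_{k=0}^{r-1} a_k z_k$, and since every $z_k$ is an eigenfunction, $L^n f = \sum_{k=0}^{r-1} a_k \lambda_k^n z_k$. Using the triangle inequality together with $\lVert z_k\rVert =1$ and $|\lambda_k|\leq C_r$ gives $\lVert L^n f\rVert \leq C_r^n \sum_{k=0}^{r-1} |a_k|$. Since $Z_r$ is finite dimensional and $\{z_0,\ldots,z_{r-1}\}$ is linearly independent, the coefficient map $f \mapsto (a_0,\ldots,a_{r-1})$ is continuous in the $L^2$ norm, so there is a constant $c_1 = c_1(r)$, independent of $n$, with $\sum_k |a_k| \leq c_1 \lVert f \rVert$. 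This gives $\lVert L^n f\rVert \leq c_1 C_r^n \lVert f\rVert$, which is precisely the membership condition for $B_{C_r}(L)$.

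For part (ii), the plan is to build a biorthogonal family from eigenfunctions of $L^*$ and use it to kill all ``high-frequency'' coefficients of $f$. By Proposition~\ref{propL}(v), to each $\lambda_k$ corresponds an eigenvalue $\lambda_k^*$ of $L^*$ with eigenfunction obtained from $z_k$; a standard computation shows $(\lambda_k-\lambda_j)\langle \tilde z_j, z_k\rangle = 0$, so these $L^*$-eigenfunctions can be orthogonalized and normalized (using the isolated-eigenvalue hypothesis and Riesz projections onto the finite-dimensional generalized eigenspaces) to produce a biorthogonal system $\{\tilde z_k\}\subset H$ with $\langle \tilde z_j, z_k\rangle = \delta_{jk}$ and $L^* \tilde z_k = \lambda_k^* \tilde z_k$. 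The basis assumption then yields $f = \sum_k a_k(f) z_k$ with $a_k(f) = \langle \tilde z_k, f\rangle$.

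The key inequality is obtained by passing $L^n$ to its adjoint against the dual vector:
\begin{equation*}
  |\lambda_k|^n |a_k(f)| = |\langle (L^*)^n \tilde z_k, f\rangle| = |\langle \tilde z_k, L^n f\rangle| \leq \lVert \tilde z_k\rVert\, \lVert L^n f\rVert \leq c_1 \lVert \tilde z_k\rVert\, c^n \lVert f\rVert,
\end{equation*}
where the last bound uses $f\in B_c(L)$. Dividing by $|\lambda_k|^n$ and letting $n\to\infty$ forces $a_k(f)=0$ whenever $|\lambda_k|>c$. Because the eigenvalues have no accumulation points, only finitely many satisfy $|\lambda_k|\leq c$; let $r$ be large enough that $Z_r$ contains all the corresponding $z_k$ in our Dirichlet-energy ordering. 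Then $f = \sum_{k:|\lambda_k|\leq c} a_k(f) z_k \in Z_r$, as required.

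The main obstacle I anticipate is the construction of the biorthogonal system in the nonnormal setting: because $L$ is only dissipative, not self-adjoint, one cannot simply invoke a spectral theorem. The no-accumulation-points hypothesis is essential here, since it lets one isolate each $\lambda_k$ with a small contour and define bounded Riesz projections $P_k$ onto finite-dimensional generalized eigenspaces, from which the $\tilde z_k$ are extracted. If some $\lambda_k$ are degenerate with nontrivial Jordan structure, the argument is cleanest when phrased directly in terms of $P_k$: one shows $\lVert P_k f\rVert \leq \mathrm{const}\,(c/|\lambda_k|)^n \lVert f\rVert$ using that $L\rvert_{\mathrm{ran}\,P_k} = \lambda_k I + N_k$ with $N_k$ nilpotent, so that $\lVert L^n P_k f\rVert$ grows like $|\lambda_k|^n$ up to a polynomial prefactor, and conclude $P_k f = 0$ when $|\lambda_k|>c$ by the same limit argument.
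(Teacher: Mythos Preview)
Your argument for part~(i) is essentially the paper's: both invoke equivalence of norms on the finite-dimensional space $Z_r$ to pass from the eigenvalue bound $|\lambda_k|\leq C_r$ to $\lVert L^n f\rVert \leq c_1 C_r^n \lVert f\rVert$. The only cosmetic difference is that the paper compares the $\ell^2$ norm on coefficients with the Gram-matrix norm $\lVert \vec c\rVert_L^2 = \sum_{j,k} c_j^* \langle z_j,z_k\rangle c_k$, while you use the $\ell^1$ norm via the triangle inequality.

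For part~(ii) you take a genuinely different route. The paper argues very tersely: it writes $f=\sum_k c_k z_k$, asserts $L^n f = \sum_k c_k\lambda_k^n z_k$, and then says the bound $\lVert \sum_k c_k \lambda_k^n z_k\rVert \leq K C^n\lVert f\rVert$ ``cannot hold'' unless only finitely many $c_k$ are nonzero (because $L$ is unbounded and the $\lambda_k$ do not accumulate). This is correct in spirit, but the passage from $f\in D^\infty(L)$ to the termwise identity $L^n f = \sum_k c_k\lambda_k^n z_k$, and the extraction of a single coefficient from the norm bound, are left implicit. Your approach sidesteps both issues: by pairing $L^n f$ against an $L^*$-eigenfunction $\tilde z_k$ you get $|\lambda_k|^n|a_k(f)| \leq \lVert \tilde z_k\rVert\,\lVert L^n f\rVert$ directly, with no need to commute $L^n$ through an infinite sum. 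The Riesz-projection construction you outline is exactly the right tool to produce the biorthogonal $\tilde z_k$ in the nonnormal setting, and your remark about nilpotent parts correctly handles degenerate eigenvalues. One small point: Proposition~\ref{propL}(v) gives $z_k^*$ as an $L^*$-eigenfunction, which is not in general the biorthogonal dual $\tilde z_k$; your actual construction via Riesz projections does not rely on this identification, so the reference to Proposition~\ref{propL}(v) is motivational rather than essential. In short, your proof of (ii) is more explicit and robust than the paper's, at the cost of invoking slightly heavier machinery.
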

A proof of this Lemma can be found in~\ref{appBL}.                                                                                                                                                                            
Lemma~\ref{lemmaBL} implies that for  observables $ f \in Z_r$ we can evaluate the semigroup action $ S_t f $ via the series expansion in~\eqref{eqSt} for all $ t \geq 0 $; explicitly, we have
\begin{equation}
  \label{eqStFL}
  S_t f = \sum_{k=0}^{r-1} e^{t\lambda_k} c_k z_k, \quad f = \sum_{k=0}^{r-1} c_k z_k.
\end{equation}
Moreover, if the assumptions in part~(ii) of the Lemma hold, then~\eqref{eqStFL} holds for any observable in $ B( L ) $ (which is in turn a dense subspace of $ H $), again for for all $  t $. While we do not have results allowing us to determine when these assumptions hold for general $L $, there are specific cases where they are satisfied. In particular, if the Koopman generator $ \tilde w $ has pure point spectrum, then it can be arranged (e.g., using delay-coordinate maps \cite{Giannakis17,DasGiannakis17}) that $ w $ and  $ \upDelta $ commute; in that case, $ L $ has an orthonormal eigenbasis with no accumulation points in its eigenvalues (in fact, the eigenfunctions of $ L $ are also eigenfunctions of $ \tilde w$).   

Next, we consider how to compute or approximate~\eqref{eqSt} when $ f \in Z_r $ for some $ r $. First, it follows immediately from~\eqref{eqStFL} that if some eigenvalues and eigenfunctions  of $ L $ are available, say the first $ \ell  $, then one can approximate $ S_t f $,  $ f = \sum_{k=0}^{\ell-1} c_k z_k $, by $ S_{\ell,t}  f := \sum_{k=0}^{\ell-1} e^{t\lambda_k} c_k z_k $. This is equivalent to introducing the orthogonal projectors $ \Pi_\ell : H \mapsto Z_\ell $ and the spectrally truncated generator $ L_\ell = \Pi_\ell L \Pi_\ell $, and computing 
\begin{equation}
  \label{eqStFJ}
  S_{\ell,t} f  = \sum_{n=0}^\infty \frac{ t^n }{ n! } L_\ell^n f, 
\end{equation}
where it is obvious that $ S_{\ell,t} f = S_t f $ for all $ t \geq 0 $ if $ \ell \geq r $. 

An expansion analogous to~\eqref{eqStFJ} was used in \cite{Giannakis17} for prediction of observables of ergodic dynamical systems.   For the skew-product systems studied here, however, computing enough eigenfunctions of $ L $ so that~\eqref{eqStFJ} is sufficiently accurate may not be feasible (for instance, due to the large number of required basis elements in the tensor product basis $ \{ \varphi_k \}$). In such situations, one may opt instead to use orthogonal projections $ \tilde \Pi_\ell : H \mapsto H_\ell $ to the subspaces $ H_\ell =  \{ \phi_0, \ldots, \phi_{\ell-1} \} \subset H $ spanned by eigenfunctions of $ \upDelta $, which may be ``easier'' to compute. In particular, the eigenfunctions of $ \upDelta $ are also eigenfunctions of the corresponding heat kernel which is a compact operator \cite{Rosenberg97}; a fact which will  be useful when building data-driven bases in Section~\ref{secDataDrivenBasis} ahead. With this in mind, an alternative to~\eqref{eqStFJ} is to define $ \tilde L_\ell = \tilde \Pi_\ell L \tilde \Pi_\ell $ for some $ \ell \geq 0 $, and compute 
\begin{equation}
  \label{eqStFJ2}
  \tilde S_{\ell,t} f  = \sum_{n=0}^\infty \frac{ t^n }{ n! } \tilde L_\ell^n f, 
\end{equation}
where $ f $ can now be any observable in $H $ (as opposed to a bandlimited observable). As with~\eqref{eqStFJ}, at fixed $ \ell $, the sum over $ n $ in~\eqref{eqStFJ2} converges for all $ t \geq 0 $ since $ \tilde L_\ell $ has finite rank. However, unless rather restrictive conditions hold (e.g., that the eigenspaces of $ \upDelta $ are invariant under $ L $), the convergence of $ \tilde S_{\ell,t}f $ to $ S_t f $ is not guaranteed to be uniform in $ t $. Instead, classical results on semidiscrete approximation schemes for linear parabolic partial differential equations \cite{FujitaSuzuki91} lead to the weaker convergence result
\begin{equation}
  \label{eqBoundParabolic}
  \lVert  ( S_t - \tilde S_{\ell,t}  ) f \rVert \leq \frac{ C }{ t } \lVert ( I - \tilde \Pi_\ell ) f \rVert,
\end{equation}
where $ C $ is a constant and $ \lVert ( I - \tilde \Pi_\ell ) f \rVert $ converges to zero as $ \ell \to \infty $. Note that the weakness of the bound at short times stems from the fact that an arbitrary $ f \in H $ may not be in the domain of $ L $, but $ S_t f $ lies in $ D(L) $ for all $ t > 0 $. 

For the remainder of the paper, we restrict attention to approximations of $ S_t f $ via~\eqref{eqStFJ2} as opposed to~\eqref{eqStFJ}. This approximation can be evaluated by computing the matrix exponential of the $ \ell \times  \ell $ generator matrix $ \boldsymbol{L} $ with elements $ L_{ij} = \langle \phi_i, L \phi_j \rangle $, $ 0 \leq i,j, \leq \ell-1 $. Specifically, given $ f \in H $, we have $ \tilde S_{l,t} f = \tilde S_{l,t} \tilde \Pi_l f $, and expanding $ \tilde \Pi_\ell f = \sum_{k=0}^{\ell-1} b_k \phi_k $, $ b_k = \langle \phi_k, f \rangle $, it follows that $ \tilde S_{\ell,t} f = \sum_{k=0}^{\ell-1} c_k \phi_k $, where the expansion coefficients $ c_k $ can be computed in vector form using 
\begin{equation}
  \label{eqStMat} \vec c = e^{t \boldsymbol{L}} \vec b, \quad \vec b = ( b_0, \ldots, b_{\ell-1} )^\top, \quad \vec c = ( c_0, \ldots, c_{\ell-1} )^\top.   
\end{equation}
It should be noted that due to the tensor product nature of the Laplace-Beltrami eigenfunction basis for $ H $, the spectral truncation parameter $ \ell $ can be very large (e.g., in the experiments in Section~\ref{secL96}, $ \ell $ will be as large as $ 750 \times 65 \times 65 \simeq 3 \times 10^6 $); therefore, efficient evaluation of $ e^{t \boldsymbol{L} } \vec b $ becomes crucial.

\subsubsection{\label{secLeja}Approximation via Leja interpolation}

The problem of computing the action of matrix exponentials on vectors has been studied extensively  \cite{MolerVanLoan03,Higham08,CaliariEtAl14}. Here, we approximate the semigroup action in~\eqref{eqStMat} using the method of Caliari et al.~\cite{CaliariEtAl04} and  Kandolf et al.~\cite{KandolfEtAl14} which approximates the action of the matrix exponential on vectors via polynomial interpolation at Leja nodes. Specifically, the method in \cite{CaliariEtAl04,KandolfEtAl14} approximates $ e^{t\boldsymbol{L}} \vec b $ by $ p_d( t \boldsymbol{L} ) \vec b $ where $ p_d $ is a Newton interpolating polynomial of degree $ d $ associated with a sequence $ \{ \zeta_0, \ldots, \zeta_d \} $, $ \zeta_i \in \mathbb{ C }$, of Leja nodes in the complex plane. By employing an interpolating polynomial, the method does not require explicit evaluation of the matrix exponential; this is important from a computational efficiency standpoint since, for instance, $ e^{t\boldsymbol{L}} $ will generally be a full matrix even if $ \boldsymbol{L} $ is sparse. Moreover, the properties of Leja interpolation nodes ensure that the approximation remains well-conditioned at large $ d $. The properties of Leja sequences also allow for efficient iterative approximation refinement by increasing $ d $ until an error tolerance is met, using a stopping criterion derived from an a posteriori error estimate to terminate the iterations \cite{KandolfEtAl14}.      

The method in \cite{CaliariEtAl04,KandolfEtAl14} utilizes Leja nodes $ \zeta_i $ on the line segment $ K = ( \alpha + \beta ) / 2 + \ii [ -\gamma, \gamma ] $ in the complex plane, where $ \alpha $, $ \beta $, and $ \gamma $ are real numbers that determine the corners of a rectangle $ [ \alpha, \beta ] \times \ii [ -\gamma, \gamma ] \subset \mathbb{ C } $ bounding the spectrum of $ \boldsymbol{L} $. Caliari et al.\ \cite{CaliariEtAl04} identify this rectangle by applying Gershgorin's disk theorem to the symmetric and antisymmetric parts of the matrix $ \boldsymbol{L} $; we also follow a similar approach here. In particular, in the case of the generator matrix associated with the semigroup $ \tilde S_{\ell,t} $, the symmetric part $ \boldsymbol{L}^S = ( \boldsymbol{L} + \boldsymbol{L}^\top ) / 2 $ is a diagonal matrix with diagonal elements $ L^S_{ii} = - \theta \langle \phi_i, \upDelta \phi_i \rangle = - \theta \eta_i $ proportional to the Laplace-Beltrami eigenvalues (Dirichlet energies). Thus, we can set $ \alpha = L^S_{00} = 0 $ and $ \beta =  L^S_{\ell-1,\ell-1} = - \theta \eta_{\ell-1} $. To place a bound on the imaginary part of the spectrum, we apply Gershgorin's theorem to the antisymmetric part $ \boldsymbol{L}^A = ( \boldsymbol{L} - \boldsymbol{L}^{\top} ) / 2 $, where the matrix elements $ \boldsymbol{L}^A_{ij} = w_{ij} = \langle \phi_i, w( \phi_j ) \rangle $ are determined by the matrix representation of the Koopman generator $ w $ in the Laplace-Beltrami eigenfunction basis of $ H $. Applied to this matrix, Gershgorin's theorem gives $ \gamma = \max_{i\in\{0,\ldots,\ell-1\}} \sum_{j=0}^{\ell-1} \lvert w_{ij} \rvert $.   

Once the interval $ K $ has been established, the first Leja point $ \zeta_0 $ is selected arbitrarily in that interval, and the remaining points are computed iteratively, setting $ \zeta_j \in \argmax_{z\in K} \prod_{i=0}^{j-1} \lvert z - \zeta_i \rvert $ for $ j > 0 $. Associated with the Leja points are the divided differences $ r[ \zeta_i, \ldots, \zeta_j ] $, which can also be computed iteratively (and stably \cite{Caliari07}), using 
\begin{displaymath}
  r[ \zeta_i ] = e^{\zeta_i}, \quad r[ \zeta_i, \ldots, \zeta_j ] = \frac{r [\zeta_{i+1}, \ldots,\zeta_k] - r[\zeta_i, \ldots, \zeta_{j-1}]}{\zeta_j - \zeta_i}.
\end{displaymath}
The Newton interpolating polynomial $ p_d( t \boldsymbol{L} ) $ associated with the Leja nodes $ \{ \zeta_0, \ldots, \zeta_d \} $ is then given by 
\begin{displaymath}
  p_d( t \boldsymbol{L} ) = r[ \zeta_0 ] + \sum_{i=1}^d r[ \zeta_0, \ldots, \zeta_i ] \prod_{j=0}^{i-1} (  t \boldsymbol{L} -\zeta_j ).
\end{displaymath}
Besides the fact that the $ ( d + 1) $-th order approximation $ p_{d+1}( t \boldsymbol{L} ) \vec \vec b $ can be easily computed from $ p_d( t \boldsymbol{L} ) \vec b $, other attractive properties of the Leja method include good numerical conditioning and superlinear convergence to $ e^{t \boldsymbol{L} } \vec b $; additional details on these properties can be found in \cite{CaliariEtAl04,KandolfEtAl14}. Details on the numerical implementation used in the experiments in Sections~\ref{secExamples} and~\ref{secL96} ahead are included in~\ref{appNumLeja}. 

In applications, we are often interested in evaluating $ e^{t_n \boldsymbol{L}} \vec b $ at the times $ t_n = n \tilde \tau $ with $ n \in \{ 0, 1, \ldots \} $ and $ \tilde \tau > 0 $ a fixed sampling interval (not necessarily equal to the sampling interval $ \tau $ in the training data). In such cases, we compute the quantities $ \vec b_0, \vec b_1, \ldots $ iteratively using 
\begin{equation}
  \label{eqIterative}
  \vec b_{n+1} = \frac{ \vec b'_{n+1} }{ \lVert \vec b'_{n+1} \rVert }, \quad \vec b'_{n+1} = e^{\tilde \tau} \vec b_n, \quad \vec b_0 = \vec b.
\end{equation}    
In this procedure, the normalization by $ \lVert \vec b'_{n+1} \rVert $ ensures that the vectors $ b_n $ have constant norm, as would be expected for unitary evolution. In effect, this normalization step can be thought of as an inflation to compensate for contraction due to diffusion regularization and spectral truncation associated with the operator $ \tilde L_\ell $ in~\eqref{eqStFJ2}. In practice, we have observed that this step is not particularly important, but can sometimes lead to a modest increase of prediction skill.

\subsubsection{\label{secBoundedDen}Computing the semigroup action on probability densities}           

We now turn to the approximation of the Perron-Frobenius group action on probability measures with densities relative to the invariant measure in $ H $. As discussed in Section~\ref{secPrelim}, the evolution of such a density $ \rho $ is governed by the adjoint of the Koopman group,  $ W_t^* $ (i.e., the Perron-Frobenius group), and in order to approximate the action on this group on functions we proceed in an analogous way to the case of $ W_t $ in Sections~\ref{secBoundedObs} and~\ref{secLeja}. That is, we approximate $ W_t^* \rho $ by $ S_t^* \rho $, where $ S_t^* $ is the adjoint semigroup generated by $ L^* $, and  further approximate this quantity by projection onto the finite-dimensional subspaces $ H_\ell \subset H $, viz., $ S^*_{t} \rho  \approx \tilde S^*_{\ell,t} \rho = \sum_{n=0}^\infty \frac{ t^n }{ n! } \tilde L_\ell^{*n} \rho $, where $ \tilde L^*_\ell = \tilde \Pi_\ell L^* \tilde \Pi_\ell $. The error of this approximation can be estimated by an analogous expression to~\eqref{eqBoundParabolic}. Expanding $ \tilde \Pi_l \rho = \sum_{k=0}^{\ell-1} b_k \phi_k $ and $ \tilde S^*_{\ell,t} f = \sum_{k=0}^{\ell-1} c_k \phi_k $  with $ b_k = \langle \phi_k, \rho \rangle $ and $ c_k = \langle \phi_k, \tilde S^*_{\ell,t} \rho \rangle $, we obtain an analogous expression to~\eqref{eqStMat}, namely,  $ \vec c = e^{t \boldsymbol{L}^*} \vec b $, where $ \vec b = ( b_0, \ldots, b_{\ell-1} )^\top $ and $  \vec c = ( c_0, \ldots, c_{\ell-1} )^\top $ are $ \ell $-dimensional column vectors containing the expansion coefficients of $ \rho $ and $ \tilde S^*_{\ell,t} \rho $, respectively, and $ \boldsymbol{L}^* $ is the adjoint of the $ \ell \times  \ell $ generator matrix. We compute the action of the matrix exponential $ e^{t\boldsymbol{L}^*} $ on $ \vec b $ using Leja interpolation as described in Section~\ref{secLeja}.

\section{\label{secExamples}Semi-analytical examples}

In this Section, we apply the framework for coherent pattern identification and prediction presented in Section~\ref{secKoopman} to time-periodic incompressible flows on a doubly-periodic domain, $ X = \mathbb{ T }^2 $. According to  Examples~\ref{exStream} and~\ref{exStream2}, the dynamical state space for this class of flows is the circle, $ A = \mathbb{ T }^1 $, and as a result the product space $ M = A \times X $ is the 3-torus. Moreover, the Laplace-Beltrami eigenfunction basis of $ H $ associated with the metric $ h $ consists of the Fourier functions $ \phi_{ijk}( a, x ) = \phi_i^A( a ) \phi_{jk}^X( x ) $, where $ \phi_i^A( a ) = e^{\ii i a } $, $ \phi_{jk}^X( x ) = e^{\ii ( j x_1 + k x_2 ) } $,  $ i, j, k \in \mathbb{ Z } $, and $ a $ and $ x = ( x_1, x_2 ) $ are canonical angle coordinates for $ A $ and $ X $, respectively. The Laplace-Beltrami eigenvalue corresponding to  $ \phi_{ijk} $ is $ \eta_{ijk} = i^2 + j^2 + k^2 $. Note that in this Section we label the basis elements and the eigenvalues using three integer indices (as opposed to the single index used in Section~\ref{secLB}), as this notation is more convenient for expressing the matrix elements of the generator. Moreover, we use the symbols $ \ell_A $, $ \ell_{X_1} $, and $ \ell_{X_2} $ to represent the spectral truncation parameters for $ L^2( A, \alpha ) $ and each of the factors of $ L^2( X, \xi ) \simeq L^2( \mathbb{ T }^1 ) \otimes L^2( \mathbb{ T }^1 )$; that is, our finite-dimensional approximation spaces have dimension $\ell = ( 2 \ell_A + 1 ) \times ( 2 \ell_{X_1} + 1 ) \times ( 2 \ell_{X_2} + 1 ) $, and are spanned by $ \phi_{ijk} $ with $ -\ell_A \leq i \leq \ell_A $, $ - \ell_{X_1} \leq j \leq \ell_{X_1} $, and $ -\ell_{X_2} \leq k \leq \ell_{X_2} $, or, as appropriate, their normalized counterparts $ \varphi_{ijk} $ for the subspaces of $ H^1 $. Following Definition~\ref{defCoherent}, whenever we discuss numerical Koopman eigenvalues and eigenfunctions $ (\lambda_k, z_k )$, $ k \in \mathbb{ N } $, we order these eigenvalues and eigenfunctions in order of increasing Dirichlet energy $ E_k $, though as stated in Remark~\ref{rkDirichlet}, in practice we compute subsets of the spectrum consisting of eigenvalues of minimal modulus using iterative solvers. 

In the examples that follow, we consider the same periodic dynamics on $ A $ as in Example~\ref{exStream} (i.e., $ \Phi_t(a  ) = \omega t + a \mod 2 \pi$), and set the streamfunction $ \zeta $ to various linear combinations of circular Gaussian functions. With this choice, the matrix elements of the generator can evaluated in closed form using integral properties of modified Bessel functions. Thus, in this Section, all errors in the computation of eigenfunctions and evolution of observables and densities are caused by projection to finite-dimensional subspaces of $ H $, as opposed to ergodic averaging errors due to finite numbers of samples and/or errors associated with the construction of the Laplace-Beltrami eigenfunction basis. We will take up the problem of constructing data-driven approximations to the Laplace-Beltrami eigenfunction basis in Section~\ref{secDataDrivenBasis}.    

\subsection{\label{secMovingVortex}Moving Gaussian vortex in a periodic domain}

In our first example, we consider the streamfunction $ \zeta : A \mapsto C^\infty(X ) $ defined as
 \begin{equation}
  \label{eqZetaMoving}
  \zeta( a )(x) = e^{\kappa [ \cos( x_1 - a ) + \cos x_2 ] },  
\end{equation}
where $ \kappa $ is a positive parameter. Thus, for each $ a \in A $, $ \zeta( a ) $ corresponds (up to normalization) to a von Mises density on $ X = \mathbb{ T }^2 $, which is the analog of a Gaussian density in a periodic domain. The corresponding incompressible velocity field $ v\rvert_a $ from~\eqref{eqStream} generates a vortical flow centered at $ ( x_1, x_2 ) = ( a, 0 ) $, illustrated in Fig.~\ref{figPsiMoving}. Since $ \partial_{1} \zeta(a) $ and $ \partial_{2}(a) \zeta $ is zero when $ x_1 \in a + \{ 0, \pi \} $ and $ x_2 \in \{ 0, \pi \} $, the velocity field vanishes at the center of the vortex as well as at three points away from the vortex center, leading to a ring-like region around the center where the velocity field is concentrated. Note that the radial extent of the ring decreases with increasing $ \kappa $. Moreover, because the dynamics on $ A $ evolve according to $ \Phi_t( a ) = a + \omega t \mod 2 \pi $, the vortex moves without deformation at fixed speed $ \omega $ along the $ x_1 $ direction.

\begin{figure}
  \centering\includegraphics[width=.4\linewidth]{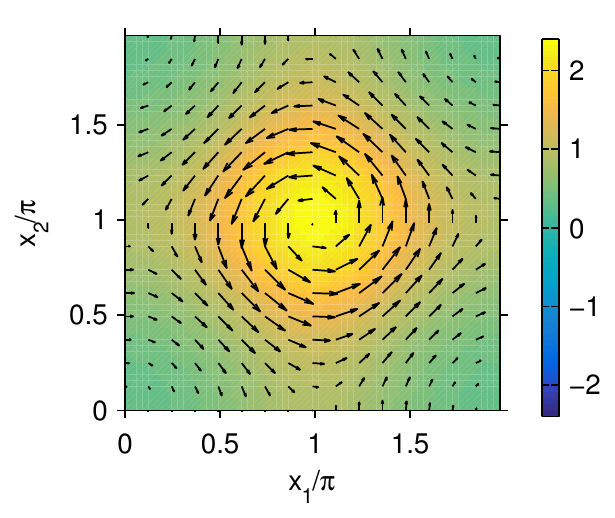} \includegraphics[width=.4\linewidth]{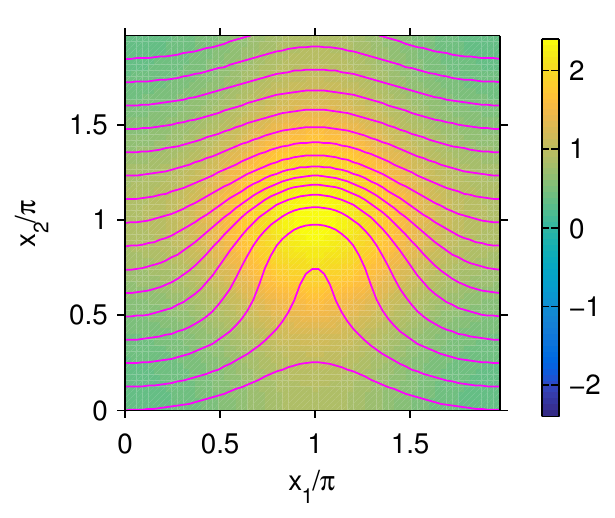}
  \caption{\label{figPsiMoving}(a) Streamfunction $ \zeta(a) $ (colors) and velocity field $ v\rvert_a $ (arrows) for the moving-vortex streamfunction in~\eqref{eqZetaMoving}  evaluated for $ \kappa = 0.5 $ and $ a = \pi $. (b) Streamfunction $ \zeta(a) $ (colors ) and representative streamlines $ s_y $ (lines) in the frame of reference comoving with the vortex center in (a) for $ \omega = 1 $. For clarity of visualization of the vortex center, the $ x_2 $ coordinate has been periodically shifted to $ x_2 + \pi $ in both (a) and (b).}  
\end{figure}

For our purposes, a particularly useful property of Gaussian streamfunctions such as~\eqref{eqZetaMoving} is that their integrals against Fourier functions can be evaluated in closed form with the help of the identity
\begin{equation}
  \label{eqBessel}
  \frac{ 1 }{ 2 \pi } \int_0^{2\pi} e^{\ii n \theta + \kappa \cos \theta } \, d \theta =  \frac{ I_{\lvert n \rvert }( \kappa ) }{ I_0( \kappa ) },
\end{equation}
where $ I_\nu $ is the modified Bessel function of the first kind and order $ \nu $. In particular, recall the decomposition $ w = w^A + w^X $ in Section~\ref{secPrelim} of the vector field $ w $ of the skew-product system  in terms of the vector fields $ w^A $ and $ w^X $ associated with the dynamics on $ A $ and $ X $, respectively. We have
\begin{equation}
  \label{eqWXMov0}
  \langle \phi_{ijk}, w^X( \phi_{lmn} ) \rangle = \langle \phi_{ijk}, \phi^A_l  v( \phi^X_{mn} ) \rangle = \langle \phi_{ijk}, \phi^A_l  ( \partial_{1} \zeta \, \partial_{2} \phi^X_{mn} - \partial_{2} \zeta \, \partial_{1} \phi^X_{mn}   )   \rangle, 
\end{equation} 
and using~\eqref{eqBessel} we arrive, after some algebra, at the result
\begin{align}
  \nonumber
  \langle \phi_{ijk}, w^X( \phi_{lmn} ) \rangle &= \frac{ \kappa }{ 2 I_0( \kappa )^2 } [ m I_{\lvert m - j \rvert}( \kappa ) ( I_{\lvert n-k - 1 \rvert}( \kappa ) - I_{\lvert n - k + 1 \rvert}(\kappa ))  \\   \label{eqWXMov} & \quad + n I_{\lvert n - k \rvert\rvert}( \kappa ) ( I_{\lvert m - j + 1 }( \kappa ) - I_{\lvert m - j - 1 \rvert}( \kappa ) ) ] \delta_{i+j,l+m}.  
\end{align}
In addition, we have
\begin{equation}
  \label{eqWAMov}
  \langle \phi_{ijk}, w^A( \phi_{lmn} ) \rangle = \langle \phi^A_i, u( \phi_l^A) \rangle \delta_{jm} \delta_{kn} = \ii \omega \delta_{il} \delta_{jm} \delta_{kn}.
\end{equation}
Together, \eqref{eqWXMov} and~\eqref{eqWAMov}, in conjunction with the fact that $ \langle \phi_{ijk}, \upDelta( \phi_{lmn} ) \rangle = \langle \grad_h \phi_{ijk}, \grad_h \phi_{lmn} \rangle_h = \eta_{ijk} \delta_{il} \delta_{jm} \delta_{kn} $ and the definitions of the normalized eigenfunction basis for $ H^1 $ (see Section~\ref{secLB}), are sufficient to evaluate all sesquilinear forms and operator matrix elements appearing in the schemes of Sections~\ref{secCoherent} and~\ref{secPrediction}. In what follows, we present results for the flow with $ \omega = 1 $ and $ \kappa = 0.5 $; with this choice of flow parameters the maximal velocity field norm is $ \simeq 0.3 $, and occurs at a radial distance of $ \simeq 0.6 \pi $ from the vortex center (all with respect to the canonical flat metric on $ \mathbb{ T}^2 $).   

\subsubsection{\label{secMovingVortexEig}Koopman eigenvalues and eigenfunctions}

Before presenting our results, it is useful to comment on the general spectral properties of the generator $ \tilde w $ for our choice of flow parameters. As discussed in~\ref{appMoving}, the spectrum of $ \tilde w $ always contains eigenvalues $ \ii \omega_k $ which can be grouped into (1) integer multiples of $ \omega $, i.e., $ \omega_k = k \omega $ with $ k \in \mathbb{ Z } $, corresponding to eigenfunctions $ z_k( a, x ) = e^{\ii k \omega a } $ that are constant on the spatial domain ($w^X( z_k ) = 0$); (2) zero eigenvalues corresponding to eigenfunctions constant on the streamlines of the velocity field in a comoving frame with the vortex center; (3) linear combinations of the eigenvalues in class~1 and~2 (i.e., integer multiples of $ \ii \omega $) corresponding to products of the respective eigenfunctions. In addition, the spectrum of $ \tilde w $ has a continuous component associated with functions in $ H$ that are nonconstant on the streamlines in class~2. 

Eigenfunctions of class~1 exist in all time-periodic flows with the associated skew-product dynamics of Section~\ref{secPrelim}; this is because $ w $ is trivially projectible under $ \pi_{A*} $. In particular, $ \pi_{A*} w $ is equal to the vector field $ u $ of the periodic dynamics on $ A $, and (a skew-adjoint extension of) the latter has pure point spectrum. Eigenfunctions of this class can therefore be expressed as pullbacks of eigenfunctions of the generator dynamics on $ A $; that is, we have $ z_k = z^A_k \circ \pi_A $, where $z^A_k(a) = e^{\ii ka} $ and $ u( z^A_k ) = \ii k \omega z_k^A $. We denote the closed subspace of $ H $ spanned by these eigenfunctions by $ \mathcal{ D }_A $ (here, the symbol $ \mathcal{ D} $ stands for discrete spectrum). Eigenfunctions of class~2 exist because $ M $ admits a submersion $ \pi_Y $ onto a circle $ Y $  where the image $ \pi_{Y*} w $ of the vector field is identically zero. This submersion can be intuitively thought of as a Galilean transformation to a frame of reference where the vortex is stationary, followed by projection transverse to the streamlines in that frame. As with the eigenfunctions of class~1, the eigenfunctions of class~2 lie in a closed subspace $ \mathcal{ D }_Y \subset H$ and can be expressed as pullbacks $ z_k = z_k^Y \circ \pi_Y $, where $ z_k^Y $ are orthonormal $ L^2 $ functions on $ Y $ equipped with the normalized Lebesgue measure. Eigenfunctions of class~3 are a direct consequence of the group structure of the Koopman eigenvalues and eigenfunctions stated in Remark~\ref{rkGroup}. 

Together, eigenfunctions in  classes~1--3 span the tensor  product space $ \mathcal{ D } = \mathcal{ D }_A \otimes \mathcal{ D }_Y $, which is closed and contains all eigenfunctions of $\tilde w $. Note that $ \mathcal{ D }_A $, $ \mathcal{ D }_Y $, and $ \mathcal{ D } $ are all invariant under the Koopman group $ W_t $. Thus, we have an invariant orthogonal decomposition $ H= \mathcal{ D } \otimes \mathcal{ D }^\perp $, where the  $ \mathcal{ D }^\perp $ is associated with the continuous spectrum of the generator.  The existence of the latter can be justified from the fact that the flow in the frame comoving with the center of the vortex is conjugate to a skew rotation on $ \mathbb{ T }^2 $, which is known to have continuous spectrum \cite{BroerTakens93,Mezic17}. Additional details on the properties of the spectrum of $ \tilde w $ can be found in~\ref{appMoving}. 

Figure~\ref{figZMoving} shows snapshots of representative numerical Koopman eigenfunctions for the moving vortex Gaussian flow, computed using the spectral truncation parameters $ \ell_A = \ell_{X_1} = \ell_{X_2} = 32 $ (amounting to a total of $ \ell = ( 2 \ell_A + 1 ) \times ( 2 \ell_{X_1} + 1 ) \times ( 2 \ell_{X_2} + 1 ) = \text{274,625} $ basis functions) and the diffusion regularization parameter $ \theta = 10^{-5} $. To obtain these results, we computed a total of 51 eigenfunctions using Matlab's \texttt{eigs} solver as described in~\ref{appKoopEig}, targeting eigenvalues of minimal modulus.  The eigenfunctions shown in Fig.~\ref{figZMoving}, $ z_2 $,  $ z_4 $,  $ z_6 $,  $ z_8 $,  $ z_{10} $,  $ z_{12} $, $ z_{29} $, and $ z_{33} $, are also visualized as spatiotemporal patterns in Movie~\href{http://cims.nyu.edu/~dimitris/files/tracers/movie1.mp4}{1}. The eigenvalues $ \lambda_k $ and Dirichlet energies $ E_k $ corresponding to these eigenfunctions are listed in Table~\ref{tableZMoving}. Note that the $( \lambda_k, z_k) $ come in complex-conjugate pairs, so we do not show them for consecutive $ k $.    

\begin{figure}
  \centering\includegraphics[width=\linewidth]{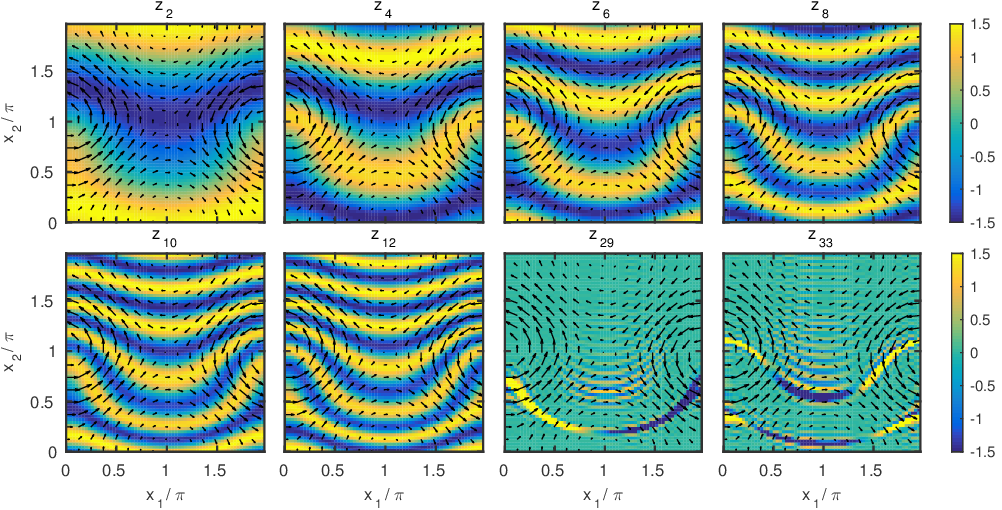}
  \caption{\label{figZMoving}Snapshots of the real parts of representative numerical Koopman eigenfunctions (colors) and the velocity field  $ v\rvert_a $ (arrows) for the moving-vortex flow with $ \omega = 1 $ and $ \kappa = 0.5 $. The diffusion regularization parameter used in this calculation is $ \theta = 10^{-5} $.  For clarity of visualization of the vortex center, the $ x_2 $ coordinate has been periodically shifted to $ x_2 + \pi $ in all panels.}  
\end{figure}

\begin{table}
  \centering\small
  \caption{\label{tableZMoving}Eigenvalues $ \lambda_k $ and Dirichlet energies $ E_k $ of the numerical Koopman eigenfunctions $ z_k $ for the moving vortex flow shown in Fig.~\ref{figZMoving}}
  \begin{tabular*}{.5\linewidth}{@{\extracolsep{\fill}}lll}
    \hline
    &  \multicolumn{1}{c}{$\lambda_k $} & \multicolumn{1}{c}{$ E_k $} \\
    \cline{2-3}
    $z_{2}$ & $ -1.48e-05 + 5.46e-19 \,\ii $ & $ 1.48e+00 $ \\ 
    $z_{4}$ & $ -5.64e-05 + 3.76e-17 \,\ii $ & $ 5.64e+00 $ \\ 
    $z_{6}$ & $ -1.26e-04 - 2.39e-17 \,\ii $ & $ 1.26e+01 $ \\ 
    $z_{8}$ & $ -2.23e-04 - 3.47e-16 \,\ii $ & $ 2.23e+01 $ \\ 
    $z_{10}$ & $ -3.48e-04 - 2.69e-16 \,\ii $ & $ 3.48e+01 $ \\ 
    $z_{12}$ & $ -5.00e-04 - 3.28e-16 \,\ii $ & $ 5.00e+01 $ \\ 
    $z_{29}$ & $ -2.82e-03 - 5.00e-01 \,\ii $ & $ 2.82e+02 $ \\ 
    $z_{33}$ & $ -3.45e-03 + 7.62e-01 \,\ii $ & $ 3.45e+02 $ \\ 
    \hline
  \end{tabular*}
\end{table}

The numerical eigenfunctions $ z_2 $, $ z_4 $, $ z_6 $,  $ z_8 $, $ z_{10}$, and $ z_{12}$,  are in good agreement with what expected from theory for the Koopman eigenfunctions in the subspace $ \mathcal{ D }_Y $. In particular, comparing Figs.~\ref{figPsiMoving}(b) and~\ref{figZMoving}, it is evident that the numerical eigenfunctions are to a good approximation constant on the streamlines in the frame comoving with the vortex center, and according to Table~\ref{tableZMoving} the imaginary parts of their corresponding eigenvalues are numerically close to zero. Moreover, the real parts of the eigenvalues are numerically close to $ - \theta E_k $; this is consistent with what expected from heuristic perturbation expansions of the eigenvalues and eigenfunctions of the operator $ L $ with respect to $ \theta $ \cite{Giannakis17}. 

Qualitatively, it appears that the numerical eigenfunctions in $ \mathcal{ D }_Y $ can be assigned a positive integer ``wavenumber'' $ \kappa $ equal to half of the number of zeros of their real and imaginary parts  along the line parallel to the $ x_2 $ coordinate line of $ X $ passing through the vortex center---as expected, the Dirichlet energy of these eigenfunctions is an increasing function of $ \kappa $. This behavior (in conjunction with the structure of $ L $ and the fact that the eigenfunctions in question  are constant on the streamlines) suggests that the numerical $ z_k $ in this class could be approximating pullbacks of eigenfunctions of a Laplace-Beltrami operator $ \upDelta_Y $ on the base space $ Y $ of the submersion $ \pi_Y $. That is, a reasonable hypothesis could be that for every $ z_k = z_k^Y \circ \pi_Y \in \mathcal{ D }_Y $ we have
\begin{displaymath}
  \lambda_k z_k = L ( z_k ) = - \theta \upDelta z_k = - \theta (\upDelta_Y z^Y_k) \circ \pi_Y. 
\end{displaymath}     
While such a scenario would be desirable, in the sense that the effect of diffusion would lead to no perturbation of the eigenfunctions of $L $ compared to the eigenfunctions of $ \tilde w $, fundamental results from the theory of harmonic maps on manifolds suggest that it is unlikely to hold exactly in practice. In particular, it is known that a necessary and sufficient condition for the relationship 
\begin{equation}
  \label{eqDeltaComm}
  \upDelta( f \circ \pi_Y ) = ( \upDelta_Y f ) \circ \pi_Y 
\end{equation}
to hold for arbitrary $ f \in C^\infty( Y ) $ is that the fibers of the submersion $ \pi_Y $ are minimal, i.e., they are surfaces of vanishing mean curvature \cite{Watson73,GoldbergIshihara78,GilkeyEtAl98}. Clearly, the latter is not the case given the nonconstant curvature of the streamlines in Fig.~\ref{figPsiMoving}(b), so we cannot expect~\eqref{eqDeltaComm} to hold for general $ f $. Nevertheless, despite that our diffusion operator is not tailored to the flow under study (in the sense of~\eqref{eqDeltaComm} failing to hold), the  numerical results in Fig.~\ref{figZMoving} and Table~\ref{tableZMoving} demonstrate that it is still possible to obtain high-quality numerical Koopman eigenfunctions via the scheme of Section~\ref{secCoherent}.

Next, we examine the effects of the continuous part of the spectrum of $ \tilde w $ to the numerically computed eigenfunctions of $ L $. As shown in Fig.~\ref{figZMoving} and Movie~\href{http://cims.nyu.edu/~dimitris/files/tracers/movie1.mp4}{1}, the numerical eigenfunctions of $ L $ include functions (e.g., $ z_{29} $ and $ z_{33} $ shown here) which are highly localized around individual streamlines in the comoving frame with the vortex center, and also exhibit abrupt changes along those streamlines. As a result, the numerical eigenfunctions of this class also have large Dirichlet energy (see Table~\ref{tableZMoving}). We believe that such eigenfunctions are remnants of the continuous spectrum of $ \tilde w $, which appear numerically due to finite spectral truncation in the Galerkin method for the eigenvalues and eigenvectors of $ L $. In particular, while $ \tilde w $ does not have nonconstant eigenfunctions on the streamlines (i.e., it does not have eigenfunctions in the Hilbert subspace $ \mathcal{ D}^\perp$), it is nevertheless possible to construct a Koopman eigenvalue problem on a suitable space of distributions \cite{Putinar97}, such that points in the continuous spectrum of $ \tilde w $ are eigenvalues corresponding to eigendistributions. Such eigendistributions are supported on sets of zero $ \mu $-measure (e.g., finite collections of streamlines), and therefore cannot be represented by $ L^2(M,\mu) $ functions, but they still can be approximated by $ L^2(M,\mu ) $ functions by molification. Such molified distributions $ \psi $ are not expected to solve the eigenvalue problem $ L \psi = \lambda \psi $ exactly, but for a given spectral truncation parameter $ \ell $ it should be possible to arrange that the residual $ ( L - \lambda  )\psi $ is $ L^2(M,\mu) $-orthogonal to the  finite-dimensional approximation space $ \spn \{ \varphi_0, \ldots, \varphi_{\ell-1} \}  \subset H^1 $, making $ \psi $ a solution of the numerical Koopman eigenvalue problem. With increasing $ \ell $, $ \psi $ would have to be concentrated on a set of increasingly small $ \mu $-measure (otherwise, the residual would fail to be orthogonal to $ \spn \{ \varphi_0, \ldots, \varphi_{\ell-1} \}$), leading to an unbounded increase of the Dirichlet energy $ E( \psi ) $ with $ \ell $. The presence of numerical eigenfunctions such as $ z_{29} $ and $ z_{33} $, which are highly concentrated near individual streamlines (or finite collections of streamlines) is consistent with this hypothesis. In separate calculations, we have also confirmed that increasing $ \ell $ indeed causes these eigenfunctions to become increasingly concentrated and appear deeper in the spectrum of $ L $ (ordered with respect to Dirichlet energy).      

Before closing this section, we note that the results presented above do not include eigenvalues and eigenfunctions of class~1. This is likely due to the fact that we have numerically computed only a small (51-element) subset of the spectrum targeting eigenvalues of minimal absolute value---as described in Remark~\ref{rkDirichlet}, this can cause us to miss eigenfunctions with low Dirichlet energy ($\Real\lambda_k$) but high oscillatory frequency ($\Imag\lambda_k$), such as the class~1 eigenfunctions associated with the periodic dynamics on $ A$. In separate calculations, we have confirmed that these eigenfunctions can be found by directly targeting that maximal $ \Real \lambda_k $ portion of the spectrum in \texttt{eigs}, at the expense of slower numerical convergence (see~\ref{appKoopEig}). Since the spatial patterns captured by these eigenfunctions are trivial we do not discuss them here. 

\subsubsection{\label{secMovingVortexPred}Prediction of observables and probability densities}

We now apply the techniques of Section~\ref{secPrediction} to perform prediction of observables and probability densities for the moving vortex flow, working with the same flow, spectral truncation, and diffusion regularization parameters as in Section~\ref{secMovingVortexEig}. In what follows, we focus on the $ L^2 $  observables   
\begin{equation}
  \label{eqF12}
  f_1( a, x ) = \phi_{010}(a,x) = e^{\ii x_1}, \quad f_2(a,x) = \phi_{001}(a,x) = e^{\ii x_2},
\end{equation}         
where $ x= ( x_1, x_2 )  $. Our interest in these observables stems from the fact that knowledge of $ W_t f_1 $ and $ W_t f_2 $ is sufficient to determine the position of Lagrangian tracers advected by the flow at forecast time $ t $. Specifically, using the notation $ ( x_1( t, a  ), x_2( t, a ) )$ for the canonical angle coordinates at time $ t $ of a tracer released at time $ t = 0 $ at the point $ x \in X $ when the state of the time-periodic velocity field is $ a \in A $, we have
\begin{displaymath}
  W_t f_1( a, x ) = e^{\ii x_1(t,a) }, \quad W_t f_2( a, x ) = e^{\ii x_2(t,a)},
\end{displaymath} 
and therefore we can recover $ x_1(t,a) $ and $ x_2( t, a) $ from $ \arg W_tf_1(x,a) $ and $ \arg W_t f_2(x, a ) $, respectively. 

Figure~\ref{figMovingX} shows snapshots of the evolution of $ ( x_1( t, a ), x_2( t ) ) $ obtained via the operator-theoretic approach of Section~\ref{secBoundedObs} and explicit solution of the ordinary differential equations (ODEs) governing the evolution of Lagrangian tracers under the time-periodic streamfunction in~\eqref{eqZetaMoving}. This evolution is also visualized more directly in Movie~\href{http://cims.nyu.edu/~dimitris/files/tracers/movie2.mp4}{2}. To obtain the operator-theoretic results we used Leja interpolation for matrix exponentiation as described in Section~\ref{secLeja} and~\ref{appNumLeja} with a relative error tolerance of $ 10^{-7} $ and forecast timestep $ \tilde \tau = 0.01 $. We integrated the tracer ODE system using Matlab's \texttt{ode45} solver with a $ 10^{-8} $ relative error tolerance, outputting the solution every $\tilde \tau $ time units. 

As is evident from both the operator-theoretic and ODE results the evolution of the $ x_1( a, t )  $ and $ x_2( a, t ) $ coordinates (hence, the $ W_t f_1 $ and $ W_t f_2 $ observables) is qualitatively different, with the former exhibiting significantly more mixing than the latter. This can be understood from the facts that  $ f_2 $ varies predominantly transverse to the streamlines in Fig.~\ref{figPsiMoving}(b), and thus projects more strongly to the discrete spectrum subspace $ \mathcal{ D }$, whereas $ f_1 $ projects more strongly to the continuous spectrum subspace $ \mathcal{ D }^\perp $ as it varies predominantly along the streamlines. More qualitatively, after each revolution of the vortex center around the periodic domain, a tracer will tend to experience comparable amounts of displacement in the positive and negative $x_2 $ directions, but may accrue a significant net displacement along the $ x_1 $ direction. The operator-theoretic model successfully captures this behavior for lead times up to $ t \simeq 40$ (i.e., $ \omega t / ( 2 \pi ) \sim 4 $ revolutions of the vortex in $ X $), but since it operates at a finite resolution (determined by the spectral truncation parameters $ \ell_A $, $ \ell_{X_1} $, and $ \ell_{X_2} $) and also has diffusion, it eventually develops biases as it fails to capture the increasingly small-scale variations developed by observables such as $ f_1 $. For such observables, the effect of diffusion in the semigroup $ S_t $ eventually dominates; the signature of this effect in Movie~\href{http://cims.nyu.edu/~dimitris/files/tracers/movie2.mp4}{2} is spurious high-velocity tracer motions leading to the formation of voids in $ X $ where tracers should be present. Clearly, the temporal extent of useful forecasts for an observable depends on both on how strongly it projects on $ \mathcal{ D }$ as opposed to $ \mathcal{ D }^\perp $, as well as the spectral truncation and diffusion regularization ($\theta$) used.

\begin{figure}
  \centering
  \includegraphics[width=.40\linewidth]{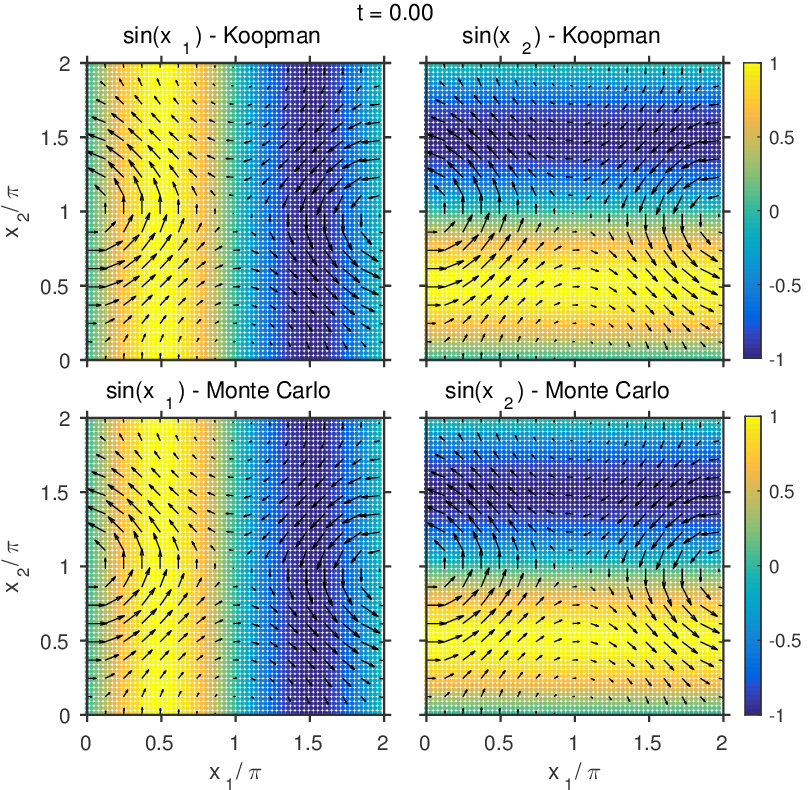}    \includegraphics[width=.40\linewidth]{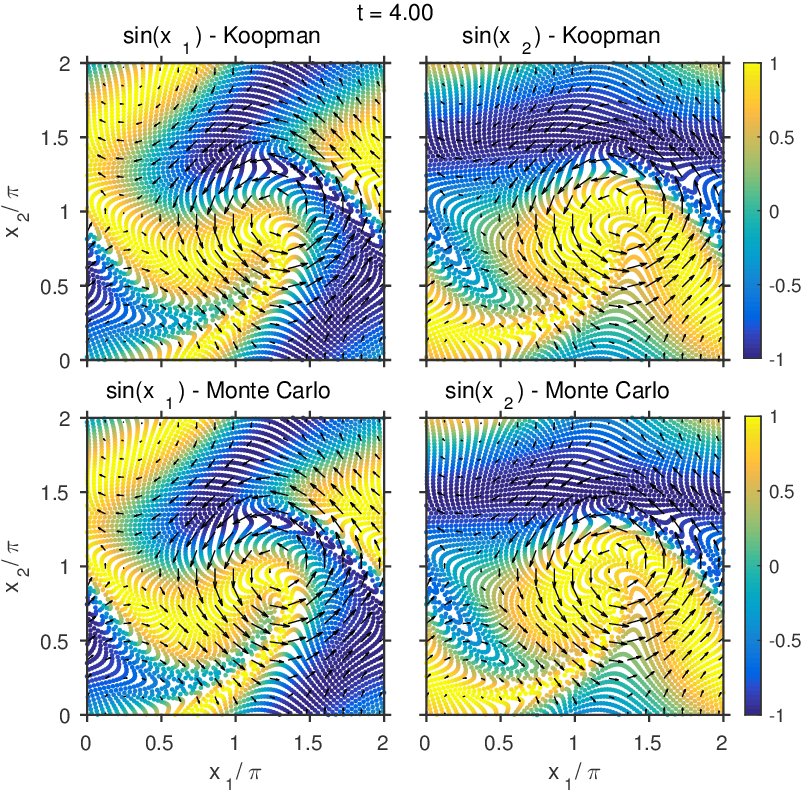}\\ 
  \includegraphics[width=.40\linewidth]{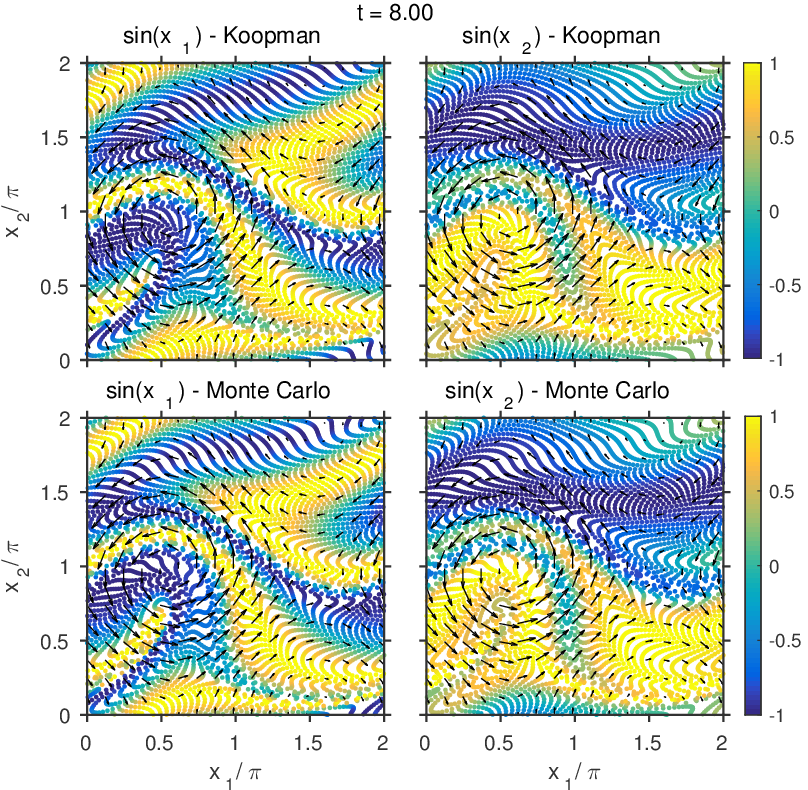}    \includegraphics[width=.40\linewidth]{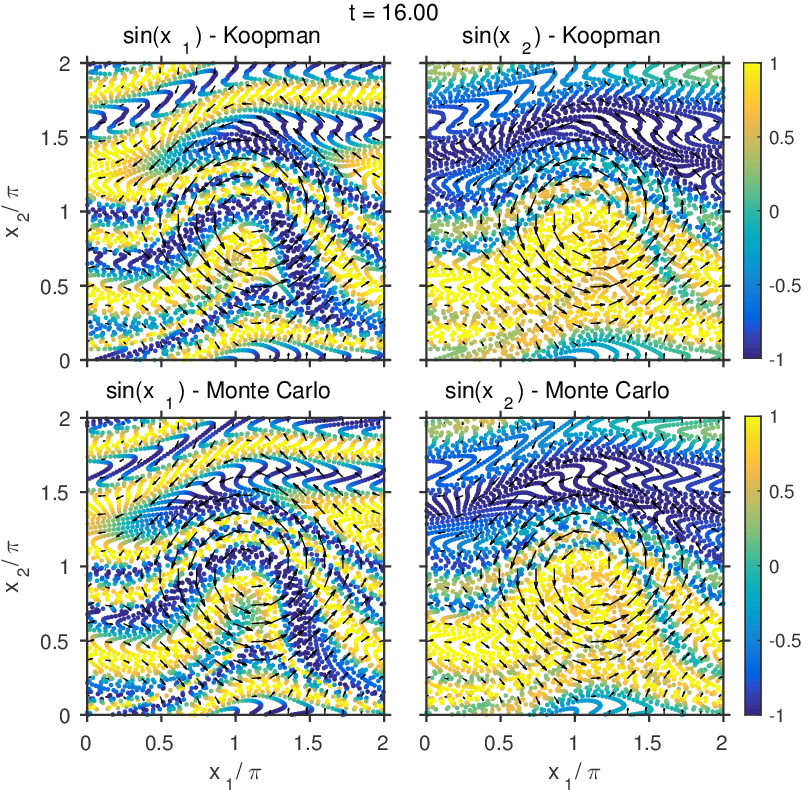}\\ 
 \includegraphics[width=.40\linewidth]{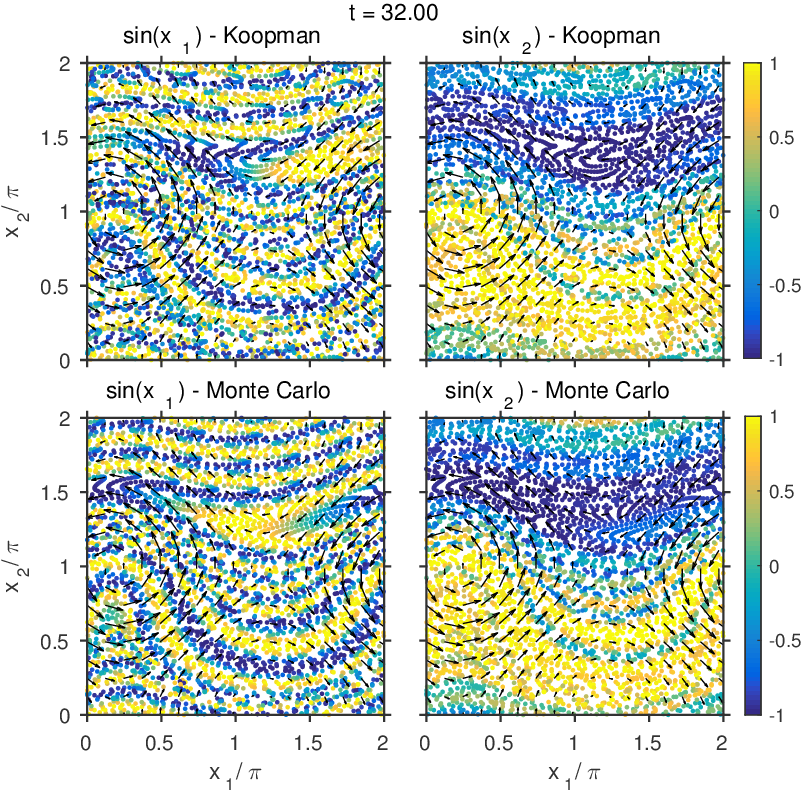}    \includegraphics[width=.40\linewidth]{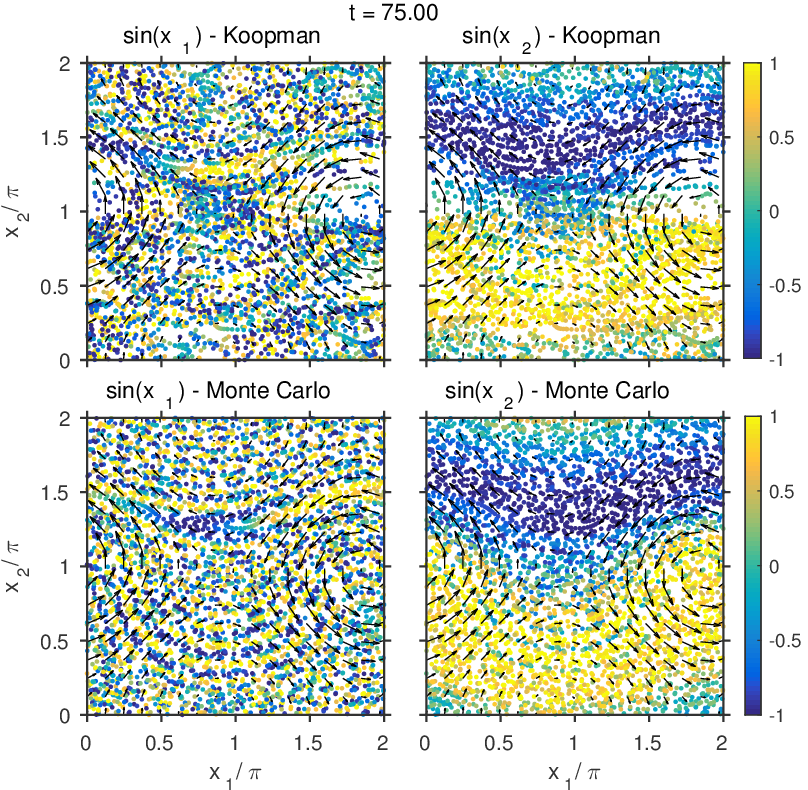} 

  \caption{\label{figMovingX}Snapshots of the evolution of the positions  $ ( x_1( t, a ), x_2( t,a ) ) \in X $ for an ensemble of Lagrangian tracers under the moving vortex flow as predicted by the operator-theoretic scheme in Sections~\ref{secBoundedObs} and~\ref{secLeja} and the perfect model for advection under the time-dependent streamfunction in~\eqref{eqZetaMoving}. The initial state of the velocity field in $A $ is  $ a = 0 $. The initial positions $ ( x_1, x_2 ) $ of the tracers are on a uniform square grid of spacing $ 2 \pi / ( 2 \ell_{X_1} + 1 ) = 2 \pi / ( 2 \ell_{X_2} + 1 ) = 2 \pi  / 65 $ along both the $ x_1 $ and $ x_2 $ coordinates. Colors show the sines of the initial $ x_1 $ and $ x_2 $ coordinates of the ensemble as an aide for visualizing the mixing properties of the flow. Arrows show the velocity field. For clarity of visualization of the vortex center, the $ x_2 $ coordinate has been periodically shifted to $ x_2 + \pi $ in all panels.}
\end{figure}

Next, we consider prediction of probability densities on tracers via the Perron-Frobenius operator $ W_t^* $ as described in Section~\ref{secBoundedDen}. Here, we perform prediction experiments initialized with a density $ \rho \in C^\infty(M) $ given by a product $ \rho(a,x) = \rho_A( a ) \rho_X( x ) $ of circular Gaussians, 
\begin{equation}
  \label{eqRhoAX}
  \rho_A( a ) = \frac{ 1 }{ I_0( \tilde \kappa ) } e^{\tilde \kappa \cos a}, \quad \rho_X( x ) = \frac{ 1 }{ I^2_0( \tilde \kappa ) } e^{\tilde \kappa [ \cos ( x_1 - \bar x_1 )  + \cos ( x_2 - \bar x_2 ) ] }.
\end{equation}
where $\tilde \kappa > 0 $ and $ \bar x_1, \bar x_2 \in [ 0, 2 \pi ) $. This density represents uncertainty in both the velocity field state in $ A $ and the tracer positions in $ X $. Note that $\rho $ attains its maximum value at the point $ ( 0, \bar x_1, \bar x_2 ) \in M $, and for increasing  $\tilde \kappa $ it becomes increasingly concentrated around that point. In what follows, we visualize the temporal evolution of $ \rho_t = U^*_t \rho $ through the marginal density $ \sigma_t = \int_A \rho_t( a, \cdot ) \, d \alpha $ on $ X $; the quantity $ \sigma_t( x ) $ is equal to the probability density to find a tracer at the point  $ x \in X $ at lead time $ t $ marginalized over all flow states $ a \in A $ given the initial density  $\rho $. We also examine the one-dimensional (1D) marginal densities $ \sigma_{1,t}( x_1) = \int_0^{2\pi} \sigma_t( x_1, x_2 ) \, dx_2 / ( 2 \pi ) $ and $ \sigma_{2,t}( x_2) = \int_0^{2\pi} \sigma_t( x_1, x_2 ) \, dx_1 / ( 2 \pi ) $. Clearly, the evolution of  $ \sigma_t $, $ \sigma_{1,t} $, and $ \sigma_{2,t} $ is neither Markovian nor unitary; that is, the $ L^2 $ norms of $ \sigma_t $, $ \sigma_{1,t} $, and $ \sigma_{2,t} $ with respect to the appropriate normalized Haar measures are generally not constant (though the corresponding $ L^1 $ norms are, of course, always equal to 1).        

Figure~\ref{figMovingRho} and Movie~\href{http://cims.nyu.edu/~dimitris/files/tracers/movie3.mp4}{3}  show the evolution of $ \sigma_t $, $ \sigma_{1,t} $ and $ \sigma_{2,t} $  for the initial probability density $ \rho $ from~\eqref{eqRhoAX} with $ \tilde \kappa  = 4 $ and $ ( \bar x_1, \bar x_2 ) = ( \pi, \pi/ 4) $. With these parameters, $ \sigma_t $ at $ t = 0 $ is concentrated upstream of the vortex center (for the state $ a = 0 \in A $ maximizing the initial density $ \rho_A $) along the $ x_1 $ direction, and has a positive offset along the $ x_2 $ direction (see top-left panel in Fig.~\ref{figMovingRho}). As a result, at $ t > 0$ the initially isotropic $ \sigma_t $ is swept and sheared by the moving vortex, producing an anisotropic density. This process is repeated with each revolution of the vortex center around the periodic domain, leading to a filamentary density with highly non-Gaussian features. As noted earlier, this flow produces significantly more mixing along the $ x_1 $ direction compared to the $ x_2 $ direction, and as a result, even after $ \sim 10 $ periods ($t=20\pi$) most of the probability mass remains concentrated in the $ x_2 > \pi $ portion of the domain.

\begin{figure}
  \centering\includegraphics[width=.38\linewidth]{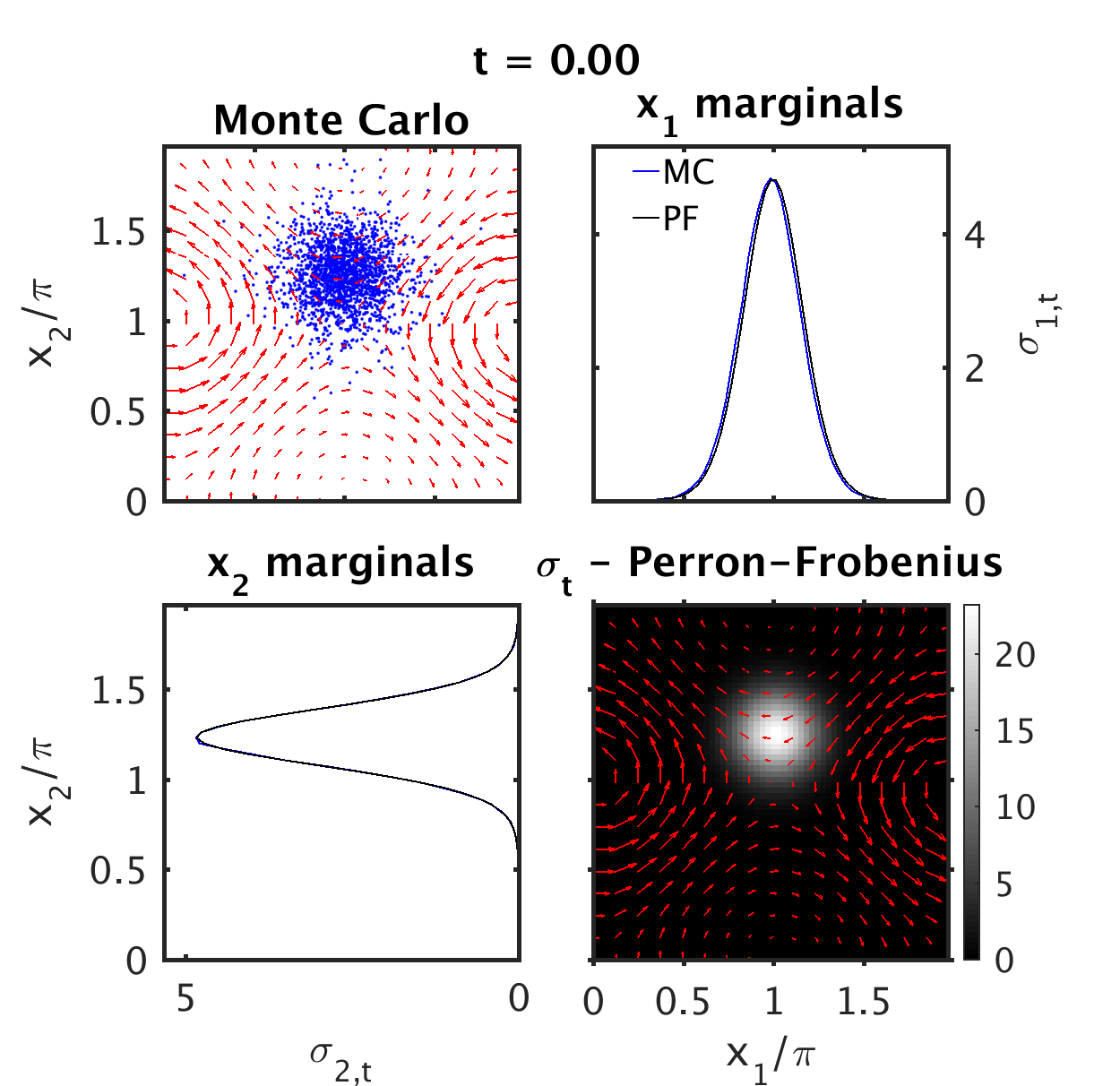}   \includegraphics[width=.38\linewidth]{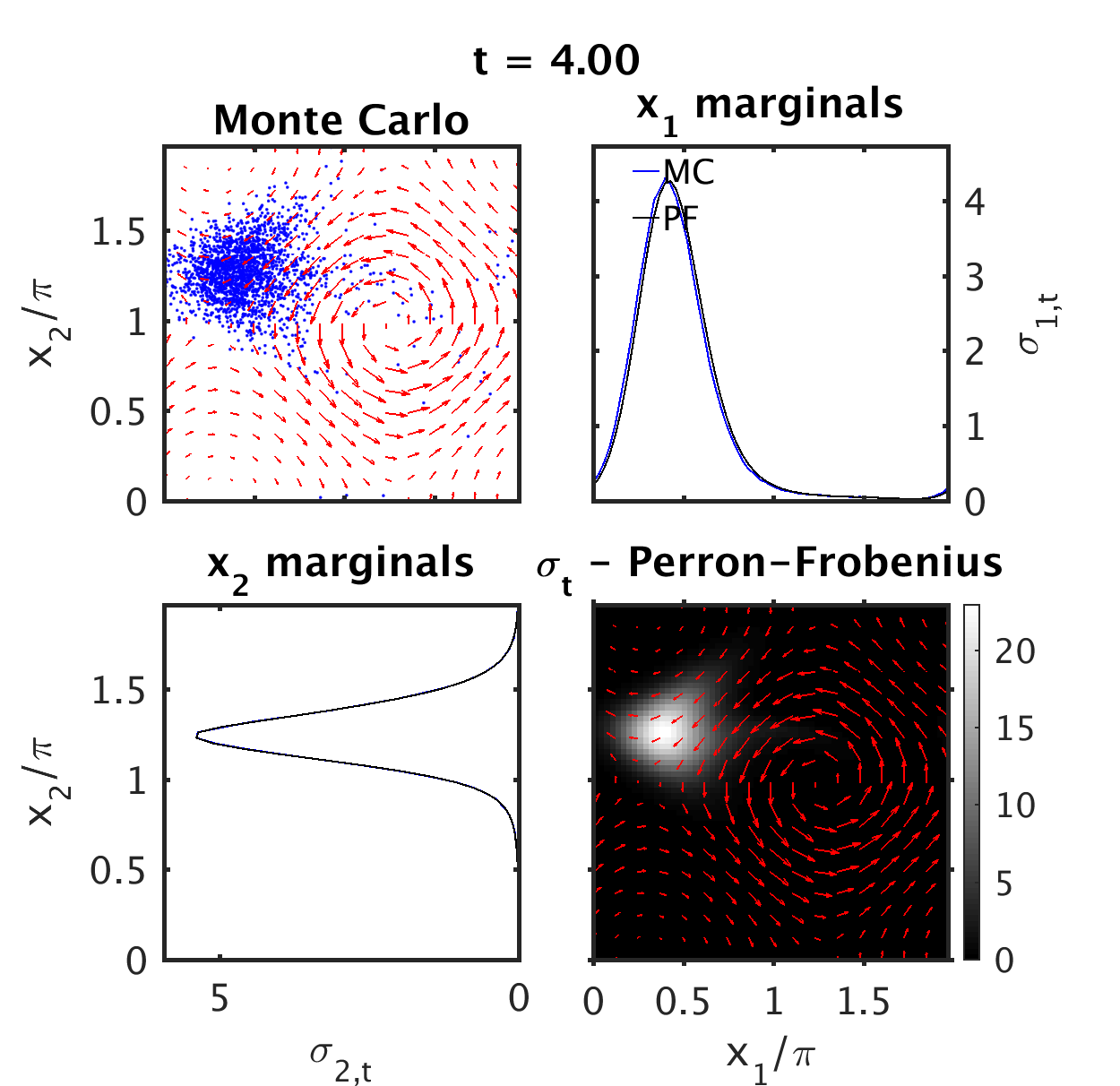} \\
\includegraphics[width=.38\linewidth]{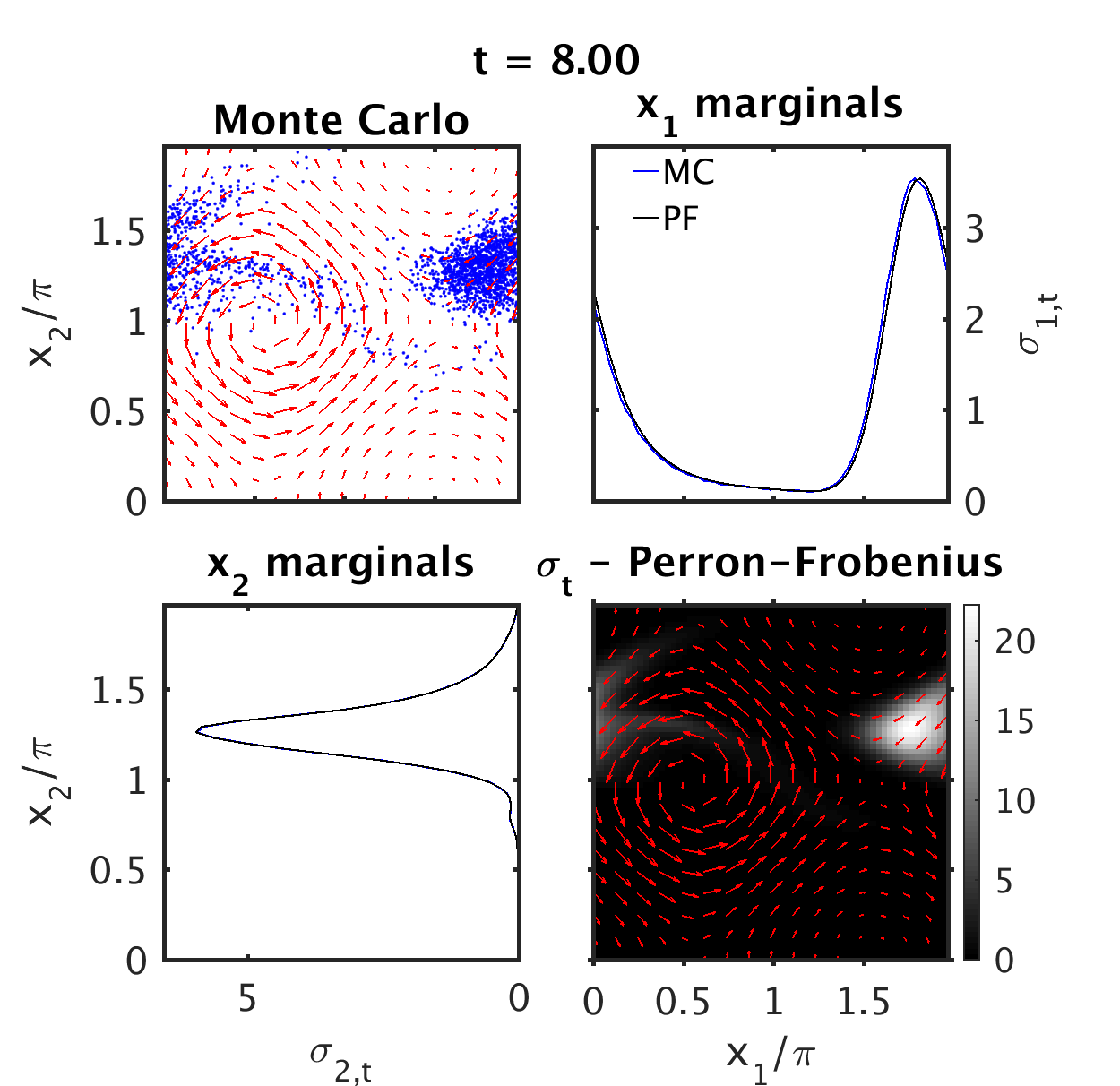}  \includegraphics[width=.38\linewidth]{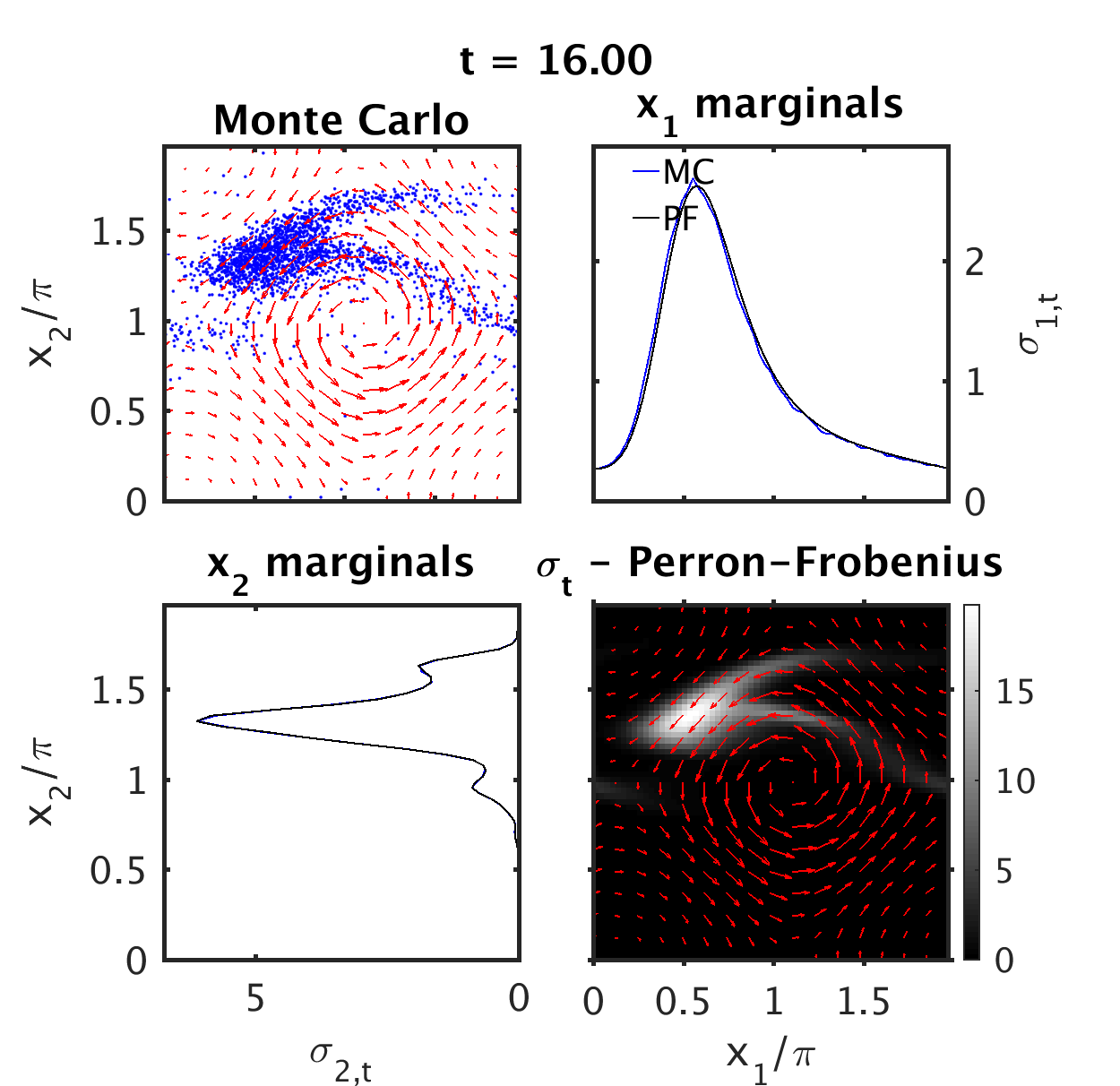} \\
\includegraphics[width=.38\linewidth]{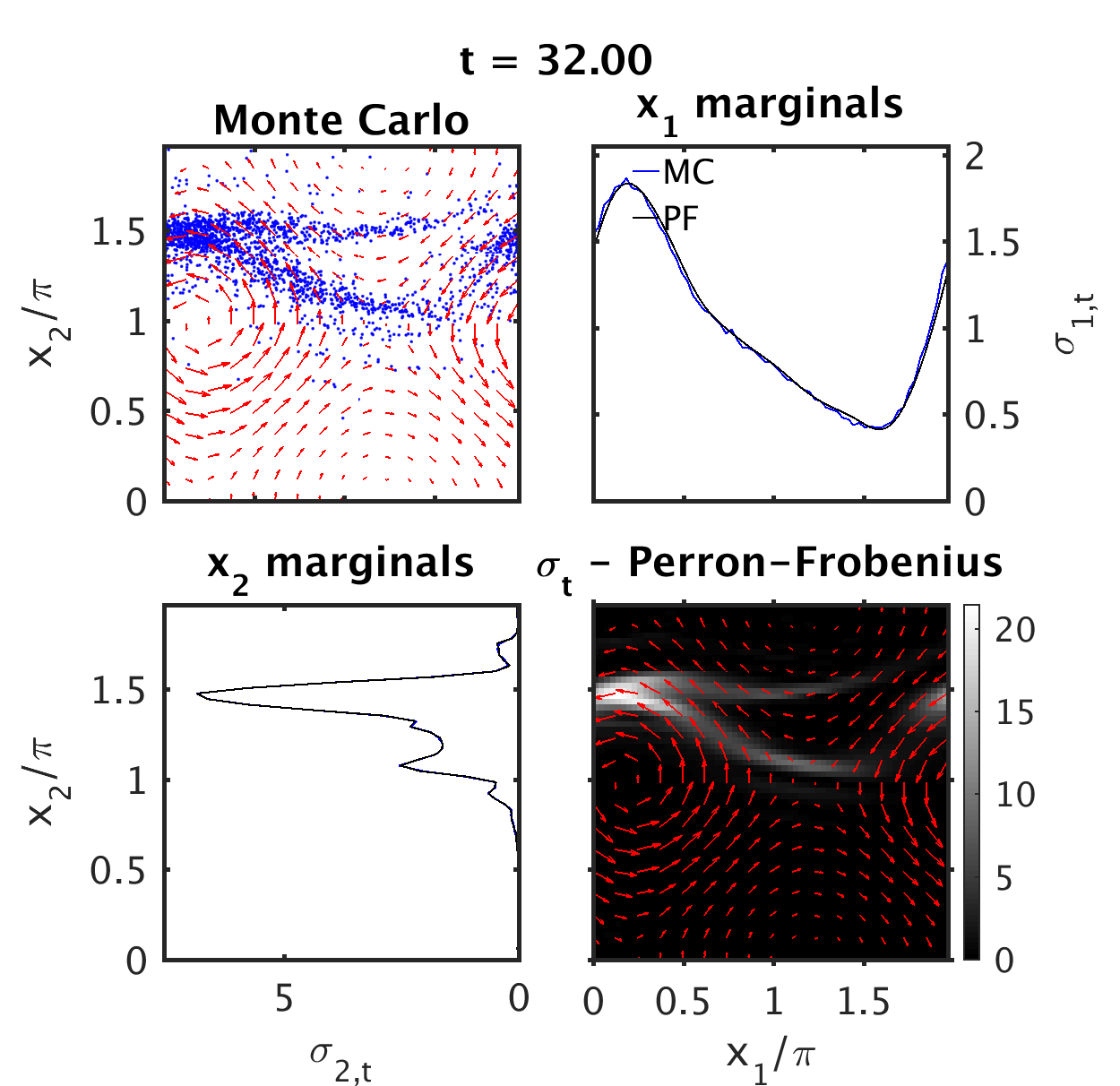}  \includegraphics[width=.38\linewidth]{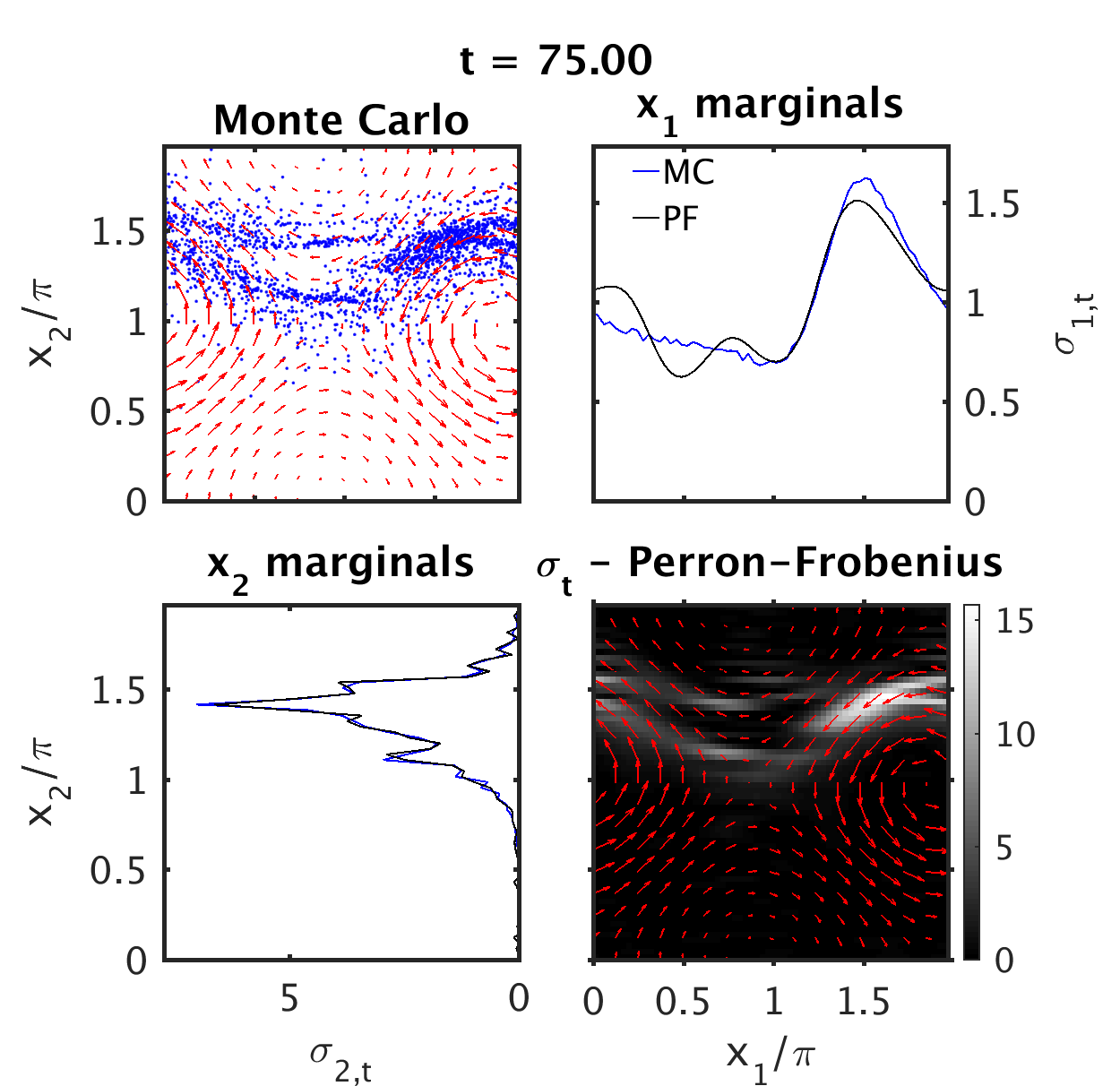}
  \caption{\label{figMovingRho}Snapshots of the temporal evolution of the marginal density $ \sigma_t $ under the moving-vortex flow determined through the operator-theoretic approach of Section~\ref{secBoundedDen} (grayscale colors) and a Monte Carlo simulation (blue dots) based on the full model for Lagrangian advection under the time-dependent streamfunction in~\eqref{eqZetaMoving}. Also shown are the 1D marginal densities $ \sigma_{1,t} $ and $\sigma_{2,t} $ determined from the operator-theoretic model (black lines) and the Monte Carlo simulation (blue lines). The size of the Monte Carlo ensemble is 50,000 with initial conditions drawn independently from the density $ \rho $ in~\eqref{eqRhoAX}, though for clarity of visualization only 2000 particles from this ensemble (drawn randomly) are shown in blue dots. The marginal densities $ \sigma_{1,t} $ and $ \sigma_{2,t} $ from the Monte Carlo ensemble were estimated by binning over uniform-sized bins of width $ 2 \pi / 65 $. The red arrows show the time-dependent velocity field evaluated for the state $ a = \Phi_t( 0 ) = \omega t \mod 2 \pi $; i.e., the mode of the probability density $ U_t^* \rho_A $ where $ \rho_A $ is the initial density on $ A $ from~\eqref{eqRhoAX}. }
\end{figure}

Overall, the results in Fig.~\ref{figMovingRho} and Movie~\href{http://cims.nyu.edu/~dimitris/files/tracers/movie3.mp4}{3} illustrate that, at least as far as the marginal densities are concerned, the data-driven operator-theoretic approach in Section~\ref{secBoundedDen} agrees well with the results from the Monte Carlo simulation based on the full model for Lagrangian tracer advection. In fact, the range of accurate forecasts from the data-driven model is longer than in the case of the observables $ f_1 $ and $f_2 $ characterizing the tracer positions (Fig.~\ref{figMovingX} and Movie~\href{http://dimitris.cims.nyu.edu/~dimitris/files/tracer/movie2.mp4}{2}). This is likely due to the fact that the marginal density $ \sigma_t $ is given by projection of the full density $ \rho_t $ to the subspace of $ H$ spanned by functions $ f $ that do not depend on state $ a \in A $ (i.e., $ w^A(f) = 0 $; see Section~\ref{secPrelim}), and moreover $ \sigma_{1,t} $ and $ \sigma_{2,t} $ involve additional projections onto subspaces of  $H $ spanned by $ x_ 2 $- and $ x_1 $-independent functions, respectively. Such projections are expected to cancel at least some of the errors that may be present in the full density $ \rho_t $, contributing to an increase of forecast skill for the marginal densities. In other words, these examples are a demonstration of the (perhaps, obvious) fact that for a fixed dynamical system and spectral truncation the prediction skill is observable dependent.        
  
\subsection{Switching Gaussian vortices}

Our second example uses the same torus domain as the moving-vortex example in Section~\ref{secMovingVortex}, but in this case we consider the streamfunction
\begin{equation}
  \label{eqZetaSwitching}
  \zeta( a )( x ) = C \cos( a ) e^{\kappa ( \cos x_1 + \cos x_2 ) }  + C \sin( a ) e^{\kappa  [  \cos( x_1 - \pi  ) + \cos x_2 ] },  
\end{equation}
where $ C $ and $ \kappa $ are positive vortex strength and concentration parameters, respectively. Coupled with the periodic dynamics on $ A $,  the streamfunction in~\eqref{eqZetaSwitching} describes a pair of vortices centered at $ ( x_1, x_2 ) = ( 0, 0 ) $ and $ ( x_1, x_2 ) = ( \pi, 0 ) $ in $ X $ (as usual, $ x_1 $ and $ x_2 $ represent canonical angle coordinates), and modulated in time by 90$^\circ $ out of phase sinusoids. Representative snapshots over half a period of this flow are shown in Fig.~\ref{figPsiSwitching}. There, it can be seen that an anticklockwise vortex centered at $ ( 0, 0 ) $ is present at $ a = 0 $, and this vortex is replaced by a vortex of the same sense centered at  $( \pi, 0 ) $ at $ a = \pi/ 2$, which is in turn replaced by an anticklockwise vortex centered at $ (0,0) $ at $ a = \pi $. In the intervening states $ a = \pi/4 $ and $ a = 3 \pi / 4 $, the flow has the structure of shear layers parallel to the $ x_1 $ and $ x_2 $ coordinate, respectively. This process is repeated over the interval $ a \in [ \pi, 2 \pi ) $ with a change of vortex sign. 

\begin{figure}
  \centering\includegraphics[width=\linewidth]{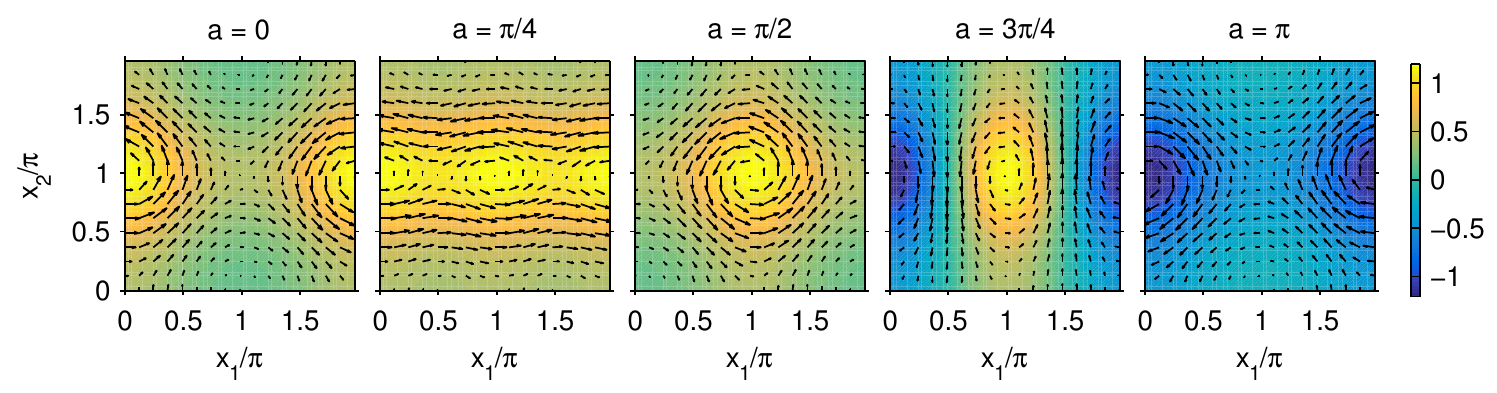}
  \caption{\label{figPsiSwitching}Streamfunction (colors) and velocity field (arrows) for the switching-vortex streamfunction in~\eqref{eqZetaMoving}  evaluated for $C = 2$, $ \kappa = 0.5 $,  and representative values of $ a $ in the interval $ [ 0, \pi ] $. For clarity of visualization of the center of the vortex, the $ x_2 $ coordinate has been periodically shifted to $ x_2 + \pi $.}  
\end{figure}

Loosely speaking, this ``switching vortex'' flow can be thought of as a continuous analog of a piecewise-constant stirring flow by point vortices \cite{Aref84} (also known as blinking vortex flow), which is known to exhibit chaotic advection. At the very least, one would expect the streamfunction in~\eqref{eqZetaSwitching} to produce more complex spectral behavior than the moving-vortex example in Section~\ref{secMovingVortex}, for the former is not decomposable via submersions associated with Galilean transformations as the latter is (see~\ref{appMoving}). In particular, as is evident from Fig.~\ref{figPsiSwitching}, the streamlines associated with~\eqref{eqZetaSwitching} undergo changes in topology as $ a $ varies, meaning that it is not possible to construct a submersion analogous to $ \Gamma $ in~\ref{appMoving} mapping the state-dependent velocity field on $ X $ to a steady velocity field with closed streamlines (the latter would imply the existence of non-trivial eigenfunctions of the generator $ \tilde w $ at eigenvalue zero). 

Qualitatively, the larger the parameter $ C $ in~\eqref{eqZetaSwitching} is, the more overturning we expect the tracers to undergo during one period of the dynamics on $ A $, leading to stronger mixing. In fact, while we do not have rigorous results to justify this assertion, it appears plausible that for sufficiently large $ C $ the only eigenfunctions of $\tilde w $ for the class of streamfunctions in~\eqref{eqZetaSwitching} are constant on $ X$. If this is indeed the case, then all numerical eigenfunctions we compute are ``genuinely approximate'' Koopman eigenfunctions, in the sense that they converge to eigenfunctions of the regularized generator $ L $ as the spectral order parameter $ \ell $ tends to infinity (such eigenfunctions always exist by Proposition~\ref{propL}(v)), but the eigenfunctions of $ L $ do not converge to eigenfunctions of $ \tilde w $ in the limit of vanishing diffusion regularization parameter $ \theta $. Thus, in such situations, diffusion regularization is expected to play an essential role for the recovery of coherent patterns via the approach of Section~\ref{secReg}.
  
Following a similar approach as in Section~\ref{secMovingVortex}, we compute the matrix elements of the generator in the Fourier basis $ \{ \phi_{ijk} \} $ of  $M $ using the integral identity for circular Gaussian integrals in~\eqref{eqBessel}. Specifically, we have
\begin{equation}
  \label{eqWXSwitch}
  \begin{aligned}
    \langle \phi_{ijk}, w^X( \phi_{lmn} ) \rangle &= \frac{ C \kappa }{ 4 I^2_0(\kappa) }( \beta_{jkmn} \delta_{i,l+1} + \beta^*_{jkmn} \delta_{i,l-1} ),  \\
      \beta_{jkmn} &= \left( 1 - \ii (-1)^{j-m} \right)   \left[ n I_{\lvert n-k\rvert}( \kappa ) \left( I_{\lvert m -j + 1 \rvert}( \kappa ) - I_{\lvert m - j - 1 \rvert}( \kappa ) \right) \right.\\
      &  \quad \left. - m I_{\lvert m -j \rvert}( \kappa ) \left(  I_{\lvert n - k + 1 \rvert} ( \kappa)  - I_{\lvert n - k - 1 \rvert }( \kappa ) \right) \right],
    \end{aligned}
\end{equation}
and the matrix elements $ \langle \phi_{ijk}, w^A( \phi_{lmn} ) \rangle $ are given  by~\eqref{eqWAMov} as in the moving-vortex flow. Using these results, we apply the operator-theoretic schemes of Section~\ref{secKoopman} using the same spectral truncation parameters $ \ell_A = \ell_{X_1} = \ell_{X_2} = 32 $ as in Section~\ref{secMovingVortex}. Throughout, we work with the flow frequency parameter $ \omega = 1 $.    

\subsubsection{Koopman eigenvalues and eigenfunctions}

Our objectives in this section are to study the properties of coherent spatiotemporal patterns recovered by numerical Koopman eigenfunctions for different values of the vortex strength parameter $ C $, as well as the dependence of these patterns to the diffusion regularization parameter $ \theta $. Figure~\ref{figZSwitching1} shows snapshots of representative eigenfunctions computed for the flow with the streamfunction parameters $ \kappa = 0.5 $ and  $ C = 2 $, and the diffusion regularization parameters $ \theta = 10^{-3} $ and $ 10^{-4} $. Videos showing these eigenfunctions are provided in Movies~\href{http://cims.nyu.edu/~dimitris/files/tracers/movie4.mp4}{4} and~\href{http://cims.nyu.edu/~dimitris/files/tracers/movie5.mp4}{5}, respectively; Table~\ref{tableZSwitching1} lists the corresponding eigenvalues and Dirichlet energies. As in Section~\ref{secMovingVortex}, we computed these results using Matlab's \texttt{eigs} solver, requesting 51 eigenvalues of minimal modulus. 

First, consider the results for $ \theta = 10^{-4} $. The numerical eigenfunctions in this case includes a family, which includes eigenfunctions $ z_1 $, $ z_3 $, $ z_5 $, $ z_7 $, $ z_9, $, $ z_{11} $, $ z_{13} $, $ z_{17} $, and $ z_{21} $ shown in Fig.~\ref{figZSwitching1} and Movie~\href{http://cims.nyu.edu/~dimitris/files/tracers/movie5.mp4}{5} which  are characterized by four globular patterns undergoing a sloshing motion as a result of advection by the time-dependent velocity field. According to the results in Table~\ref{tableZSwitching1}, the eigenvalues $ \lambda_k $ corresponding to these patterns have essentially vanishing imaginary part (that is, twelve orders of magnitude smaller than $ \Real \lambda_k $), indicating that, at least to the resolution afforded by our spectral truncation, these patterns are conserved on Lagrangian tracers; that is, the level sets of these eigenfunctions induce an ergodic partition on the state space $ M $. As expected, the eigenfunctions in this set  with higher Dirichlet energy exhibit increasingly smaller-scale oscillations, which are arranged in a concentric manner relative to the center of each cluster. In other words, the eigenfunctions in this family provide an increasingly fine partition of $M $ into quasi-invariant sets. Hereafter, we refer to eigenfunctions of this class as class~1 eigenfunctions.  

\begin{figure}
  \centering\includegraphics[width=.8\linewidth]{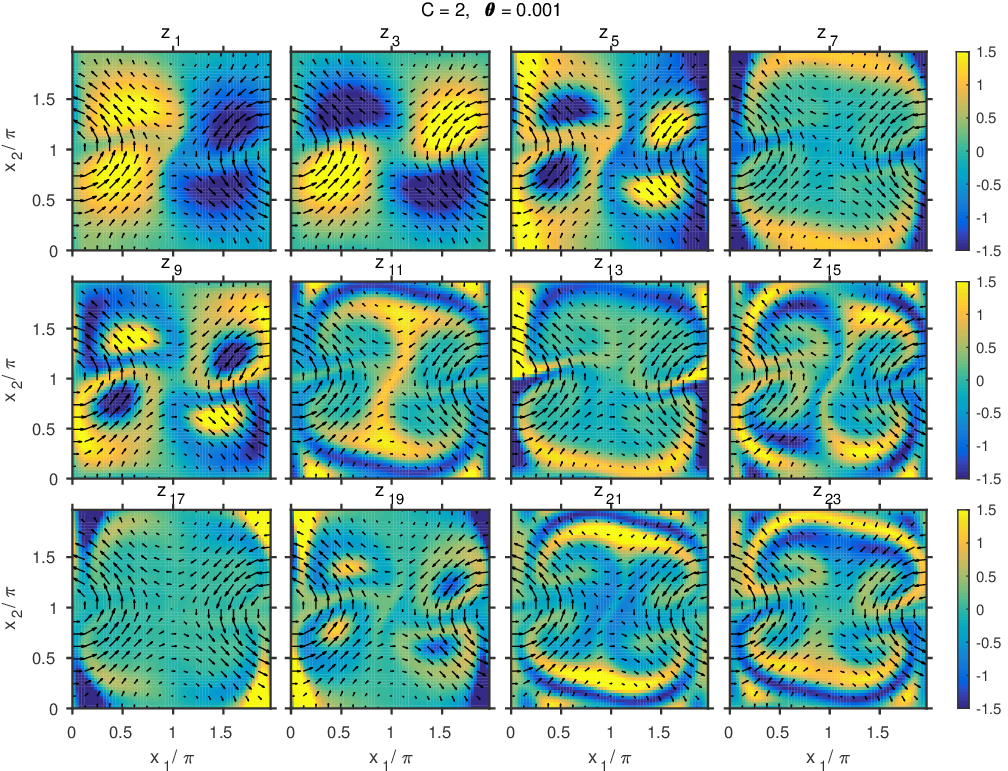}\\
  \includegraphics[width=.8\linewidth]{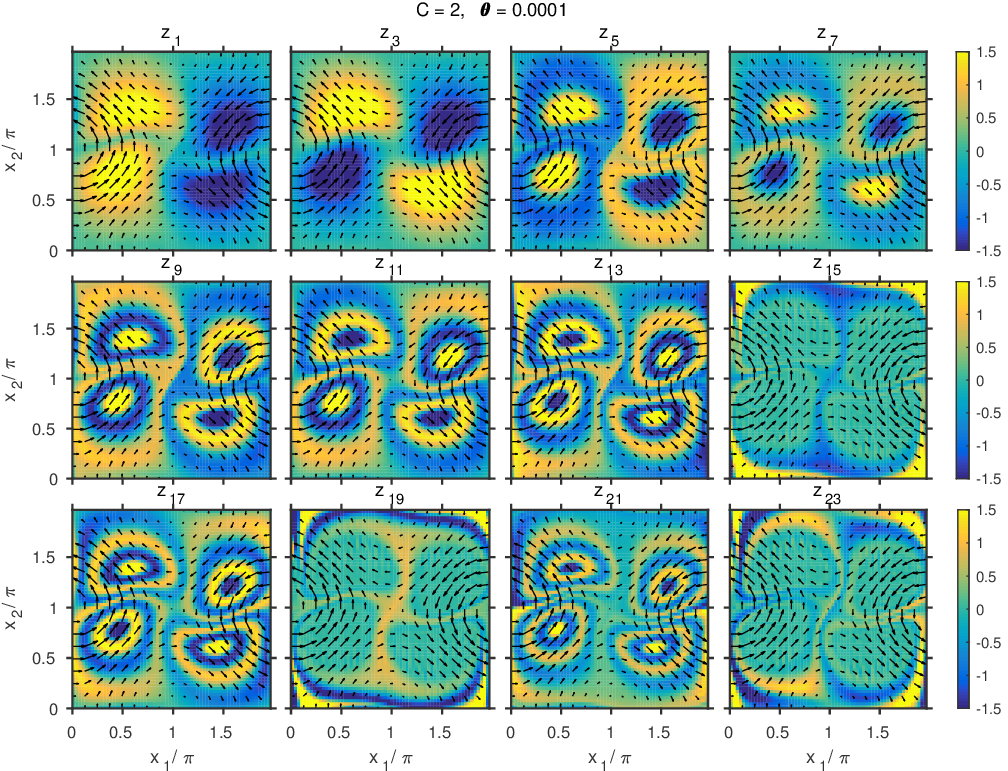}
  \caption{\label{figZSwitching1}Snapshots of the real parts of representative numerical Koopman eigenfunctions (colors) and the velocity field  $ v\rvert_a $ (arrows) for the switching-vortex flow with $ \omega = 1 $, $ \kappa = 0.5 $, and $  C = 2 $. Two sets of numerical eigenfunctions are shown, computed with the diffusion regularization parameter $ \theta = 10^{-3} $ (top) and $ \theta = 10^{-4} $ (bottom). For clarity of visualization of the vortex centers, the $ x_2 $ coordinate has been periodically shifted to $ x_2 + \pi $ in all panels.}  
\end{figure}

\begin{table}
  \centering\small
  \caption{\label{tableZSwitching1}Eigenvalues $ \lambda_k $ and Dirichlet energies $ E_k $ of the numerical Koopman eigenfunctions $ z_k $ for the switching-vortex flow with $ C = 2 $ shown in Fig.~\ref{figZSwitching1}.}
  \begin{tabular*}{\linewidth}{@{\extracolsep{\fill}}lllll}
    \hline
    & \multicolumn{2}{c}{$\theta = 0.001 $} & \multicolumn{2}{c}{$\theta=0.0001 $}\\
    \cline{2-3} \cline{4-5}
    & \multicolumn{1}{c}{$\lambda_k$} & \multicolumn{1}{c}{$E_k$} & \multicolumn{1}{c}{$\lambda_k$} & \multicolumn{1}{c}{$E_k$}\\
    $z_{1}$ & $ -2.28e-03 - 2.97e-17 \,\ii $ & $ 2.28e+00 $  &  $ -2.70e-04 - 7.16e-17 \,\ii $ & $ 2.70e+00 $ \\ 
    $z_{3}$ & $ -2.93e-03 - 5.16e-18 \,\ii $ & $ 2.93e+00 $  &  $ -2.94e-04 - 1.85e-16 \,\ii $ & $ 2.94e+00 $ \\ 
    $z_{5}$ & $ -1.11e-02 - 1.05e-17 \,\ii $ & $ 1.11e+01 $  &  $ -1.32e-03 + 4.11e-16 \,\ii $ & $ 1.32e+01 $ \\ 
    $z_{7}$ & $ -1.17e-02 + 1.68e-02 \,\ii $ & $ 1.17e+01 $  &  $ -1.45e-03 + 4.20e-16 \,\ii $ & $ 1.45e+01 $ \\ 
    $z_{9}$ & $ -1.41e-02 + 5.82e-17 \,\ii $ & $ 1.41e+01 $  &  $ -3.17e-03 + 8.79e-16 \,\ii $ & $ 3.17e+01 $ \\ 
    $z_{11}$ & $ -1.67e-02 + 3.54e-02 \,\ii $ & $ 1.67e+01 $  &  $ -3.49e-03 + 4.20e-16 \,\ii $ & $ 3.49e+01 $ \\ 
    $z_{13}$ & $ -2.27e-02 + 2.44e-02 \,\ii $ & $ 2.27e+01 $  &  $ -5.86e-03 + 6.70e-16 \,\ii $ & $ 5.86e+01 $ \\ 
    $z_{15}$ & $ -2.42e-02 - 4.33e-02 \,\ii $ & $ 2.42e+01 $  &  $ -6.28e-03 + 1.44e-02 \,\ii $ & $ 6.28e+01 $ \\ 
    $z_{17}$ & $ -2.61e-02 + 2.03e-02 \,\ii $ & $ 2.61e+01 $  &  $ -6.41e-03 - 4.72e-16 \,\ii $ & $ 6.41e+01 $ \\ 
    $z_{19}$ & $ -2.71e-02 - 3.62e-16 \,\ii $ & $ 2.71e+01 $  &  $ -9.04e-03 - 2.40e-02 \,\ii $ & $ 9.04e+01 $ \\ 
    $z_{21}$ & $ -2.73e-02 + 5.51e-02 \,\ii $ & $ 2.73e+01 $  &  $ -9.65e-03 + 2.07e-16 \,\ii $ & $ 9.65e+01 $ \\ 
    $z_{23}$ & $ -2.90e-02 + 4.87e-02 \,\ii $ & $ 2.90e+01 $  &  $ -1.01e-02 - 2.31e-02 \,\ii $ & $ 1.01e+02 $ \\
    \hline
  \end{tabular*}
\end{table}

In addition to the patterns described above, the numerical Koopman eigenfunctions include a family (hereafter, class~2), members of which are $ z_{15} $, $ z_{19} $, and $ z_{23} $ displayed in Fig.~\ref{figZSwitching1} and Movie~\href{http://cims.nyu.edu/~dimitris/files/tracers/movie5.mp4}{5}, which are concentrated in the separatrix regions between the four globular clusters of the previous family. The eigenvalues corresponding to these eigenfunctions have nonzero imaginary parts (e.g., $ \Imag \lambda_k = O( 10^{-2} ) $ for $ z_{15} $, $ z_{19} $, and $ z_{23} $), indicating that these eigenfunctions vary periodically in a Lagrangian frame following the tracers. The concentration of these eigenfunctions to subsets of $X $ of small Lebesgue measure and the sharp spatial gradients that they exhibit (see, e.g., $ z_{19} $ in Fig.~\ref{figZSwitching1} along the $ x_1 = x_2 $ diagonal near $ (0,0) $) are reminiscent of the behavior of numerical eigenfunctions $ z_{29} $ and $ z_{33} $ in Fig.~\ref{figZMoving} associated with the continuous spectrum of the moving vortex flow. 

Next, we examine the dependence of the patterns described above on the diffusion regularization parameter $ \theta $. Inspecting the results in Fig.~\ref{figZSwitching1} and Movie~\href{http://cims.nyu.edu/~dimitris/files/tracers/movie4.mp4}{4}, it is evident that the leading class~1 eigenfunctions identified for $ \theta = 10^{-4} $ are relatively robust under an increase of $ \theta $ from $ 10^{-4} $ to $ 10^{-3} $; for example,  eigenfunctions $ \{ z_1, z_3, z_5, z_9 \} $ have clear  $ \theta = 10^{-3} $ counterparts, $ \{ z_1, z_3, z_5, z_7 \} $. More quantitatively, the Dirichlet energies $ E_k $ of those eigenfunctions (see Table~\ref{tableZSwitching1}) do not change by more than $ \simeq 20\% $ (and in some cases this change is as little as $ \simeq 1\%) $. Together, these results suggest that class~1 eigenfunctions behave smoothly as $ \theta \to 0 $ and, correspondingly, that   they approximate true Koopman eigenfunctions. In contrast, class~2 eigenfunctions such as $ z_{15} $, $ z_{19} $, and $ z_{23} $ at $ \theta = 10^{-4} $ have a significantly more sensitive dependence on $ \theta $. That is, while analogs of some of these eigenfunctions can be identified in the $ \theta = 10^{-3} $ results (e.g., $z_{15} $ and $ z_{19} $ at $ \theta = 10^{-4} $ appear to be related to $ z_7 $ and $ z_{11} $ at $ \theta = 10^{-3} $, respectively), the spatial patterns of these eigenfunctions are visibly affected by diffusion (compare, e.g., $z_{19} $ at $ \theta = 10^{-4} $ with $ z_{11} $ at $ \theta=10^{-3} $). This also reflected in the corresponding Dirichlet energies which change by approximately a factor of five between the two cases.         
  
We now increase the vortex strength parameter to $ C = 4 $ and examine the resulting numerical eigenfunctions, using again the diffusion regularization parameter values $ \theta = 10^{-3} $ and $ 10^{-4} $. The resulting eigenfunctions and eigenvalues are shown in Fig.~\ref{figZSwitching2}, Movie~\href{http://cims.nyu.edu/~dimitris/files/tracers/movie6.mp4}{6} ($\theta=10^{-3}$), Movie~\href{http://cims.nyu.edu/~dimitris/files/tracers/movie7.mp4}{7} ($\theta=10^{-4}$), and Table~\ref{tableZSwitching2}. There, it is evident that in this flow with stronger stirring and overturning, the geometrically smooth patterns recovered in the $ C = 2 $ experiments are replaced by significantly more complex filamentary patterns. Nevertheless, the eigenfunctions still can be grouped into class~1 and class~2 families, the former characterized by approximately zero imaginary part of the corresponding eigenvalue $ \lambda_k $ and the latter consisting of eigenfunctions concentrated on subsets of the spatial domain $ X $ of small Lebesgue measure while having nonzero $ \Imag \lambda_k $. Examples of class~1 and class~2 eigenfunctions at $ \theta = 10^{-4} $ shown in Fig.~\ref{figZSwitching2} and Movie~\href{http://cims.nyu.edu/~dimitris/files/tracers/movie7.mp4}{7} are $\{ z_1, z_3, z_5 \} $ and $\{ z_7, z_9, z_{11} \} $, respectively.

\begin{figure}
  \centering 
      \includegraphics[width=.8\linewidth]{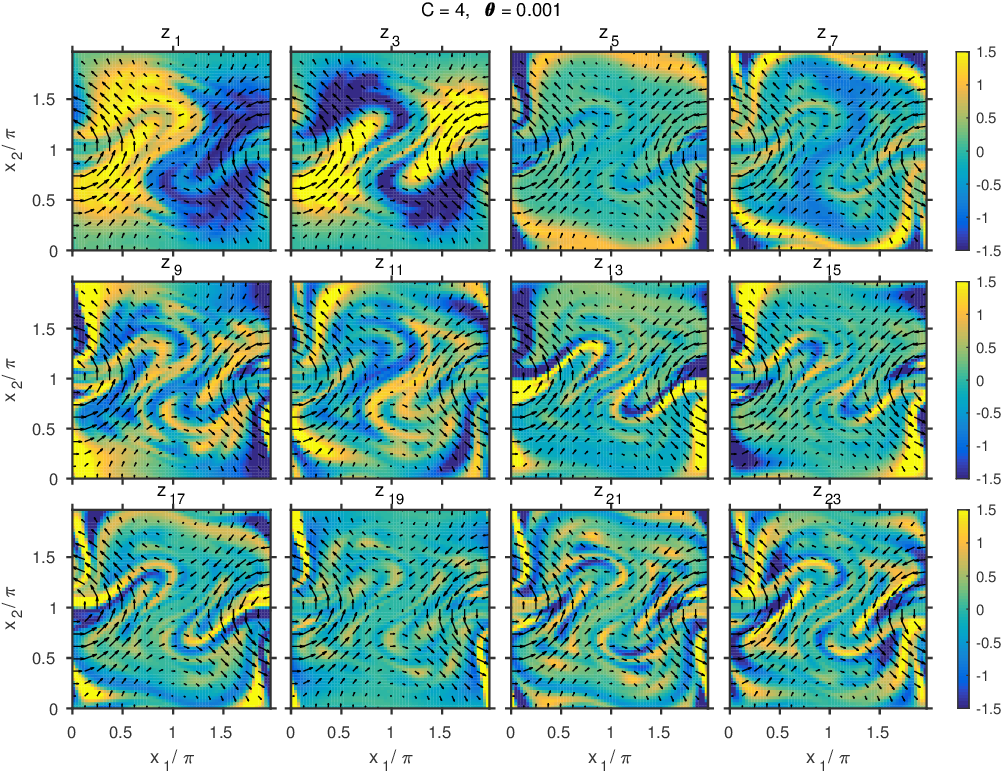}\\
    \includegraphics[width=.8\linewidth]{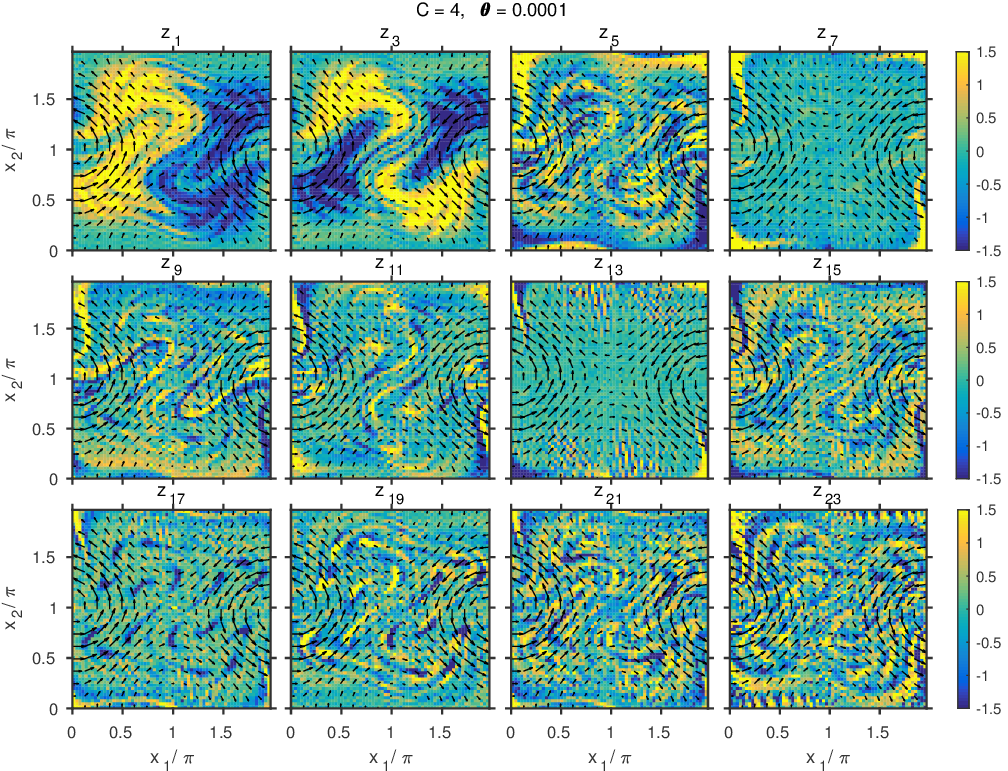}
  \caption{\label{figZSwitching2}As in Fig.~\ref{figZSwitching1}, but for the vortex strength parameter $ C = 4 $.}
\end{figure}

\begin{table}
  \centering\small
  \caption{\label{tableZSwitching2}Eigenvalues $ \lambda_k $ and Dirichlet energies $ E_k $ of the numerical Koopman eigenfunctions $ z_k $ for the switching-vortex flow with $ C = 4 $ shown in Fig.~\ref{figZSwitching2}.}
  \begin{tabular*}{\linewidth}{@{\extracolsep{\fill}}lllll}
    \hline
    & \multicolumn{2}{c}{$\theta = 0.001 $} & \multicolumn{2}{c}{$\theta=0.0001 $}\\
    \cline{2-3} \cline{4-5}
    & \multicolumn{1}{c}{$\lambda_k$} & \multicolumn{1}{c}{$E_k$} & \multicolumn{1}{c}{$\lambda_k$} & \multicolumn{1}{c}{$E_k$}\\
    $z_{1}$ & $ -9.71e-03 + 1.21e-15 \,\ii $ & $ 9.71e+00 $  &  $ -7.78e-03 - 3.94e-16 \,\ii $ & $ 7.78e+01 $ \\ 
    $z_{3}$ & $ -1.50e-02 - 5.63e-16 \,\ii $ & $ 1.50e+01 $  &  $ -9.31e-03 - 7.78e-16 \,\ii $ & $ 9.31e+01 $ \\ 
    $z_{5}$ & $ -4.20e-02 + 5.86e-02 \,\ii $ & $ 4.20e+01 $  &  $ -2.68e-02 + 4.47e-15 \,\ii $ & $ 2.68e+02 $ \\ 
    $z_{7}$ & $ -4.45e-02 - 1.13e-01 \,\ii $ & $ 4.45e+01 $  &  $ -2.83e-02 + 3.78e-02 \,\ii $ & $ 2.83e+02 $ \\ 
    $z_{9}$ & $ -5.35e-02 - 1.82e-15 \,\ii $ & $ 5.35e+01 $  &  $ -2.96e-02 + 4.75e-02 \,\ii $ & $ 2.96e+02 $ \\ 
    $z_{11}$ & $ -5.65e-02 - 1.32e-01 \,\ii $ & $ 5.65e+01 $  &  $ -3.19e-02 - 4.08e-02 \,\ii $ & $ 3.19e+02 $ \\ 
    $z_{13}$ & $ -5.67e-02 - 4.59e-02 \,\ii $ & $ 5.67e+01 $  &  $ -3.21e-02 - 2.84e-02 \,\ii $ & $ 3.21e+02 $ \\ 
    $z_{15}$ & $ -7.75e-02 + 4.81e-02 \,\ii $ & $ 7.75e+01 $  &  $ -3.24e-02 + 6.28e-15 \,\ii $ & $ 3.24e+02 $ \\ 
    $z_{17}$ & $ -7.83e-02 + 1.08e-01 \,\ii $ & $ 7.83e+01 $  &  $ -3.66e-02 + 5.38e-15 \,\ii $ & $ 3.66e+02 $ \\ 
    $z_{19}$ & $ -9.03e-02 - 1.12e-15 \,\ii $ & $ 9.03e+01 $  &  $ -3.91e-02 - 5.49e-15 \,\ii $ & $ 3.91e+02 $ \\ 
    $z_{21}$ & $ -9.15e-02 - 1.23e-01 \,\ii $ & $ 9.15e+01 $  &  $ -4.45e-02 + 9.24e-16 \,\ii $ & $ 4.45e+02 $ \\ 
    $z_{23}$ & $ -9.36e-02 + 1.16e-01 \,\ii $ & $ 9.36e+01 $  &  $ -4.88e-02 - 3.84e-15 \,\ii $ & $ 4.88e+02 $ \\
    \hline
  \end{tabular*}
\end{table}

Despite these similarities, the $ C = 2 $ and $ C = 4 $ results have a fundamental difference in that the former include a class of eigenfunctions (class~1) that are only weakly affected by the examined changes in $ \theta $, whereas in the latter case these changes in $ \theta $ impart a significant change to all of the eigenfunctions. For instance, when $ \theta $ is decreased from $ 10^{-3} $ to $ 10^{-4} $, the Dirichlet energies of class~1 eigenfunctions $ z_1 $ and $ z_3 $ increases by a factor of 8 and 6, respectively (see Table~\ref{tableZSwitching2}).  This implies that the eigenfunctions acquire increasingly small-scale features with decreasing $ \theta $, as is also evident in Fig.~\ref{figZSwitching2} and Movies~\href{http://cims.nyu.edu/~dimitris/files/tracers/movie6.mp4}{6} and~\href{http://cims.nyu.edu/~dimitris/files/tracers/movie7.mp4}{7}. In that regard, these eigenfunctions behave similarly to strange eigenmodes for diffusive Lagrangian tracers in time-periodic flows \cite{Pierrehumbert94}, obtained via Floquet analysis of an advection-diffusion operator in~\cite{LiuHaller04}.  

\subsubsection{Prediction of observables and probability densities} 

In this Section, we discuss prediction of observables and probability densities for the switching-vortex flow. Our experiments are analogous to those for the moving-vortex flow in Section~\ref{secMovingVortexPred}; that is, we consider prediction of the observables $ f_1 $ and $ f_2 $ in~\eqref{eqF12} characterizing the position of Lagrangian tracers, and also consider probability densities with the Gaussian initial conditions $ \rho$ in~\eqref{eqRhoAX}. Hereafter, we restrict attention to the flow with frequency $ \omega = 1 $ and vortex concentration and strength parameters $ \kappa = 0.5 $ and $ C = 4 $, respectively. Moreover, we work throughout with the diffusion regularization parameter $ \theta = 10^{-4} $. 

Figure~\ref{figSwitchingX} and Movie~\href{http://cims.nyu.edu/~dimitris/files/tracers/movie8.mp4}{8} show the evolution of the positions $ (x_1(t,a), x_2(t,a)) $ for an ensemble of tracers (initially arranged on a uniform square grid in $ X $) obtained via the operator-theoretic model from Section~\ref{secBoundedObs}, compared against the evolution obtained by explicit integration of the ODEs governing the advection of tracers in the switching-vortex flow. We used the same methods and error tolerance parameters to compute these results as in Section~\ref{secMovingVortexPred}; here, the forecast timestep is $ \tilde \tau = 0.025 $. In Fig.~\ref{figSwitchingX} and Movie~\href{http://cims.nyu.edu/~dimitris/files/tracers/movie8.mp4}{8} it can be seen that, at least over short to moderate lead times ($ t \lesssim 20$), the operator-theoretic model is able to predict the evolution of the tracer positions with comparable accuracy to the explicit ODE model, but eventually it develops biases due to the combined effects of diffusion and spectral truncation. Thus, the operator-theoretic model performs comparably to the moving-vortex experiment in Section~\ref{secMovingVortexPred}, but note that in this case the dynamics is more complex since mixing takes place with respect to both the $ x_1 $ and $x_2$ coordinates of the tracers.     
\begin{figure}
  \centering
  \includegraphics[width=.42\linewidth]{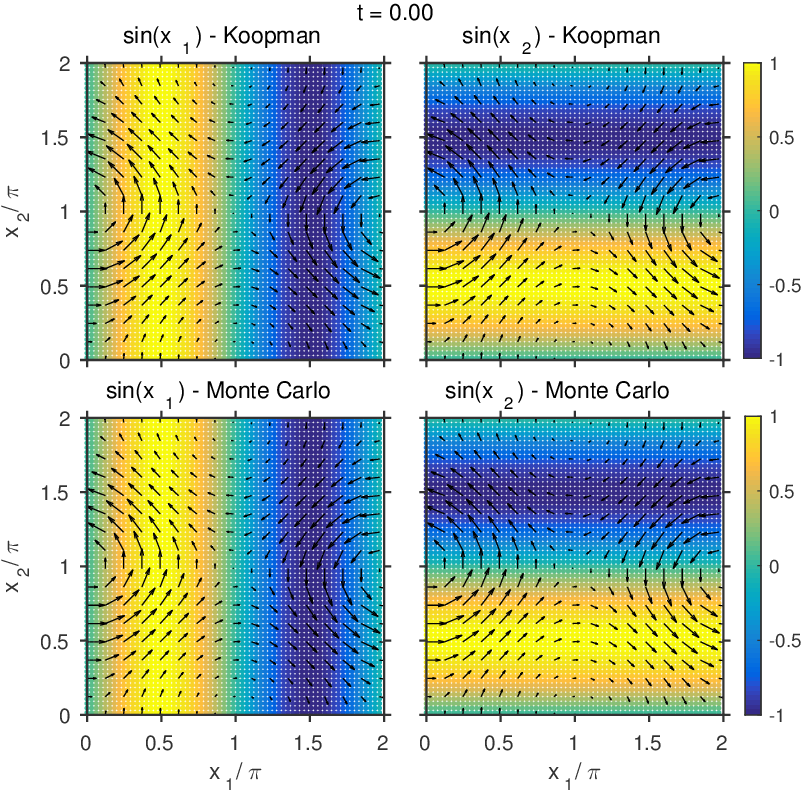}    \includegraphics[width=.42\linewidth]{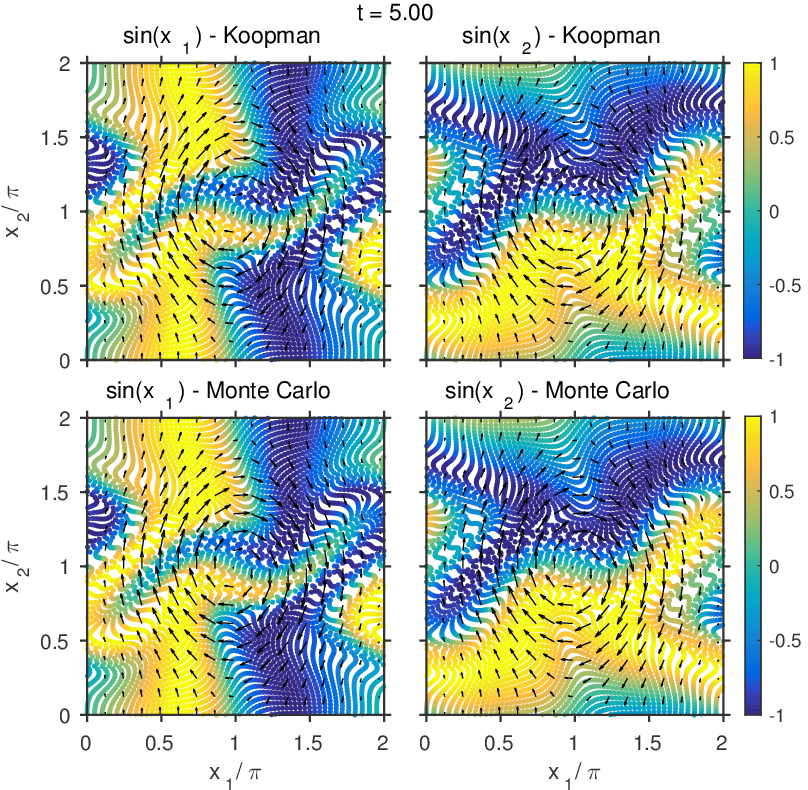}\\ 
  \includegraphics[width=.42\linewidth]{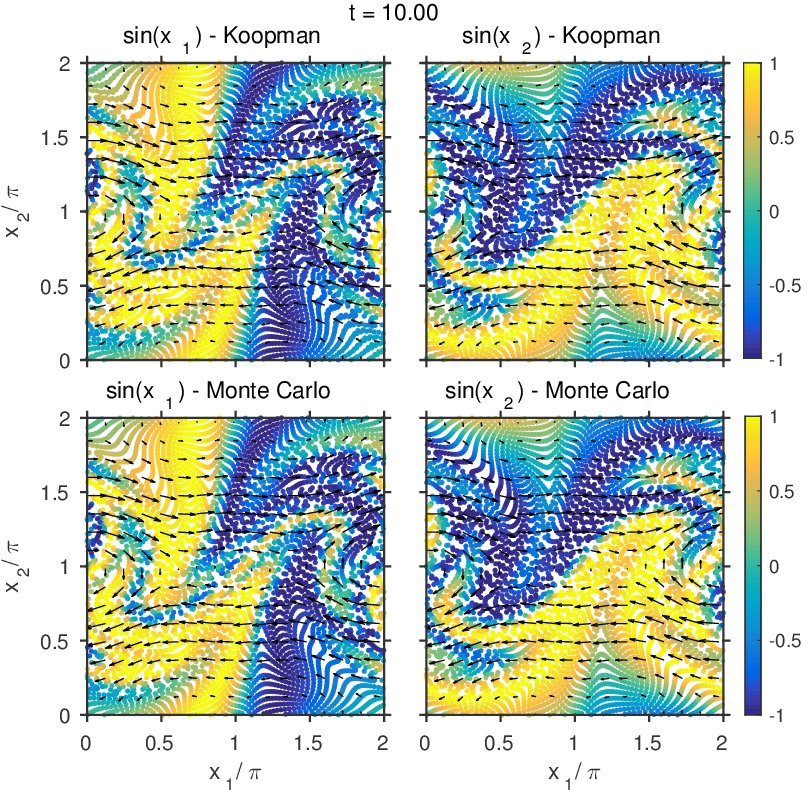}    \includegraphics[width=.42\linewidth]{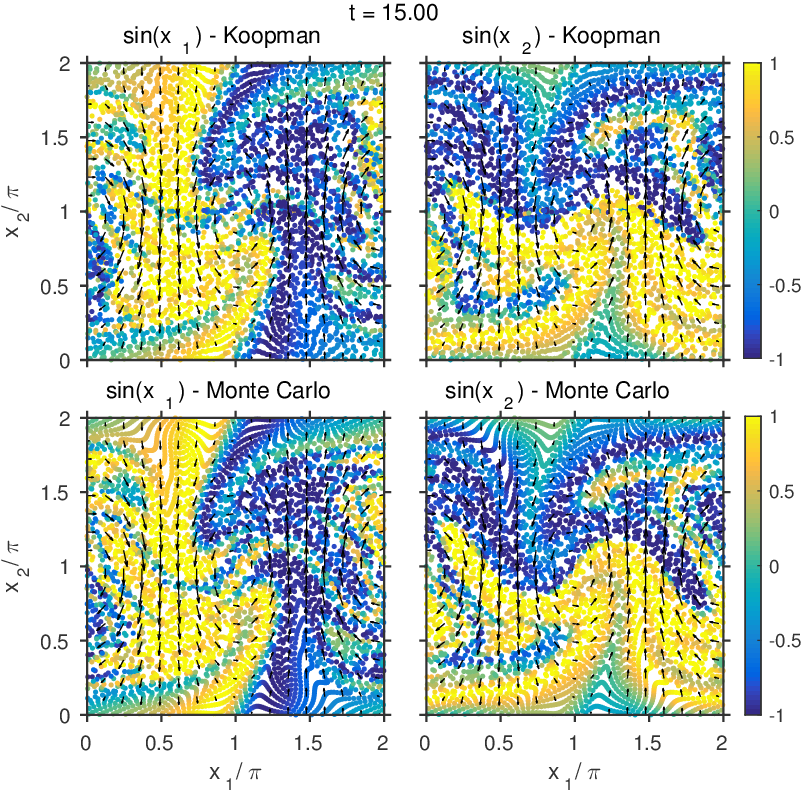}\\
  \includegraphics[width=.42\linewidth]{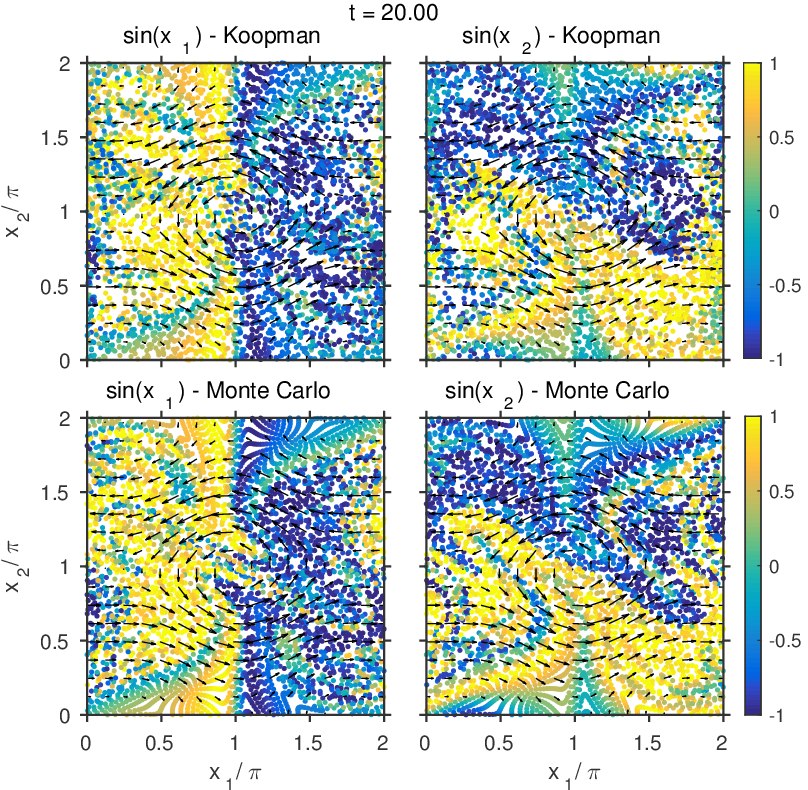} \   \includegraphics[width=.42\linewidth]{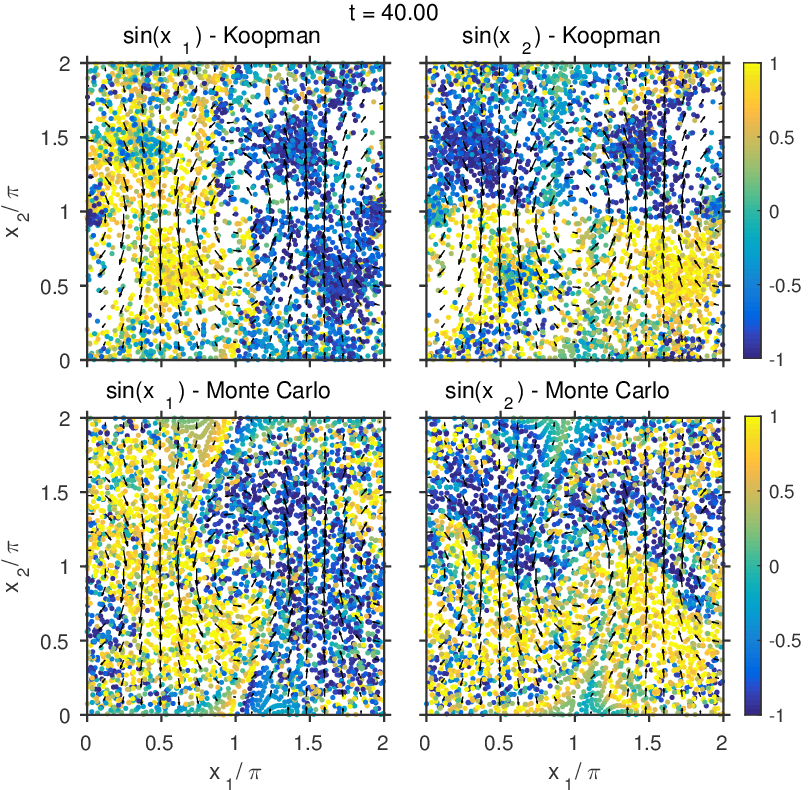}
  \caption{\label{figSwitchingX}As in Fig.~\ref{figMovingX}, but for the switching-vortex flow with the streamfunction in~\eqref{eqZetaSwitching} with $ \omega = 1 $, $ \kappa = 0.5 $, and $ C = 4$. The diffusion regularization parameter is $ \theta = 10^{-4} $.} 
\end{figure}

Figure~\ref{figSwitchingRho} and Movie~\href{http://cims.nyu.edu/~dimitris/files/tracers/movie9.mp4}{9} show the evolution of the marginal densities $ \sigma_t $, $ \sigma_{1,t} $, and $ \sigma_{2,t} $ (see Section~\ref{secMovingVortexPred}) over the forecast interval $ t \in [ 0, 50 ] $ obtained via the operator-theoretic model of Section~\ref{secBoundedDen} and a Monte Carlo ensemble of 300,000 particles. The initial density $ \rho $ on $ M $ is given by~\eqref{eqRhoAX}  with location and concentration parameters $ ( \bar x_1, \bar x_2 ) = ( \pi, 0 ) $ and $ \tilde \kappa = 3 $; as shown in Fig.~\ref{figSwitchingRho}, the corresponding initial marginal density $ \sigma_0 $ is concentrated to the right of the vortex center associated with the state $ a = 0 \in A $ (i.e., the mode of the density $ \rho_A( a ) $). At time $ t > 0 $, the initially radially symmetric density is sheared by the flow, and probability mass is gradually expelled from the vicinity of $ ( \pi, 0 ) $ where $ \sigma_0 $ is concentrated. During this process, $ \sigma_t $ and the 1D densities $ \sigma_{1,t} $ and $ \sigma_{2,t} $ become highly non-Gaussian. As is evident from Fig.~\ref{figSwitchingRho} and Movie~\href{http://cims.nyu.edu/~dimitris/files/tracers/movie9.mp4}{9}, the operator-theoretic model agrees well with the evolution of the marginal densities over the full forecast interval.   
     
\begin{figure}
   \centering
  \includegraphics[width=.42\linewidth]{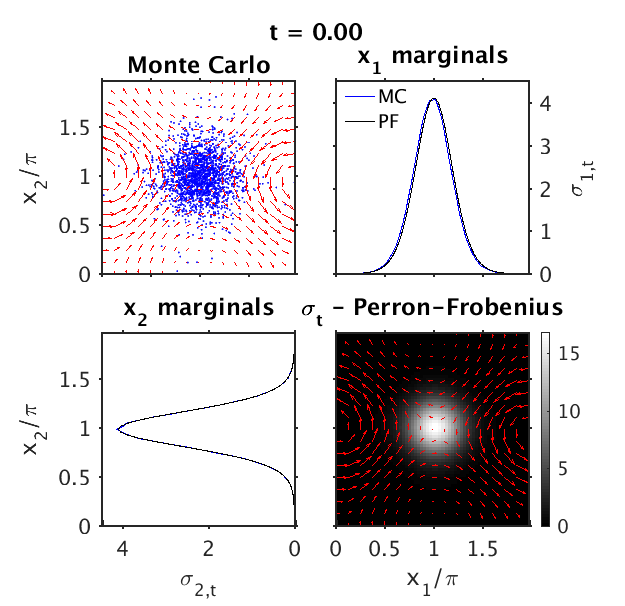}    \includegraphics[width=.42\linewidth]{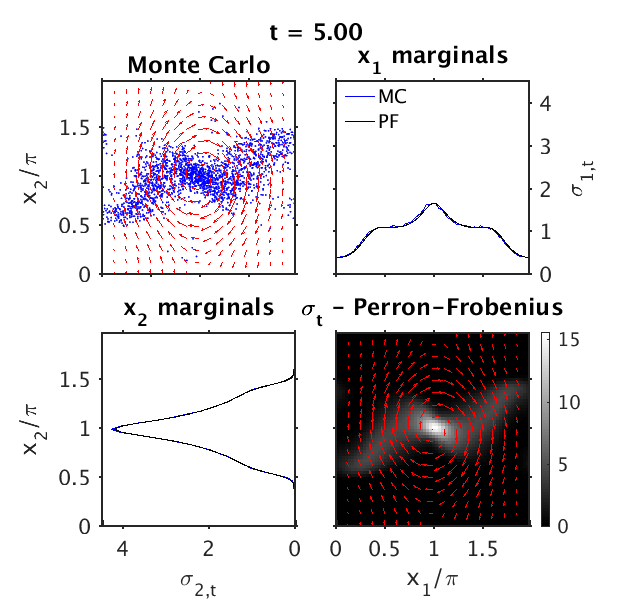} \\
  \includegraphics[width=.42\linewidth]{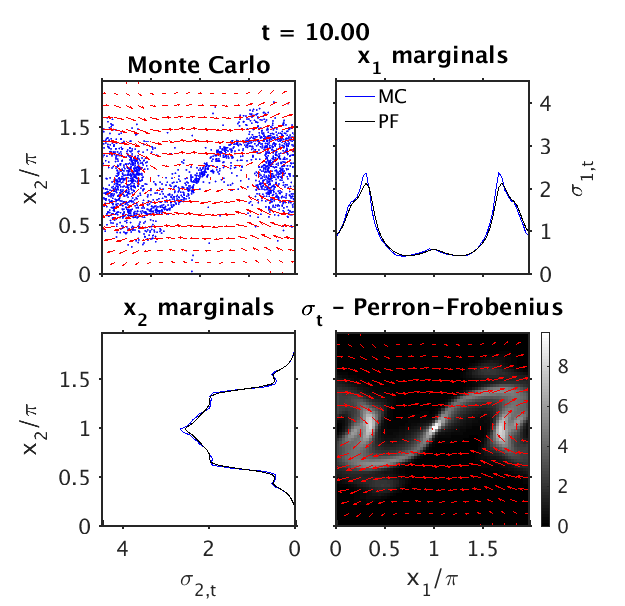}    \includegraphics[width=.42\linewidth]{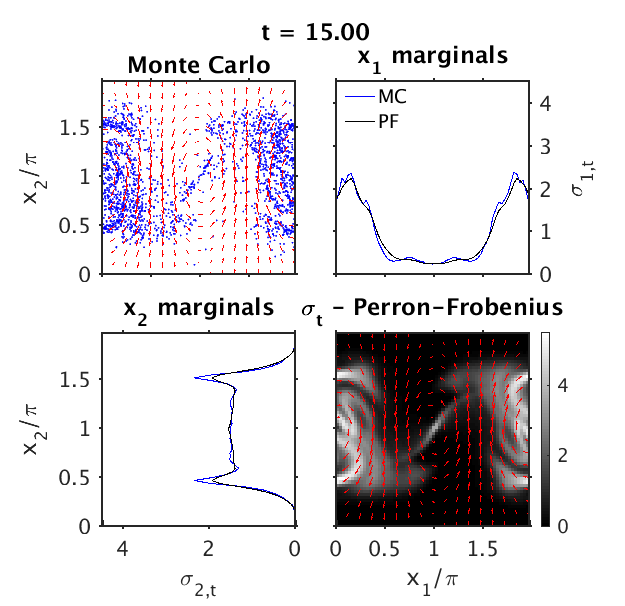} \\
  \includegraphics[width=.42\linewidth]{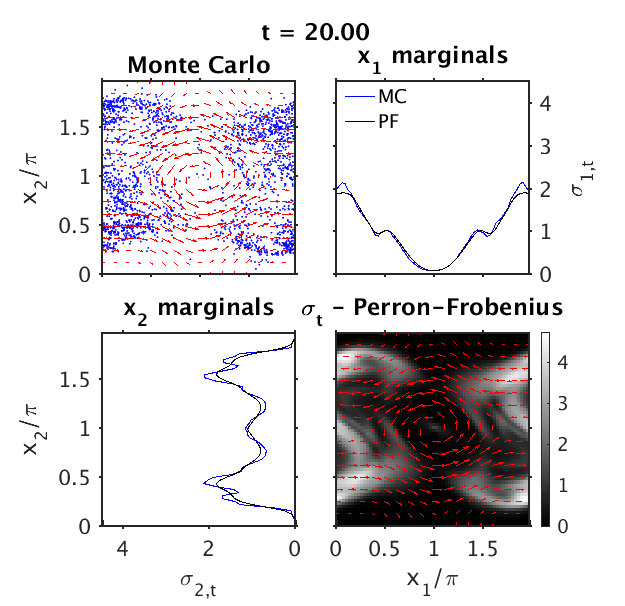}    \includegraphics[width=.42\linewidth]{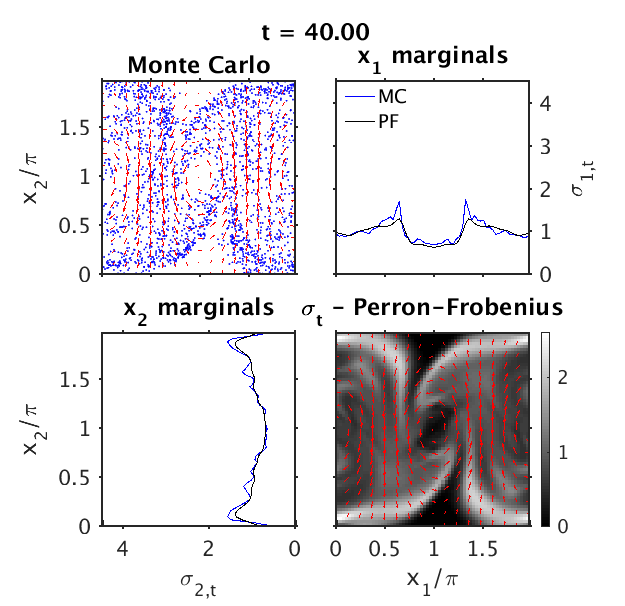} \\
  \caption{\label{figSwitchingRho}As in Fig.~\ref{figMovingRho}, but for the switching-vortex flow with the streamfunction in~\eqref{eqZetaSwitching} and $ \omega = 1 $, $ \kappa = 0.5 $, and $ C = 4$,  and the initial density from~\eqref{eqRhoAX} with $ \tilde \kappa = 3 $ and $ ( \bar x_1, \bar x_2 ) = (\pi, 0) $.} 
\end{figure}    

\section{\label{secDataDrivenBasis}Approximation in a data-driven basis}

Thus far, the development of our methods for coherent pattern extraction and nonparametric prediction, as well as the associated numerical experiments, were made under the strong assumption that orthonormal bases for the Hilbert spaces $ L^2( A, \alpha ) $ and $ L^2(X,\xi) $ associated with the state space $ A $ and the physical domain $ X $, respectively, are available. In this Section, we partially relax this assumption using kernel algorithms \cite{CoifmanLafon06,BerryHarlim16} to formulate the techniques of Section~\ref{secKoopman} in a basis of functions on $ A $ learned from the observed velocity field snapshots $ \{ v_0, \ldots, v_{N-1} \} $. In particular, our objectives are that (1) this basis approximates (in a sense that will be made precise below) the Laplace-Beltrami eigenfunction basis $ \{ \phi^A_0, \phi^A_1, \ldots \} $ of $ L^2(A,\alpha ) $ associated with the Riemannian metric $ h $ from Section~\ref{secLB}, and (2) the Dirichlet energies $E(\phi^A_k) = \eta_k^A $ of the basis functions can also be approximated.  

Our approach follows closely  \cite{GiannakisEtAl15,Giannakis15}, where an analogous data-driven basis was used for mode decomposition and prediction in ergodic dynamical systems. Here, we extend this framework to the setting of skew-product systems under the additional assumption that observations of the velocity field $ v\rvert_a $ driving the flow in $ X $ are available. Moreover, we introduce a more general framework for computing Dirichlet energies for the data-driven basis functions. It should be noted that in this work we do not address the question of constructing a data-driven basis of $ L^2(X,\xi) $, and we will continue to assume that the basis $ \{ \phi^X_{ij} \} $ is available. Nevertheless, building data-driven bases of $ L^2( X, \xi )$ from an arbitrary collection of points $ \{ x_0, x_1, \ldots, x_{\tilde N-1} \} $ in $ X $ should be possible through analogous kernel techniques to those used to build the basis for $ L^2(A, \alpha ) $, or via the variety of meshing techniques developed in finite element methods. It is also worthwhile noting that despite being formulated for finitely sampled data, the schemes for coherent pattern extraction and prediction introduced below are structurally very similar to those already presented in Section~\ref{secKoopman}. Thus, it should be possible for readers mainly interested in numerical results to skip ahead to Section~\ref{secL96} where we discuss experiments with aperiodic tracer flows driven by L96 systems.     

\subsection{\label{secKernel}Kernel function}

The starting point for the construction of our data-driven basis is a specification of a kernel function $ K: A \times A \mapsto \mathbb{ R } $ providing a measure of similarity of an arbitrary pair of states in $ A $ which is computable using observed velocity field data in $ \mathfrak{ X } $. Mathematically, we require that
\begin{enumerate}
  \item $ K $ is symmetric, i.e., $ K( a, b ) = K( b, a ) $ for any $ a, b \in A $;
  \item $ K $ is smooth (hence, bounded above by compactness) on $ A \times A $;
  \item $ K $ is bounded below, i.e., there exists a constant $ c > 0 $ such that $ K  \geq c $. 
\end{enumerate}     
In what follows, we work with the variable-bandwidth kernel introduced in~\cite{BerryHarlim16}, viz.
\begin{equation}
  \label{eqKVB}K_{\epsilon,N}( a, b ) = \exp\left( - \frac{ \lVert F( a ) - F( b ) \rVert^2 }{ \epsilon r_{\epsilon,N}( a ) r_{\epsilon,N}( b ) }\right),
\end{equation}
where $ \epsilon $ is a positive bandwidth parameter, and $ r_{\epsilon,N} $ a positive function in $ C^\infty( A ) $, bounded away from zero. In particular, the bandwidth function $ r_{\epsilon,N} $ is an estimate of the function $ r = \sigma_A^{-1/m_A} $, where $ \sigma_A $ is the density of the invariant measure $ \alpha $ relative to the volume of the ambient-space metric $ g_A $ introduced in Section~\ref{secLB}, and $ m_A $ the dimension of $ A $. This bandwidth function is constructed such that $ r_{\epsilon,N}( a ) $ converges to $ r( a ) $ almost surely with respect to the starting state $ a_0 \in A $ as $ N \to \infty $ and uniformly with respect to $ a \in A $ as $ \epsilon \to 0 $. Details on this construction can be found in \ref{appDensity}. With the definition in~\eqref{eqKVB}, the conditions on kernels stated above are satisfied by construction and the fact that $ A $ is compact. Moreover, by the pointwise ergodic theorem, as $N \to \infty $, $ K_{\epsilon,N} $ converges almost surely to the value $ K_\epsilon(a, b ) $ of the kernel
\begin{equation}
  \label{eqKVB2}K_{\epsilon}( a, b ) = \exp\left( - \frac{ \lVert F( a ) - F( b ) \rVert^2 }{ \epsilon r_{\epsilon}( a ) r_{\epsilon}( b ) }\right),
\end{equation}
which also meets conditions 1--3 stated above.    

\begin{rk}
  The role of the bandwidth functions $ r_{\epsilon,N} $ and $ r_\epsilon $ in~\eqref{eqKVB} and~\eqref{eqKVB2}, respectively, is to implement the conformal transformation leading to the Riemannian metric $ h $ (which has uniform volume form relative to the invariant measure; see Section~\ref{secLB}) from the ambient space metric $ g $. 
\end{rk}

\begin{rk} The analysis that follows relies on the assumption made in Section~\ref{secLB} that $ F $ is a smooth embedding of $ A $ into the space of velocity fields $ \mathfrak{ X } $. As stated in remark~\ref{rkAssumptions}, if that condition is not met then it is possible to replace $ F $ in~\eqref{eqKVB} (as well the definition of $ r_{\epsilon,N} $) by  the delay-coordinate map $ F_q : A \mapsto \mathfrak{ X }^q $ given by  
  \begin{displaymath}
    F_q( a ) = ( F( a ), F( \hat \Phi_{-1}( a ) ), \ldots, F( \hat \Phi_{-q}(a) ) ),
  \end{displaymath}
  where $ q $ is a positive integer parameter. According to the theory of delay-coordinate maps \cite{SauerEtAl91}, $F_q $ will generically be an embedding for sufficiently large $ q $ and under mild assumptions on $ \hat \Phi_n $ and $ F $. Moreover, as discussed in~\cite{Giannakis17}, performing delays can be beneficial even if $F  $ is already an embedding as it improves the noise robustness of the data-driven basis functions. Our scheme is also applicable when $ F $ takes values in other spaces than $ \mathfrak{ X } $ (e.g., if the fluid dynamical driving system has additional degrees of freedom such as temperature), so long as $ F $ and/or or $F_q $ are embeddings.     
\end{rk}

\subsection{\label{secKernelOp}Kernel integral operators}

Having specified an appropriate kernel, we now apply the normalization procedure introduced in the diffusion maps algorithm \cite{CoifmanLafon06} and further developed in~\cite{BerrySauer16b} to construct a kernel integral operator which can be used to approximate the eigenfunctions of the Laplace-Beltrami operator $ \upDelta_A $ associated with the Riemannian metric $ h $ and the associated Dirichlet energies. In what follows, we describe the construction of this integral operator and discuss  spectral convergence results justifying its utility in providing the data-driven basis used in our mode decomposition and prediction schemes. The discussion  below is mainly based on results established in a number of works in the literature, including  \cite{CoifmanLafon06,VonLuxburgEtAl08,BerryHarlim16,BerrySauer16,BerrySauer16b}. We believe that presenting this material here in a fairly self-contained manner is useful given that each of these works addresses a  fairly distinct aspect of the problem of interest here, but readers familiar with these topics may wish to skip to Section~\ref{secDirichlet}. 

We begin by establishing the relevant function spaces for our data-driven scheme. In particular, note that in practical applications with finitely many samples we cannot work with the Hilbert space $H_A = L^2(A,\alpha) $ directly. Instead, we consider the Hilbert space $ H_{A,N} = L^2(A,\alpha_N) $ of complex-valued functions associated with the sampling measure $ \alpha_N = \sum_{n=0}^{N-1} \delta_{a_n} / N $. That is, $ H_{A,N} $ consists of equivalence classes of functions $ f : A \mapsto \mathbb{ C } $ which have common values and are square-summable on the sampled states $ a_n $. Its associated inner product and norm are 
\begin{displaymath}
 \langle f_1, f_2 \rangle_{H_{A,N}} = \int_A f_1^* f_2 \, d\alpha_N = \frac{ 1 }{ N } \sum_{n=0}^{N-1} f_1^*( a_n ) f_2( a_n ), \quad \lVert f \rVert_{H_{A,N}} = \sqrt{\langle f, f \rangle_{H_{A,N}}}.
\end{displaymath}
Of course, $ H_{A,N}$ is isomorphic to $ \mathbb{ C }^N $, but we prefer to work with the former space to emphasize that its elements are equivalence classes of functions defined on the same underlying space $ A $ as in the case of $H_A $. Nevertheless, given $ f : A \mapsto \mathbb{ C } $, it is convenient to represent the corresponding equivalence class in $  H_{A,N} $ by the column vector $ \vec f = ( f_0, \ldots, f_{N-1} )^\top \in \mathbb{ C }^N $ storing the values $ f_n = f( a_n )  $ at the sampled states $ a_n $; we will use this notation whenever we wish to distinguish between functions on $ A $ and equivalence classes of functions in $ H_{A,N} $. Similarly, we will use the notation $ \overline{f} $ to represent the $ H_A $ equivalence class associated with $ f $. Clearly, we have $ \langle \vec f_1, \vec f_2 \rangle_{H_{A,N}} = \vec f_1 \cdot \vec f_2 / N $, where $ \cdot $ is the standard dot product for vectors in $ \mathbb{ C }^N $. Moreover, by the pointwise ergodic theorem, given $ f_1, f_2 : A \mapsto \mathbb{ C }$ such that $ \overline{f}_1 $ and $ \overline{f}_2 $ are both in $ H_A $, then for $ \alpha $-a.e.\ starting state $ a_0 \in A $, $ \vec f_1 $ and $ \vec f_2 $ are also in $ H_{A,N} $ and as $ N \to \infty $, $ \langle \vec f_1, \vec f_2 \rangle_{H_{A,N}} $ converges to $ \langle \overline f_1, \overline f_2 \rangle_{H_A} $.

Besides these Hilbert spaces, we will make use of the Banach space of continuous functions on $ A $ equipped with the uniform norm (denoted $ \lVert \cdot \rVert_\infty $). This space will provide a common space to define kernel integral operators associated with the finite dataset $ \{ a_n \}_{n=0}^{N-1} \subset A  $ and the whole state space $ A $, allowing us to take advantage of existing spectral convergence results in the machine learning and statistics literature \cite{VonLuxburgEtAl08}. It is then straightforward to lift those results to the Hilbert space setting of $ H_{A,N} $ and $ H_{A} $.     

With these definitions in place, consider the bounded integral operators $ G'_{\epsilon,N} : H_{A,N} \mapsto C(A) $ and $ G'_{\epsilon} : H_A \mapsto C(A) $ defined as 
\begin{gather*}
  G'_{\epsilon,N} \vec f = \frac{ 1 }{ \epsilon^{m_A/2} } \int_A K_{\epsilon,N}( \cdot, a ) \vec f( a ) \, d\alpha_N( a ) = \frac{ 1 }{ N \epsilon^{m_A/2}  } \sum_{n=0}^{N-1} K_{\epsilon,N}( \cdot, a_n ) f_n, \\ 
  G'_{\epsilon} \overline{ f } = \frac{ 1 }{ \epsilon^{m_A/2} } \int_A K_{\epsilon}( \cdot, a ) \overline{ f }( a ) \, d\alpha( a ).
\end{gather*}
Clearly, we can also define analogs $ G''_{\epsilon,N} : C( A ) \mapsto C( A ) $ and $ G_{\epsilon, N } : H_{A,N} \mapsto H_{A,N} $ of $ G'_{\epsilon,N} $ by $ G''_{\epsilon,N} = G'_{\epsilon,N} \circ \pi_N $ and $ G_{\epsilon,N} = \pi_N \circ G'_{\epsilon,N} $, where $ \pi_N : C( A ) \mapsto H_{A,N} $ is the canonical restriction map, $\pi_N f  = \vec f $. Although related by trivial restrictions, the  distinction between these operators will be useful below. Similarly, we introduce $ G''_\epsilon  = \iota \circ G'_\epsilon : C( A ) \mapsto C( A ) $ and $ G_\epsilon = G'_\epsilon \circ \iota : H_A \mapsto H_A $, where $ \iota : C( A ) \mapsto H $ is the canonical inclusion map, $ \iota f  = \overline{ f } $. By the assumptions on $ K_{\epsilon,N} $ and $K_\epsilon $ stated in Section~\ref{secKernel}, the range of both $ G''_{\epsilon,N} $ and $ G''_\epsilon $ is included in $ C^\infty( A) $. Note that $ G''_{\epsilon,N} f( a)  $ can be evaluated with no approximation at arbitrary $ a \in A $ given only the values $ f( a_n ) $ at the sampled states $ a_n $. Moreover, by the pointwise ergodic theorem, $ G''_{\epsilon,N} f $ converges to $ G''_\epsilon f $ in uniform norm for $ \alpha $-a.e.\ starting state $ a_0 $. In the case of $ G_{\epsilon,N} $ and $ G_\epsilon $, the symmetry of $ K_{\epsilon,N} $ and $ K_\epsilon $ implies that these operators are self-adjoint. In addition, we have: 

\begin{lemma}
  \label{lemmaG}
  The operators $ G''_{\epsilon,N} $ and $ G''_\epsilon $ are compact. 
\end{lemma}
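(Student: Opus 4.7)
The plan is to handle the two operators separately, since their compactness stems from rather different mechanisms. For $G''_{\epsilon,N}$ I would observe that it is actually of finite rank: since the defining formula $G''_{\epsilon,N} f = (N\epsilon^{m_A/2})^{-1} \sum_{n=0}^{N-1} K_{\epsilon,N}(\cdot, a_n) f(a_n)$ expresses $G''_{\epsilon,N} f$ as a linear combination of the $N$ fixed functions $K_{\epsilon,N}(\cdot, a_n) \in C^\infty(A) \subset C(A)$, the range of $G''_{\epsilon,N}$ lies in an at most $N$-dimensional subspace of $C(A)$. Boundedness is clear from $\|G''_{\epsilon,N} f\|_\infty \leq (\epsilon^{m_A/2})^{-1} \|K_{\epsilon,N}\|_\infty \|f\|_\infty$, and a bounded operator with finite-dimensional range is compact.

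For $G''_\epsilon$ the natural approach is the Arzelà--Ascoli theorem applied to the image $\mathcal{F} = \{ G''_\epsilon f : \|f\|_\infty \leq 1 \}$ of the closed unit ball of $C(A)$. First I would verify uniform boundedness: for any such $f$, the pointwise bound
\begin{equation*}
  |(G''_\epsilon f)(a)| \leq \epsilon^{-m_A/2} \int_A K_\epsilon(a,b) |f(b)| \, d\alpha(b) \leq \epsilon^{-m_A/2} \|K_\epsilon\|_\infty
\end{equation*}
gives $\sup_{g \in \mathcal{F}} \|g\|_\infty < \infty$, using that $K_\epsilon$ is continuous on the compact set $A \times A$. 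Next I would establish equicontinuity: since $K_\epsilon \in C^\infty(A \times A)$ and $A \times A$ is compact, $K_\epsilon$ is uniformly continuous, so for any $\eta > 0$ there exists $\delta > 0$ such that $|K_\epsilon(a,b) - K_\epsilon(a',b)| < \eta$ whenever $d_A(a,a') < \delta$, uniformly in $b$. This yields
\begin{equation*}
  |(G''_\epsilon f)(a) - (G''_\epsilon f)(a')| \leq \epsilon^{-m_A/2} \eta \|f\|_\infty
\end{equation*}
for all $\|f\|_\infty \leq 1$ whenever $d_A(a,a') < \delta$, which is exactly the equicontinuity required. Arzelà--Ascoli then gives that $\mathcal{F}$ has compact closure in $C(A)$, so $G''_\epsilon$ is compact. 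An analogous Arzelà--Ascoli argument also works for $G''_{\epsilon,N}$, and one could use it uniformly for both operators, but the finite-rank argument is shorter for the discrete case.

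No significant obstacle is expected: both arguments are routine once the smoothness and boundedness properties of $K_{\epsilon,N}$ and $K_\epsilon$ listed in Section~\ref{secKernel} are available. The mildest subtlety is that compactness is with respect to the uniform norm on $C(A)$ rather than the $H_A$ or $H_{A,N}$ norms, which is why the proof is phrased on $C(A)$; this is exactly the setting needed to later transfer spectral convergence results from the machine learning literature to the Hilbert space operators $G_{\epsilon,N}$ and $G_\epsilon$ via the restriction and inclusion maps $\pi_N$ and $\iota$.
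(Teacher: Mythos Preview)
Your proposal is correct and follows essentially the same approach as the paper: finite rank for $G''_{\epsilon,N}$, and Arzel\`a--Ascoli (uniform boundedness plus equicontinuity) for $G''_\epsilon$. The only cosmetic difference is that the paper bounds $|G''_\epsilon f(a)|$ and $|G''_\epsilon f(a) - G''_\epsilon f(a')|$ via $\lVert K_\epsilon(a,\cdot)\rVert_{H}$ and $\lVert K_\epsilon(a,\cdot) - K_\epsilon(a',\cdot)\rVert_{H}$ rather than via $\lVert K_\epsilon\rVert_\infty$ and uniform continuity, but this is the same argument in slightly different norms.
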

A proof of Lemma~\ref{lemmaG} can be found in~\ref{appG}.

Next, using $ G'_{\epsilon,N} $ and $ G'_{\epsilon} $, respectively, we construct normalized operators $ P'_{\epsilon,N} : H_{A,N} \mapsto C( A  ) $ and $ P'_\epsilon : H_A \mapsto C(A)  $  by applying the normalization procedure in \cite{CoifmanLafon06,BerrySauer16}. Specifically, we set
\begin{equation}
  \label{eqPOp}
    P'_{\epsilon,N} f =\frac{ \tilde G'_{\epsilon, N } f }{ d_{\epsilon,N } }, \quad  d_{\epsilon,N} = \tilde G'_{\epsilon,N} 1_A, \quad \text{and} \quad
    P'_{\epsilon} f =\frac{ \tilde G'_{\epsilon } f }{ d_\epsilon }, \quad d_\epsilon = \tilde G'_\epsilon 1_A,  
\end{equation}    
where $ \tilde G'_{\epsilon,N} : H_{A,N} \mapsto C(A) $ and $ \tilde G'_\epsilon : C(A) \mapsto H_A $ are defined as
\begin{equation}
  \label{eqGTildeOp}
    \tilde G'_{\epsilon, N}  f = G'_{\epsilon,N}\left( \frac{ f }{ q_{\epsilon,N}} \right), \quad  q_{\epsilon,N} = G'_{\epsilon,N} 1_A, \quad \text{and} \quad
    \tilde G'_{\epsilon}  f = G'_{\epsilon}\left( \frac{ f }{ q_{\epsilon}} \right), \quad   q_{\epsilon} = G_{\epsilon} 1_A,
\end{equation}
and $ 1_A $ is the function equal to 1 at every point in $ A $. Both $ P'_{\epsilon,N} $ and $ P'_\epsilon $ can be alternatively be expressed as kernel integral operators, viz.
\begin{equation}
  \label{eqPOpKer}
\begin{gathered}
  P'_{\epsilon,N} \vec f = \int_A p_{\epsilon,N}( \cdot, a ) \vec f( a ) \, d\alpha_N( a ) = \frac{ 1 }{ N } \sum_{n=0}^{N-1} p_{\epsilon,N}( \cdot, a_n )  f_n, \quad p_{\epsilon,N}( b, a ) = \frac{ K_{\epsilon,N}(b,a ) }{ d_{\epsilon,N}( b ) q_{\epsilon,N}( a ) }, \\
  P'_{\epsilon} \overline f = \int_A p_{\epsilon}( \cdot, a ) \overline f( a ) \, d\alpha( a ), \quad p_\epsilon(b,a) = \frac{ K_\epsilon(b,a) }{ d_\epsilon(b) q_\epsilon(a) },
\end{gathered}
\end{equation}
where the kernels $ p_{\epsilon,N} $ and $ p_\epsilon $ satisfy 
\begin{equation}
  \label{eqPNorm}
  \int_A p_{\epsilon,N}( a, \cdot ) \, d\alpha_N = 1, \quad \int_A p_{\epsilon}( a, \cdot ) \, d\alpha = 1 
\end{equation}
for all $ a \in A $. By our conditions on kernels and compactness of $ A $, the functions $ q_{\epsilon,N} $, $ q_{\epsilon} $, $ d_{\epsilon,N} $, and $ d_\epsilon $ introduced above are all $ C^\infty $ positive functions bounded away from zero, and $ q_{\epsilon,N}( a ) \stackrel{\text{a.s.}}{\longrightarrow} q_\epsilon( a )$, $ d_{\epsilon,N}( a ) \stackrel{\text{a.s.}}{\longrightarrow} d_\epsilon( a ) $ for every $ a \in A $ by the pointwise ergodic theorem. As a result, $ p_{\epsilon,N} $ and $ p_\epsilon $ are smooth with respect to both of their arguments, positive, and bounded away from zero. In~\cite{BerrySauer16}, the normalization steps in~\eqref{eqPOp} and~\eqref{eqGTildeOp} are referred to as left and right normalization, respectively. 

\begin{rk} \label{rkQD}Intuitively, the quantity $ q_\epsilon $ can be thought of as an estimate (up to a proportionality constant) of the density of the invariant measure $ \alpha $ relative to the volume form of a Riemannian metric on $ A $ that depends on $ K_\epsilon $. For the choice of kernel in~\eqref{eqKVB}, that metric is in fact the desired metric $ h $, whose volume form is equal to the invariant measure, and one can show that $ \lim_{\epsilon\to0} q_\epsilon( a ) = q $ for every $ a \in A $, where $ q $ is a positive constant. As a result, we have $ \lim_{\epsilon\to0} d_\epsilon = 1 $. Note that despite that $ q_\epsilon $ and $ d_\epsilon $ are constants in the limit, their  behavior at $ O( \epsilon ) $ is also important (e.g., for Lemma~\ref{lemmaP}(ii) ahead), and at that order they are non-constant on $ A $. 
\end{rk}

Next, through an appropriate use of restriction/inclusion operators, we  define operators $ P_{\epsilon,N} : H_{A,N} \mapsto H_{A,N} $ and $ P''_{\epsilon,N} : C(A) \mapsto C(A)  $ having the same kernel $ p_{\epsilon,N} $ as $ P'_{\epsilon,N} $, and similarly we  define $ P_{\epsilon,N} : H_{A} \mapsto C( A ) $ and $ P''_{\epsilon,N} : H_{A} \mapsto H_{A} $ having the same kernel $ p_\epsilon $ as $ P_{\epsilon} $. Note that by smoothness of $ p_{\epsilon,N} $ and $ p_\epsilon $, both $ P_{\epsilon,N} $ and $ P_\epsilon $ are Hilbert-Schmidt (hence compact) kernel integral operators. In particular, \eqref{eqPNorm} and the fact that $ p_{\epsilon,N} $ and $ p_\epsilon $ are bounded away from zero imply imply that $ P_{\epsilon, N} $ and $ P_\epsilon $ generate ergodic Markov semigroups, respectively $ \{ I, P_{\epsilon,N}, P^2_{\epsilon,N}, \ldots \} $ and $ \{ I, P_\epsilon, P_\epsilon^2, \ldots \} $, and thus each has a simple eigenvalue at 1 corresponding to a constant eigenfunction. The following Lemma lists four key properties of the averaging operators introduced thus far. 

\begin{lemma}
  \label{lemmaP}
  (i) For $ \alpha $-a.e.\ starting point $ a_0 $ in the training data, $ P''_{\epsilon,N} $ converges as $ N \to \infty $ to $ P''_\epsilon $ in $ \lVert \cdot \rVert_{\infty} $ norm.
  
  (ii) For every $ f \in C^\infty( A ) $ and $ \epsilon > 0 $, the mapping $ \epsilon \mapsto P''_\epsilon f $ is $ C^\infty $; in particular, for every $ a \in A $ we have
  \begin{equation}
    \label{eqPEpsilon}
    P''_\epsilon f( a ) = f( a ) - \frac{ \epsilon }{ 8 } \upDelta_A f( a ) + O( \epsilon^2 ),
  \end{equation}
  where the estimate holds uniformly on $ A $.

  (iii) For every $ s \geq 0 $, $ P^{s/\epsilon}_\epsilon $ converges as $ \epsilon \to 0 $ to the heat operator $ \mathcal{ P }_s = e^{-s\upDelta_A} : H_A \mapsto H_A $, $ s \geq 0 $, in $ \lVert \cdot \rVert_{H_A} $ norm. 
  
  (iv) $ P_{\epsilon,N} $ and $ P_{\epsilon} $ are similar to the self-adjoint Hilbert-Schmidt operators $ \hat P_{\epsilon,N} : H_{A,N} \mapsto H_{A,N} $ and $ \hat P_\epsilon : H_A \mapsto H_A $, respectively, where
  \begin{equation}
    \label{eqGBar}
    \begin{gathered}
      \hat P_{\epsilon,N} \vec f =  \int_A \hat p_{\epsilon,N}( \cdot, a ) \vec f( a ) \, d\alpha_N( a ), \quad \hat p_{\epsilon,N}( b, a ) = \frac{ K_{\epsilon,N}(b,a ) }{ [ \epsilon^{m_A} \tilde d_{\epsilon,N}( b ) \tilde d_{\epsilon,N}( a ) ]^{1/2} }, \quad \tilde d_{\epsilon,N} = q_{\epsilon,N} d_{\epsilon,N}, \\
      \hat P_{\epsilon} \bar f = \int_A \hat p_{\epsilon}( \cdot, a ) \bar f( a ) \, d\alpha( a ), \quad \hat p_\epsilon(b,a) = \frac{ K_\epsilon(b,a) }{ [ \epsilon^{m_A} \tilde d_\epsilon(b) \tilde d_\epsilon(a) ]^{1/2} }, \quad  \quad \tilde d_{\epsilon} = q_{\epsilon} d_\epsilon.
    \end{gathered}
\end{equation} 
Moreover, we have $ \lVert \hat P^{s/\epsilon}_{\epsilon} - \mathcal{ P }_s \rVert_{H_A} \to 0 $ as $ \epsilon \to 0 $. 
\end{lemma}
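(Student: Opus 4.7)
I would treat the four items in order, bootstrapping from scalar estimates on the kernels to operator-level statements. For (i), the first step is to show that the bandwidth function $r_{\epsilon,N}$ converges uniformly on $A$ to $r_\epsilon$ for $\alpha$-a.e.\ starting state $a_0$; this follows from the pointwise ergodic theorem applied to the smooth bounded integrands defining $r_\epsilon$ (cf.~\ref{appDensity}), combined with equicontinuity of the family $\{\lVert F(\cdot) - F(a)\rVert^2 : a \in A\}$ and compactness of $A$. Since $K_{\epsilon,N}$ depends continuously on $r_{\epsilon,N}$, uniform convergence $K_{\epsilon,N} \to K_\epsilon$ on $A \times A$ follows. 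A second application of the ergodic theorem, together with equicontinuity, then gives $q_{\epsilon,N} \to q_\epsilon$ and $d_{\epsilon,N} \to d_\epsilon$ uniformly; as both are bounded away from zero, the normalized kernel $p_{\epsilon,N}$ converges uniformly to $p_\epsilon$. Splitting
\[
P''_{\epsilon,N} f(b) - P''_\epsilon f(b) = \int_A (p_{\epsilon,N} - p_\epsilon)(b,a)\, f(a)\,d\alpha_N(a) + \int_A p_\epsilon(b,a)\, f(a)\,d(\alpha_N - \alpha)(a),
\]
the first term is controlled by $\lVert p_{\epsilon,N}-p_\epsilon\rVert_\infty \lVert f \rVert_\infty$, while the second is controlled uniformly in $b$ by the ergodic theorem on the equicontinuous family $\{p_\epsilon(b,\cdot) f\}_{b\in A}$.

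For (ii), I would use the standard variable-bandwidth diffusion-maps expansion: fix $a \in A$, pass to geodesic normal coordinates at $a$ with respect to $g_A$, and Taylor-expand the integrand of $G''_\epsilon f(a)$ in powers of $\epsilon^{1/2}$. The choice $r = \sigma_A^{-1/m_A}$ in the bandwidth, together with the left and right normalizations in~\eqref{eqPOp}--\eqref{eqGTildeOp}, is precisely what arranges for the $O(\epsilon)$ correction to be the Laplace-Beltrami operator of the conformally transformed metric $h_A = \sigma_A^{2/m_A} g_A$ of~\eqref{eqConf}: the first normalization cancels the gradient of $\log\sigma_A$ that would otherwise appear by~\eqref{eqConf}, and the second cancels the residual multiplicative constant so that $P''_\epsilon 1_A = 1_A$. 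The prefactor $1/8$ is then a standard Gaussian second-moment computation for the (symmetric) variable-bandwidth kernel, and uniformity of the remainder on $A$ follows from smoothness of $f, \sigma_A, F$ and compactness. I expect this to be the main obstacle since the cancellation is delicate and requires tracking several orders in $\epsilon^{1/2}$ together with Riemannian curvature terms.

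For (iii), with (ii) in hand I would invoke a Trotter--Kato product-formula argument. Equation~\eqref{eqPEpsilon} shows that $\epsilon^{-1}(I - P''_\epsilon)$ converges on the core $C^\infty(A) \subset H_A$ to a positive multiple of $\Delta_A$, and $P_\epsilon$ is a contraction on $H_A$ (being a Markov operator). By Chernoff's theorem, the iterates $P_\epsilon^{\lfloor s/\epsilon \rfloor}$ converge strongly in $H_A$ to the corresponding heat semigroup on each $s \geq 0$. The upgrade from strong to norm convergence uses compactness of the limiting heat operator $\mathcal{P}_s$ for $s > 0$ (smoothing of the heat kernel on a closed manifold), together with the fact that $P_\epsilon$ is Hilbert--Schmidt and that its iterates form a collectively compact family on bounded sets of $H_A$.

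Finally, for (iv), I would construct the similarity explicitly. Setting $\phi = (q_\epsilon/d_\epsilon)^{1/2}$, a smooth positive function bounded away from zero, and writing $M_\phi$ for the corresponding multiplication operator, a direct kernel computation shows that $M_\phi^{-1} P_\epsilon M_\phi$ has kernel $\hat p_\epsilon(b,a)$ symmetric in $(a,b)$ and given by~\eqref{eqGBar}; hence $\hat P_\epsilon$ is self-adjoint, and Hilbert--Schmidt by smoothness of its kernel on the compact manifold $A \times A$. The same construction with $\phi_N = (q_{\epsilon,N}/d_{\epsilon,N})^{1/2}$ handles $\hat P_{\epsilon,N}$. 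For the norm convergence $\lVert \hat P_\epsilon^{s/\epsilon} - \mathcal{P}_s \rVert_{H_A} \to 0$, I would combine (iii) with Remark~\ref{rkQD}: as $\epsilon \to 0$, $\phi$ converges uniformly to a positive constant, so $M_\phi$ (and $M_\phi^{-1}$) converges in operator norm to a scalar multiple of the identity. The identity $\hat P_\epsilon^{s/\epsilon} = M_\phi^{-1} P_\epsilon^{s/\epsilon} M_\phi$ then transports the norm convergence from $P_\epsilon^{s/\epsilon}$ to $\hat P_\epsilon^{s/\epsilon}$, since the scalar limit of $M_\phi$ commutes with $\mathcal{P}_s$.
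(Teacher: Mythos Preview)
Your argument is correct and, for parts (i), (ii), and (iv), follows essentially the same route as the paper. The paper's proof is terse: it cites \cite{VonLuxburgEtAl08} for (i), \cite{BerryHarlim16} for the Gaussian expansion underlying (ii), and \cite{CoifmanLafon06} for (iii); for (iv) it writes out the same similarity via the multiplication operator $D_\epsilon$ by $\hat d_\epsilon = d_\epsilon/q_\epsilon$ (your $M_\phi$ with $\phi = \hat d_\epsilon^{-1/2}$), and then uses exactly your commutator observation---that the limit of $D_\epsilon^{\pm 1/2}$ is a scalar, hence commutes with $\mathcal P_s$---to transport norm convergence from $P_\epsilon^{s/\epsilon}$ to $\hat P_\epsilon^{s/\epsilon}$. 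Your sketches for (i) and (ii) are precisely the content of the cited results, so there is no real divergence there.

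The one place you take a genuinely different route is (iii). The paper simply invokes the diffusion-maps convergence result of Coifman--Lafon, whereas you argue via Chernoff's product formula from the generator expansion in (ii). This is a clean alternative and arguably more self-contained. One caution: Chernoff's theorem delivers strong convergence of $P_\epsilon^{s/\epsilon}$ to the heat semigroup, and your upgrade to operator-norm convergence via ``collectively compact iterates'' is not automatic---strong convergence of compact operators to a compact limit does not by itself imply norm convergence. You would need either a uniform Hilbert--Schmidt bound on the iterated kernels (so that convergence can be lifted to $L^2(A\times A)$), or to pass through spectral convergence of $P_\epsilon$ to the heat kernel as in \cite{CoifmanLafon06}. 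Either fix is standard, but the step should be made explicit.
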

A proof of this Lemma can be found in~\ref{appP}.

Let now $ \{ \psi^{(\epsilon,N)}_0, \ldots, \psi^{(\epsilon,N)}_{N-1} \} $ and $ \{ \psi_0^{(\epsilon)}, \psi_1^{(\epsilon)}, \ldots \} $ be orthonormal bases of $ H_{A,N} $ and $ H_A $, respectively, consisting of eigenfunctions of $ \hat P_{\epsilon,N} $ and $ \hat P_\epsilon $. Let also  $ \Lambda_0^{(\epsilon,N)}, \ldots, \Lambda_{N-1}^{(\epsilon,N)} $ and $ \Lambda_0^{(\epsilon)}, \Lambda_1^{(\epsilon)}, \ldots $, respectively, be the corresponding (real) eigenvalues. Since $ \hat P_{\epsilon,N} $ is similar to $ P_{\epsilon,N} $ (resp.\ $ \hat P_\epsilon $ is similar to $ P_\epsilon $) and $ P_{\epsilon,N} $ (resp.\ $ P_{\epsilon} $) is an ergodic  Markov operator, it follows that the eigenvalues admit the ordering $ 1 = \Lambda^{(\epsilon,N)}_0 > \Lambda^{(\epsilon,N)}_1 \geq \cdots \geq \Lambda^{(\epsilon,N)}_{N-1}  $ (resp.\ $ 1 = \Lambda_0^{(\epsilon)} > \Lambda^{(\epsilon)}_1 \geq  \Lambda^{(\epsilon)}_{2} \geq \cdots $). Moreover, one can verify that eigenfunctions of $ P_{\epsilon,N} $ and  $ P_\epsilon $ corresponding to these eigenvalues are given by 
\begin{equation}
  \label{eqPsiO}
  \phi_k^{(\epsilon,N)} =  \beta_{\epsilon,N}^{-1/2} \psi_k^{(\epsilon,N)}, \quad \phi^{(\epsilon)}_k =  \beta_\epsilon^{-1/2} \psi_k^{(\epsilon)}, 
\end{equation}
respectively, where $ \beta_{\epsilon,N} $ and $ \beta_\epsilon $ are the densities of the stationary measures of the Markov semigroups generated by $ P_{\epsilon,N} $ (resp.\ $ P_\epsilon $) relative to $ \alpha_N $ (resp.\ $ \alpha$). Explicitly, we have $ \beta_{\epsilon,N} = \hat d_{\epsilon,N} / \int_A \hat d_{\epsilon,N} \, d\alpha_N $, $ P^*_{\epsilon,N} \beta_{\epsilon,N} = \beta_{\epsilon,N} $, and $ \int_A \beta_{\epsilon,N} \, d\alpha_N = 1 $, where $ \hat d_{\epsilon,N} =  d_{\epsilon,N}/ q_{\epsilon,N} $ is a positive smooth function, bounded away from zero. Analogous formulas hold  for $ \beta_\epsilon $ with $  \hat d_\epsilon = d_\epsilon / q_\epsilon $.  Note  that while $ \{ \phi_0^{(\epsilon,N)}, \ldots, \phi_{N-1}^{(\epsilon,N)} \} $ and $\{  \phi_0^{(\epsilon)}, \phi_1^{(\epsilon)}, \ldots \} $ are bases of $ H_{A,N} $ and $ H_A $, respectively, they are generally not orthonormal (although, as we will see in Corollary~\ref{corEig} ahead, the lack of orthonormality vanishes in a suitable limit $ N \to \infty $ and $ \epsilon \to 0 $). For later convenience, we introduce the kernel integral operators $ \hat P'_{\epsilon,N} : H_{A,N} \mapsto C( A ) $ and $ \hat P'_\epsilon : H_A \mapsto C( A) $ which are defined using the same kernels as in the definitions of $ \hat P_{\epsilon,N} $ and $ \hat P_\epsilon $, respectively. 

By smoothness of $ p_\epsilon $ and compactness of $ A $, $ P_{\epsilon,N} $ and $ P_\epsilon $ are Hilbert-Schmidt integral operators, and therefore compact. Moreover, by Lemma~\ref{lemmaG}, $ P''_{\epsilon,N} $, and $ P''_\epsilon $ are also compact. (In fact, $ P_{\epsilon,N} $ and $ P''_{\epsilon,N} $ have even finite rank.) As a result, the eigenvalues of $ P_{\epsilon,N} $, $ P_\epsilon $, $ P''_{\epsilon,N} $, and $ P''_\epsilon $ can only accumulate at zero, and their nonzero eigenvalues have finite multiplicities. For such eigenvalues, convergence in operator norm implies convergence of eigenvalues and eigenfunctions corresponding to simple eigenvalues. For non-simple eigenvalues, convergence in operator norm implies convergence of projection operators to the corresponding eigenspaces \cite{VonLuxburgEtAl08}. In particular, we have:

\begin{lemma}
  \label{lemmaEig}
  
  (i) Let $ \phi \in C( A) $ be an eigenfunction of $ P''_{\epsilon,N} $ (resp.\ $ P''_\epsilon$) at eigenvalue $ \Lambda $. Then, $ \vec \phi = \pi_N \phi $ (resp.\ $ \overline\phi= \iota\phi$) is an eigenfunction of $ P_{\epsilon,N} $ (resp.\ $ P_\epsilon$), also at eigenvalue $ \Lambda $. 
  
  (ii) Let $ \vec \phi \in H_{A,N} $ be an eigenfunction of $ P_{\epsilon,N} $ at nonzero eigenvalue $ \Lambda $. Then $ \phi = \Lambda^{-1} P'_{\epsilon,N} \vec\phi$ is an eigenfunction of $ P''_{\epsilon,N} $, also at eigenvalue $ \Lambda $. The analogous result holds for eigenfunctions of $ P_\epsilon $ and $P''_\epsilon $. 
  
  (iii) As $ N \to \infty $, the eigenvalue $ \Lambda_k^{(\epsilon,N)} $ of $ P_{\epsilon,N} $ converges $ \alpha $-a.s. to the eigenvalue $ \Lambda_k^{(\epsilon)} $ of $ P_\epsilon $, assuming that the latter is nonzero. Moreover, for each eigenfunction $ \phi_k^{(\epsilon)} $ of $ P_\epsilon $ corresponding to $ \Lambda_k^{(\epsilon)} \neq 0 $, there exist eigenfunctions $ \phi_k^{(\epsilon,N)} $ of $ P_{\epsilon,N} $   such that $ P'_{\epsilon,N}\phi_k^{(\epsilon,N)} / \Lambda_k^{(\epsilon,N)}\stackrel{\text{a.s.}}{\longrightarrow}  P'_{\epsilon}\phi_k^{(\epsilon)} / \Lambda_k^{(\epsilon)} $, where the limit is taken in uniform norm. 
  
  (iv) The eigenvalues $ \Lambda_k^{(\epsilon)}$ of $ P_\epsilon $ satisfy  
  \begin{equation}
    \label{eqLimLambda}
    \lim_{\epsilon \to 0} \frac{  - \log \Lambda_k^{(\epsilon)} }{ \epsilon } = \eta^A_k,
  \end{equation}
  where $ \eta^A_k $ is the $ k $-th eigenvalue of $ \upDelta_A $. Moreover, for each eigenfunction $ \phi_k^A $ of $ \Delta_A $ at eigenvalue $ \eta^A_k $ there exist eigenfunctions $ \phi_k^{(\epsilon)} $ of $ P_\epsilon $ such that $ \phi_k^{(\epsilon)} \to \phi_k $ in $ L^2 $ norm.

  (v) The analogous results to (i)--(iv) hold for the self-adjoint operators $\hat P_{\epsilon,N} $ and $ \hat P_\epsilon $.
\end{lemma}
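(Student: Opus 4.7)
My plan is to handle the five parts in the order (i)--(ii), (iii), (v), (iv), so that the self-adjoint reformulation furnished by (v) is available when invoking spectral perturbation theory in (iv).

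Parts (i) and (ii) are purely algebraic and exploit the factorizations $P_{\epsilon,N} = \pi_N \circ P'_{\epsilon,N}$ and $P''_{\epsilon,N} = P'_{\epsilon,N} \circ \pi_N$ (and the analogous identities for the $\epsilon$ operators through $\iota$). For (i), I apply $\pi_N$ (resp.\ $\iota$) to both sides of $P''_{\epsilon,N}\phi = \Lambda\phi$ and read off $P_{\epsilon,N}\vec\phi = \Lambda\vec\phi$. For (ii), I set $\phi = \Lambda^{-1} P'_{\epsilon,N}\vec\phi$ and first check that $\pi_N\phi = \vec\phi$, which holds because $\pi_N P'_{\epsilon,N}\vec\phi = P_{\epsilon,N}\vec\phi = \Lambda\vec\phi$; then $P''_{\epsilon,N}\phi = P'_{\epsilon,N}\pi_N\phi = P'_{\epsilon,N}\vec\phi = \Lambda\phi$.

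For (iii), the inputs are Lemma~\ref{lemmaP}(i), which gives $\alpha$-almost-sure convergence $P''_{\epsilon,N}\to P''_\epsilon$ in operator norm on $C(A)$, and Lemma~\ref{lemmaG}, which supplies compactness of both operators. I then invoke a standard spectral approximation result for compact operators on Banach spaces (e.g., Theorem~15 of \cite{VonLuxburgEtAl08}) to conclude that the isolated nonzero eigenvalues of $P''_{\epsilon,N}$ converge almost surely to those of $P''_\epsilon$, with eigenfunctions converging in $\lVert\cdot\rVert_\infty$ (and with spectral projectors converging in the non-simple case). Parts (i) and (ii) then transfer this convergence across function spaces; the normalization $P'_{\epsilon,N}\phi_k^{(\epsilon,N)}/\Lambda_k^{(\epsilon,N)}$ appearing in the statement is precisely the lift from $H_{A,N}$ back to $C(A)$ provided by (ii), which makes the uniform convergence statement meaningful.

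Part (v) follows from Lemma~\ref{lemmaP}(iv): conjugation by multiplication with $\beta_{\epsilon,N}^{1/2}$ (resp.\ $\beta_\epsilon^{1/2}$)---both smooth, positive, bounded away from zero---is a bounded invertible similarity between $P$ and $\hat P$, so the spectra coincide and eigenfunctions correspond under the similarity. For (iv), I then work on the self-adjoint side, using $\hat P^{s/\epsilon}_\epsilon \to \mathcal{P}_s = e^{-s\upDelta_A}$ in operator norm on $H_A$ from Lemma~\ref{lemmaP}(iv). Compactness and self-adjointness allow me to apply the standard spectral perturbation theorem for compact self-adjoint operators (e.g., Kato) to obtain, for each isolated eigenvalue $e^{-s\eta_k^A}$ of $\mathcal{P}_s$, convergence $(\Lambda_k^{(\epsilon)})^{s/\epsilon}\to e^{-s\eta_k^A}$ and $H_A$ convergence of the corresponding eigenfunctions; taking logarithms and dividing by $s$ yields~\eqref{eqLimLambda}. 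Since the eigenfunctions of $\mathcal{P}_s$ at $e^{-s\eta_k^A}$ are exactly the Laplace--Beltrami eigenfunctions at $\eta_k^A$, the eigenfunction convergence claim is immediate. Applying (v) in reverse transports the conclusion to $P_\epsilon$.

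The main obstacle is the spectral convergence step used in (iii): the operators $P''_{\epsilon,N}$ and $P''_\epsilon$ are neither self-adjoint nor Hilbert space operators, which forces reliance on a Banach-space spectral convergence theorem and on spectral projectors (rather than individual eigenfunctions) whenever the limit eigenvalue is non-simple. A secondary bookkeeping difficulty is keeping track of the passages between $C(A)$, $H_{A,N}$, and $H_A$ through $\pi_N$ and $\iota$, and of the factor $\beta^{1/2}$ relating $P$ to $\hat P$, so that the normalizations of the various eigenfunctions line up consistently.
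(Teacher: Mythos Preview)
Your proposal is correct and follows essentially the same route as the paper: parts (i)--(ii) via the factorizations through $\pi_N$ and $\iota$, part (iii) via Lemma~\ref{lemmaP}(i), compactness, and the Banach-space spectral convergence result of \cite{VonLuxburgEtAl08}, and part (iv) via the norm convergence $P_\epsilon^{s/\epsilon}\to\mathcal P_s$ (or its self-adjoint analogue) from Lemma~\ref{lemmaP}. Your choice to establish (v) before (iv) so as to work on the self-adjoint side when invoking spectral perturbation is a minor but sensible clarification; the paper's own argument is slightly more casual at that step but otherwise identical.
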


A proof of Lemma~\ref{lemmaEig} is included in~\ref{appEig}. Together, the results in Lemma~\ref{lemmaEig} imply that we can approximate the eigenvalues and eigenfunctions of $ \upDelta_A $ using the eigenvalues and eigenfunctions of either $ P_{\epsilon,N} $ or $ \hat P_{\epsilon,N} $ (both of which are accessible from data),  in the following sense.

\begin{cor}
  \label{corEig}
  For $ \alpha $-a.e.\ starting state $ a_0 $ in the training data, 
  \begin{displaymath}
    \lim_{\epsilon\to 0} \lim_{N\to\infty} \frac{ - \log \Lambda_k^{(\epsilon,N)} }{ \epsilon } = \eta^A_k. 
  \end{displaymath}
  Moreover, there exist eigenfunctions $ \phi_k^{(\epsilon,N)} \in H_{A,N} $ of $ P_{\epsilon,N} $ (resp.\ eigenfunctions $ \psi_k^{(\epsilon,N)} $ of $ \hat P_{\epsilon,N} $) that approximate the eigenfunctions $ \phi^A_k \in H_A $ of $ \upDelta_A $, in the sense that 
  \begin{displaymath}
    \lim_{\epsilon \to 0 } \lim_{N\to\infty} \frac{ 1 }{ \Lambda_k^{(\epsilon,N)} } P'_{\epsilon,N }\phi_k^{(\epsilon,N)} \to \tilde \phi^A_k, \quad \lim_{\epsilon \to 0 } \lim_{N\to\infty} \frac{ 1 }{ \Lambda_k^{(\epsilon,N)} } \hat P'_{\epsilon,N} \psi_k^{(\epsilon,N)} \to \tilde \phi^A_k,
  \end{displaymath}
  in uniform norm, where $ \tilde \phi^A_k \in C^\infty( A ) $ is the unique smooth representative of $ \phi^A_k \in H_A $. In particular, for any fixed state $ a_n \in \{ a_0, \ldots, a_{N-1} \} $, the values $ \phi_k^{(\epsilon,N)}( a_n ) $ and $ \psi_k^{(\epsilon,N)}(a_n) $ converge $ \alpha $-a.s.\ to $ \tilde\phi_k^A(a_n) $. 
\end{cor}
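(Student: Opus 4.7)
The proof chains Lemma~\ref{lemmaEig}(iii) and (iv) via iterated limits, taking $N \to \infty$ first and $\epsilon \to 0$ second. Fix $k$ and assume $\eta_k^A$ is finite, so that $\Lambda_k^{(\epsilon)} \neq 0$ for all sufficiently small $\epsilon$ by Lemma~\ref{lemmaEig}(iv). Lemma~\ref{lemmaEig}(iii) then yields $\alpha$-a.s.\ convergence $\Lambda_k^{(\epsilon,N)} \to \Lambda_k^{(\epsilon)}$ together with uniform convergence of the continuous representatives $P'_{\epsilon,N}\phi_k^{(\epsilon,N)}/\Lambda_k^{(\epsilon,N)} \to P'_\epsilon\phi_k^{(\epsilon)}/\Lambda_k^{(\epsilon)}$ on $A$. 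Composing with the continuous map $x \mapsto -(\log x)/\epsilon$ and subsequently invoking Lemma~\ref{lemmaEig}(iv) gives the eigenvalue statement.

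For the eigenfunctions, I use Lemma~\ref{lemmaEig}(ii) to identify $P'_{\epsilon,N}\phi_k^{(\epsilon,N)}/\Lambda_k^{(\epsilon,N)}$ as the smooth eigenfunction of $P''_{\epsilon,N}$ extending $\phi_k^{(\epsilon,N)}$, and analogously at the level of $P_\epsilon$. The $N \to \infty$ step is immediate from part (iii). To pass to the $\epsilon \to 0$ limit, I would upgrade the $L^2$-convergence $\phi_k^{(\epsilon)} \to \phi_k^A$ of Lemma~\ref{lemmaEig}(iv) to uniform convergence of $P'_\epsilon \phi_k^{(\epsilon)}/\Lambda_k^{(\epsilon)}$ to $\tilde\phi_k^A$ via the decomposition
\begin{displaymath}
\frac{P'_\epsilon\phi_k^{(\epsilon)}}{\Lambda_k^{(\epsilon)}} - \tilde\phi_k^A = \frac{P'_\epsilon(\phi_k^{(\epsilon)} - \tilde\phi_k^A)}{\Lambda_k^{(\epsilon)}} + \frac{P'_\epsilon\tilde\phi_k^A - \Lambda_k^{(\epsilon)}\tilde\phi_k^A}{\Lambda_k^{(\epsilon)}}.
\end{displaymath}
The second term is $O(\epsilon)$ in uniform norm by the expansion $P''_\epsilon \tilde\phi_k^A = (1 - \epsilon\eta_k^A/8)\tilde\phi_k^A + O(\epsilon^2)$ from Lemma~\ref{lemmaP}(ii), matched against $\Lambda_k^{(\epsilon)} = 1 - \epsilon\eta_k^A/8 + O(\epsilon^2)$ from Lemma~\ref{lemmaEig}(iv). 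Evaluating the resulting uniform estimates at any fixed sampled state $a_n$ (a continuous evaluation functional on $C(A)$) and concatenating the iterated limits yields the $\alpha$-a.s.\ pointwise convergence claimed. The statement for $\hat P_{\epsilon,N}$ and $\psi_k^{(\epsilon,N)}$ follows from part (v) of Lemma~\ref{lemmaEig} by the identical argument.

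The main obstacle is controlling the first term of the displayed decomposition uniformly in $\epsilon$. For each fixed $\epsilon$, $P'_\epsilon$ maps $H_A$ boundedly into $C(A)$ by smoothness of the kernel $p_\epsilon$, but its operator norm $\lVert P'_\epsilon \rVert_{H_A \to C(A)}$ will generally grow as $\epsilon \to 0$ because $p_\epsilon$ concentrates. One route is to establish a quantitative smoothing bound in terms of both $\lVert f \rVert_{H_A}$ and a higher Sobolev norm of $f$, and pair it with a rate for the $L^2$-convergence of $\phi_k^{(\epsilon)}$. A cleaner alternative is to work directly at the level of $P''_\epsilon$: because both $\phi_k^{(\epsilon)}$ (after passage to its continuous representative) and $\tilde\phi_k^A$ are smooth and satisfy nearly identical equations by~\eqref{eqPEpsilon}, elliptic-type a priori bounds applied to the difference upgrade $L^2$ to $C^0$ convergence uniformly in $\epsilon$, closing the argument.
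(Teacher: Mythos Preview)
Your approach is exactly what the paper intends: the corollary is stated immediately after Lemma~\ref{lemmaEig} with the sentence ``Together, the results in Lemma~\ref{lemmaEig} imply that we can approximate the eigenvalues and eigenfunctions of $\upDelta_A$ \ldots'' and no further proof is given. Chaining parts~(iii), (iv), and~(v) via the iterated limit $N\to\infty$ then $\epsilon\to 0$ is precisely the intended argument, and your treatment of the eigenvalue statement and the $N\to\infty$ step for eigenfunctions matches this.

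Where you go beyond the paper is in isolating the $\epsilon\to 0$ step for the eigenfunctions and noticing that Lemma~\ref{lemmaEig}(iv) only furnishes $L^2$ convergence $\phi_k^{(\epsilon)}\to\phi_k^A$, whereas the corollary asserts uniform convergence of the continuous representatives $P'_\epsilon\phi_k^{(\epsilon)}/\Lambda_k^{(\epsilon)}\to\tilde\phi_k^A$. You are right that this upgrade is not automatic: the $H_A\to C(A)$ norm of $P'_\epsilon$ does blow up like $\epsilon^{-m_A/2}$ as the kernel concentrates, so your first displayed term cannot be controlled by $L^2$ convergence alone without a rate. The paper does not address this point either; it simply asserts the corollary. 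Your proposed fixes (pairing a smoothing bound with a convergence rate, or exploiting that both $\phi_k^{(\epsilon)}$ and $\tilde\phi_k^A$ satisfy the same equation up to $O(\epsilon)$ via~\eqref{eqPEpsilon}) are the right kind of argument, and the second route is the cleaner one since it avoids needing an explicit rate in Lemma~\ref{lemmaEig}(iv). In short: your proof is aligned with the paper's, and the gap you flag is real but is one the paper itself leaves open rather than a defect in your reading.
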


Details on the numerical solution of the eigenvalue problems for $ P_{\epsilon,N} $ and $ \hat P_{\epsilon,N} $ are provided in~\ref{appNumEig}.

\subsection{\label{secDirichlet}Data-driven bases and the associated Dirichlet energies}    

Having established the spectral convergence results of Section~\ref{secKernelOp} (in particular, Corollary~\ref{corEig}), we are ready to specify the data-driven bases and their associated Dirichlet energies to be used in the mode decomposition and forecasting techniques of Sections~\ref{secCoherent} and~\ref{secPrediction}, respectively. In essence, given a fixed dataset of $ N $ samples, this procedure entails three steps, namely (1) selection of an appropriate value $ \epsilon( N ) $ of the kernel bandwidth parameter $ \epsilon $; (2) specification of basis functions for the finite-dimensional space $ H_{A,N} $; (3) an assignment of Dirichlet energies for the basis functions and construction of an associated Sobolev space $ H^1_{A,\epsilon,N}$ (needed for the variational formulation of the Koopman eigenvalue problem in Section~\ref{secReg}). We now describe these steps. 

\subsubsection{\label{secTuning}Kernel bandwidth selection}
   
We select $ \epsilon $ using the method developed in \cite{CoifmanEtAl08,BerryHarlim16}, which was also employed in \cite{BerryEtAl15,GiannakisEtAl15,Giannakis17,DasGiannakis17}. The method tunes $ \epsilon $ by examining the behavior of the function $ S( \epsilon ) = \int_{A\times A} K_\epsilon \, d\alpha \otimes d\alpha $, which is approximated in the case of finite data by $ S_N( \epsilon ) = \sum_{i,j=0}^{N-1} K_\epsilon( a_i, a_j ) / N^2 $. As $ \epsilon \searrow 0 $ and $ \epsilon \to \infty $, that function tends to the constant values 0 and 1, respectively. Moreover, as shown in \cite{CoifmanEtAl08,BerryHarlim16}, there exists a regime of intermediate  $ \epsilon $ values where $ S( \epsilon ) $ exhibits the scaling $ S( \epsilon ) \simeq C ( 2 \pi \epsilon )^{m_A/2} $, where $ C $ is a constant independent of the intrinsic dimension $m_A $ of $ A $. In particular, in that regime, $ T(\epsilon) := \frac{ d\log S( \epsilon ) }{ d \log \epsilon } $ is approximately equal to $ m_A / 2 $. In \cite{BerryHarlim16}, the quantity $ T( \epsilon ) $ is interpreted as a ``resolution'' of the kernel, and they suggest choosing $ \epsilon $ as the maximizer of that function, yielding also an estimate of $ m_A $. In what follows, we select $ \epsilon $ using that criterion, approximating $ T( \epsilon ) $ through finite differences of $ S_N( \epsilon ) $ on a logarithmic $ \epsilon $ grid. Further details on this procedure can be found in \cite{BerryEtAl15,Giannakis17}.

\subsubsection{\label{secBasis}Basis functions}

In light of the results of Section~\ref{secKernelOp}, obvious candidates for data-driven basis functions of $ H_{A,N} $ are the eigenfunctions of $ P_{\epsilon,N} $ and $ \hat P_{\epsilon,N} $, i.e., $ \{ \phi_0^{(\epsilon,N)}, \ldots, \phi_{N-1}^{(\epsilon,N)} \} $ and $ \{ \psi_0^{(\epsilon,N)}, \ldots, \psi_{N-1}^{(\epsilon,N)} \} $, respectively. According to Corollary~\ref{corEig}, both of these choices lead to consistent approximations of the Laplace-Beltrami eigenfunctions $ \phi^A_k $ in the limit of infinitely many samples and vanishing bandwidth parameter, but away from that limit they each have their advantages and disadvantages. 

More specifically, the main advantage of the $ \{\psi_k^{(\epsilon,N)} \}$ basis is that it is orthonormal on $H_{A,N} $. Its main disadvantage is that it is not an eigenbasis of a Markov operator, and as a result, estimating the associated Dirichlet energies using the eigenvalues $ \Lambda_k^{(\epsilon,N)} $ from diffusion maps would lead to certain biases (e.g., $ \psi_0^{(\epsilon,N)} $ corresponding to eigenvalue $ \Lambda_0^{(\epsilon,N)} = 1 $ would be assigned zero Dirichlet energy while being non-constant; the latter follows from~\eqref{eqPsiO} and the fact that $ \phi_0^{(\epsilon,N)} $ is constant whereas $  \beta_{\epsilon,N}$ is non-constant). Working with the $ \{ \phi_k^{(\epsilon,N)} \} $ basis avoids this issue, but only at the expense of losing orthogonality of the basis functions. 

One way of alleviating the latter issue is to observe that the $ \phi_k^{(\epsilon,N)} $ are orthonormal on the Hilbert space $ H_{A,\epsilon,N} $ associated with the stationary distribution of the associated Markov semigroup $ P_{\epsilon,N} $ (see Section~\ref{secKernelOp}). That is, the   $ \phi_k^{(\epsilon,N)} $ are orthonormal with respect to the inner product  
\begin{equation}
  \label{eqBeta}
  \langle f_1, f_2 \rangle_{H_{A,\epsilon,N}} = \int_A f_1^* f_2  \, d\alpha_{\epsilon,N}, \quad  \alpha_{\epsilon,N} = \frac{ 1 }{ N } \sum_{i=1}^N \beta_{\epsilon,N}( a_n ) \delta_{a_n}, 
\end{equation} 
where $ \beta_{\epsilon,N} $ is the stationary density of $ P_{\epsilon,N} $ introduced in Section~\ref{secKernelOp}. This suggests that we can formulate our data-driven techniques in this space instead of $ H_{A,N} $. However, in doing so one must remember that taking the limit $ N \to \infty $ at fixed $ \epsilon > 0 $ will not lead to convergence to the inner product of the Hilbert space $ H_A $ associated with the invariant measure $\alpha$, but rather to the inner product $ \langle f_1, f_2 \rangle_{H_{A,\epsilon}} = \int_A  f_1^* f_2  \beta_\epsilon \, d\alpha $ of a different Hilbert space, $ H_{A,\epsilon} $, associated with the stationary density $ \beta_\epsilon $  of $ P_\epsilon $ (in particular, the Koopman group is non-unitary in this Hilbert space). However, the limit $ N \to \infty $ can be taken in conjunction with a decreasing function $ \epsilon( N ) $ (e.g., via the tuning procedure in Section~\ref{secTuning}), so that  $ \langle f_1, f_2 \rangle_{H_{A,\epsilon(N)}} $ converges to  $ \langle f_1, f_2 \rangle_{H_{A}} $. In particular, in this limit, the functions $ \beta_{\epsilon,N} $ and $ \beta_{\epsilon} $ both converge to $ 1_A $ (see also Remark~\ref{rkQD}).  

Due to the important role that Dirichlet energies play in our schemes, in the numerical experiments presented in Section~\ref{secL96} ahead, we always work with the $ \{ \phi_k^{(\epsilon,N)} \} $ basis and the weighted inner product space $ H_{A,\epsilon,N} $. However, in practice we find that the range of values of $ \beta_{\epsilon,N} $ is usually not too large (e.g., in the applications of Section~\ref{secL96} it does not vary by more than $ \sim 10\% $), and therefore comparable results can also be obtained using the orthonormal $  \{ \psi_k^{(\epsilon,N)} \} $ basis of $ H_{A,N}$.

 For later convenience, we note that since $ H_{A,\epsilon,N} $ is finite-dimensional, the pairwise products $ \phi_i^{(\epsilon,N)} \phi_j^{(\epsilon,N)} $ of the basis elements are also in $ H_{A,\epsilon,N} $. Thus, we can write $ \phi_i^{(\epsilon,N)} \phi_j^{(\epsilon,N)} = \sum_{k=0}^{N-1} c_{ijk} \phi_k^{(\epsilon,N)}$, where the coefficients
\begin{equation}
  \label{eqStructureConstants}
  c_{ijk} = \langle \phi_k^{(\epsilon,N)}, \phi^{(\epsilon,N)}_i \phi^{(\epsilon,N)}_j \rangle_{H_{A,\epsilon,N}} 
\end{equation}     
can be thought of as ``structure constants'' for the algebra of equivalence classes of functions $ H_{A,\epsilon,N} $. Since the $ \phi^{(\epsilon,N)}_k $ are all real, the $ c_{ijk} $ are invariant under arbitrary permutations of $ i, j, k $. Moreover, because $ \phi^{(\epsilon,N)}_0 $ is a constant, we have $ c_{0jk} = \delta_{jk} $. Analogous coefficients can also be defined for the $ \{ \phi^{(\epsilon)}_k \} $ basis of $ H_{A,\epsilon} $ by continuity of the $ \phi^{(\epsilon_k)} $, but note that  $H_{A,\epsilon} $ is not closed under multiplication.  

\subsubsection{\label{secDirichletE}Dirichlet energies}

We now discuss how to construct data-driven analogs of the Dirichlet form and the associated Laplace-Beltrami operator needed to implement the variatonal formulation of the eigenvalue problem for coherent patterns in Section~\ref{secGalerkin}. First, before stating definitions for these quantities suppose that we have at our disposal quantities $ \eta^{(\epsilon,N)}_0, \ldots, \eta^{(\epsilon,N)}_{N-1} $ and  $ \eta_0^{(\epsilon,N)}, \eta_1^{(\epsilon,N)}, \ldots $ approximating the Laplace-Beltrami eigenvalues $ \eta_k^A $. In particular, we assume that the $ \eta_k^{(\epsilon,N)} $ and $ \eta_k^{(\epsilon)} $ are ordered in a non-decreasing sequence, that $ \eta_0^{(\epsilon,N)} = \eta_0^{(\epsilon)} = 0 $, and that all other $ \eta_k^{(\epsilon,N)} $ and $ \eta_k^{(\epsilon)} $ are positive and finite. We also assume that $ \eta_k^{(\epsilon,N)} \stackrel{\text{a.s.}}{\longrightarrow} \eta_k^{(\epsilon)}$ for fixed $k $ and $ \epsilon $ by ergodicity. Then, for $ p\in \{ 0,1,\ldots \} $, we define the Sobolev spaces
\begin{gather*}
  H^p_{A,N,\epsilon} = \left\{ \vec f = \sum_{k=0}^{N-1} c_k \phi_k^{(\epsilon,N)} \in H_{A,N,\epsilon}: \sum_{j=0}^p \sum_{k=0}^{N-1} \left( \eta_k^{(\epsilon,N)} \right)^j\lvert c_k\rvert^2 < \infty \right\}, \\
  H^p_{A,\epsilon} = \left\{ \bar f = \sum_{k=0}^\infty c_k \phi_k^{(\epsilon)}\in H_{A,\epsilon}: \sum_{j=0}^p \sum_{k=0}^{\infty} \left( \eta_k^{(\epsilon)} \right)^j\lvert c_k \rvert^2 < \infty \right\}.
\end{gather*}          
Clearly, $ H^p_{A,N,\epsilon} = H_{A,N,\epsilon} $ since these spaces are finite-dimensional (though we prefer to maintain their distinction for consistency with the infinite-dimensional case), but $ H^p_{A,\epsilon} $ are strict subspaces of $ H_{A,\epsilon} $. However, we have:

\begin{lemma}
  \label{lemmaDense}The spaces $ H^p_{A,\epsilon} $ are dense in $ H_{A,\epsilon} $. 
\end{lemma}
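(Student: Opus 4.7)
The plan is to exhibit any $\bar f \in H_{A,\epsilon}$ as the $H_{A,\epsilon}$-limit of a sequence of finite linear combinations of the basis elements $\phi_k^{(\epsilon)}$; since any finite sum automatically lies in every $H^p_{A,\epsilon}$, this gives density immediately.

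First I would verify that $\{\phi_k^{(\epsilon)}\}_{k=0}^\infty$ is an orthonormal basis of $H_{A,\epsilon}$. By Lemma~\ref{lemmaP}(iv), $\hat P_\epsilon$ is a compact self-adjoint operator on $H_A$, so the spectral theorem for compact self-adjoint operators yields an orthonormal basis $\{\psi_k^{(\epsilon)}\}$ of $H_A$ consisting of its eigenfunctions (after, if necessary, choosing an orthonormal basis of $\ker \hat P_\epsilon$). The multiplication map $M \colon f \mapsto \beta_\epsilon^{-1/2} f$ is bounded with bounded inverse since, by Remark~\ref{rkQD} and the construction of $\beta_\epsilon$, the density $\beta_\epsilon$ is smooth, positive, and bounded away from zero. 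A direct computation using the definition of $\langle \cdot, \cdot \rangle_{H_{A,\epsilon}}$ in~\eqref{eqBeta} gives $\langle M f_1, M f_2 \rangle_{H_{A,\epsilon}} = \langle f_1, f_2 \rangle_{H_A}$, so $M$ is unitary from $H_A$ onto $H_{A,\epsilon}$. Because $\phi_k^{(\epsilon)} = M \psi_k^{(\epsilon)}$ by~\eqref{eqPsiO}, the images $\{\phi_k^{(\epsilon)}\}$ form an orthonormal basis of $H_{A,\epsilon}$.

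Given the basis property, any $\bar f \in H_{A,\epsilon}$ admits the generalized Fourier expansion $\bar f = \sum_{k=0}^\infty c_k \phi_k^{(\epsilon)}$ with Parseval identity $\lVert \bar f \rVert^2_{H_{A,\epsilon}} = \sum_{k=0}^\infty \lvert c_k \rvert^2$. Setting $\bar f_K = \sum_{k=0}^K c_k \phi_k^{(\epsilon)}$, Parseval gives $\lVert \bar f - \bar f_K \rVert^2_{H_{A,\epsilon}} = \sum_{k > K} \lvert c_k \rvert^2 \to 0$. Since $\bar f_K$ has only finitely many nonzero coefficients, and the $\eta_k^{(\epsilon)}$ are finite by assumption, the weighted sum $\sum_{j=0}^p \sum_{k=0}^\infty (\eta_k^{(\epsilon)})^j \lvert c_k^{(K)} \rvert^2$ reduces to a finite sum for every $p$, so $\bar f_K \in H^p_{A,\epsilon}$. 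This furnishes the required approximating sequence.

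The main obstacle is really the preliminary completeness step, which rests on the compact self-adjointness of $\hat P_\epsilon$ from Lemma~\ref{lemmaP}(iv) and on checking that the similarity $M$ between $\hat P_\epsilon$ and $P_\epsilon$ is unitary in the appropriate weighted inner product; once that is secured, the density reduces to a standard Parseval truncation argument.
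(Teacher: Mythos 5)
Your proof is correct and its core argument is the same as the paper's: truncate the generalized Fourier expansion of $\bar f$ in the $\{\phi_k^{(\epsilon)}\}$ basis and observe that each partial sum lies in every $H^p_{A,\epsilon}$ because the weighted sum defining the Sobolev norm reduces to a finite sum. The paper simply invokes this truncation directly, treating the fact that $\{\phi_k^{(\epsilon)}\}$ is an orthonormal basis of $H_{A,\epsilon}$ as already established in Section~\ref{secKernelOp}--\ref{secBasis}; your preliminary step making the unitarity of $f\mapsto\beta_\epsilon^{-1/2}f$ explicit and deducing completeness from the spectral theorem for the compact self-adjoint operator $\hat P_\epsilon$ is a sound (and arguably helpful) elaboration, but it does not change the underlying method.
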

\begin{proof}
  Let $ f $ be an arbitrary function in $ H_{A,\epsilon}$, and let $ \delta $ be any positive number. Then, there exists an integer $ n_\delta $ such that for all $ n \geq n_\delta $ the function $ f_n = \sum_{k=0}^{n} \langle \phi_k^{(\epsilon)}, f \rangle \phi_k^{(\epsilon)} $ satisfies $ \lVert f - f_n \rVert < \delta $, and moreover  $  \sum_{j=0}^p \sum_{k=0}^{\infty} ( \eta_k^{(\epsilon)} )^j\lvert \langle \phi_k^{(\epsilon)}, f_n \rangle_{H_{A,\epsilon}} \rvert^2 = \sum_{j=0}^p \sum_{k=0}^{n} ( \eta_k^{(\epsilon)} )^j\lvert \langle \phi_k^{(\epsilon)}, f_n \rangle_{H_{A,\epsilon}} \rvert^2 $ is finite so $ f_n \in H^p_{A,\epsilon} $. 
\end{proof}

Let now $ \vec f_j = \sum_{k=0}^{N-1} c_{kj} \phi_k^{(\epsilon,N)} $ and $ \bar f_j = \sum_{k=0}^{\infty} \tilde c_{kj} \phi_k^{(\epsilon,N)} $  be arbitrary functions in $ H^p_{A,\epsilon,N} $ and $ H^p_{A,\epsilon} $, respectively. As usual, we equip these spaces with the inner products  
\begin{displaymath}
  \langle \vec f_1, \vec f_2 \rangle_{H^p_{A,\epsilon,N}} = \sum_{j=0}^p \sum_{k=0}^{N-1}  \left( \eta_k^{(\epsilon,N)} \right)^j c_{k1}^* c_{k2},  \quad \langle \bar f_1, \bar f_2 \rangle_{H^p_{A,\epsilon}} = \sum_{j=0}^p \sum_{k=0}^{\infty}  \left( \eta_k^{(\epsilon)} \right)^j  \tilde c_{k1}^* \tilde c_{k2}, 
\end{displaymath} 
and the norms $ \lVert \vec f_1 \rVert_{H^p_{A,\epsilon,N}} = \sqrt{ \langle \vec f_1, \vec f_1 \rangle_{H^p_{A,\epsilon,N}} }$ and  $ \lVert \bar f_1 \rVert_{H^p_{A,\epsilon}} = \sqrt{ \langle \bar f_1, \bar f_1 \rangle_{H^p_{A,\epsilon}} }$. In addition, for $ p = 1 $, we define the sesquilinear forms $ E_{A,\epsilon,N} : H^1_{A,\epsilon,N} \times H^1_{A,\epsilon,N} \mapsto \mathbb{ C }$ and $ E_{A,\epsilon} : H^1_{A,\epsilon} \times H^1_{A,\epsilon} \mapsto \mathbb{ C }$,
\begin{equation}
  \label{eqEEpsilon}
  E_{A,\epsilon,N}( \vec f_1, \vec f_2 ) = \sum_{k=0}^{N-1} \eta_k^{(\epsilon,N)}c^*_{1k} c_{2k}, \quad E_{A,\epsilon}( \bar f_1, \bar f_2 ) = \sum_{k=0}^{\infty} \eta_k^{(\epsilon)} \tilde c^*_{1k} \tilde c_{2k}, 
\end{equation}
and for $ p = 2 $ we define the operators $ \upDelta_{\epsilon,N} : H^2_{A,N,\epsilon} \mapsto H_{A,N,\epsilon} $ and $ \upDelta_{A,\epsilon} : H^2_{A,\epsilon} \mapsto H_{A,\epsilon} $ with
\begin{equation}
  \label{eqDeltaEpsilon}
  \upDelta_{A,\epsilon,N} \vec f_1 = \sum_{k=0}^{N-1} \eta_k^{(\epsilon,N)} c_{1k} \phi_k^{(\epsilon,N)}, \quad  \upDelta_{A,\epsilon} \bar f_1 = \sum_{k=0}^{\infty} \eta_k^{(\epsilon)} \tilde c_{1k} \phi_k^{(\epsilon)}.
\end{equation} 
These forms and operators have the following properties: 
\begin{prop}
  \label{propDelta}
  (i) $ E_{A,\epsilon,N}$ and $ E_{\epsilon} $ obey the bounds 
\begin{displaymath}
  E_{A,\epsilon,N}( \vec f_1, \vec f_2 ) \leq \lVert \vec f_1 \rVert_{H^1_{A,\epsilon,N}}  \lVert \vec f_2 \rVert_{H^1_{A,\epsilon,N}}, \quad E_{A,\epsilon}( \bar f_1, \bar f_2 ) \leq \lVert \bar f_1 \rVert_{H^1_{A,\epsilon}}  \lVert \bar f_2 \rVert_{H^1_{A,\epsilon}}, 
\end{displaymath}
respectively.

  (ii) $ \upDelta_{A,\epsilon,N}$ and $ \upDelta_{A,\epsilon} $ are self-adjoint. Moreover, $ -\upDelta_{A,\epsilon,N} $ and $ - \upDelta_{A,\epsilon} $ are dissipative; i.e., 
  \begin{displaymath}
    \Real \langle \vec f, -\upDelta_{A,\epsilon,N} \vec f \rangle_{H_{A,\epsilon,N}} \leq 0, \quad \Real \langle \bar f, -\upDelta_{A,\epsilon} \bar f \rangle_{H_{A,\epsilon}} \leq 0.
  \end{displaymath}
\end{prop}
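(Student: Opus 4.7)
The strategy is to work directly in the eigenbases $\{\phi_k^{(\epsilon,N)}\}$ and $\{\phi_k^{(\epsilon)}\}$, which diagonalize $\upDelta_{A,\epsilon,N}$ and $\upDelta_{A,\epsilon}$ by construction. The first step is to verify that these bases are orthonormal with respect to the stationary-density-weighted inner products of $H_{A,\epsilon,N}$ and $H_{A,\epsilon}$. This follows from~\eqref{eqPsiO}: since $\phi_k = \beta^{-1/2}\psi_k$ with $\psi_k$ orthonormal in the unweighted spaces, the density factor $\beta$ appearing in the measure $\alpha_{\epsilon,N}$ (cf.~\eqref{eqBeta}) exactly cancels the $\beta^{-1}$ arising from $|\phi_k|^2$, restoring orthonormality. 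With this orthonormality in hand, both parts of the proposition reduce to elementary statements about diagonal operators with non-negative real eigenvalues.

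For part (i), I would expand the arguments $\vec f_1,\vec f_2$ in the orthonormal eigenbasis so that $E_{A,\epsilon,N}(\vec f_1,\vec f_2)$ becomes a weighted bilinear pairing of the Fourier coefficients with weights $\eta_k^{(\epsilon,N)}$. Applying Cauchy--Schwarz on $\ell^2$ with the weight $\left(\eta_k^{(\epsilon,N)}\right)^{1/2}$ distributed symmetrically between the two factors yields a bound by $\sum_k \eta_k^{(\epsilon,N)}|c_k|^2$ for each argument, which is in turn dominated by $\lVert\vec f\rVert_{H^1_{A,\epsilon,N}}^2$ because $p=1$ in the defining sum of the $H^1$ norm contributes the additional non-negative term from $j=0$. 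The argument for $E_{A,\epsilon}$ is verbatim; the only extra ingredient is convergence of the infinite series, which is built into the defining coefficient decay condition for membership in $H^1_{A,\epsilon}$.

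For part (ii), self-adjointness is immediate from diagonalization in an orthonormal eigenbasis with real eigenvalues: the form $\langle \vec f_1, \upDelta_{A,\epsilon,N}\vec f_2 \rangle_{H_{A,\epsilon,N}}$ collapses to the manifestly Hermitian-symmetric expression $\sum_k \eta_k^{(\epsilon,N)} c_{k1}^* c_{k2}$. Dissipativity of $-\upDelta_{A,\epsilon,N}$ then reduces to the inequality $-\sum_k \eta_k^{(\epsilon,N)}|c_k|^2 \leq 0$, which holds because every $\eta_k^{(\epsilon,N)}$ is non-negative by the assumption that the eigenvalues are ordered non-decreasingly with $\eta_0^{(\epsilon,N)}=0$. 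In the infinite-dimensional case I would additionally invoke Lemma~\ref{lemmaDense} (with $p=2$) to confirm that $D(\upDelta_{A,\epsilon}) = H^2_{A,\epsilon}$ is dense in $H_{A,\epsilon}$, so that the self-adjointness statement is meaningful as an operator identity and not merely as a bilinear-form identity on a subspace.

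I do not anticipate any serious obstacle. The one point that warrants care is the orthonormality of $\{\phi_k^{(\epsilon,N)}\}$ in the weighted inner product, since this is not recorded as a stand-alone lemma but must be extracted from~\eqref{eqPsiO} together with the definition~\eqref{eqBeta} of $\alpha_{\epsilon,N}$. Once that identification is made, everything else is routine spectral linear algebra on a non-negative diagonal operator.
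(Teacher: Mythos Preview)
Your approach is essentially the paper's: both work directly in the eigenbasis $\{\phi_k^{(\epsilon,N)}\}$ (resp.\ $\{\phi_k^{(\epsilon)}\}$), use Cauchy--Schwarz together with the trivial domination $\lVert\cdot\rVert_{H_{A,\epsilon,N}}\leq\lVert\cdot\rVert_{H^1_{A,\epsilon,N}}$ for part~(i), and read off symmetry and dissipativity from the diagonal form with nonnegative real eigenvalues for part~(ii).

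One point to tighten: in the infinite-dimensional case, showing that $\langle\bar f_1,\upDelta_{A,\epsilon}\bar f_2\rangle_{H_{A,\epsilon}}=\sum_k\eta_k^{(\epsilon)}\tilde c_{k1}^*\tilde c_{k2}$ is Hermitian-symmetric and that $H^2_{A,\epsilon}$ is dense (Lemma~\ref{lemmaDense}) establishes only that $\upDelta_{A,\epsilon}$ is \emph{symmetric} with a well-defined adjoint, i.e.\ $\upDelta_{A,\epsilon}\subseteq\upDelta_{A,\epsilon}^*$. Your sentence ``so that the self-adjointness statement is meaningful as an operator identity'' reads as if density alone closes the gap, but it does not: you still need the reverse domain inclusion $D(\upDelta_{A,\epsilon}^*)\subseteq H^2_{A,\epsilon}$. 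The paper addresses this (briefly) by noting that any $\bar f_1$ for which $\bar f_2\mapsto\langle\bar f_1,\upDelta_{A,\epsilon}\bar f_2\rangle_{H_{A,\epsilon}}$ is bounded must satisfy $\sum_k(\eta_k^{(\epsilon)})^2|\tilde c_{k1}|^2<\infty$, i.e.\ $\bar f_1\in H^2_{A,\epsilon}$. This is the standard argument that a multiplication operator on its maximal domain is self-adjoint; add one line making it explicit and your proof is complete.
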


A proof of this Proposition can be found in~\ref{appPropDelta}. In Section~\ref{secDataDrivenM} ahead, $ E_{A,\epsilon,N} $ and $ E_{A,\epsilon} $ will be used to construct data-driven analogs of the Dirichlet form $ E $ employed in the Koopman eigenvalue problem for coherent patterns in Section~\ref{secGalerkin}. It is worthwhile noting that Proposition~\ref{propDelta}(ii) in conjunction with \cite{Phillips59} implies that $ \upDelta_{A,\epsilon,N} $ and $ \upDelta_{A,\epsilon} $ (or their maximal self-adjoint extensions) are generators of strongly-continuous contraction semigroups, $ \{ \mathcal{ P }_{\epsilon,N,s} \}_{s\geq 0} $ and  $ \{ \mathcal{ P }_{\epsilon,s} \}_{s\geq 0} $, respectively, with $ \mathcal{ P }_{\epsilon,N,s} = e^{-s\upDelta_{A,\epsilon,N}} $ and $ \mathcal{ P }_{\epsilon,s} = e^{-s \upDelta_{A,\epsilon}} $. For appropriate choices of $ \eta_k^{(\epsilon,N)} $ and $ \eta_k^{(\epsilon)}$ (e.g.,~\eqref{eqEtaEpsN} below), these semigroups approximate the heat semigroup $ \{ \mathcal{ P }_s \}_{s \geq 0} $, $ \mathcal{ P }_s = e^{-s \upDelta_A } $,  associated with the Laplace-Beltrami operator $ \upDelta_A $. 

We now return to the question of defining $ \eta_k^{(\epsilon,N)} $ and $ \eta_k^{(\epsilon)} $. As with the definition of the data-driven basis functions in Section~\ref{secBasis}, here too there are options. Below, we list three possibilities for $ \eta_k^{(\epsilon,N)} $, 
\begin{equation}
  \label{eqEtaEpsN}
  \eta_k^{(\epsilon,N)} = \frac{  1 - \Lambda_k^{(\epsilon,N)} }{  \epsilon }, \quad \eta_k^{(\epsilon,N)} = - \frac{ \log \Lambda_k^{(\epsilon,N)} }{  \epsilon }, \quad
  \eta_k^{(\epsilon,N)} = \frac{  \left( \Lambda_k^{(\epsilon,N)} \right)^{-1} - 1 }{ \epsilon },
\end{equation} 
which we refer to as option~1, 2, and 3 respectively. By Lemma~\ref{lemmaEig}(iii), as $ N \to \infty $, these quantities converge to 
\begin{equation}
  \label{eqEtaEps}
  \eta_k^{(\epsilon)} = \frac{  1 - \Lambda_k^{(\epsilon)} }{  \epsilon }, \quad \eta_k^{(\epsilon)} = - \frac{ \log \Lambda_k^{(\epsilon)} }{  \epsilon }, \quad
  \eta_k^{(\epsilon)} = \frac{  \left( \Lambda_k^{(\epsilon)} \right)^{-1} - 1 }{ \epsilon },
\end{equation}
respectively. 

\begin{rk}
  In our definitions of $ \eta_k^{(\epsilon,N)} $ and $ \eta_k^{(\epsilon)} $ via options~2 and~3, we have tacitly assumed that the $ \Lambda_k^{(\epsilon,N)}  $ and $ \Lambda_k^{(\epsilon)} $ are all positive, or, equivalently, that $ P_{\epsilon,N} $ and  $ P_\epsilon $ are positive operators. In the event that these operators have negative eigenvalues, one can modify the definitions of $ \eta_k^{(\epsilon,N)} $ and $ \eta_k^{(\epsilon)} $ for $ k > k^*$ (with $ k^* $ the index of the smallest positive eigenvalue) to a positive, increasing sequence with no accumulation points. For example, one could set $ \eta^{(\epsilon)}_{k>k^*} = \eta_{k^*}^{(\epsilon)} ( k / k^* )^{2/m_A} $, which is consistent with Weyl's law for the asymptotic growth of Laplace-Beltrami eigenvalues on compact Riemannian manifolds. It should be noted that this modification is mainly formal since for any finite spectral truncation parameter $\ell_A $ used in our schemes there exist $ N_0 $ and $ \epsilon_0 $ such that, for all $ N > N_0 $ and $ \epsilon < \epsilon_0 $,  $ \Lambda_{\ell_A-1}^{(\epsilon,N)} $ and $ \Lambda_{\ell_A-1}^{(\epsilon)} $ are positive. For the rest of the paper, we will  treat the $ \Lambda_k^{(\epsilon,N)} $ and $ \Lambda_k^{(\epsilon)} $ as positive eigenvalues. 
\end{rk}

With option~1, the $ \eta_k^{(\epsilon,N)} $ are equal to eigenvalues of the operator $  ( I - P_{\epsilon,N} ) / \epsilon $, i.e., the normalized graph Laplacian widely used in machine learning \cite{BelkinNiyogi03,CoifmanLafon06,VonLuxburgEtAl08,BerrySauer16}. It is a well-known fact that for an appropriate decreasing sequence $ \epsilon( N ) $, the $ \eta_k^{(\epsilon(N),N)} $ from this option converge to $ \eta_k^A $; that is,  $ \eta_k^{(\epsilon(N),N)} $ are consistent estimators of the Laplace-Beltrami eigenvalues. At fixed $ \epsilon $, the limit quantities $ \eta_k^{(\epsilon)} $, and hence the corresponding operator $ ( I - P_\epsilon ) / \epsilon $, are bounded (in particular, $ \eta_k^{(\epsilon)} \leq 2 / \epsilon $ since $ P_\epsilon $ is Markov). The $ \eta_k^{(\epsilon,N)} $ from option~2 also converge to the true Laplace-Beltrami eigenvalues for a decreasing sequence $ \epsilon( N) $ in accordance with Corollary~\ref{corEig}, but notice that since $ \lim_{k\to\infty}\Lambda^{(\epsilon)}_k = 0 $, in this case $ \eta_k^{(\epsilon)} $ and $ \upDelta_{A,\epsilon} $ become unbounded as $ N \to \infty $. Finally, the $ \eta_k^{(\epsilon,N)} $ from option~3 converge to $ \eta_k^A $ (again, for an appropriate decreasing sequence $ \epsilon( N ) $) under the additional assumption that the remainder $ R_k( \epsilon ) $ in the Taylor expansion $  - \log \Lambda_k^{(\epsilon)} =  ( \Lambda_k^{(\epsilon)} ) ^{-1} - 1 + R_k( \epsilon ) $ is asymptotically smaller than $ O( \epsilon ) $ as $ \epsilon \to 0 $. At finite $ N $ and nonzero $ \epsilon $ we have $( \Lambda_k^{(\epsilon,N)} )^{-1}  - 1 \geq \log \Lambda_k^{(\epsilon,N)} $, and therefore the Dirichlet energies from this option dominate those from option~2. Moreover the ratio $ ( (\Lambda_k^{(\epsilon,N)})^{-1} - 1 ) / ( - \log \Lambda_k^{(\epsilon,N)} ) $ increases with $ k $.

Note now that each of the definitions of $ \eta_k^{(\epsilon,N)} $ in~\eqref{eqEtaEpsN} will lead to a different regularized Koopman eigenvalue problem for coherent spatiotemporal patterns (after employing these quantities to define Dirichlet forms for functions on $ M $, as described in Section~\ref{secDataDrivenM} below).  At a minimum, whatever the specific choice of $ \eta_k^{(\epsilon,N)} $ (and hence $ \eta_k^{(\epsilon)} $) is, the associated Sobolev space $ H^1_\epsilon $ should have the property that the sesquilinear form $\langle \cdot, u( \cdot ) \rangle $ associated with the generator of the Koopman group on $ A $ is bounded on  $ H^1_{A,\epsilon} \times H^1_{A,\epsilon} $  (see Section~\ref{secGalerkin}), at least in the limit $ \epsilon \to 0 $. While all of the definitions for $ \eta_k^{(\epsilon,N)} $ in~\eqref{eqEtaEpsN} meet that asymptotic requirement, they will of course behave differently at finite $ N $ and nonzero $ \epsilon $.  In particular, in light of the above arguments, we expect the $ \eta_k^{(\epsilon,N)} $ from option~3 to provide the strongest and most selective regularization among the three options, followed by option~2, and then option~1. Indeed, in experiments we found that the Dirichlet forms from option~3 generally led to higher-quality numerical coherent patterns, so in what follows we nominally work with that option.

\subsection{\label{secDataDrivenM}Data-driven techniques for skew-product systems}

We now construct analogs of the schemes of coherent pattern extraction and nonparametric prediction of Section~\ref{secKoopman} using the basis functions and Dirichlet forms from Section~\ref{secDirichlet}. Throughout this Section, we use the symbols $ \phi_k^{A,\epsilon,N} $ and $ \eta_k^{A,\epsilon,N} $ in place of $ \phi_k^{(\epsilon,N)} $ and $ \eta_k^{(\epsilon,N)} $, respectively, in order to avoid notational ambiguities when building tensor product bases on the product space $ M = A \times X $. 

\subsubsection{\label{secApproximateKoop}Approximate Koopman generator on the Hilbert space associated with the sampling measure}

As stated in Section~\ref{secKernelOp}, in practical applications involving finite numbers of samples we do not have access to the Hilbert space $H_A $ where the Koopman group $ \{ U_t \} $ is naturally defined, and we work instead with Hilbert spaces such as $ H_{A,N} $ and $ H_{A,\epsilon,N} $ associated with the sampling measure $ \alpha_N$. While the latter two spaces form natural settings for defining kernel integral operators and their associated eigenfunctions and Dirichlet forms, they are unfortunately not suitable for defining a Koopman operators analogous to $ U_t $ via composition with the flow map. In essence, this is because the dynamical flow $ \Phi_t $ does not preserve null sets with respect to  $ \alpha_N $; that is, $ \alpha_N( S ) = 0 $ for some measurable set $ S \subset A $ does not imply that $ \alpha_N( \Phi_t^{-1}( S ) ) $ vanishes too. More specifically, Koopman operators on $ H_{A,N} $ and $ H_{A,\epsilon,N} $ would have to act on equivalence classes $ \vec f $ of functions  that have common values on the sampled states $ a_n $, but given two representatives $ f_1 $ and $ f_2 $ in $ \vec f $ the functions $ f_1 \circ \Phi_t $ and $ f_2 \circ \Phi_t $ could lie in different equivalence classes, resulting in ``$U_t \vec f = \vec f \circ \Phi_t $'' being ill-defined.  

Despite this issue, it is nevertheless possible to define operators on $ H_{A,N} $ and $ H_{A,\epsilon,N} $ that approximate $ U_t $ and the generator $ \tilde u $ without explicit use of the flow map. Here, we focus on approximations of the generator $ \tilde u $ that appears in the schemes of Section~\ref{secKoopman}. As a concrete example of such an approximation, consider the operator $ u_{N,\tau} : H_{A,\epsilon,N} \mapsto H_{A,\epsilon,N} $ (here, $ \tau $ is the sampling interval; see Section~\ref{secPrelim}), whose action on $ \vec f = ( f_0, f_1, \ldots, f_{N-1} ) $ is given by $ u_{N,\tau} \vec f = \vec g = ( g_0, \ldots, g_{N-1} ) $ with 
\begin{equation}
  \label{eqFD}
  g_0 = g_{N-1} = 0 \quad \text{and} \quad g_n = ( f_{n+1} - f_{n-1} ) / ( 2 \tau ), \quad  1 \leq n \leq N - 2. 
\end{equation}
Intuitively, $ u_{N,\tau} $, can be thought of as a (second-order, central) finite-difference approximation of $ \tilde u( f ) = \lim_{t\to0}( U_t f - f ) / t $ for $ f \in D( \tilde u ) $. Such finite-difference approximations were previously used in \cite{Giannakis15} to construct $ \tilde u$-dependent kernels and in \cite{GiannakisEtAl15,Giannakis17,DasGiannakis17} for approximating Koopman generators as done here.  

The following Lemma (proved in~\ref{appUConsistency}) establishes a consistency property of the sesquilinear form associated with the approximate generator, evaluated on the data-driven basis elements:

\begin{lemma}
  \label{lemmaUConsistency}
  Let $ \phi_i^{A,\epsilon,N}, \phi_j^{A,\epsilon,N} \in H_{A,\epsilon,N} $ be any two eigenfunctions of $ P_{\epsilon,N} $ from Section~\ref{secKernelOp} converging, in the sense of Corollary~\ref{corEig}, to eigenfunctions $ \phi_i^A, \phi_j^A \in H_A $ of $ \upDelta_A $. Then,
  \begin{equation}
    \label{eqUConsistency}
    \lim_{\tau \to 0} \lim_{\epsilon\to 0} \lim_{N\to\infty} \langle \phi_i^{A,\epsilon,N}, u_{N,\tau}( \phi_j^{A,\epsilon,N} ) \rangle_{H_{A,\epsilon,N}} = \langle \phi_i^A, \tilde u( \phi^A_j ) \rangle_{H_A}. 
  \end{equation}
  Moreover, the order of the limits $ \epsilon \to 0 $ and $ \tau \to 0 $ can be interchanged.  
\end{lemma}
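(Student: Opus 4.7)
The plan is to peel off the three limits in the prescribed order, converting the discrete finite-difference expression into one involving the Koopman group and then into an inner product against the generator.

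The key initial observation is that because $a_n = \Phi_{n\tau}(a_0)$, we can evaluate $\phi_j^{A,\epsilon,N}$ at $a_{n\pm 1}$ using its smooth continuous representative $\tilde\phi_j^{A,\epsilon,N} = (\Lambda_j^{A,\epsilon,N})^{-1} P'_{\epsilon,N}\phi_j^{A,\epsilon,N} \in C^\infty(A)$ supplied by Lemma~\ref{lemmaEig}(ii): $\phi_j^{A,\epsilon,N}(a_{n\pm 1}) = \tilde\phi_j^{A,\epsilon,N}(\Phi_{\pm\tau}(a_n)) = (U_{\pm\tau}\tilde\phi_j^{A,\epsilon,N})(a_n)$. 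Hence, except for the boundary entries at $n=0$ and $n=N-1$ which contribute $O(1/N)$ to any subsequent Birkhoff sum, the vector $u_{N,\tau}\phi_j^{A,\epsilon,N}$ coincides with the restriction to $\{a_n\}$ of $(U_\tau - U_{-\tau})\tilde\phi_j^{A,\epsilon,N}/(2\tau)$.

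First I would take $N \to \infty$. Using Corollary~\ref{corEig} for the uniform convergence $\tilde\phi_{i,j}^{A,\epsilon,N} \to \tilde\phi_{i,j}^{A,\epsilon}$, the pointwise a.s.\ convergence $\beta_{\epsilon,N} \to \beta_\epsilon$ (continuity of $q_{\epsilon,N},d_{\epsilon,N}$ in the data, as in the arguments of Section~\ref{secKernelOp}), and the pointwise ergodic theorem applied to the continuous integrand $\tilde\phi_i^{A,\epsilon,*}\beta_\epsilon\,(U_\tau - U_{-\tau})\tilde\phi_j^{A,\epsilon}/(2\tau)$, the sampling-measure inner product converges $\alpha$-a.s.\ to $\langle \phi_i^{A,\epsilon}, (U_\tau - U_{-\tau})\phi_j^{A,\epsilon}/(2\tau)\rangle_{H_{A,\epsilon}}$; the boundary terms vanish by the $O(1/N)$ estimate and uniform boundedness of the integrand. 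Next I would take $\epsilon \to 0$: by Remark~\ref{rkQD} the weight $\beta_\epsilon \to 1$ uniformly, by Lemma~\ref{lemmaEig}(iv) $\phi_{i,j}^{A,\epsilon} \to \phi_{i,j}^A$ in $L^2$, and since $U_{\pm\tau}$ are unitary on $H_A$, the Cauchy--Schwarz inequality yields convergence of the inner product to $\langle \phi_i^A,(U_\tau - U_{-\tau})\phi_j^A/(2\tau)\rangle_{H_A}$. Finally, taking $\tau \to 0$ and using $\phi_j^A \in C^\infty(A) \subset D(\tilde u)$, the symmetric difference converges in $H_A$-norm to $\tilde u(\phi_j^A)$, producing the right-hand side of~\eqref{eqUConsistency}.

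For the interchange of $\tau \to 0$ and $\epsilon \to 0$, I would note that $\tilde\phi_j^{A,\epsilon}$ is smooth (the kernel $p_\epsilon$ is $C^\infty$), so $(U_\tau - U_{-\tau})\phi_j^{A,\epsilon}/(2\tau) \to \tilde u(\phi_j^{A,\epsilon})$ in $H_{A,\epsilon}$; then the $\epsilon \to 0$ limit gives the same value by Lemma~\ref{lemmaEig}(iv), provided one has an $\epsilon$-uniform bound on $\lVert \tilde u \phi_j^{A,\epsilon}\rVert_{H_{A,\epsilon}}$, which follows from the smooth dependence of $P_\epsilon$'s kernel on $\epsilon$ together with a resolvent-type argument relating $\phi_j^{A,\epsilon}$ to $\phi_j^A$ in $H_A^1$. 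I expect the main technical obstacle to be precisely this $\epsilon$-uniformity of $\tilde u \phi_j^{A,\epsilon}$ in an appropriate norm — controlling it requires a bit more than pure spectral convergence, and is the step where the smoothness of the driving kernel and the compactness of $A$ enter decisively.
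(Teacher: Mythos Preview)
Your proposal is correct and follows essentially the same route as the paper: introduce the smooth representatives $\tilde\phi_k^{A,\epsilon,N}=(\Lambda_k^{A,\epsilon,N})^{-1}P'_{\epsilon,N}\phi_k^{A,\epsilon,N}$, rewrite $u_{N,\tau}$ as the restriction of the symmetric difference quotient $u_\tau=(U_\tau-U_{-\tau})/(2\tau)$ (modulo the two boundary entries), pass $N\to\infty$ via Corollary~\ref{corEig} and the ergodic theorem, then $\epsilon\to0$ via $\beta_\epsilon\to1$ and $L^2$ convergence of the eigenfunctions, and finally $\tau\to0$ using $\phi_j^A\in C^\infty(A)\subset D(\tilde u)$. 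You are in fact more careful than the paper about the boundary terms and about the uniformity needed for the $\epsilon\leftrightarrow\tau$ interchange, which the paper asserts without the resolvent-type control you flag.
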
         
As we discuss below, the consistency results in Corollary~\ref{corEig} and Lemma~\ref{lemmaUConsistency} are sufficient to establish consistency of the techniques presented in Section~\ref{secKoopman} in the presence of errors due to finite training datasets and discrete-time sampling.      

\begin{rk} Of course, besides the second-order, central finite difference scheme in~\eqref{eqFD}, other approaches could be employed to construct the approximate generator $ u_{N,\tau} $, including higher-order finite differences or interpolation techniques. For instance, a straightforward improvement of~\eqref{eqFD} would be to replace  $ g_0 = 0 $ and $ g_{N-1} = 0 $ with values obtained from linear extrapolation of $ ( g_1, g_2 ) $ and $(  g_{N-3}, g_{N-2} ) $, respectively, though this modification would have only minor consequences at large $ N $.      
\end{rk}

\subsubsection{\label{secDataDrivenEig}Koopman eigenvalue problem for coherent spatiotemporal patterns}

Recall that we assume that the Laplace-Beltrami eigenfunction basis $ \{ \phi_0^X, \phi^X_1, \ldots \} $ for the Hilbert space $H_X $ of functions on the spatial domain and the associated eigenvalues $ \{ \eta_0^X, \eta_1^X, \ldots \} $ are given. Thus, in order to formulate an analog of the Galerkin method in Section~\ref{secGalerkin} utilizing the data-driven constructions of Sections~\ref{secKernel}--\ref{secApproximateKoop}, it suffices to consider the Hilbert space $ H_{\epsilon,N} = \{ f : M \mapsto \mathbb{ C } : \int_M \lvert f \rvert^2 \, d\mu_{\epsilon,N} < \infty \} $ associated with the product measure $ \mu_{\epsilon,N}  = \alpha_{\epsilon,N} \otimes \xi $. Our orthonormal basis for this space has a tensor product form, consisting of the functions $ \phi^{\epsilon, N}_{ij} = \phi_i^{A,\epsilon,N} \phi_j^X $, where $ i \in \{ 0, 1, \ldots, N-1 \} $ and $ j \in \{ 0, 1, \ldots \} $. We also define $ \eta^{\epsilon,N}_{ij} = \eta^{A,\epsilon,N}_i + \eta_j^X $, the associated Sobolev spaces 
\begin{displaymath}
  H^p_{\epsilon,N} = \left\{ f = \sum_{i=0}^{N-1} \sum_{j=0}^\infty c_{ij} \phi_{ij}^{\epsilon,N} : \sum_{l=0}^p \sum_{i=0}^{N-1} \sum_{j=0}^\infty ( \eta_{ij}^{\epsilon,N} )^l \lvert c_{ij} \rvert^2 < \infty \right\}
\end{displaymath}
with $ \langle f_1, f_2 \rangle_{H^1_{\epsilon,N}} = \sum_{p=0}^1 \sum_{ij} ( \eta^{\epsilon,N}_{ij} )^p c^*_{1ij} c_{2ij} $, $ f_k = \sum_{i=0}^{N-1} \sum_{j=0}^\infty c_{kij} \phi^{\epsilon,N}_{ij} $,   the Dirichlet form $ E_{\epsilon,N} : H^1_{\epsilon,N} \times H^1_{\epsilon,N} \mapsto \mathbb{ C } $ with $ E_{\epsilon,N}( f_1, f_2 ) = \sum_{i=0}^{N-1} \sum_{j=0}^\infty \eta_{ij}^{\epsilon,N}c^*_{1ij} c_{2ij}$, and the Laplace-Beltrami operator $ \upDelta_{\epsilon,N} : H^2_{\epsilon,N} \mapsto H_{\epsilon,N} $ with $ \upDelta_{\epsilon,N} f_1 = \sum_{i=0}^{N-1} \sum_{j=0}^\infty \eta_{ij}^{\epsilon,N} c_{1ij} $. As in Section~\ref{secLB}, we order the basis elements $ \phi_{ij}^{\epsilon,N} $ in order of increasing Dirichlet energy $ E_{\epsilon,N}( \phi_{ij}^{A,\epsilon,N} ) = \eta_{ij}^{\epsilon,N } $, and when convenient we use single-index notation, $ \phi_k^{\epsilon,N} = \phi_{i(k),j(k)}^{\epsilon,N} $ and $ \eta_k^{\epsilon,N} = \eta_{i(k)j(k)}^{\epsilon,N} $, $ k \in \{ 0, 1, 2, \ldots \} $, consistent with that ordering. We also introduce the normalized eigenfunctions $ \varphi_{0}^{\epsilon,N} = \phi_{0}^{\epsilon,N} $ and $ \varphi_{k>0}^{\epsilon,N} = \phi_k^{\epsilon,N} / ( \eta_k^{\epsilon,N} )^{1/2} $, which form an orthogonal basis of $ H^1_{\epsilon,N} $ with $ \lVert \varphi_0 \rVert_{H^1_{\epsilon,N}} = 1 $ and $ \lVert \varphi_k^{\epsilon,N} \rVert_{H^1_{\epsilon,N}} = [ 1 + ( \eta_k^{\epsilon,N} )^{-1} ]^{1/2}$. Note that as with the corresponding basis of $ H^1 $ in Section~\ref{secLB}, we have $ E_{\epsilon,N}( \varphi^{\epsilon,N}_i, \varphi^{\epsilon,N}_j ) = \delta_{ij} $ if $ i, j > 0 $ and $ E_{\epsilon,N}( \varphi^{\epsilon,N}_0, \varphi_j^{\epsilon,N} ) = 0 $. 

It follows by construction of the $  \{ \phi_k^{\epsilon,N} \} $ basis that a consistency result analogous to Corollary~\ref{corEig} holds; that is, as $ N \to \infty $ and $ \epsilon \to 0 $, for every eigenfunction $ \phi_k $ of the Laplace-Beltrami operator $ \upDelta $ there exist  $ \phi_k^{\epsilon,N} $ converging to $ \phi_k $ in the sense of Corollary~\ref{corEig}. In the case of the $ \eta^{\epsilon,N}_k $, if these quantities are computed using options~1 and~2 in~\eqref{eqEtaEpsN}, then Corollary~\ref{corEig} implies that they converge to the Laplace-Beltrami eigenvalues $ \eta_k $. If they are computed using option~3 (i.e., our nominal choice), then the $ \eta^{\epsilon,N} $ may either (1) converge to $ \eta_k $, (2) converge to finite quantities $ \tilde \eta_k \geq \eta_k $, (3) diverge. In addition to the first of these cases, case~2 is also  sufficient for the well-posedness of the Koopman eigenvalue problem introduced below. In particular, associated with $ \tilde \eta_k $ is a Sobolev space $ \tilde H^1 $ on $ M $ (defined in the obvious way), which is dense in $ H $ and satisfies $ \lVert f \rVert_{\tilde H^1 } \geq \lVert f \rVert_{H^1} $. This space can be used as an alternative to $H^1 $ in the variational eigenvalue problem in Definition~\ref{defEig}. In what follows, we will use the symbols $ \tilde \varphi_i $ and $ \tilde E : \tilde H^1 \times \tilde H^1 \mapsto \mathbb{ C } $ to represent the normalized eigenfunctions and Dirichlet form, respectively, for  $\tilde H^1 $. Then, for any $ \varphi_i^{\epsilon,N} $ and $ \varphi_j^{\epsilon,N} $ converging to $ \tilde \varphi_i $ and $ \tilde \varphi_j $, respectively,  
\begin{equation}
  \label{eqEConsistency}
  \lim_{\epsilon\to 0} \lim_{N\to\infty} E_{\epsilon,N}( \varphi_i^{\epsilon,N}, \varphi^{\epsilon,N}_j ) = \tilde E( \tilde \varphi_i, \tilde \varphi_j ).
\end{equation}

Next, using the results of Section~\ref{secApproximateKoop}, we introduce an operator on $ H^1_{\epsilon,N} $ approximating the generator  $\tilde w $ of the skew-product dynamical system on $ M $. Specifically, we define $ w_{N,\tau} : H^1_{\epsilon,N} \mapsto H_{\epsilon,N} $,  $ w_{N,\tau} = w_{N,\tau}^A + w_N^X $, where $ w_{N,\tau}^A $ is the natural lift of $ u_{N,\tau} $ on $ H_{\epsilon,N} $ (that is, $ w_{N,\tau}^A( f_A f_X ) = u_{N,\tau}( f_A ) f_X $ where $ f_A \in H_{A,\epsilon,N} $ and $ f_X \in H_X $), and $ w_N^X $ is  the lift of the state-dependent vector field $ v\rvert_a $ on $ X $  as in Section~\ref{secPrelim}. This operator has the analogous consistency property to that established for $  u_{N,\tau} $ in Lemma~\ref{lemmaUConsistency}, i.e., 
\begin{equation}
  \label{eqWConsistency}
  \lim_{\tau \to 0 } \lim_{\epsilon \to 0} \lim_{N\to\infty} \langle \phi_i^{\epsilon,N}, w_{N,\tau}( \phi_j^{\epsilon,N} ) \rangle_{H_{\epsilon,N}} = \langle \phi_i, \tilde w( \phi_j ) \rangle.  
\end{equation}    
Moreover, for all Dirichlet energy options in~\eqref{eqEtaEpsN} we have 
\begin{displaymath}
  \lim_{\tau \to 0 } \lim_{\epsilon \to 0} \lim_{N\to\infty} \langle \varphi_i^{\epsilon,N}, w_{N,\tau}( \varphi_j^{\epsilon,N} ) \rangle_{H_{\epsilon,N}} = \langle \tilde \varphi_i, w( \tilde\varphi_j ) \rangle.
\end{displaymath}
In the case of options~1 and~2 we additionally have $ \langle \tilde \varphi_i, w( \tilde\varphi_j ) \rangle_H = \langle \varphi_i, w( \varphi_j ) \rangle_H $, and thus we obtain a consistent approximation of $ W( \varphi_i, \varphi_j ) $, where $ W $ is the sesquilinear form on $ H^1 \times H^1 $ associated with $ \tilde w $ (see~\eqref{eqSesquiAB}). In the case of option~3, that approximation may not be consistent, but so long as the $ \tilde \eta_k $ are finite, we nevertheless have 
\begin{equation}
  \label{eqWBound}
  \lvert \langle \tilde\varphi_i, \tilde w( \tilde \varphi_j ) \rangle \rvert \leq \lVert w \rVert_\infty \lVert \tilde \varphi_i \rVert_{H^1} \lVert \tilde \varphi_j \rVert_{H^1} \leq  \lVert w \rVert_\infty \lVert \tilde \varphi_i \rVert_{\tilde H^1} \lVert \tilde \varphi_j \rVert_{\tilde H^1} \leq \lVert w \Vert_{\infty} ( 1 + \tilde \eta_1^{-1} ).
\end{equation}
Thus, $ W $ can also be defined as a sesquilinear form on $\tilde H^1 \times \tilde H^1 $.    

\begin{rk}
  \label{rkEpsilon}One of the properties that must necessarily hold in order for the variational eigenvalue problems employed in this work to be well posed is that $ W( \psi, z ) = \langle \psi, \tilde w( z ) \rangle $ is bounded on the domain of the Dirichlet form used for regularization (e.g., $ E $ and $ \tilde E $ with domains $ H^1 \times H^1 $ and $ \tilde H^1 \times \tilde H^1 $, respectively). Depending on how the  $ \eta_k^{(\epsilon)} $ are defined, it may be possible to satisfy this condition without taking the limit $ \epsilon \to 0 $. For instance, the $ \eta_k^{(\epsilon)} $ from options~2 and~3 in~\eqref{eqEtaEps} are unbounded for all $ \epsilon > 0 $, so it is possible that $ W $ is bounded on $ H^1_\epsilon \times H^1_\epsilon $; the order-1 Sobolev space associated with the $ \eta_k^{(\epsilon)} $. If it indeed holds, this property would suggest that our schemes are applicable for more general state spaces $ A $ than smooth manifolds (since $ \epsilon \to 0 $ limits may be problematic on such spaces), including non-smooth compact topological attractors and/or compact supports of physical measures. Addressing these issues (which arise more broadly than the skew-product systems of interest here) is beyond the scope of this work, but in Section~\ref{secL96} we provide numerical evidence that our approach may indeed be applicable if $ A  $ is not smooth.       
\end{rk}

Fixing now the diffusion regularization parameter $ \theta \geq 0 $ and the spectral truncation parameters $ \ell_A \leq N- 1 $, $ \ell_X $, and $ \ell = \ell_A \times ( 2 \ell_X + 1 ) $, we introduce the $\ell $-dimensional Galerkin approximation spaces $ H^1_{\epsilon,N,\ell} = \spn\{ \varphi^{\epsilon,N}_0, \ldots, \varphi^{\epsilon,N}_{\ell-1} \} \subset H^1_{\epsilon,N} $ and consider the following variational eigenvalue problem, which is analogous to the eigenvalue problem in Definition~\ref{defEig}:
\begin{defn}[data-driven eigenvalue problem for coherent patterns] \label{defEigDat} Find $ z \in  H_{\epsilon,N,\ell}^1  $ and $ \lambda \in \mathbb{ C } $ such that for all $ \psi \in  H_{\epsilon,N,\ell}^1 $ the equality $ A_{\epsilon,N,\tau}( \psi, z ) = \lambda B_{\epsilon,N} ( \psi, z ) $ holds, where $ A_{\epsilon,N,\tau} $ and $B_{\epsilon,N} $ are the sesquilinear forms on $ H_{\epsilon,N}^1  \times H_{\epsilon,N}^1  $, given by 
  \begin{displaymath}
    A_{\epsilon,N,\tau}( \psi, z ) = W_{\epsilon,N,\tau}( \psi, z )  - \theta E_{\epsilon,N}( \psi, z ), \quad W_{\epsilon,N,\tau}( \psi, z ) = \langle \psi, w_{N,\tau}( z ) \rangle_{H_{\epsilon,N}}, \quad B_{\epsilon,N}( \psi, z ) = \langle \psi, z \rangle_{H_{\epsilon,N}}. 
  \end{displaymath}
\end{defn} 

Solving the eigenvalue problem in Definition~\ref{defEigDat} is equivalent to solving an $ \ell \times \ell $ matrix generalized eigenvalue problem with an entirely analogous structure to~\eqref{eqEig}. Moreover, it follows from~\eqref{eqEConsistency} and~\eqref{eqWConsistency} that for any $\ell $, the Galerkin method in Definition~\ref{defEigDat} converges as $ N \to \infty $ and $ \epsilon \to 0 $ to a Galerkin method for the variational eigenvalue problem on $ \tilde H^1 \times \tilde H^1 $ given by $ \tilde A( \psi, z ) = \lambda B( \psi, z ) $,  $ \tilde A( \psi, z ) =  W( \psi, z ) - \theta \tilde E( \psi, z ) $, restricted to the $\ell $-dimensional approximation space. By the properties of $ W $ (in particular,~\eqref{eqWBound}) and $ \tilde E$, this problem is well-posed and the Galerkin approximation converges as $ \ell \to \infty $. If we have in addition $ \tilde \eta_k = \eta_k $ (which is necessarily the case for options~1 and~2 in~\eqref{eqEtaEpsN} but not for option~3), then it follows that $ \tilde A( \psi, z ) = A( \psi, z ) $, and the Galerkin method in Definition~\ref{defEigDat} converges to the one in Definition~\ref{defEig}. It should be noted that $ A $ and $ \tilde A $ essentially differ only with respect to diffusion regularization, and are both expected to yield adequate approximate Koopman eigenfunctions at small values of the regularization parameter $ \theta $  (at least when $ \tilde w $ has discrete or mixed spectrum). As stated in Section~\ref{secDirichletE}, in practical  applications we find that the diffusion regularization from option~3 leads to a modest improvement in the quality of  numerical Koopman eigenfunctions. Explicit formulas for the matrices appearing in the generalized eigenvalue problem from Definition~\ref{defEigDat} in the case of a doubly-periodic spatial domain, $ X = \mathbb{ T}^2 $, can be found in~\ref{appGenT2}.

\subsubsection{\label{secDataDrivenPred}Prediction of observables and probability densities}

In this section, we introduce prediction schemes for observables and probability densities analogous to those in Section~\ref{secPrediction}, but formulated in the data-driven basis $ \{ \phi_k^{\epsilon,N} \} $ of  $ H_{\epsilon,N} $. In what follows, we consider a prediction observable $ f : M \mapsto \mathbb{ C } $ such that $ \bar f $ lies in $ H $; this ensures that for $ \alpha $-a.e.\  starting state $ a_0  $ in the training data, $ \vec f $ lies in $ H_{\epsilon,N} $. It then follows from the convergence of the data-driven eigenfunctions $ \phi_k^{A,\epsilon,N} $ to the continuous functions $ \tilde \phi_k^A $ established in Corollary~\ref{corEig}, that the expansion coefficients $ c_k^{\epsilon,N} = \langle \phi_k^{\epsilon,N}, \vec f \rangle_{H_{\epsilon,N}} $ converge to $ c_k = \langle \phi_k,  \bar f  \rangle $ for all $ k $; specifically, 
\begin{equation}
  \label{eqCConv}
  \lim_{\epsilon \to 0} \lim_{N\to\infty} c_k^{\epsilon,N} = c_k. 
\end{equation}

Let now $ L_{\epsilon,N,\tau}  : H_{\epsilon,N} \mapsto H_{\epsilon,N} $ be the data-driven regularized generator defined via $ L_{\epsilon,N,\tau } \vec f = w_{N,\tau} \vec f - \theta \upDelta_{\epsilon,N} f $, where $ \theta \geq 0 $, and $ w_{N,\tau} $ and $ \upDelta_{\epsilon,N} $ are the data-driven generator and Laplace-Beltrami operators from Section~\ref{secDataDrivenEig}, respectively. Let also $ \tilde \Pi_{\epsilon,N,\ell} $ be orthogonal projectors from $ H_{\epsilon,N} $ to $ H_{\epsilon,N,\ell} = \spn \{ \phi^{\epsilon,N}_0, \ldots, \phi^{\epsilon,N}_{\ell-1} \} $, and define (cf.~\eqref{eqStFJ2})
\begin{equation}
  \label{eqSemigroupPhi}
  \tilde S_{\epsilon,N,\tau,\ell,t} \vec f = \sum_{n=0}^\infty \frac{ t^n }{ n! } \tilde L^n_{\epsilon,N,\tau,l} \vec f, \quad \tilde L_{\epsilon,N,\tau,\ell} = \tilde \Pi_{\epsilon,N,\ell} L_{\epsilon,N,\tau} \tilde \Pi_{\epsilon,N,\ell}.
\end{equation}
Then, assuming that the eigenvalues $ \eta_k^{\epsilon,N} $ of $ \upDelta_{\epsilon,N} f $ converge to the eigenvalues $ \eta_k $ of $ \upDelta $, we have 
\begin{equation}
  \label{eqLCConv}
  \lim_{\tau \to 0} \lim_{\epsilon \to 0} \lim_{N\to\infty} \langle \phi_j^{\epsilon,N}, L_{\epsilon,N,\tau} \phi_k^{\epsilon,N} \rangle_{H_{\epsilon,N}} = \langle \phi_j^{\epsilon,N}, L \phi_k \rangle.       
\end{equation}
Together, \eqref{eqCConv} and~\eqref{eqLCConv} lead to the following consistency result:

\begin{lemma}
  \label{lemmaSConsistency}
  If  the eigenvalues $ \eta_k^{\epsilon,N} $ converge to $ \eta_k $, then for any observable $ f : M \mapsto \mathbb{ C}  $ such that $ \bar f \in H $ and $ \alpha $-a.e.\ starting state $ a_0 $ in the training data, 
\begin{displaymath}
  \lim_{\tau \to 0 }\lim_{\epsilon\to 0} \lim_{N\to \infty} \tilde S_{N,\epsilon,\tau,\ell,t} \vec f =  \tilde S_{\ell,t} \bar f.
\end{displaymath}  
\end{lemma}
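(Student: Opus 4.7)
The strategy is to exploit the fact that $\tilde L_{\epsilon,N,\tau,\ell}$ and $\tilde L_\ell$ both have rank at most $\ell$, so that the series defining $\tilde S_{\epsilon,N,\tau,\ell,t}$ and $\tilde S_{\ell,t}$ reduce to $\ell\times \ell$ matrix exponentials whose entries converge under the iterated limits. First I would expand $\tilde\Pi_{\epsilon,N,\ell}\vec f$ in the orthonormal basis $\{\phi_k^{\epsilon,N}\}_{k=0}^{\ell-1}$ of $H_{\epsilon,N,\ell}$ and $\tilde\Pi_\ell \bar f$ in $\{\phi_k\}_{k=0}^{\ell-1}$, writing
\begin{gather*}
  \tilde S_{\epsilon,N,\tau,\ell,t}\vec f = \sum_{k=0}^{\ell-1}c_k^{\epsilon,N}(t)\phi_k^{\epsilon,N},\qquad \vec c^{\epsilon,N}(t)=e^{t\boldsymbol{L}_{\epsilon,N,\tau,\ell}}\vec b^{\epsilon,N},\\
  \tilde S_{\ell,t}\bar f = \sum_{k=0}^{\ell-1}c_k(t)\phi_k,\qquad \vec c(t)=e^{t\boldsymbol{L}_{\ell}}\vec b,
\end{gather*}
where $(\boldsymbol{L}_{\epsilon,N,\tau,\ell})_{ij}=\langle \phi_i^{\epsilon,N},L_{\epsilon,N,\tau}\phi_j^{\epsilon,N}\rangle_{H_{\epsilon,N}}$, $(\boldsymbol{L}_\ell)_{ij}=\langle \phi_i,L\phi_j\rangle$, $b_k^{\epsilon,N}=\langle \phi_k^{\epsilon,N},\vec f\rangle_{H_{\epsilon,N}}$, and $b_k=\langle \phi_k,\bar f\rangle$. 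This reduces the problem to showing, at fixed $\ell$, that $e^{t\boldsymbol{L}_{\epsilon,N,\tau,\ell}}\vec b^{\epsilon,N}\to e^{t\boldsymbol{L}_\ell}\vec b$ componentwise under the iterated limits, and that the associated $H_{\epsilon,N,\ell}$-expansion then converges to the $H_\ell$-expansion as a function on $M$.

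Next I would pass to the limits entrywise. Equation \eqref{eqCConv} yields $\lim_{\epsilon\to 0}\lim_{N\to\infty}b_k^{\epsilon,N}=b_k$ for each $k\in\{0,\ldots,\ell-1\}$. Equation \eqref{eqLCConv}, combined with the hypothesis $\eta_k^{\epsilon,N}\to\eta_k$ (which handles the $-\theta\upDelta_{\epsilon,N}$ contribution to $L_{\epsilon,N,\tau}$), gives $\lim_{\tau\to 0}\lim_{\epsilon\to 0}\lim_{N\to\infty}(\boldsymbol{L}_{\epsilon,N,\tau,\ell})_{ij}=(\boldsymbol{L}_\ell)_{ij}$ for each pair $(i,j)$. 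Since each matrix has a fixed finite size $\ell\times\ell$, entrywise convergence is equivalent to norm convergence, and the map $\boldsymbol{L}\mapsto e^{t\boldsymbol{L}}$ is continuous on $\mathbb{C}^{\ell\times \ell}$ in the operator-norm topology (uniformly on compact sets of $\boldsymbol{L}$ for each fixed $t$). Taking the iterated limits in the order $N\to\infty$, $\epsilon\to 0$, $\tau\to 0$, and using joint continuity of the map $(\boldsymbol{L},\vec b)\mapsto e^{t\boldsymbol{L}}\vec b$, I obtain $\vec c^{\epsilon,N}(t)\to \vec c(t)$ componentwise.

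To conclude, the basis functions themselves must also be carried to the limit. Here I would invoke Corollary~\ref{corEig}: the continuous extensions $\tilde\phi_k^{\epsilon,N}:=(\Lambda_k^{(\epsilon,N)})^{-1}P'_{\epsilon,N}\phi_k^{\epsilon,N}\in C(A)$ converge uniformly, as $N\to\infty$ and then $\epsilon\to 0$, to the smooth representatives $\tilde\phi_k^A$ of $\phi_k^A$, and the product structure of the tensor basis lifts this convergence to uniform convergence of $\tilde\phi_k^{\epsilon,N}\otimes\phi_{j(k)}^X$ to $\tilde\phi_{i(k)}^A\otimes\phi_{j(k)}^X$ on $M$. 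Since $\ell$ is fixed, a finite-sum argument then gives
\begin{equation*}
  \sum_{k=0}^{\ell-1} c_k^{\epsilon,N}(t)\,\tilde\phi_k^{\epsilon,N}\otimes\phi_{j(k)}^X \;\longrightarrow\; \sum_{k=0}^{\ell-1} c_k(t)\,\tilde\phi_{i(k)}^A\otimes \phi_{j(k)}^X
\end{equation*}
uniformly on $M$ (hence in $H$) under the stated iterated limits, which is the desired conclusion.

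The principal obstacle is bookkeeping of the iterated limits: the three small parameters $(\tau,\epsilon,1/N)$ enter different pieces (the finite-difference generator, the kernel bandwidth, and the Monte Carlo/ergodic average), and each convergence statement inherited from Sections~\ref{secKernelOp}--\ref{secDataDrivenEig} holds only in a specific order. The finite rank $\ell$ is what makes this manageable, since it confines the approximation to a finite-dimensional subspace where entrywise convergence of matrices and coefficient vectors combines with continuity of the matrix exponential; no uniformity of the series \eqref{eqSemigroupPhi} in $\ell$ is required, because the statement itself fixes $\ell$. A minor technical point worth verifying is that the Dirichlet-energy contribution $-\theta\upDelta_{\epsilon,N}$, represented on the tensor-product basis by the diagonal matrix with entries $\eta_k^{\epsilon,N}$, is handled by the hypothesis $\eta_k^{\epsilon,N}\to\eta_k$; otherwise the analogous argument under the weaker limit $\eta_k^{\epsilon,N}\to \tilde\eta_k$ would produce $\tilde L_\ell$ rather than $L_\ell$ in the limit, consistent with the discussion in Section~\ref{secDataDrivenEig}.
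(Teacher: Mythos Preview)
Your proposal is correct and follows essentially the same approach as the paper, which does not give a formal proof but simply states that the lemma follows from \eqref{eqCConv} and \eqref{eqLCConv}; your argument is a careful elaboration of exactly that idea, using the fixed finite rank $\ell$ to reduce to entrywise convergence of $\ell\times\ell$ generator matrices and coefficient vectors and then invoking continuity of the matrix exponential, together with Corollary~\ref{corEig} to pass the basis functions to the limit.
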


Now, as with the eigenvalue problem in Section~\ref{secDataDrivenEig}, if the  $ \eta_k^{\epsilon,N} $ do not converge to $ \eta_k $ as $ N \to \infty $ and $ \epsilon \to 0 $, but instead converge to finite quantities $  \tilde \eta_k \neq \eta_k $, then these quantities are bounded below by $ \eta_k $,  and the matrix elements $  \langle \phi_j^{\epsilon,N}, \tilde L_{\epsilon,N,\tau} \phi_k^{\epsilon,N} \rangle_{H_{\epsilon,N}} $ converge to the matrix elements $ \langle \phi_j,  \tilde L \phi_k \rangle $ of $  \tilde L = w - \theta  \tilde \upDelta $, where $  \tilde \upDelta $ is a symmetric, dissipative operator such that $  \tilde \upDelta f = \sum_{k=0}^\infty \langle \phi_k, f \rangle \tilde \eta_k \phi_k $. An analogous consistency result to that in Lemma~\ref{lemmaSConsistency} then holds for the contraction semigroup generated by $ \tilde L $, and this semigroup can also be used as an approximation of the Koopman group $ W_t $. 

Following the approach of Section~\ref{secBoundedDen}, we approximate the action of the Perron-Frobenius group on probability densities $ \rho : M \mapsto \mathbb{ C } $  with $ \bar \rho \in H $ by evaluating
\begin{equation}
  \label{eqAdjSemigroupPhi}
  \tilde S^*_{\epsilon,N,\tau,\ell,t} \vec \rho = \sum_{n=0}^\infty \frac{ t^n }{ n! } \tilde L^{*n}_{\epsilon,N,\tau,\ell} \vec \rho, \quad \tilde L^*_{\epsilon,N,\tau,\ell} = \tilde \Pi_{\epsilon,N,\ell} L^*_{\epsilon,N,\tau} \tilde \Pi_{\epsilon,N,\ell}.
\end{equation}    
This approximation has an analogous consistency property to that in Lemma~\ref{lemmaSConsistency} if the $ \eta_k^{\epsilon,N} $ converge to $ \eta_k$; otherwise, if the $ \eta_k^{\epsilon,N} $ converge to $ \tilde \eta_k < \infty $,  $ \tilde S^*_{\epsilon,N,\tau,\ell,t}  $ approximates the Perron-Frobenius group $ W_t^* $ through a different semigroup than $ S^*_t $. 

As with the schemes of Sections~\ref{secPrediction}, we evaluate~\eqref{eqSemigroupPhi} and~\eqref{eqAdjSemigroupPhi} by forming the $ \ell  \times \ell  $ generator matrices $ \boldsymbol{L} = [ \langle \phi_i^{\epsilon,N}, L_{\epsilon,N,\tau} \phi_j^{\epsilon,N} \rangle ]_{ij} $ and $ \boldsymbol{ L }^* $, and computing $ e^{t \boldsymbol{L}} \vec b $ and $ e^{t \boldsymbol{L^*}} \vec b' $, respectively, via Leja interpolation, where $ \vec b = ( b_0, \ldots, b_{\ell-1} )^\top $, $ b_i = \langle \phi_i^{\epsilon,N}, f \rangle_{H_{\epsilon,N} } $, and $ \vec b' = ( b'_0, \ldots, b'_{\ell-1} )^\top $, $  b'_i = \langle \phi_i^{\epsilon,N}, \rho \rangle_{H_{\epsilon,N} } $, are the expansion coefficients of $ f $ and $ \rho $ in the $ \{ \phi_i^{\epsilon,N} \} $ basis of $ H_{\epsilon,N} $, respectively. Explicit formulas for the elements of $ \boldsymbol{ L } $ and $ \boldsymbol{ L }^* $ in the case $ X = \mathbb{ T }^2 $ are provided in~\ref{appGenT2}. 

\section{\label{secL96}Demonstration in flows driven by Lorenz~96 systems}

\subsection{\label{secL96Model}Model description}

In this Section, we apply the methods presented in Section~\ref{secDataDrivenBasis} to a class of time-dependent incompressible flows introduced by Qi and Majda \cite{QiMajda16}. In this class of flows the spatial domain is doubly periodic, i.e., $ X = \mathbb{ T }^2 $ as in the examples of Section~\ref{secExamples}. Moreover, in canonical angle coordinates $\{  x_1, x_2 \} $ the velocity field has the general form
\begin{displaymath}
  v \rvert_a = v^{(1)}( a ) \, \partial_1 + v^{(2)}( a ) \, \partial_2,  
\end{displaymath}   
where $ a $ is the state in $ A $ (to be specified below), and $ v^{(1)}, v^{(2)} $ are continuous functions on $ A $ taking values in $ C^\infty( X) $ and satisfying $ \partial_{1} v^{(1)}( a ) = \partial_{2} v^{(2)}( a ) = 0 $ for every $ a \in A $. Thus, by construction, $ v\rvert_a $ has vanishing divergence with respect to the normalized Haar measure $ \xi $ on $ X $ for all $  a \in A $. Based on earlier work by Majda et al.\ \cite{MajdaKramer99,MajdaGershgorin10}, in \cite{QiMajda16}  they choose the functions $ v^{(1)}( a ) $ and $ v^{(2)}( a ) $ so as to produce a flow involving a large-scale (spatially homogeneous) cross-sweep flow along the $ x_1 $ direction and a spatially inhomogeneous shear along the $ x_2 $ direction. Specifically, 
\begin{equation}
  \label{eqL96V}
  v^{(1)}( a ) = \hat s_0( a) \phi^X_{00}, \quad v^{(2)}( a ) = \left( \sum_{q=-J}^{-1} + \sum_{q=1}^J \right) \hat s_q( a ) \phi^X_{q0},  
\end{equation}     
where $ J $ is a positive integer, $ \hat s_{-J}, \ldots, \hat s_J $ are complex-valued functions on $ A $, and $ \phi^X_{ij}( x_1, x_2 ) = e^{\ii(ix_1 + jx_2)} $, $ i,j \in \mathbb{ Z } $, are the Fourier functions on $ X $. Note that~\eqref{eqL96V} determines the Fourier expansion coefficients $ \hat v_{qr}^{(1)} $ and $ \hat v_{qr}^{(2)} $ in the generator formula for arbitrary state-dependent velocity fields in~\eqref{eqVFourier}; that is, 
\begin{displaymath}
  \hat v_{qr}^{(1)} = \hat s_0 \delta_{q0} \delta_{r0}, \quad \hat v_{qr}^{(2)} = 
  \begin{cases}
    \hat s_q \delta_{r0}, & q \in \{ -J, \ldots,-1,1, \ldots, J\}, \\
    0, & \text{otherwise}.
  \end{cases}
\end{displaymath}   

The model in~\eqref{eqL96V} was introduced as a low-order model for passive tracers in a jet; correspondingly, the dynamics driving the evolution of the Fourier coefficients $ \hat s_j $ was chosen so as to mimic some of the key features of turbulent tracer advection such as intermittency and non-Gaussian statistics. In particular, in \cite{QiMajda16}  the evolution of the Fourier coefficients is driven by a L96 system with $ 2 J + 1 $ degrees of freedom, which we take here to be $ \dot s = \vec u( s ) $, 
with $ s = ( s_0, \ldots, s_{2J} ) \in \mathbb{ R }^{2J+1} $, $ \vec u( s ) = ( u_1( s ), \ldots, u_{2J}(s) ) $, and
\begin{equation}
  \label{eqL96}
    u_j( s ) = ( s_{j+1} - s_{j-2} ) s_{j-1} - s_j + F_{\text{L96}}, \quad s_{-2} = s_{2J-1}, \quad s_{-1} = s_{2J}, \quad s_{2J+1}=s_0, \quad F_{\text{L96}} \geq 0.
\end{equation}
This system was introduced by Lorenz in 1996 \cite{Lorenz96} as a low-order model for atmospheric flow at a constant latitude circle, with the variables $ s_j $ representing the zonal (west--east) component of the velocity field at $ 2 J + 1 $ zonally equispaced gridpoints. 

Formally, we assume that for each parameter choice  $ ( J, F_\text{L96} ) $ used below, the L96 system admits an invariant ergodic measure $ \alpha $ supported on a compact (invariant) set $ A \subset \mathbb{ R }^{2J +1} $, which is physical with respect to the Lebesgue measure of $ \mathbb{ R }^{2J+1} $. That is, given solutions $ s( 0 ), s( \tau ), s( 2 \tau ), \ldots $ of the L96 system sampled discretely in time at the sampling interval $ \tau $, the set
\begin{displaymath}
  B( \alpha ) = \left \{ s( 0 ) \in \mathbb{ R }^{2J+1} : \lim_{N\to\infty} \frac{ 1 }{ N } \sum_{n=0}^{N-1} f(  s( n \tau ) ) = \int_A f \, d\alpha, \; f \in C(\mathbb{ R }^{2J+1} ) \right \}, 
\end{displaymath}
called the basin of $ \alpha $, is assumed to have positive Lebesgue measure in $ \mathbb{ R }^{2J+1} $. Note that unlike the Lorenz 63 model, where this property is known to hold \cite{LuzzattoEtAl05}, to our knowledge the existence of physical measures for L96 systems is an open problem. 

Under these assumptions, we can construct a skew-product flow on $ M = A \times X $ governing the evolution of Lagrangian tracers as described in Section~\ref{secPrelim}, and approximate the associated Koopman generator from long-time integrations of the L96 driving system. In particular, the assumed compactness of $ A $ ensures that the kernel integral operators $ P_{\epsilon,N} $ and $ P_\epsilon $ from Section~\ref{secDataDrivenBasis} used to build our data-driven basis are well defined and compact. However, $ A $ is not expected to be smooth, meaning that the various $ \epsilon \to 0 $ limits taken in Section~\ref{secDataDrivenBasis} to establish convergence of our data-driven basis functions and  Dirichlet energies to corresponding quantities associated with heat kernels on smooth manifolds may not exist. Nevertheless, according to Remark~\ref{rkEpsilon}, our schemes for coherent pattern identification and prediction may still converge away from that limit.   
  
Under the assumption that such a set $ A \subset \mathbb{ R }^{2J+1} $ exists, we choose it as the state space of the ergodic system driving the non-autonomous flow on $ X $. Moreover, following \cite{QiMajda16}, we define $ \hat s_r \in C( A ) $ such that for $ a = ( s_0, \ldots, s_{2J} ) \in A $,  
\begin{equation}
  \label{eqL96Fourier}
  \hat s_q( a ) = \frac{ 1 }{ 2 J + 1 } \sum_{j=0}^{2J} e^{-2 \pi \ii q j / ( 2 J + 1 )} s_j(a).
\end{equation} 
Thus, the functions $ \hat s_q $ used to define the state-dependent velocity field in~\eqref{eqL96V} are obtained by taking the discrete Fourier transform of the L96 variables $ s_j $. Taking also the generator $ u $ of the dynamics on $ A $ to be the restriction of the L96 vector field $ \vec u $ from~\eqref{eqL96} completes the specification of the skew-product system on $ M $ with vector field $ w = u + v $ and invariant measure $ \mu = \alpha \otimes \xi $. Note that the specification of $ v|_a  $ via~\eqref{eqL96V} and~\eqref{eqL96} also defines the observation map $ F : A \mapsto \mathfrak{ X } $ from Section~\ref{secPrelim}; i.e., $ v \rvert_a = F( a ) $. By standard properties of Fourier transforms, we have $ \lVert F( a ) - F( a' ) \rVert_{\mathfrak{X}} = \lVert a - a'  \rVert_{\mathbb{R}^{2J+1}} $ so that computing pairwise distances between velocity field snapshots is equivalent to computing pairwise distances between L96 state vectors. 

As discussed in~\ref{appGenT2}, given a data-driven basis $ \{ \phi_i^{\epsilon,N} \} $ of $ H_{A,\epsilon,N} $ (computed as described in Section~\ref{secL96Basis} ahead) and the corresponding triple product coefficients $ c_{ijk} $ from~\eqref{eqStructureConstants}, the matrix elements of the generator $ \tilde w_{N,\tau} = \tilde w^A_{N,\tau} + \tilde w^X_N $ appearing in the schemes of Section~\ref{secDataDrivenBasis} can be computed through~\eqref{eqWAT2} and~\eqref{eqWXT2Trunc}. In the case of the L96-driven flow in~\eqref{eqL96V}, the latter equation becomes 
\begin{equation}
  \label{eqWXL96Trunc}
  \langle \phi^{\epsilon,N}_{ijk}, \tilde w_{N,\ell_v}^X( \phi^{\epsilon,N}_{lmn} ) \rangle_{H_{\epsilon,N}} = \sum_{p=0}^{\ell_v-1} \ii c_{ilp} \left( m \hat s_{p0} \delta_{jm} \delta_{kn} + \hat s_{p,j-m} \delta_{kn} \right),
\end{equation}
where
\begin{displaymath}
  \hat s_{pq} = 
  \begin{cases}
    \langle \phi_q^{\epsilon,N}, \hat s_q \rangle_{H_{A,\epsilon,N}}, & \lvert q \rvert \leq J,\\
    0, & \text{otherwise},
  \end{cases}
\end{displaymath} 
and $ \ell_v $ is a spectral truncation parameter for the state-dependent velocity field $ v\rvert_a $. In what follows, we set $ \ell_v = \ell_A $.

\subsection{\label{secL96Basis}Dynamical regimes and data-driven basis functions}

The L96 model in~\eqref{eqL96} includes a number of important features of atmospheric dynamics, namely energy-preserving nonlinear advection ($( s_{j+1} - s_{j-2} ) s_{j-1}$), damping ($-s_j$), and external forcing ($F_\text{L96}$). It is well known  that as $ F_\text{L96} $ increases, the system transitions through different dynamical regimes ranging from steady, periodic, quasiperiodic, weakly chaotic, strongly chaotic, and fully turbulent \cite{MajdaEtAl05}. Here, we work with the parameter values $ J = 20 $ (corresponding to 41 degrees of freedom) and $ F_{\text{L96}} = 4 $ or $ F_\text{L96}= 5 $; representative solutions for these regimes, computed ``on the attractor'', and the root mean square (RMS) amplitudes of the Fourier modes $ \hat s_j $   are shown in Figs.~\ref{figL96} and~\ref{figL96Rec}, respectively.  Published numerical results on the L96 model \cite{KarimiPaul10} indicate that the case $ J = 20 $, $ F_\text{L96} = 5 $ corresponds to a weakly chaotic regime with Lyapunov attractor dimension $ \simeq 17 $. We were not able to find analogous results for the $ J = 20 $, $ F_\text{L96} = 4 $ case, but based on available numerical results for nearby regimes \cite{VanKekemSterk17} and qualitative inspection of numerical integrations, it is likely that this regime exhibits quasiperiodic or low-dimensional chaotic behavior (referred to hereafter as quasiperiodic behavior). In particular, it can be seen from Figs.~\ref{figL96} and~\ref{figL96Rec} that the $ F_\text{L96} = 4 $ regime is dominated by wavenumber 7 waves with negative phase velocity, whose duration varies aperiodically. Wavenumber 7 waves are also clearly visible in the $ F_\text{L96} = 5 $ regime, but now the system also exhibits pronounced intermittent excursions from coherent wavenumber~7 activity.    

\begin{figure}
  \centering\includegraphics[scale=.8]{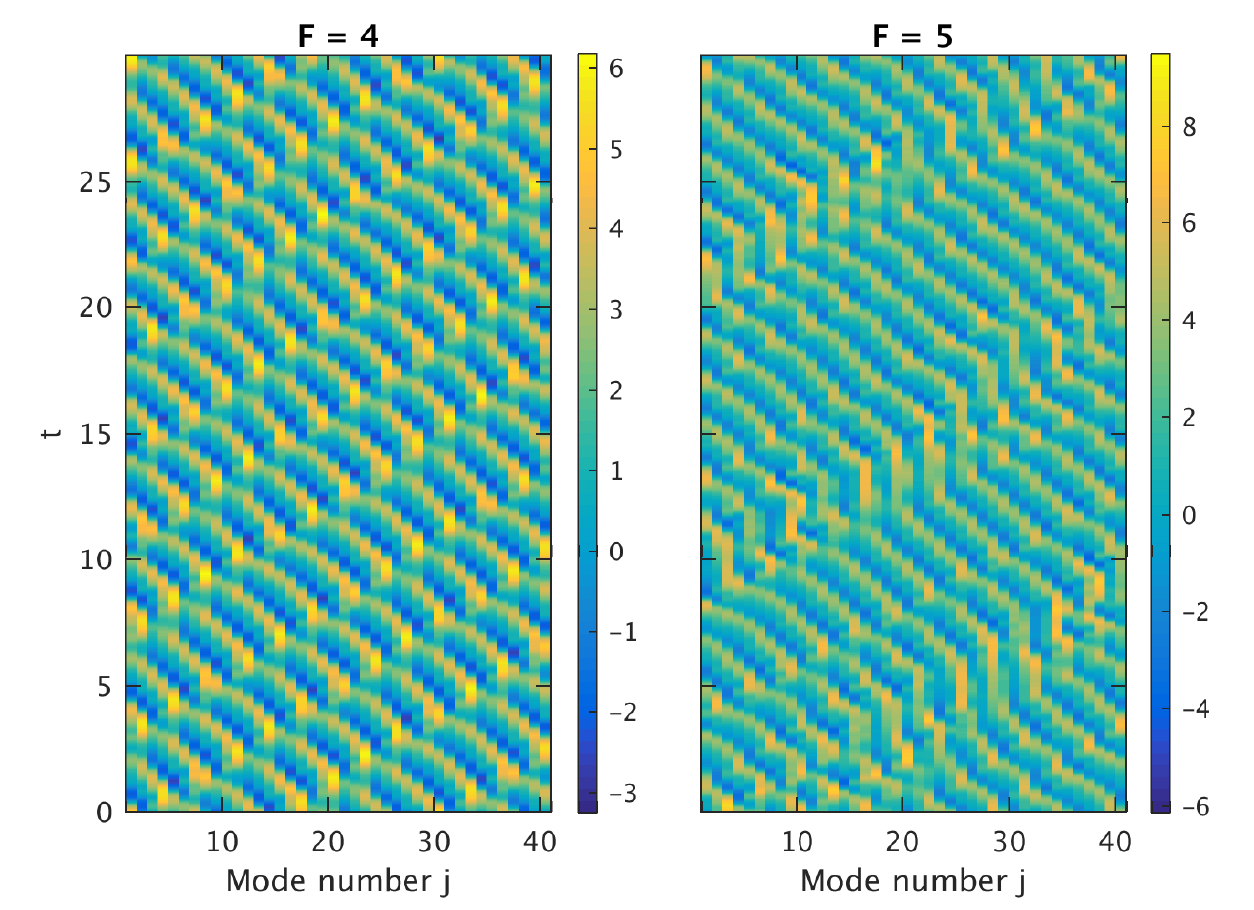}
  \caption{\label{figL96}Evolution of the L96 systems with $ F_\text{L96} = 4 $ and 5 for an interval of 30 time units.}
\end{figure}

\begin{figure}
  \centering\includegraphics{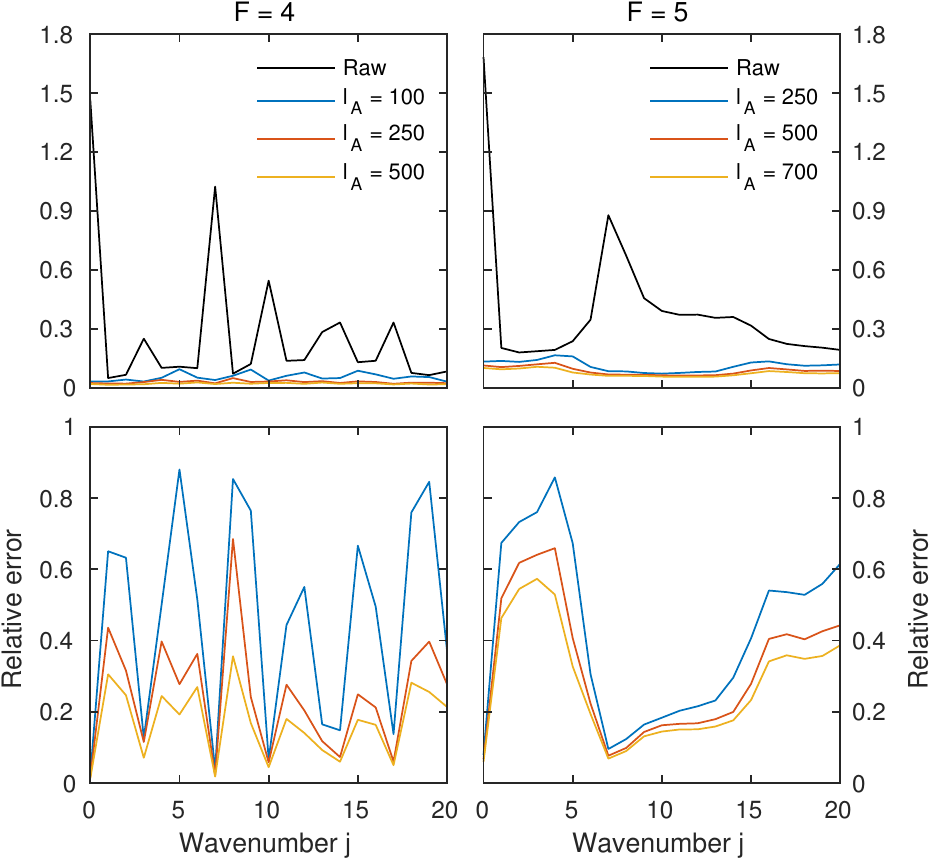}
  \caption{\label{figL96Rec}Top panels: RMS amplitude (black lines) and reconstruction error (colored lines) for the Fourier modes of the L96 systems with $ F_\text{L96} = 4 $ and 5, computed for representative values of the spectral truncation parameter $ \ell_A $. Bottom panels: Relative reconstruction error for the same $ \ell_A $ values.  Since the L96 modes $ s_j $ are real only the Fourier modes corresponding to nonnegative wavenumbers are shown.}
\end{figure}

Numerically, we generate training data from our chosen dynamical regimes by integrating the L96 model (using Matlab's \texttt{ode45} solver) with initial conditions $ s(0) = ( 1, 0, \ldots, 0 ) $. Waiting for transients to decay over a long spinup time (approximately 5,000 natural time units), we sample the solution $ s( t ) $ $ N $ times every $ \tau = 0.01 $ time units after the spinup period, where $ N = \text{128,000} $ and $ \text{512,000} $ for $ F_\text{L96} = 4 $ and $ 5 $, respectively (Figs.~\ref{figL96} and~\ref{figL96Rec} were produced using those samples). Thus, taking into account the assumption on physical measures made in Section~\ref{secL96Model}, we are effectively assuming that for both of the above choices of $ ( J, F_\text{L96} ) $ the point $ ( 1, 0, \ldots, 0 ) \in \mathbb{ R }^{2J+1} $, lies in the basin of some ergodic probability measure $ \alpha $ supported on an invariant compact set $ A $. In what follows, we use the shorthand notation $ a_n = s( n \tau ) $, $ n \in \{ 0, \ldots, N-1\} $ for the L96 training states for notational compatibility with the rest of the paper, but note that we do not require that the $a_n $ lie exactly on $ A $.   

Using this training data, we compute data-driven basis functions $ \phi_k^{A,\epsilon,N} $  and their associated eigenvalues $ \Lambda_k^{A,\epsilon,N} $ as described in Sections~\ref{secKernel} and~\ref{secKernelOp}. Representative eigenfunctions form these calculations, plotted as time series for the same portions of the training datasets as those shown in Fig.~\ref{figL96}, are displayed in Fig.~\ref{figL96Phi}. There, it can be seen that in both the $ F_\text{L96} = 4 $ and $ F_\text{L96} = 5 $ cases the eigenfunctions capture a range of timescales generated by the L96 system. As discussed in Section~\ref{secTuning}, our procedure for tuning the kernel bandwidth also yields an effective intrinsic dimension for $ A $; this was found to be 4.1 and 6.4, respectively for  $ F_\text{L96} = 4 $ and 5, respectively. Note that the effective dimension for $ F_\text{L96} = 5 $ is smaller than the corresponding Lyapunov attractor dimension reported in \cite{KarimiPaul10} by a factor of 2.5. This is not too surprising given that if $ A $ is indeed $ \simeq 17 $-dimensional our 512,000 sample dataset only provides a coarse sampling of that set. 

\begin{figure}
  \centering\includegraphics{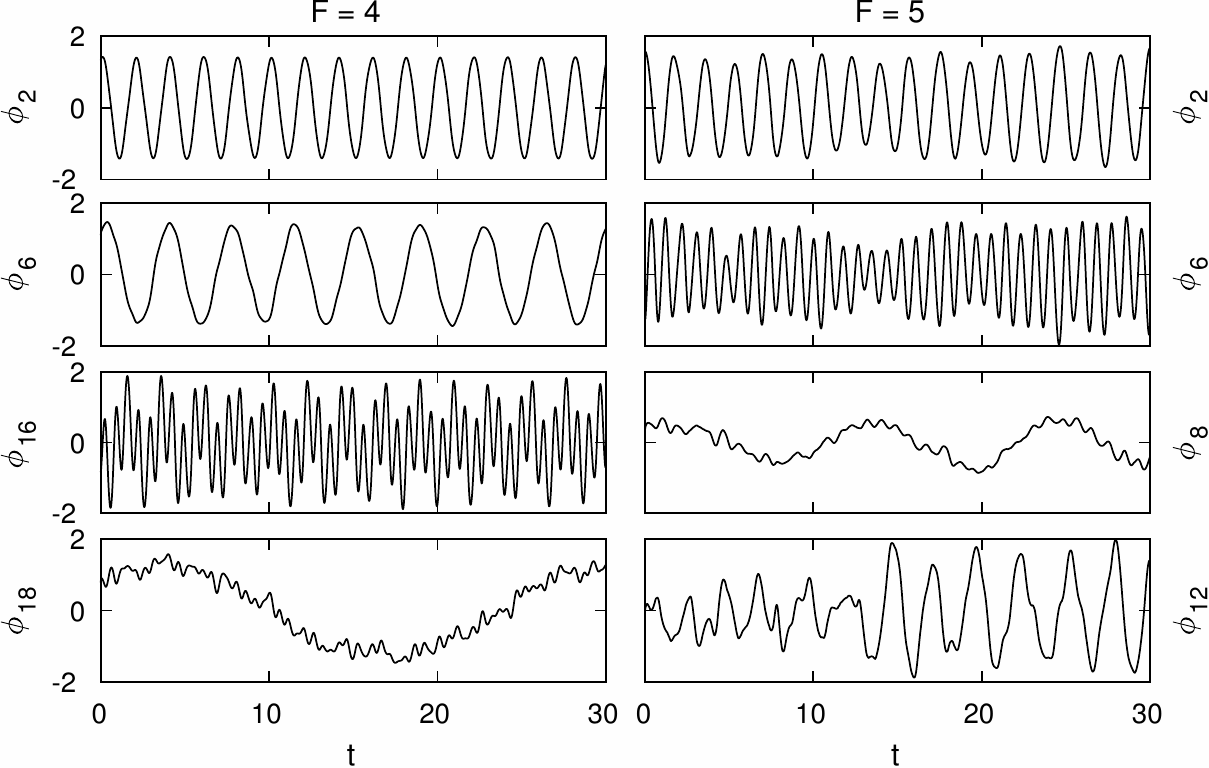}
  \caption{\label{figL96Phi}Time series of representative data-driven basis functions $ \phi_k^{A,\epsilon,N} $ for the L96 systems with $ F_\text{L96} = 4 $ and 5.} 
\end{figure}   

Next, we examine how many basis functions $ \phi_k^{A,\epsilon,N} $ are needed to accurately reconstruct the Fourier modes $ \hat s_j $ in~\eqref{eqL96Fourier} (and hence the state-dependent velocity field $ v\rvert_a$). Figure~\ref{figL96} shows the absolute and relative RMS reconstruction error for the Fourier modes as a function of the spectral truncation parameter $ \ell_A $, computed as $ \delta_{j,\ell_A} = \lVert \hat s_j - \hat s_{j,\ell_A} \rVert_{H_{A,\epsilon,N}} $ and $ \delta_{j,\ell_A} / \rVert \hat s_j \rVert_{H_{A,\epsilon,N}} $, respectively, where $ \hat s_{j,\ell_A} = \sum_{k=0}^{\ell_A-1} \langle \phi_k^{A,\epsilon,N}, \hat s_j \rangle_{H_{A,\epsilon,N}} $. Two features that stand out in these results are that (1) for both $ F_\text{L96}= 4 $ and $ F_\text{L96} = 5 $, the high-energy Fourier modes require fewer basis functions for the same amount of reconstruction error than the low-energy modes, and (2) as expected from the more chaotic nature of the dynamics in that regime, the Fourier modes for $ F_\text{L96}= 5 $ require a significantly larger number of basis functions for accurate reconstruction. The large number of basis functions required for accurate reconstruction of $ v\rvert_a $ becomes an especially pertinent issue associated with computational cost since eventually we will be working with the tensor product basis $ \phi^{\epsilon,N}_{ijk} $ on $M $, where, e.g.,  for $ \ell_A \sim 250 $ and Fourier resolution $ \ell_{X_1} = \ell_{X_2} \sim 32 $ the number of basis functions becomes $ \ell_A ( 2\ell_{X_1} + 1 )^2 \sim 10^6 $. As discussed in~\ref{appNumerical}, this issue can be mitigated in numerical implementations through the use of code that computes the action of the various operator matrices in our schemes on vectors without explicit formation of the matrices themselves and/or code that exploits any sparsity that these matrices might possess. Indeed, it is evident from~\eqref{eqWXL96Trunc} and~\eqref{eqWAT2} that the generator matrices associated with the L96-driven system are sparse, and the code used to obtain the results presented below takes advantage of that sparsity (see, in particular, \ref{appNumLeja}),   

\subsection{\label{secL96Coherent}Coherent spatiotemporal patterns}

We begin by considering the $ F_\text{L96} = 4 $ system. Figure~\ref{figL96Z} shows snapshots of the real parts of approximate Koopman eigenfunctions computed for this system through the variatonal eigenvalue problem in Definition~\ref{defEigDat}. In this case, we used the  spectral truncation and diffusion regularization parameters $ \ell_A = 250 $, $ \ell_{X_1} =\ell_{X_2} = 32 $ and $ \theta = 0.001 $. For this choice of spectral truncation parameters the  total number of basis functions in the tensor product basis is $ \ell = \ell_A ( 2 \ell_{X_1} + 1)^2 = \text{1,056,250} $. Note that the number of Fourier basis functions in each dimension on $  X $, $ 2 \ell_X + 1 = 65 $, is greater than the number $ 2 J + 1 = 41 $ of L96 modes; this is important since the generator of the dynamics on $  M $ can  generate smaller scales on observables than those present in the Fourier coefficients on the velocity field $ v\rvert_a $. The dynamical evolution of the numerical Koopman eigenfunctions in Fig.~\ref{figL96Z} can also be visualized more directly as a video in Movie~\href{http://cims.nyu.edu/~dimitris/files/tracers/movie10.mp4}{10}. Table~\ref{tableZL96} lists the corresponding eigenvalues and Dirichlet energies. 

\begin{figure}
  \centering\includegraphics[width=.8\linewidth]{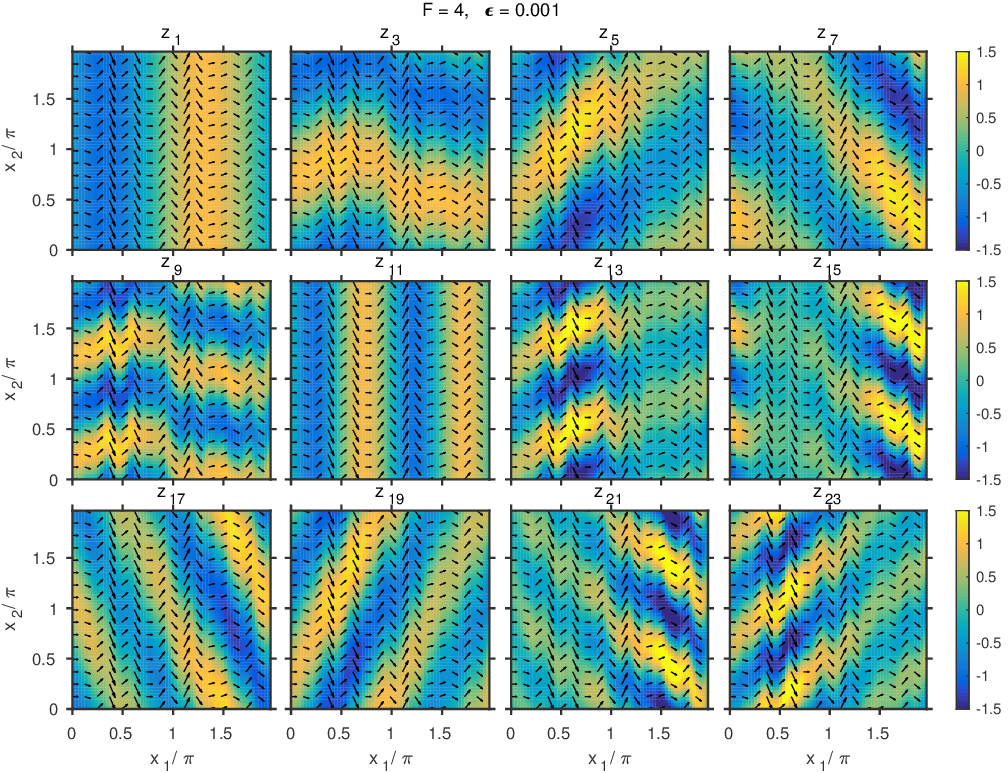}\\
  \includegraphics[width=.8\linewidth]{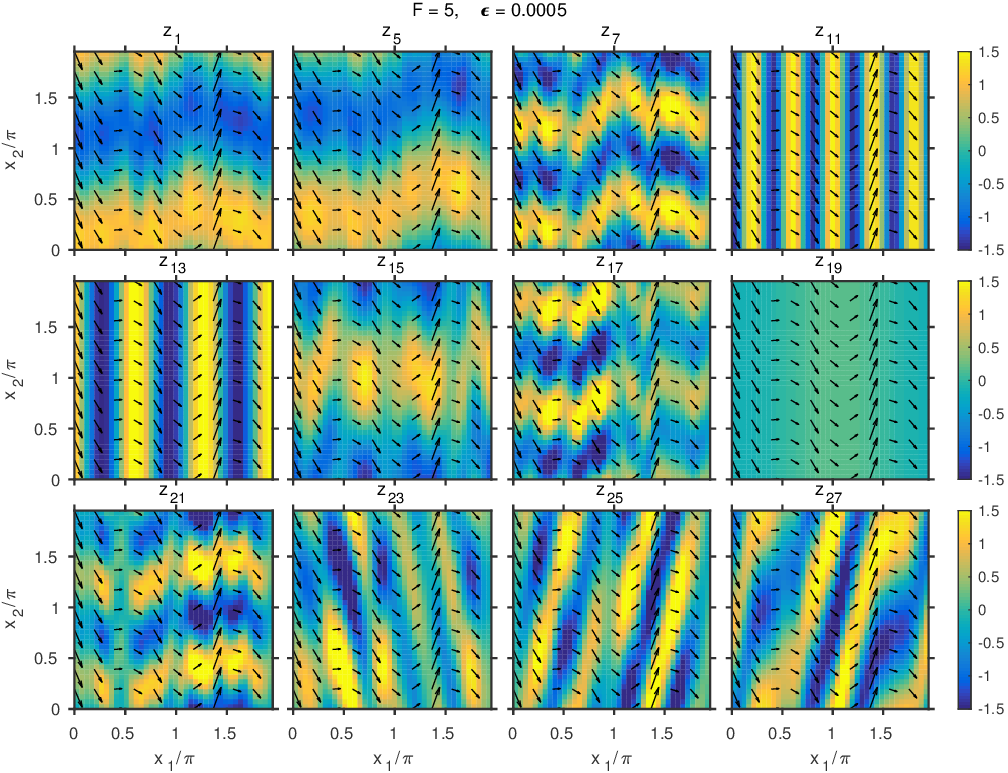}
  \caption{\label{figL96Z}Snapshots of the real parts of representative numerical Koopman eigenfunctions (colors) and the velocity field $ v\rvert_a $ (arrows) for the L96-driven flows with $ F_\text{L96} = 4 $ and 5. The numerical eigenfunctions were computed with the diffusion regularization parameters $ \theta = 10^{-3} $ and $ 5 \times 10^{-4} $, respectively. In both cases, the data-driven Dirichlet energies were computed via option~3 in~\eqref{eqEtaEpsN}.}
\end{figure}

\begin{table}
  \centering\small
  \caption{\label{tableZL96}Eigenvalues $ \lambda_k $ and Dirichlet energies $ E_k $ of the numerical Koopman eigenfunctions $ z_k $ for the L96-driven flows shown in Fig.~\ref{figL96Z}}
  \begin{tabular*}{\linewidth}{@{\extracolsep{\fill}}llllll}
    \hline
    \multicolumn{3}{c}{$F_\text{L96}=4 $, $\theta = 0.001 $} & \multicolumn{3}{c}{$F_\text{L96} = 5 $, $\theta=0.0005 $}\\
    \cline{1-3} \cline{4-6}
    & \multicolumn{1}{c}{$\lambda_k$} & \multicolumn{1}{c}{$E_k$} & & \multicolumn{1}{c}{$\lambda_k$} & \multicolumn{1}{c}{$E_k$}\\
    $ z_{1} $ & $ -5.67e-03 + 1.44e-02 \,\ii $ & $ 5.60e+00 $  &  $ z_{1} $ & $ -5.94e-03 + 1.51e-04 \,\ii $ & $ 1.18e+01 $ \\ 
    $ z_{3} $ & $ -7.21e-03 - 2.08e-04 \,\ii $ & $ 7.18e+00 $  &  $ z_{5} $ & $ -1.63e-02 - 1.75e-17 \,\ii $ & $ 1.98e+01 $ \\ 
    $ z_{5} $ & $ -1.20e-02 + 1.62e-02 \,\ii $ & $ 1.19e+01 $  &  $ z_{7} $ & $ -2.84e-02 - 4.51e-04 \,\ii $ & $ 3.17e+01 $ \\ 
    $ z_{7} $ & $ -1.22e-02 - 1.67e-02 \,\ii $ & $ 1.21e+01 $  &  $ z_{11} $ & $ -3.44e-02 - 5.87e-03 \,\ii $ & $ 5.69e+01 $ \\ 
    $ z_{9} $ & $ -2.84e-02 - 1.71e-03 \,\ii $ & $ 2.83e+01 $  &  $ z_{13} $ & $ -3.58e-02 - 3.18e-03 \,\ii $ & $ 6.76e+01 $ \\ 
    $ z_{11} $ & $ -2.97e-02 - 2.55e-02 \,\ii $ & $ 2.96e+01 $  &  $ z_{15} $ & $ -3.58e-02 - 1.19e-02 \,\ii $ & $ 7.06e+01 $ \\ 
    $ z_{13} $ & $ -3.08e-02 + 2.02e-02 \,\ii $ & $ 3.07e+01 $  &  $ z_{17} $ & $ -3.72e-02 + 9.11e-04 \,\ii $ & $ 7.19e+01 $ \\ 
    $ z_{15} $ & $ -3.32e-02 + 2.56e-02 \,\ii $ & $ 3.31e+01 $  &  $ z_{19} $ & $ -3.76e-02 + 2.35e-03 \,\ii $ & $ 7.20e+01 $ \\ 
    $ z_{17} $ & $ -3.42e-02 - 2.51e-02 \,\ii $ & $ 3.40e+01 $  &  $ z_{21} $ & $ -3.77e-02 - 1.15e-02 \,\ii $ & $ 7.41e+01 $ \\ 
    $ z_{19} $ & $ -3.45e-02 - 2.60e-02 \,\ii $ & $ 3.44e+01 $  &  $ z_{23} $ & $ -3.84e-02 + 1.02e-02 \,\ii $ & $ 7.49e+01 $ \\ 
    $ z_{21} $ & $ -4.68e-02 - 1.83e-02 \,\ii $ & $ 4.67e+01 $  &  $ z_{25} $ & $ -3.83e-02 - 1.81e-02 \,\ii $ & $ 7.53e+01 $ \\ 
    $ z_{23} $ & $ -5.16e-02 - 2.85e-02 \,\ii $ & $ 5.15e+01 $  &  $ z_{27} $ & $ -3.89e-02 + 2.31e-02 \,\ii $ & $ 7.70e+01 $ \\ 
    \hline
  \end{tabular*}
\end{table}

In general, the numerical eigenfunctions for $ F_\text{L96}= 4 $ have the structure of propagating patterns, advected along the positive $ x_1 $ direction by the spatially uniform cross-sweep flow $ v^{(1)} $ in~\eqref{eqL96V}. Qualitatively, we can identify four distinct classes of eigenfunctions, namely (1) eigenfunctions such as $ z_1 $ and $ z_{11} $ that do not depend on the $x_2 $ coordinate; (2) eigenfunctions such as $ z_3 $ and $ z_5 $ that depend on $ x_2 $ but are approximately constant on the instantaneous streamlines of the flow in $ X $; (3) eigenfunctions such as $ z_5 $, $z_7 $, $z_{17} $, and $ z_{19} $ that depend on both $ x_1 $ and $ x_2 $ and whose level sets form approximately straight lines in the $ (x_1, x_2) $ plane; (4) eigenfunctions such as $ z_{13} $, $ z_{15} $, $ z_{21} $, and $ z_{23} $ that have the structure of propagating wavepackets. Note that eigenfunctions of class~2 have small values of $ \Imag \lambda_k $ ($O(10^{-4})$ and $O(10^{-3}) $ for $ z_3 $ and $ z_5 $, respectively) indicating that the level sets of these functions are nearly invariant under the skew-product flow. On the other hand, the rest of the $ F_\text{L96}=4 $ eigenfunctions shown here have $ \Imag\lambda_k = O(10^{-2} )$, indicating that they vary on the tracers more strongly, albeit still at a slow timescale compared to the natural timescale of the flow.  Unlike the moving-vortex example in Section~\ref{secMovingVortex} (see also \ref{appMoving}), in the case of the L96-driven system we do not have a detailed justification of the properties of the numerical Koopman eigenfunctions. Nevertheless, at least in some cases, we can provide partial justification provided that certain simplifying assumptions are made. 

First, consider the eigenfunctions of class~1. Since the velocity field component $ v^{(1)}(a) $ is uniform on $ X $ for all $ a \in A $, the vector field $ w $ is projectible under the map $ \pi : M \mapsto A \times \mathbb{ T }^1 $ such that $ \pi( a, x_1, x_2 ) = ( a, x_1 ) $. In particular, we have $ \pi_* w = u + \hat s_0 \partial_1 $. Assume now that (1) $ \hat s_0 $ is constant, and (2) the generator $ \tilde u $ on $ A$ has eigenfunctions. The validity of assumption~1 can be tested from the time series of $ \hat s_0 $ in the training data; in the $ F_\text{L96} = 4 $ regime we find the mean and standard deviation of $ \hat s_0 $ to be 1.46 and 0.038, respectively, indicating that $ \hat s_0 $ can indeed be treated as a constant to a good approximation. Assumption~2 is supported from the quasiperiodic nature of the dynamics in the $ F_\text{L9} = 4 $ regime (see Fig.~\ref{figL96}). Under these assumptions, the product $ \tilde z = z_A \phi_k $ of any eigenfunction $ z_A $ of $ u $ at frequency $ \omega $ with any Fourier function $ \phi_k $ at wavenumber $ k \in \mathbb{ Z } $ will be an eigenfunction of $ \pi_* w $ at eigenvalue $ \ii( \omega + k \hat s_0 ) $. Moreover, the pullback $ z = \tilde z \circ \pi $ of that eigenfunction onto $ M $ will be an eigenfunction of $ w $ at the same eigenvalue having constant values at the fibers of $ \pi $ (i.e., constant $ x_2 $ hypersurfaces), as observed in Fig.~\ref{figL96Z} and Movie~\href{http://cims.nyu.edu/~dimitris/files/tracers/movie10.mp4}{10}.

Next, to interpret the eigenfunctions of class~2, observe that if $ v^{(1)}(  a ) $ is nonzero (as is the case for all $ a $ in the $ F_\text{L96} = 4 $ regime), the instantaneous streamlines of time-dependent flows of the class~\eqref{eqL96V} are closed. In particular, since $ v^{(1)}( a ) $ is non-vanishing and independent of $ x_2 $, the streamlines at fixed $ a $ are solutions of the ODE
\begin{displaymath}
  \frac{ dx_2 }{ dx_1 } = \frac{ v^{(2)}( a, x_1 ) }{ v^{(1)}( a ) },
\end{displaymath}   
and since $ v^{(2)}( a, x_1 ) $ has no wavenumber zero Fourier component, integration of the right-hand side shows that $ x_2( 0 ) = x_2( 2 \pi ) $, which implies that the streamlines are closed. Thus, as done in the moving-vortex example (see~\ref{appMoving}), we can consider a projection onto a circle $ Y \subset  X $ transverse to the streamlines. In particular, we can define $ \pi : M \mapsto A \times Y $ such that $ \pi( a, x_1, x_2 ) = ( a, y ) $ where $ y $ is the intercept of the streamline at state $ a \in A $ passing through the point $ ( x_1, x_2 ) \in X $ with the circle $ x_1 = 0 $. Since $ v \rvert_a $ is everywhere tangent to the streamlines by definition, the $ w^X $ component of the generator vanishes under $ \pi_* $, and we have $ \pi_* w = \pi_* w^A = u \oplus 0 $. Thus, every $L^2 $ function $ \tilde z $ on $ A \times Y $ which is constant on $ A$ is an eigenfunction of $ \pi_* w $ at eigenvalue zero, and that function pulls back to an eigenfunction $ z = \tilde z \circ \pi $ of $ \tilde w $, also at eigenvalue zero, which is constant on the streamlines of the L96-driven flow. The numerical eigenfunctions of class~2 described above are consistent with this behavior. 

While we do not have a justification of the properties of class~3 and~4 eigenfunctions, it is worthwhile noting that the spatially localized character of the latter suggests that they are associated with the continuous spectrum of the generator (cf.\ Section~\ref{secMovingVortex} and~\ref{appMoving}). 

We now consider the regime $ F_\text{L96} = 5 $. For the reasons discussed in~\ref{appKoopEig}, our current code and available computational resources did not allow us to work with as high values of the spectral truncation parameter $ \ell_A $ as the results in Fig.~\ref{figL96Rec} indicate are required for accurate reconstruction of the state-dependent velocity field $ v\rvert_a $. Thus, aiming for a compromise between resolution on $ A $ and $ X$, we decreased $ \ell_{X_1} $ and $ \ell_{X_2} $ to 20, which allowed us to increase $ \ell_A $ to 500. With these parameter choices, the number of Fourier modes per dimension of $ X $ is equal to the number of degrees of freedom $ 2 J + 1 $ of the L96 system; that is, we are fully resolving the velocity field but not any smaller scales that could develop as a result of the action of that velocity field on tracers. Moreover, the total number of basis functions is $ \ell = \ell_A ( 2 \ell_X + 1 )^2 = \text{840,500} $. Note that even though $ \ell $ in this case is smaller than in the $ F_\text{L96} = 4 $ experiments, the memory requirements are actually higher due to the presence of $ \ell_A \times \ell_A $-sized blocks of nonzero elements in the generator matrix $ \boldsymbol{ A } $; see~\eqref{eqWXL96Trunc}.            

Figure~\ref{figL96Z}, Movie~\href{http://cims.nyu.edu/~dimitris/files/tracers/movie11.mp4}{11} and Table~\ref{tableZL96} show representative Koopman eigenfunctions and eigenvalues computed with these parameters for the $ F_\text{L96} = 5 $ regime. The compromises we had to make on spectral resolution somewhat diminish our confidence in these results, but nevertheless some of he qualitative features identified in the $ F_\text{L96} = 4 $ experiments are also present here. In particular, the leading $ F_\text{L96} = 5 $ eigenfunction, $ z_1 $, exhibits the characteristics of class~2 eigenfunctions discussed above; i.e., its level sets tend to follow the instantaneous streamlines of the flow (though with discrepancies at times, likely due to spectral truncation) and the corresponding eigenfrequency is small, $ \Imag \lambda_1 = O( 10^{-4} ) $. 

Eigenfunctions broadly resembling the class~1 eigenfunctions at $ F_\text{L96}= 4 $ are also present at $ F_\text{L96} = 5 $ (e.g., $ z_{11} $, $ z_{13}$, and $ z_{19}$), although they differ notably from the previous regime in that they display significant amplitude modulations. This is actually not too surprising, for our interpretation of class~1 eigenfunctions relied on the simplifying assumptions that the cross-sweep flow component $ v^{(1)} $ is constant and that the generator of the dynamics on $ A $ has eigenfunctions. At $ F_\text{L96} = 5$, the mean and standard deviation of $ \hat s_0 $ are 1.67 and 0.17, respectively; i.e., the strength of fluctuations relative to the mean is about an order of magnitude stronger than in the $ F_\text{L96} = 4 $ case. Moreover, the assumption that $ \tilde u $ has eigenfunctions becomes more questionable at $ F_\text{L96} = 5 $. Thus, while we cannot exclude the possibility that the amplitude fluctuations in $ z_{11} $, $ z_{13} $, and $ z_{19} $ are due to spectral truncation, our results are consistent with these functions being associated with the continuous spectrum of the $ F_\text{L96} = 5 $ system, exhibiting amplitude modulations as a result of localization on $ A $.    
        
\subsection{Prediction of observables and probability densities}

We now examine prediction of observables and probability densities using the methods of Section~\ref{secDataDrivenPred}. In what follows, we discuss a suite of numerical experiments that are analogous to those performed for the vortex flow models in Section~\ref{secExamples}; namely, we consider prediction of observables representing the position of tracers in $ X $ and probability densities with a circular Gaussian structure at forecast initialization. 

Since the data-driven basis function $ \phi_0^{A,\epsilon,N} $ is constant by construction (see Section~\ref{secKernelOp}), we can define $ f_1 = \phi^{\epsilon,N}_{010} = \phi^X_{10} $ and $ f_2 = \phi^{\epsilon,N}_{001} = \phi^X_{01} $ in direct analogy with the corresponding observables in~\eqref{eqF12}, and use the semigroup action $ \tilde S_{\epsilon,N,\tau,\ell,t} f_j( a, x )  $ from~\eqref{eqSemigroupPhi} to approximate the Koopman group action $ W_t f_j( a, x ) $ (recall that the latter gives the position at lead time $ t $ of a tracer released from point $ x \in X $ when the L96 state is $ a $). Figures~\ref{figL96X_F4} and~\ref{figL96X} show  the evolution of tracer positions for the systems with $ F_\text{L96} = 4 $ and $ F_\text{L96}=5 $, respectively, obtained via the operator-theoretic approach of Section~\ref{secDataDrivenPred} and explicit calculation by numerical ODE integration of the time-dependent flow in~\eqref{eqL96V}. These results were computed for initial L96 states in the training data; that is, in the operator-theoretic and ODE-based experiments we compute $ \tilde S_{\epsilon,N,\tau,\ell,t} f_j( a, x ) $ and $ \Psi_t(a, x ) $ for starting states $ a $ in the training data, $ \{ a_n \}_{n=0}^{N-1} $. This avoids out-of-sample extension errors that would occur if $ a \notin \{ a_n \}_{n=0}^{N-1} $, so that the model error generated by the operator theoretic model is entirely due to diffusion regularization and spectral truncation. The evolution depicted in Figs.~\ref{figL96X_F4} and~\ref{figL96X} is also more directly visualized as videos in Movies~\href{http://cims.nyu.edu/~dimitris/files/tracers/movie12.mp4}{12} and~\href{http://cims.nyu.edu/~dimitris/files/tracers/movie13.mp4}{13}, respectively. In the $ F_\text{L96} = 4 $ case, we show results for 1,024 timesteps, i.e., forecast times up to 10.24 natural time units. In the $ F_\text{L96} = 5 $ case, we were only able to compute for 256 time steps with the computational resources available to us.   
   
\begin{figure}
  \centering
  \includegraphics[width=.4\linewidth]{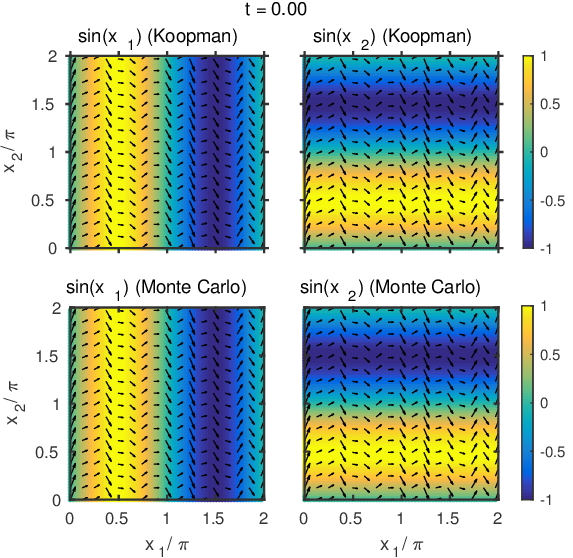}    \includegraphics[width=.4\linewidth]{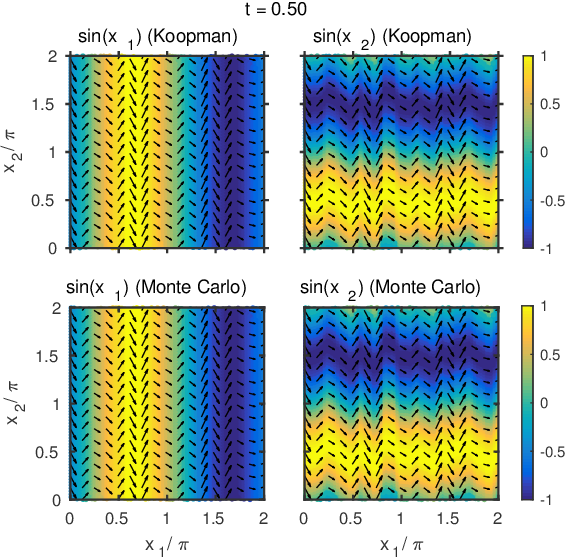}\\ 
  \includegraphics[width=.4\linewidth]{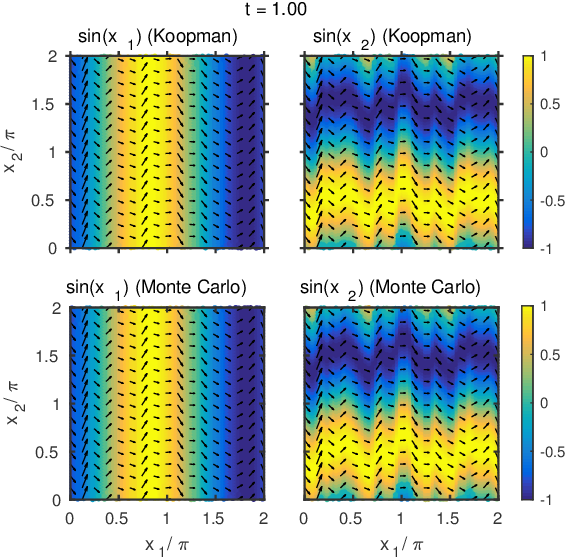}    \includegraphics[width=.4\linewidth]{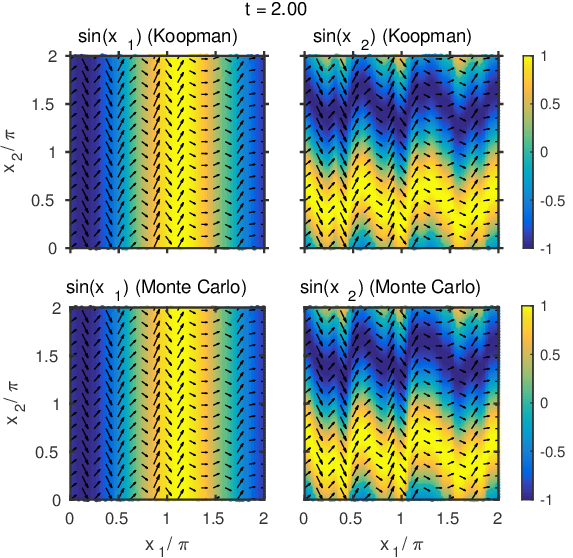}\\
\includegraphics[width=.4\linewidth]{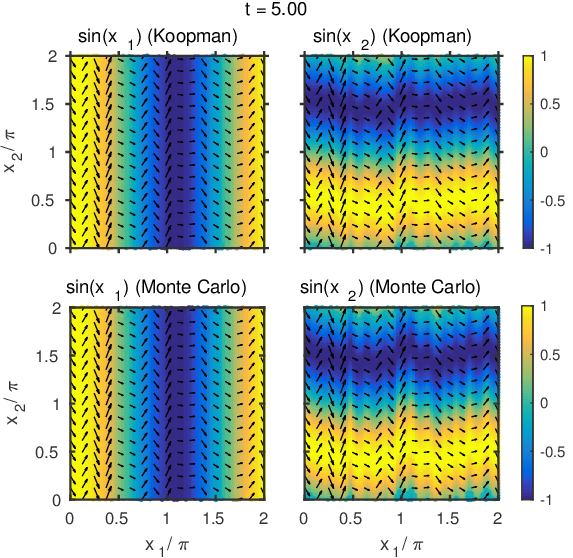} \   \includegraphics[width=.4\linewidth]{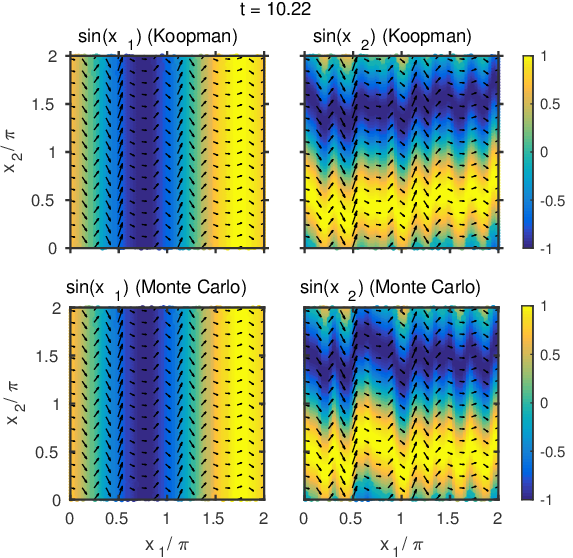}

  \caption{\label{figL96X_F4}Snapshots of the evolution of the positions  $ ( x_1( t, a ), x_2( t,a ) ) \in X $ for an ensemble of Lagrangian tracers under the L96-driven flow with $ F_\text{L96} = 4$, computed via the data-driven approach of Section~\ref{secDataDrivenPred} and via explicit ODE integration of the full model for tracer advection. The initial state of the L96 system is state $ a_0 $ in the training dataset. The initial positions $ ( x_1, x_2 ) $ of the tracers are on a uniform square grid of spacing $ 2 \pi / ( 2 \ell_{X_1} + 1 ) = 2 \pi / ( 2 \ell_{X_2} + 1 ) = 2 \pi  / 65 $ along both the $ x_1 $ and $ x_2 $ coordinates. Colors show the sines of the initial $ x_1 $ and $ x_2 $ coordinates as an aide for visualizing the displacement of the tracers. Arrows show the velocity field. The diffusion regularization parameter is $ \theta = 10^{-5} $, and the spectral truncation parameter $\ell_A$ is equal to 250.} 
\end{figure}
  
\begin{figure}
  \centering
  \includegraphics[width=.4\linewidth]{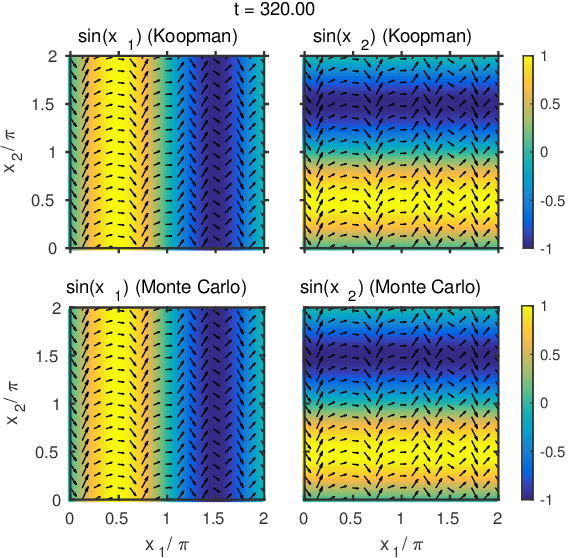}    \includegraphics[width=.4\linewidth]{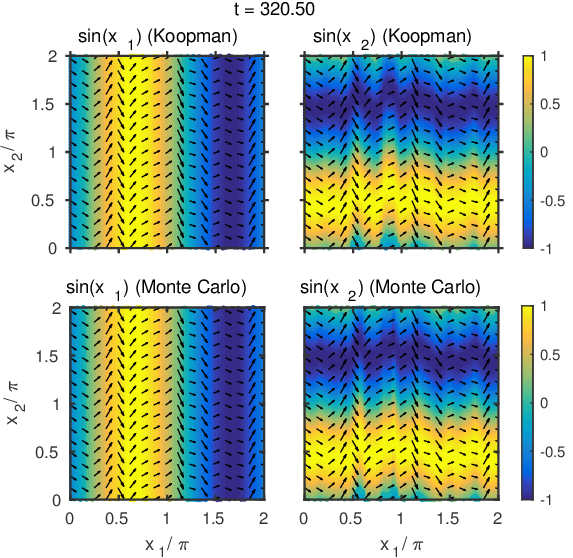}\\ 
  \includegraphics[width=.4\linewidth]{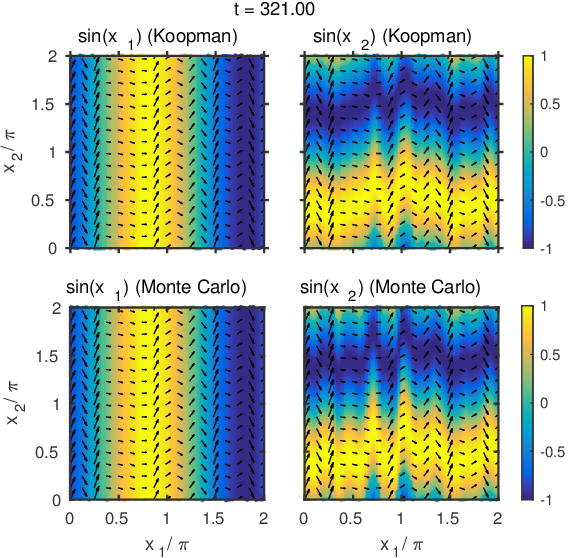}    \includegraphics[width=.4\linewidth]{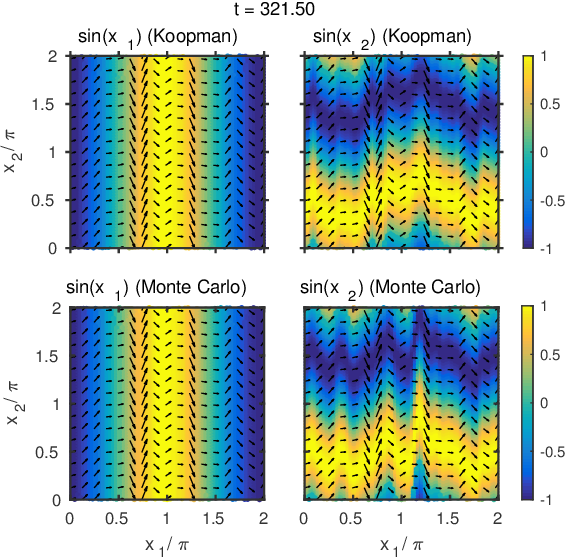}\\
\includegraphics[width=.4\linewidth]{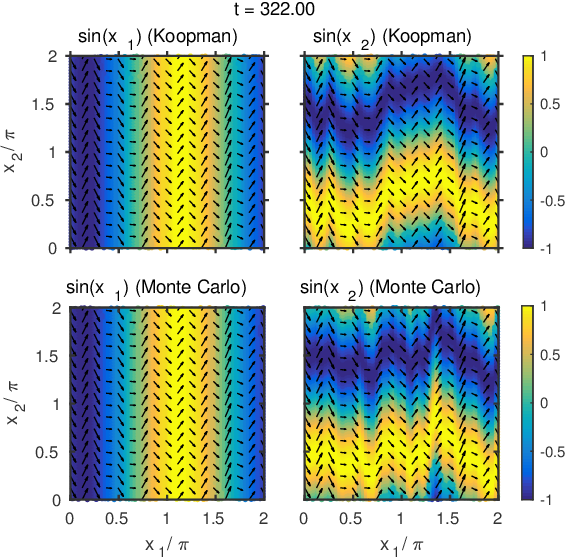} \   \includegraphics[width=.4\linewidth]{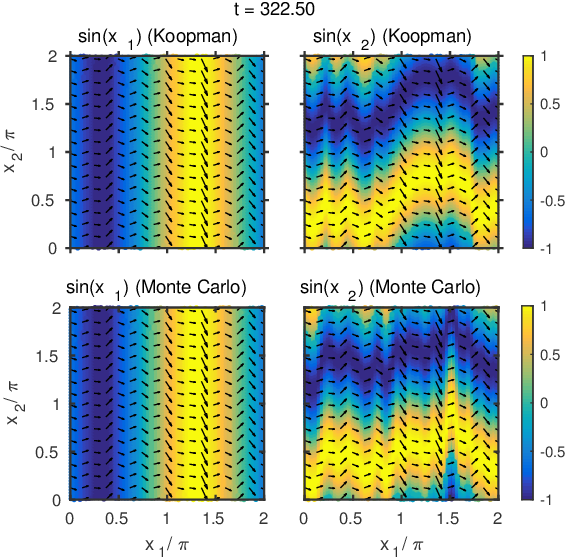}

  \caption{\label{figL96X}As in Fig.~\ref{figL96X_F4}, but for the L96 system with $ F_\text{L96} = 5 $. In this case, the initial state of the L96 system is state $ a_\text{32,000} $ in the training dataset (the corresponding starting time is $ t = 320$). The diffusion regularization parameter is $ \theta = 10^{-5} $, and the spectral truncation parameter $\ell_A$ is equal to 750.} 
\end{figure}

As expected from the structure of the velocity field in~\eqref{eqL96V}, in both the $ F_\text{L96}= 4 $ and $ F_\text{L96} = 5 $ cases the system generates a spatially uniform (but time-dependent) tracer flow along the $ x_1 $ direction and spatially nonuniform displacements along the $ x_2 $ direction, creating traveling-wave-like disturbances in observables $ f_1 $ and $ f_2$. As expected, the spatiotemporal structure of these disturbances is more complex in the $ F_\text{L96} = 5 $ case, but even at $ F_\text{L96} = 4 $ the system generates patterns at a broader range of spatial scales than that suggested by the dominant Fourier modes of the velocity field (see Fig.~\ref{figL96Rec}). In general, over the examined time interval, the results from the operator-theoretic model and the the Monte Carlo simulation are in very good agreement for $ F_\text{L96} = 4 $, with only minor discrepancies becoming noticeable towards the end of the interval ($ t \simeq 10$). The results from the two methods are also in reasonably good agreement in the $ F_\text{L96} =5 $ experiments, although there the operator theoretic model fails to capture some of the small-scale spatial patterns developed by the true model, even at times as early as $ t = 2$. These discrepancies are likely due to both spectral truncation (notice  in Fig.~\ref{figL96Rec} that even at $ \ell_A = 750 $ there are significant reconstruction errors in the velocity field) and chaotic dynamics of the L96 system. Nevertheless, the results in Fig.~\ref{figL96X} and Movie~\href{http://cims.nyu.edu/~dimitris/files/tracers/movie13.mp4}{13} indicate that, at least over short to moderate lead times, the operator-theoretic model is able to perform useful forecasts of tracer positions.

Next, we consider prediction of probability densities. As stated above, we set the initial density $ \rho_X $ on $ X $ to a circular Gaussian, given by the same formula as~\eqref{eqRhoAX}. In the experiments that follow, we work throughout with the location and concentration parameters $ ( \bar x_1, \bar x_2) = ( \pi, \pi ) $ and $ \kappa = 3 $, respectively. To construct an initial density $ \rho_A $ on $ A $ we fix an arbitrary state $ a \in B(\alpha) $, and define $ \rho_A( a_n ) = p_{\epsilon,N}( a, a_n ) $, where $ p_{\epsilon,N} $ is the Markov kernel from~\eqref{eqPOpKer}. In what follows, we choose $ a $ to be a state ``on the attractor''  but outside of the training dataset $ \{ a_n \}_{n=0}^{N-1} $. In particular, we compute the solution $ s( t ) $ of the L96 system beyond the latest time $ t = ( N - 1 ) \tau $ in the training dataset, and set $  a = s(2 N \tau ) $. Taking the product $ \rho(a_n, x ) = \rho_A(a_n) \rho_X( x ) $ then leads to our initial density in $ H_{\epsilon,N} $. Note that being an element of $ H_{\epsilon,N} $, $ \rho( a, x )$ is unspecified for $ a \notin \{ a_n \} $, but its expansion coefficients in the $ \phi_k^{\epsilon,N} $ basis depend only on the values $ \rho(a_n,x)$. Given these expansion coefficients, we approximate the evolution of the initial density $ \rho $ through the semigroup action $ \rho_t = \tilde S^*_{\epsilon,N,\tau,\ell,t} \rho $ in~\eqref{eqAdjSemigroupPhi}. As in Section~\ref{secExamples}, we visualize the evolution of $ \rho_t $ through the corresponding  marginal densities $ \sigma_t $, $ \sigma_{1,t} $, and $ \sigma_{2,t} $.    

Figures~\ref{figL96_F4_Rho} and~\ref{figL96Rho} show the evolution of these marginal densities under the L96-driven flows with $ F_\text{L96} = 4 $ and $ F_\text{L96} = 5$, respectively, computed using the operator-theoretic approach of Section~\ref{secDataDrivenPred} and explicit Monte Carlo simulation for ensembles of 300,000 tracers drawn independently from $ \rho $. The spectral truncation parameters and simulation time intervals are identical to the corresponding values used for prediction of the $ f_1 $ and $ f_2 $  observables for $ F_\text{L96} = 4 $ and $ F_\text{L96} = 5 $ discussed above. A more direct visualization of the evolution in Figs.~\ref{figL96_F4_Rho} and~\ref{figL96Rho} is provided in Movies~\href{http://cims.nyu.edu/~dimitris/files/tracers/movie14.mp4}{14} and~\href{http://cims.nyu.edu/~dimitris/files/tracers/movie15.mp4}{15}, respectively. 
 
\begin{figure}
  \centering\includegraphics[width=.38\linewidth]{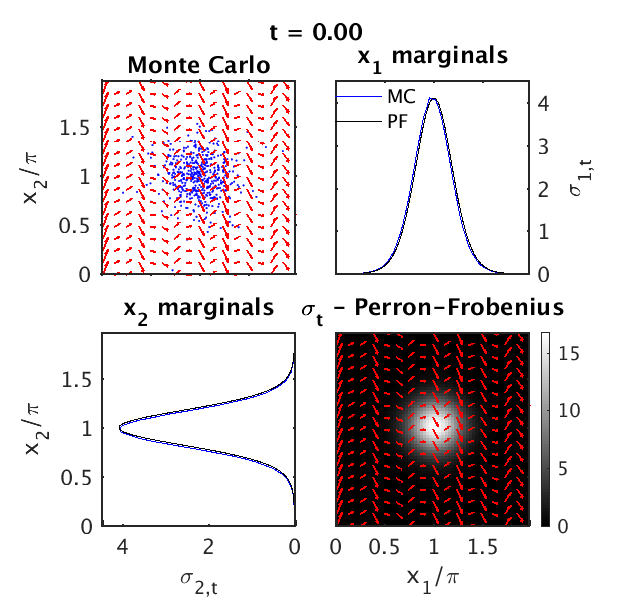}   \includegraphics[width=.38\linewidth]{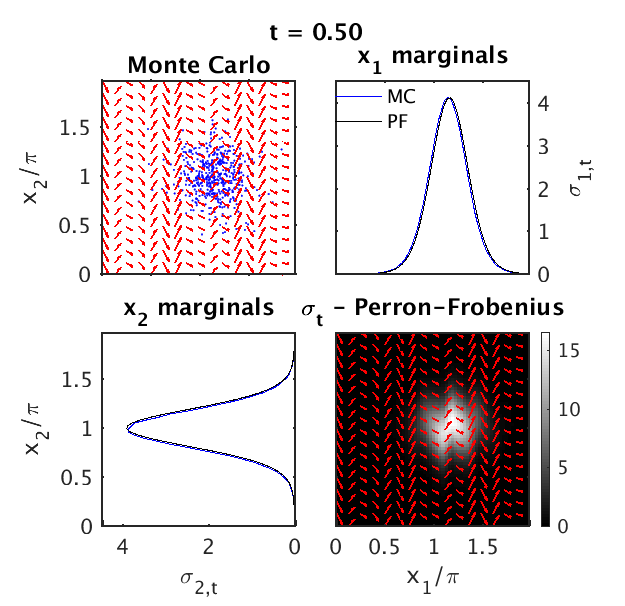} \\
\includegraphics[width=.38\linewidth]{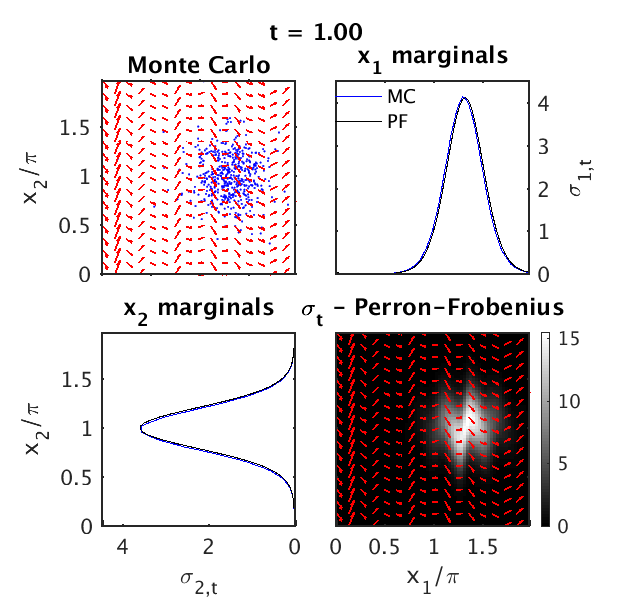}  \includegraphics[width=.38\linewidth]{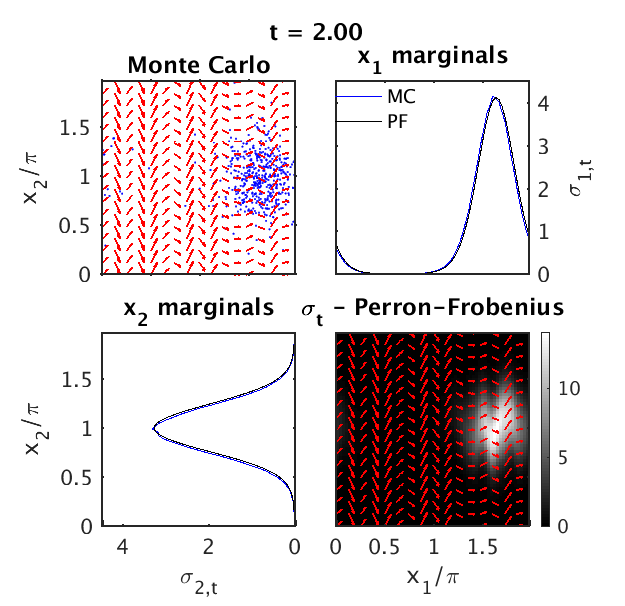} \\
\includegraphics[width=.38\linewidth]{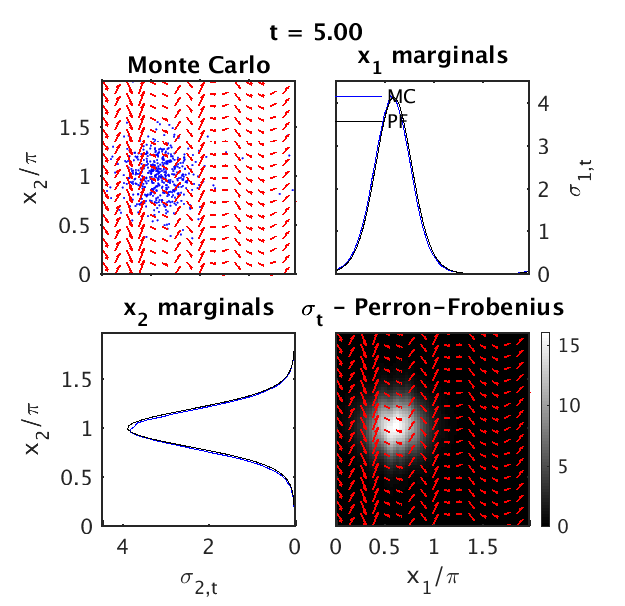}  \includegraphics[width=.38\linewidth]{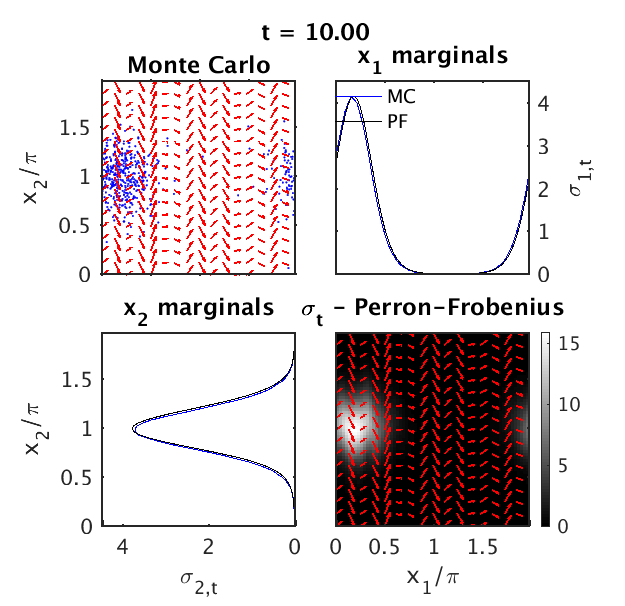}
  \caption{\label{figL96_F4_Rho}Snapshots of the temporal evolution of the marginal density $ \sigma_t $ under the L96-driven flow with $ F_\text{L96} = 4 $  through the data-driven scheme in Section~\ref{secDataDrivenPred} (grayscale colors) with $ \theta = 10^{-5} $ and $ \ell_A = 250 $, and a Monte Carlo simulation (blue dots) based on the full model for Lagrangian advection under the time-dependent flow in~\eqref{eqL96V}. Also shown are the one-dimensional marginal densities $ \sigma_{1,t} $ and $\sigma_{2,t} $ determined from the data-driven model (black lines) and the Monte Carlo simulation (blue lines). The density estimates from the Monte Carlo ensemble were determined in the same manner as in Fig.~\ref{figMovingRho}. For reference, the red arrows show the time-dependent velocity field evaluated for the L96 state $ s( t ) $ such that $ s( 0 ) = a $ is the state used to construct the initial density $ \rho_A $.}
\end{figure}

\begin{figure}
  \centering\includegraphics[width=.42\linewidth]{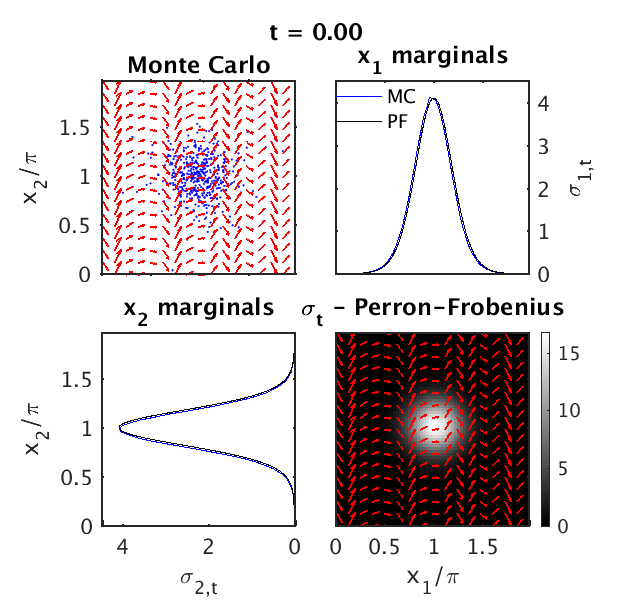}   \includegraphics[width=.42\linewidth]{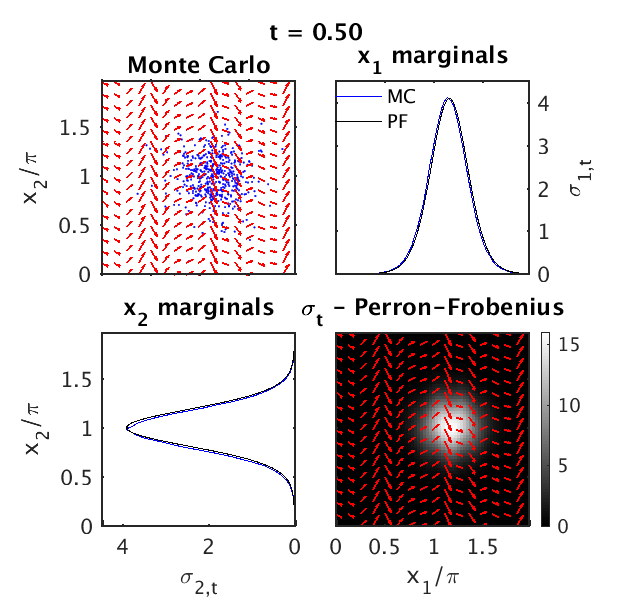} \\
\includegraphics[width=.42\linewidth]{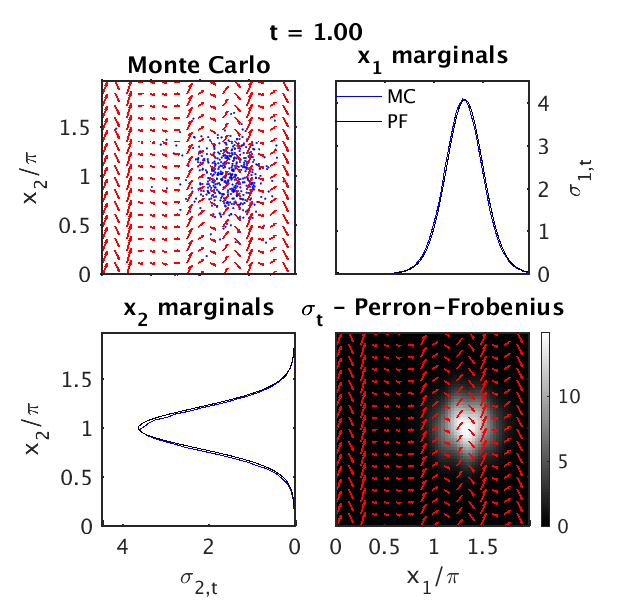}  \includegraphics[width=.42\linewidth]{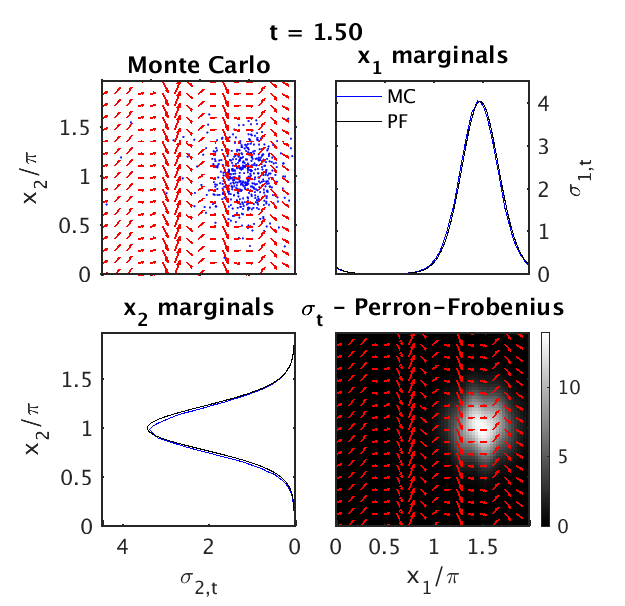} \\
\includegraphics[width=.42\linewidth]{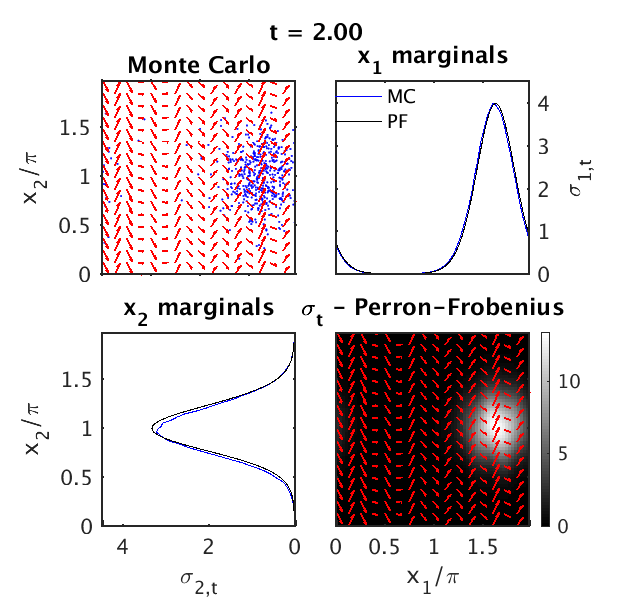}  \includegraphics[width=.42\linewidth]{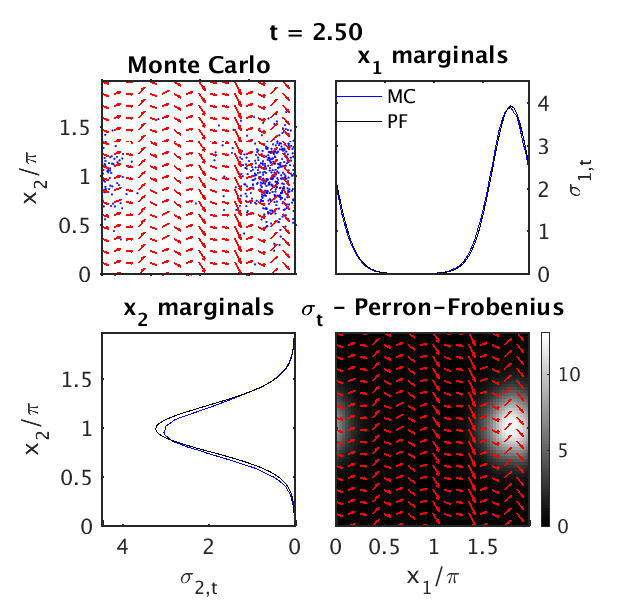}
  \caption{\label{figL96Rho}As in Fig.~\ref{figL96_F4_Rho}, but for the flow with $ F_\text{L96} = 5 $ and the spectral truncation parameter $ \ell_A = 750 $.}
\end{figure}

In both the $ F_\text{L96}= 4 $ and $ F_\text{L96} = 5 $ cases, the initially globular density $ \sigma_t $ is distorted by the velocity field fluctuations in the $ x_2 $ direction while simultaneously being advected along the positive $ x_1 $ direction by the spatially uniform cross-sweep flow. The spatial distortions of the initial density are more pronounced at $ F_\text{L96} = 4 $, likely due to the more pronounced spectral peaks of the velocity field in that regime. In both cases, the 1D marginal density $ \sigma_{1,t} $ is advected along the $ x_1 $ direction with little change of shape, whereas $ \sigma_{2,t} $ undergoes oscillations in its variance due to the velocity field fluctuations in the $ x_2 $ direction. In general, the agreement between the operator-theoretic and Monte Carlo results is very good for $ F_\text{L96} = 4 $. At $ F_\text{L96} = 5 $, the agreement is also reasonably good (at least over the examined lead times), but biases in $ \sigma_{2,t} $ start becoming noticeable at $ t = 2$; as with prediction of $ f_2 $, this is at least partly caused by spectral truncation.    

\section{\label{secConclusions}Concluding remarks}

In this paper, we have developed a family of techniques for coherent pattern identification and statistical prediction in measure-preserving, skew-product dynamical systems, with a particular emphasis on systems of Lagrangian tracers evolving under time-dependent fluid flows driven by ergodic dynamical systems. Central to these techniques is a data-driven representation of the generator of the Koopman and Perron-Frobenius groups of operators on an extended state space (as recently done in \cite{FroylandKoltai17} in the case of periodic flows), given by the product of the state space $ A $  of the dynamical system driving the flow and the physical domain $ X $ in which the flow takes place. In particular, our approach is to represent the generator of the associated skew-product system (governing the joint evolution of the velocity field and the tracers) in a smooth basis of the $ L^2 $ space of $ A \times X $, constructed as a tensor product of bases for the $ L^2 $ spaces on  $ A $ and $ X $. Importantly, the basis for $ L^2( A ) $ is built from Laplace-Beltrami eigenfunctions learned from time-ordered velocity field snapshots using kernel algorithms for machine learning \cite{CoifmanLafon06,BerryHarlim16}, requiring no a priori knowledge of the geometry or the dynamics on $ A $, or availability of explicit tracer trajectories. 

With the availability of this basis, we constructed a Galerkin method for the eigenvalue problem of a regularized generator for the skew-product system having only point spectrum \cite{FrankeEtAl10}, and used the eigenfunctions of this operator to define coherent spatiotemporal patterns. Following earlier work \cite{GiannakisEtAl15,Giannakis17}, our approach has been to select eigenfunctions with minimal roughness; the latter measured through a Dirichlet energy functional accessible to data-driven approximation through the same kernel algorithms as those used to build the basis. Taking the exponential of the generator and its adjoint, we also constructed representations of the Koopman and Perron-Frobenius operators governing the evolution of observables and probability densities of the skew-product system, and used these representations in model-free statistical prediction schemes. Here, we computed the action of the generator exponential on functions by means of Leja polynomial interpolation algorithms \cite{CaliariEtAl04,KandolfEtAl14}, used in exponential integrators for ODEs. 

We demonstrated the utility of these techniques in periodic flows with Gaussian streamfunctions on two-dimensional periodic domains and in aperiodic flows driven by Lorenz 96 systems \cite{QiMajda16} in weakly chaotic regimes. The periodic-flow examples where chosen so as to have mixed spectra and generator matrices computable in closed form, allowing us to assess the properties of these schemes in the absence of sampling errors. We saw that in these flows the numerically computed eigenfunctions of the generator provide useful notions of coherent spatiotemporal patterns, with properties consistent with those expected from theory (when such results are available), and in some cases providing numerical access to the continuous spectrum through patterns behaving as ``strange eigenmodes'' \cite{LiuHaller04,Pierrehumbert94}. We also saw that our methods for prediction of observables and probability densities perform comparably to explicit integration of the full model, including in their ability to reproduce highly non-Gaussian probability density functions. In the case of L96-driven flows, our numerical results provide evidence that our framework is also applicable when the state space of the driving system is not a smooth manifold, but rather a more general compact invariant set of a dynamical system. Here, a major challenge was that the L96 driving system can require large numbers of basis functions in order to accurately represent the generator, especially in chaotic regimes with many positive Lyapunov exponents. Nevertheless, we found that our approach yields reasonably good results, both in terms of coherent pattern detection and prediction of observables and densities.  

Two aspects of this work that warrant future study and potential improvement are (1) the way that diffusion regularization interacts with the skew-adjoint Koopman generator in the presence of continuous spectrum, and (2) the high computational cost associated with the tensor product basis for the Hilbert spaces $ H $ encountered in skew-product systems. The first issue was addressed in a specific case in \cite{Giannakis17,DasGiannakis17} where it was shown that through an appropriate use of delay-coordinate maps one can construct a data-driven diffusion operator the commutes with the Koopman generator on the discrete subspace of $ H $. However, that work did not address how objects such as strange eigenmodes behave when the diffusion operators and the Koopman generator do not commute (which, to our knowledge, is an open problem in PDE theory). Possible modifications of our approach to address the second issue would be to replace the tensor product basis with a basis constructed directly on the extended state space of the skew-product system, possibly by using explicit tracer trajectories. Alternatively, one could seek efficiency improvements by employing multi-resolution bases \cite{CoifmanMaggioni06,AllardEtAl12}, or through sparse representations of the generator \cite{BruntonEtAl16}. We plan to pursue these topics in future work.

\section*{Acknowledgments}

Dimitrios Giannakis acknowledges support by  ONR YIP grant N00014-16-1-2649, NSF grant DMS-1521775, and DARPA grant HR0011-16-C-0116. Suddhasattwa Das is supported as a postdoctoral research fellow from the first grant. The authors wish to thank Pierre Germain and Igor Mezi\'c for stimulating conversations. 
 
\appendix 

\section{\label{appTechnical}Technical results}

\subsection{\label{appPropL}Proof of Proposition~\ref{propL}}

  (i) That $ L_{\pm} $ are dissipative follows immediately from the skew-symmetry of $ w $ and positivity of $ \upDelta $. The latter imply that for any $ f \in C^\infty(M) $,  $ \langle f, w( f ) \rangle = 0 $ and $ \langle f, \upDelta f \rangle \geq 0 $,  leading to $ \Real \langle f, L_\pm f  \rangle = - \theta \langle f, \upDelta f \rangle \leq 0 $. It is a classical result \cite{Phillips59} that every densely-defined dissipative operator is closable and and its closure is dissipative. 

(ii) Let $ f $ and $ g $ be arbitrary functions in $ C^\infty( M )$. It follows by integration  by parts in conjunction with the skew-symmetry of $ w $ and the symmetry of $ \upDelta $ that $ \langle f, L_+ g \rangle = \langle L_- f, g \rangle $. Since this relationship holds for any $ g \in C^\infty( M ) $, we can conclude that $ f \in D( L_+^* ) $ and $ L_+^* f = L_- f $. It is also straightforward to check that $ L_+ $ is not closed. Together, these results in conjunction with the fact that $ L^*_- $ is closed imply that $ L_+ \subset L_-^* $. The result $ L_- \subset L_+^* $ follows similarly. 

(iii) The existence of $ L $ follows from~\citep[][Chapter~IV, Proposition~4.2]{SzNagyEtAl10} in conjunction with the fact that $ L_+ $ is dissipative (in their terminology, accretive). That $ L $ is maximally dissipative is equivalent to $ L^* $ also being maximally dissipative \cite{Phillips59,FirschbacherEtAl16}, leading to the second claim.

(iv) First, observe that for any function $ h \in C^\infty( M ) $ we have 
\begin{equation}
  \label{eqLCInf}
   \lVert L_+ h \rVert \leq \lVert w( h ) \rVert + \theta \lVert \upDelta h \rVert \leq ( \lVert w \rVert_\infty + \theta ) \lVert h \rVert_{H^2} = C \lVert h \rVert_{H^2},  
\end{equation}
where $ C = \lVert w \rVert_\infty + \theta $, $ \lVert w \rVert_\infty = \max_{m\in M} \lVert w \rvert_m \rVert_h $, and we have used the fact that $ \lVert \cdot \rVert \leq \lVert \cdot \rVert_{H^2} $. Next, let $ f $ be any function in $ H^2 $, and consider a $ C^\infty $ Cauchy sequence $ f_n $ converging to $ f $ in $ H^2 $ norm. Equation~\eqref{eqLCInf}, in conjunction with the fact that $ f_n $ is Cauchy in $ H^2 $, implies that $ g_n = L_+ f_n  = \bar L_+ f_n  $ is Cauchy in $ H $. Therefore, since $ \bar L_+ $ is closed, $ ( f_n, g_n ) $ converges to a point $ ( f, g ) $ in the graph of $ \bar L_+ $, where $ g = \bar L_+f = \lim_{n\to \infty  } g_n  $. Hence, we can conclude that 
\begin{displaymath}
 \lVert \bar L_+ f \rVert = \lim_{n\to\infty} \lVert L_+ f_n \rVert \leq C \lim_{n\to\infty} \lVert f_n \rVert_{H^2} = C \lVert f \rVert_{H^2},
\end{displaymath} 
as claimed. The result for $ \bar L_- $ follows similarly.
  
  (v) A proof of the claim can be found in \cite[][Proposition~1]{FrankeEtAl10}. \qedhere

\subsection{\label{appBL}Proof of Lemma~\ref{lemmaBL}}

(i) Given $ f \in Z_r $, there exist unique coefficients $ c_0, \ldots, c_{r-1} $ such that $ f = \sum_{k=0}^{r-1} c_k z_k $. For later convenience, let $ \vec c = ( c_0, \ldots, c_{r-1} ) \in \mathbb{ C }^{r} $, and define the norm $ \lVert \vec c \rVert_L = \sum_{j,k=0}^{r-1} c_j^* G_{jk} c_k $ with
\begin{equation}
  \label{eqGrammZ}
  G_{jk} = \langle z_j, z_k \rangle,
\end{equation}
 so that $ \lVert f \rVert = \lVert \vec c \rVert_L $. By equivalence of norms on finite-dimensional vector spaces, there exist constants $ K_1 $ and $ K_2 $ such that $ K^{-1}_1 \lVert c \rVert \leq  \lVert c \rVert_L \leq K_2 \lVert \vec c \rVert $, where $ \lVert \vec c \rVert $ is the canonical 2-norm on $ \mathbb{ C }^{r} $. Thus, for $ n \geq 0 $  we have
\begin{align*}
  \lVert L^n f \rVert &= \left \lVert \sum_{k=0}^{r-1} c_k L^n z_k \right\rVert 
  = \left \lVert \sum_{k=0}^{r-1} c_k \lambda_k^n z_k \right \rVert 
  = \left \lVert \overrightarrow{[ c_k \lambda_k^n ]_k} \right \rVert_L \\
  & \leq K_2 \left\lVert \overrightarrow{[ c_k \lambda_k^n ]_k} \right\rVert 
  \leq K_2 C^n_{r} \lVert \vec c \rVert 
  \leq K_2 K_1 C^n_{r} \lVert \vec c \rVert_L 
  = K_1 K_2 C^n_{r} \lVert f \rVert,
\end{align*}
from which it follows that $ f \in B_{C_r}( L ) $.

(ii) Since $\{ z_0, z_1, \ldots \} $ is assumed to be basis of $ H $, for any $ f \in B( L ) $ there exist unique coefficients $ c_0, c_1, \ldots $ such that $ f = \sum_{k=0}^\infty c_k z_k $. Moreover, there exist constants $ K  $ and $ C $ such that $ \lVert L^n f \rVert \leq K C^n \lVert f \rVert $ for any $n \geq 0 $. This implies that $ \left \lVert \sum_{k=0}^\infty c_k \lambda_k^n z_k \right \rVert \leq K C^n \lVert f \rVert $ for any $ n $, but because $ L $ is unbounded and $ \lambda_0, \lambda_1, \ldots $ is assumed to have no accumulation points, this bound cannot hold unless there exists $ r $ such that $ c_k = 0 $ for all $ k \geq r $. Thus, it follows that $ f \in Z_r $, as desired.   

\subsection{\label{appG}Proof of Lemma~\ref{lemmaG}}

That $G''_{\epsilon,N}$ is compact follows immediately from the fact that it has finite rank. Showing that $G''_{\epsilon}$ is compact is equivalent to showing that for any sequence $f_n\in C(A) $ with bounded uniform norm $ \lVert f_n \rVert_\infty $, the sequence $G''_{\epsilon}f_n$ has a limit point in the uniform-norm topology. Since the underlying space $A$ is compact, this will follow from the Arzela-Ascoli theorem, provided that it can be shown that $Gf_n$ is equicontinuous and uniformly bounded. Indeed, uniform boundedness and equicontinuity follow from the inequalities
\begin{gather*}
  \left\lvert G''_{\epsilon}f(a) \right\rvert = \left\lvert \int_A K_{\epsilon}(a,b)f(b)\, d\alpha(b) \right\rvert \leq \lVert K_\epsilon\rVert_{H\times H}\lVert f\rVert_{\infty}, \\
  \left\lvert G''_{\epsilon}f(a) - G''_{\epsilon}f(b) \right\rvert  \leq \lVert K_\epsilon(a,\cdot) - K_\epsilon(b,\cdot)\rVert_{H}\lVert f\rVert_{\infty},
\end{gather*}
respectively, proving the Lemma. 

\subsection{\label{appP}Proof of Lemma~\ref{lemmaP}}

(i) The claim follows from Proposition~11 in \cite{VonLuxburgEtAl08}. 

  (ii) The smoothness of the mapping $ \epsilon \mapsto P''_\epsilon f $ follows from the smoothness of $ p_\epsilon $. In particular, using the results in Appendices A.4 and A.5 in~\cite{BerryHarlim16} and the relation between the Laplace-Beltrami operator associated with $ h_A $ and the ambient space metric $ g_A $ in~\eqref{eqConf}, it is possible to derive the uniform asymptotic approximation
  \begin{displaymath}
    \epsilon^{-m_A/2} G''_\epsilon f( a ) = c_0 f( a ) + \epsilon c_2 \left( \tilde c_2( a ) f( a ) + \upDelta_A f( a ) \right) + O( \epsilon^2 ), 
  \end{displaymath} 
  where $ f \in C^3( A) $,  
\begin{displaymath}
  c_0 = \int_{\mathbb{ R}^{m_A}} e^{-\lVert x \rVert^2} \, dx = \pi^{m_A/2}, \quad c_2 = \int_{\mathbb{R}^{m_A}} e^{-\lVert x \rVert^2} \lVert x \rVert^2 \, dx = \frac{ \pi^{m_A/2} }{  8 } 
\end{displaymath}
 are Gaussian integrals, and $ \tilde c_2 $ is a smooth function. Using this expansion in the normalization steps in~\eqref{eqPOp} and~\eqref{eqGTildeOp} (as done in~\cite{CoifmanLafon06,BerrySauer16}) leads to the desired result. In particular, note that the term $ \epsilon c_2 \tilde c_2 f $ cancels as a result of normalization. 

  (iii) The claim was proved in~\cite{CoifmanLafon06} in the case of radial (fixed-bandwidth) Gaussian kernels. The corresponding result for the variable-bandwidth kernels in~\eqref{eqKVB} can be obtained following the same approach, employing the asymptotic expansion in~\eqref{eqPEpsilon} in place of the corresponding expansion for radial Gaussian kernels as necessary.  

(iv) Introducing the multiplication operators $ D_{\epsilon, N} : H_{A,N} \mapsto H_{A,N} $ and $ D_\epsilon : H_A \mapsto H_A $ with $ D_{\epsilon,N} \vec f = \hat d_{\epsilon,N} \vec f $, $ D_\epsilon \bar f = \hat d_\epsilon \bar f $, and $  \hat d_{\epsilon,N}  = d_{\epsilon,N}/ q_{\epsilon,N} $, $  \hat d_{\epsilon}  = d_{\epsilon}/ q_{\epsilon} $, it follows immediately that $ P_{\epsilon,N } = D^{-1/2}_{\epsilon,N} \hat P_{\epsilon, N} D^{1/2}_{\epsilon,N}$ and  $ P_{\epsilon } = D^{-1/2}_{\epsilon} \hat P_{\epsilon} D^{1/2}_{\epsilon} $. Note that we can also relate $ \hat P_{\epsilon,N} $ and $ \hat P_\epsilon $ to the corresponding unnormalized operators $ G_{\epsilon,N} $ and $ G_\epsilon $ via 
\begin{gather*}
  \hat P_{\epsilon,N} \vec f = \frac{ 1 }{ \tilde d^{1/2}_{\epsilon,N} } G_{\epsilon, N} \left( \frac{ \vec f }{ \tilde d^{1/2}_{\epsilon, N} } \right), \quad  \hat P_{\epsilon} \bar f = \frac{ 1 }{ \tilde d^{1/2}_{\epsilon} } G_{\epsilon} \left( \frac{ \bar f }{ \tilde d^{1/2}_{\epsilon} } \right). 
\end{gather*}
To establish convergence of $ \hat P_\epsilon^{s/\epsilon} $ to $ \mathcal{ P }_s $, note first that, as $ \epsilon \to 0 $, $ D_\epsilon $ converges in operator norm to the multiplication operator $ D $ associated with the positive, constant function $ \hat d = \lim_{\epsilon\to0} d_{\epsilon}/ q_{\epsilon} = 1/q $. Correspondingly, $ D_\epsilon^{1/2} $ and $ D_\epsilon^{-1/2} $ converge to $ D^{1/2} $ and $ D^{-1/2} $, respectively. We thus have $ \hat P_\epsilon^{s/\epsilon} = D_\epsilon^{1/2} P_\epsilon^{s/\epsilon} D_\epsilon^{-1/2} $, and
\begin{align*}
  \lVert \hat P_\epsilon^{s/\epsilon} - \mathcal{ P }_s \rVert & \leq   \lVert P_\epsilon^{s/\epsilon} - \mathcal{ P }_s \rVert  + \lVert D_\epsilon^{1/2} \rVert \lVert [ \mathcal{ P}_s, D_\epsilon^{-1/2} ] \rVert \\
  & =   \lVert P_\epsilon^{s/\epsilon} - \mathcal{ P }_s \rVert  + \lVert D_\epsilon^{1/2} \rVert \lVert [ \mathcal{ P}_s, D_\epsilon^{-1/2} - D^{-1/2} ] \rVert \\
  &  \leq   \lVert P_\epsilon^{s/\epsilon} - \mathcal{ P }_s \rVert  + 2 \lVert D_\epsilon^{1/2} \rVert \lVert \mathcal{ P}_s \rVert \lVert D_\epsilon^{-1/2} - D^{-1/2} \rVert,
\end{align*}
where $ [ \cdot, \cdot ] $ denotes the commutator of operators. As $ \epsilon \to 0 $, both terms in the right-hand side of the last inequality vanish. 

\subsection{\label{appEig}Proof of Lemma~\ref{lemmaEig}}

(i, ii) The claims are obvious.

(iii) Since $ \Lambda^{(\epsilon)}_k $ is nonzero, it follows from part~(ii) that it is an eigenvalue of $ P''_\epsilon $, and $\tilde \phi_k^{(\epsilon)} =  P'_\epsilon \phi_k^{(\epsilon)} / \Lambda_k^{(\epsilon)} $ is an eigenfunction. As shown in \cite{VonLuxburgEtAl08}, because $ P''_{\epsilon,N} $ converges (in this case, $ \alpha $-a.s.)  to $ P''_\epsilon $ (Lemma~\ref{lemmaP}(i)), there exist eigenvalues $ \tilde \Lambda_k^{(\epsilon,N)} $ of $ P''_{\epsilon,N} $ and corresponding eigenfunctions $ \tilde \phi_k^{(\epsilon,N)} $ converging as $ N \to \infty $ to $ \Lambda_k^{(\epsilon)} $ and $ \phi_k^{(\epsilon)} $, respectively. It then follows from part~(i) that $ \tilde \Lambda^{(\epsilon,N)}_k $ and $ \phi_k^{(\epsilon,N)} = \pi_N \tilde \phi^{(\epsilon,N)}_k$ are also eigenvalues and eigenfunctions of $ P_{\epsilon,N} $, respectively. Thus, $ \Lambda_k^{(\epsilon,N)} = \tilde \Lambda_k^{(\epsilon,N)} $ converges to $ \Lambda_k^{(\epsilon)}$, as desired. Moreover, since $ \Lambda_k^{(\epsilon)} > 0 $, there exists a stage $ N_0 $ such that for all $ N > N_0 $ we have $ \Lambda_k^{(\epsilon,N)} > 0 $, and therefore (by part~(ii)) $ \tilde \phi_k^{(\epsilon,N)} = P'_{\epsilon,N} \phi_k^{(\epsilon,N)} / \Lambda_k^{(\epsilon,N)} $. The fact that $ \tilde \phi_k^{(\epsilon,N)} \stackrel{\text{a.s.}}{\longrightarrow}   \tilde \phi_k^{(\epsilon)} $ in uniform norm completes the proof of the claim.          

(iv) Since $ P_\epsilon^{s/\epsilon} $ converges to the heat operator $ \mathcal{ P }_s $ (Lemma~\ref{lemmaP}(iv)), we have  $ ( \Lambda_k^{(\epsilon)} )^{s/\epsilon} \to \exp( -s \eta_k^A ) $, which proves~\eqref{eqLimLambda}. Similarly, the existence of a family of eigenfunctions $ \phi_k^{(\epsilon)} $ converging to $ \phi_k $ follows from convergence of $ P_\epsilon^{s/\epsilon} $ to $ \mathcal{ P }_s $.  

(v) The results follow by obvious modifications of the arguments in the proof of parts (i)--(iv).

\subsection{\label{appPropDelta}Proof of Proposition~\ref{propDelta}}

  (i) As in the case of the Dirichlet form $ E $ in Section~\ref{secGalerkin}, the claim follows from the Cauchy-Schwartz inequality and the facts that $ \lVert  \cdot \rVert_{H_{A,\epsilon,N}} \leq \lVert  \cdot \rVert_{H^1_{A,\epsilon,N}} $ and $ \lVert  \cdot \rVert_{H_{A,\epsilon}} \leq \lVert  \cdot \rVert_{H^1_{A,\epsilon}} $. 

  (ii) In the case of $ \upDelta_{A.\epsilon,N} $, the claim follows immediately from the fact that the operator is symmetric and $ H_{A,\epsilon,N}$ is finite-dimensional. In the  case of $ \upDelta_{A,\epsilon} $, since the operator is symmetric and densely defined (by Lemma~\ref{lemmaDense}), $ \upDelta^*_\epsilon $ is an extension of $ \upDelta_\epsilon $. It therefore suffices to show that $ \upDelta_\epsilon $ is also an extension of $ \upDelta_\epsilon^* $, i.e., $ D( \upDelta_\epsilon^*) \subseteq D( \upDelta_\epsilon) = H^2_{A,\epsilon} $. In particular, it suffices to show that for every $ f_1,f_2 \in H^2_{A,\epsilon} $ there exists $ g \in H_{A,\epsilon} $ such that $ \langle f_1, \upDelta_{A,\epsilon} f_2 \rangle_{H_{A,\epsilon}} = \langle g, f_2 \rangle_{H_{A,\epsilon}} $. The last equation is satisfied for $ g = \sum_{k=0}^\infty \eta_k^{(\epsilon)} \tilde c_{k1} \phi_k^{(\epsilon)} $, proving that $ \upDelta_{A,\epsilon } $ is self-adjoint. The fact that $ - \upDelta_{A,\epsilon,N} $ and $ -\upDelta_{A,\epsilon}$ are dissipative is obvious from the definition of these operators in~\eqref{eqDeltaEpsilon}.

\subsection{\label{appUConsistency}Proof of Lemma~\ref{lemmaUConsistency}}

First, note that $ \phi_i^A $ and $ \phi^A_j $ are $ L^2 $ equivalence classes of $ C^\infty $ functions, so they lie in the domain of $ \tilde u $ and the right-hand side of~\eqref{eqUConsistency} is finite. Next, consider the bounded operator $ u_\tau : H_A \mapsto H_A $ defined via $ u_\tau( f ) = ( U_\tau f - U_{-\tau} f ) / ( 2 \tau ) $, and the $ C^\infty $ functions $ \tilde \phi_k^{A,\epsilon,N} = P'_{\epsilon,N} \phi_k^{A,\epsilon,N} / \Lambda_k^{\epsilon,N} $ and $ \tilde \phi_k^{A,\epsilon} = \lim_{N\to\infty} \tilde \phi_k^{A,\epsilon,N} $  (the existence of $ \tilde \phi_k^{A,\epsilon} $ follows from Lemma~\ref{lemmaEig}). Then, using Corollary~\ref{corEig},  we obtain
  \begin{align*}
    \lim_{N\to\infty}  \langle \phi_i^{A,\epsilon,N}, u_{N,\tau}( \phi_j^{A,\epsilon,N} ) \rangle_{H_{A,\epsilon,N}} & =     \lim_{N\to\infty} \sum_{n=1}^{N-2} \tilde \phi_i^{(A,\epsilon,N)}( a_n ) \frac{ \tilde \phi_j^{(A, \epsilon,N)}( a_{n+1} ) -  \tilde \phi_j^{(A, \epsilon,N)}( a_{n+1} ) }{ 2 \tau } \beta_{\epsilon,N} \\
    & = \langle \phi_i^{A,\epsilon}, \tilde u_{\tau}( \phi_j^{A,\epsilon} ) \rangle_{H_{A,\epsilon}}.
  \end{align*}
  Since $ \phi_j^{\epsilon,N} $ and $ \phi_j^A $ are both in the domain of $ \tilde u $, and the inner product weight function $ \beta_{\epsilon} $ for $ H_{A,\epsilon} $  satisfies $ \lim_{\epsilon\to 0} \beta_{\epsilon} = 1_A $ (see Section~\ref{secKernelOp}), we have
\begin{displaymath}
  \lim_{\tau\to0} \lim_{\epsilon\to0} \langle \phi_i^{A,\epsilon}, \tilde u_{\tau}( \phi_j^{A,\epsilon} ) \rangle_{H_{A,\epsilon}} =  \lim_{\epsilon\to0} \lim_{\tau\to0} \langle \phi_i^{A,\epsilon}, \tilde u_{\tau}( \phi_j^{A,\epsilon} ) \rangle_{H_{A,\epsilon}} = \langle \phi_i^A, \tilde u_\tau( \phi_j^A ) \rangle_{H_A},
\end{displaymath}
leading to the desired result.

\subsection{\label{appGenT2}Explicit formulas for the generator for periodic spatial domains}

Since all the numerical experiments in this paper (Sections~\ref{secExamples} and~\ref{secL96}) were performed on a doubly-periodic spatial domain, $ X = \mathbb{ T }^2 $, equipped with the normalized Haar measure $ \xi $, in this Appendix we provide explicit formulas for the matrix representation of the generator $ w_{N,\tau} $ in this domain for arbitrary state-dependent incompressible velocity fields $ v\rvert_a $. Throughout, we work in the tensor product basis $ \{ \phi_{ijk}^{\epsilon,N} \} $, $ \phi_{ijk}^{\epsilon,N} = \phi_i^{A,\epsilon,N} \phi^X_{ij} $, with $ \phi_i^{A,\epsilon,N} $, $ i \in \{ 0, \ldots, N-1 \} $, set to the data-driven basis functions from Section~\ref{secDataDrivenBasis}, and $ \phi^X_{jk} $ set to the Fourier basis, $ \phi^X_{jk}(x_1, x_2) =  e^{\ii ( j x_1 + k x_2 ) } $, $ j, k \in \mathbb{ Z } $. 

A special property of $ \mathbb{ T }^2 $ (with obvious generalizations to tori of other dimensions) is that the canonical basis vector fields $ \partial_1 = \frac{ \partial\; }{ \partial x_1} $ and  $ \partial_2 = \frac{ \partial\; }{ \partial x_2} $ span the tangent space $ T_x \mathbb{ T }^2 $ at every $ x \in X $. Thus, we can expand  $ v\rvert_a $, $ a \in A $, as
\begin{equation}
  \label{eqVFourier}
  v\lvert_a = \sum_{q,r=-\infty}^\infty  \phi_{qr}^X \left( \hat v^{(1)}_{qr}(a) \partial_1 + \hat v^{(2)}_{qr}(a) \partial_2 \right),
\end{equation}    
where $ \hat v^{(1)}_{qr} $ and $ \hat v^{(2)}_{qr} $ are smooth, complex-valued functions on $ A $. The corresponding equivalence classes $ \vec v^{(1)}_{qr} $ and $ \vec v^{(2)}_{qr} $ in $ H_{A,\epsilon,N} $ can be further expanded as
\begin{displaymath}
  \vec v^{(1)}_{qr} = \sum_{p=0}^{N-1} v^{(1)}_{pqr} \phi_p^{A,\epsilon,N}, \quad  \vec v^{(2)}_{pqr} = \sum_{p=0}^{N-1} v^{(2)}_{pqr} \phi_p^{A,\epsilon,N},  
\end{displaymath} 
with $ v^{(1)}_{pqr} = \langle \phi_p^{A,\epsilon,N}, \hat v_{qr}^{(1)} \rangle $ and $ v^{(2)}_{pqr} = \langle \phi_p^{A,\epsilon,N}, \hat v_{qr}^{(2)} \rangle $. 

Next, we make use of an important property of the Fourier basis, namely that $ \phi^X_{jk} $ is an eigenfunction of both $ \partial_1 $ and $ \partial_2 $ with $ \partial_1 \phi^X_{jk} = \ii j \phi_{jk}^X $ and $ \partial_2 \phi^X_{jk} = \ii k \phi_{jk}^X $. Using this property in conjunction with the expansion in~\eqref{eqVFourier}, we obtain (cf.~\eqref{eqWXMov0}) 
\begin{align}
  \nonumber
  \langle \phi^{\epsilon,N}_{ijk}, \tilde w^X( \phi^{\epsilon,N}_{lmn} ) \rangle_{H_{\epsilon,N}} &= \sum_{p=0}^{N-1} \sum_{q,r=-\infty}^\infty \left( \hat v^{(1)}_{pqr} \langle \phi^{\epsilon,N}_{ijk}, \phi^{\epsilon,N}_{pqr} \, \partial_1 \phi^{\epsilon,N}_{lmn} \rangle + \hat v^{(2)}_{pqr} \langle \phi^{\epsilon,N}_{ijk}, \phi^{\epsilon,N}_{pqr} \, \partial_2 \phi^{\epsilon,N}_{lmn} \rangle \right) \\
  \nonumber &= \sum_{p=0}^{N-1} \sum_{q,r=-\infty}^\infty \ii ( m v^{(1)} _{pqr}  + n v^{(2)}_{pqr} ) c_{ilp} \delta_{j,q+m} \delta_{k,r+n} \\
    \label{eqWXT2} &= \sum_{p=0}^{N-1} \ii ( m v^{(1)}_{p,j-m,k-n} + nv^{(2)}_{p,j-m,k-n} ) c_{ilp}, 
\end{align}
where the coefficients $ c_{ilp} $ are given in~\eqref{eqStructureConstants}. Moreover, we have (cf.~\eqref{eqWAMov})
\begin{equation}
  \label{eqWAT2}
  \langle \phi^{\epsilon,N}_{ijk}, \tilde w_{N,\tau}^A( \phi^{\epsilon,N}_{lmn} ) \rangle_{H_{\epsilon,N}} =\langle \phi_i^{(\epsilon,N)}, u_{N,\tau}(\phi_l^{(\epsilon,N)} ) \rangle_{H_{A,\epsilon,N}} \delta_{jm} \delta_{kn},
\end{equation}
where the quantities $ \langle \phi_i^{(\epsilon,N)}, u_{N,\tau}(\phi_l^{(\epsilon,N)} ) \rangle_{H_{A,\epsilon,N}} $ are evaluated by finite differences using~\eqref{eqFD}. 

Equations~\eqref{eqWXT2} and~\eqref{eqWAT2}, together with the definition of the $ \{ \varphi_{ijk}^{\epsilon,N} \} $ basis of $ H^1_{\epsilon,N} $ and the facts that $ E_{\epsilon,N}( \varphi_{ijk}^{\epsilon,N}, \varphi_{lmn}^{\epsilon,N} ) = \delta_{il} \delta_{jm} \delta_{kn} $ and $ B_{\epsilon,N}( \varphi_{ijk}^{\epsilon,N}, \varphi_{lmn}^{\epsilon,N} ) =  \delta_{il} \delta_{jm} \delta_{kn} / \eta_{ijk}^{\epsilon,N} $ are in principle sufficient to form the matrices appearing in the eigenvalue problem in Definition~\ref{defEigDat}. Equations~\eqref{eqWXMov} and~\eqref{eqWAMov} in conjunction with $ \langle \phi_{ijk}^{\epsilon,N}, \upDelta_{\epsilon,N}\phi_{lmn}^{\epsilon,N} \rangle = \eta_{ijk}^{\epsilon,N} \delta_{il} \delta_{jm} \delta_{kn} $ are also in principle sufficient to compute the generator matrices $ \boldsymbol{ L } $ and $ \boldsymbol{ L}^* $ employed in the data-driven prediction schemes for observables and densities in Section~\ref{secDataDrivenPred}. In practice, however, evaluation of \eqref{eqWXT2} is likely to be unwieldy since it requires computation of $ N $ eigenfunctions $ \phi_i^{A,\epsilon} $, which may not be feasible at large $ N $. To address this issue, we take advantage of the expansion of $ v\lvert_a $ in~\eqref{eqVFourier} in terms of the globally defined vector fields $ \partial_1 $ and $ \partial_2 $, defining
\begin{displaymath}
  v_{\ell_v} \lvert_a = \sum_{q,r=-\infty}^\infty  \phi_{qr}^X \left( \hat v^{(1)}_{\ell_v, qr}(a) \partial_1 + \hat v^{(2)}_{\ell_v,qr}(a) \partial_2 \right), \quad \vec v^{(1)}_{\ell_v,qr} = \sum_{p=0}^{\ell_v-1} v^{(1)}_{pqr} \phi_q^{A,\epsilon,N}, \quad  \vec v^{(2)}_{\ell_v,pqr} = \sum_{p=0}^{\ell_v-1} v^{(2)}_{pqr} \phi_p^{A,\epsilon,N},  
\end{displaymath}
where $ \ell_v \leq N $ is a spectral truncation parameter (usually, $ \ell_v \ll N$). Using this smoothed version of the velocity field, we compute
\begin{equation}
  \label{eqWXT2Trunc}
\langle \phi^{\epsilon,N}_{ijk}, \tilde w_{\ell_v}^X( \phi^{\epsilon,N}_{lmn} ) \rangle_{H_{\epsilon,N}} = \sum_{p=0}^{\ell_v-1} \ii ( m v^{(1)}_{p,j-m,k-n} + nv^{(2)}_{p,j-m,k-n} ) c_{ilp},
\end{equation}
and implement the schemes of Section~\ref{secDataDrivenBasis} using these quantities instead of~\eqref{eqWXT2}. Note that this approximation does not affect the asymptotic consistency of our schemes, so long as an appropriate increasing sequence of $ \ell_v $ is employed; e.g., $ \ell_v = \ell_A $. All of the numerical experiments of Section~\ref{secL96} were performed with the latter choice for $ \ell_v $.

\section{\label{appNumerical}Numerical implementation}

All numerical experiments presented in this paper were carried out in Matlab running on a medium-size Linux cluster with ``big memory'' capabilities (up to 1.5 TiB of memory per compute node). In the case of the L96-driven experiments of Section~\ref{secL96}, we also interface with a Fortran~77 code (using Matlab's MEX framework) to carry out matrix-vector products involving the Koopman and Perron-Frobenius generators used in Leja interpolation. It should be noted that while big-memory capabilities are highly convenient for numerical implementation of our schemes, in many cases they are not essential. In particular, it is frequently the case that matrix-vector products involving the generator can be evaluated with tolerable computational overhead without explicit formation of the generator matrix itself; our Fortran-based implementation for the L96-driven system is an example of this approach.

\subsection{\label{appNumLeja}Leja interpolation}

We have implemented the data-driven prediction schemes of Sections~\ref{secPrediction} and~\ref{secDataDrivenPred} using a publicly available implementation of the Leja method in Matlab authored by Caliari and Kandolf.\footnote{Available at \url{https://www.mathworks.com/matlabcentral/fileexchange/44039-matrix-exponential-times-a-vector}.} In addition to the Leja interpolation procedure, this Matlab code also performs a subdivision step, expressing $ \vec c = e^{t \boldsymbol{L}} \vec b $ as $ \vec c = ( e^{t \boldsymbol{L}/s} )^s \vec b $ for a positive integer parameter $ s $, and repeatedly performing Leja interpolation to approximate each of the substeps $ \vec b^{(j+1)} = e^{t \boldsymbol{L}/s}  \vec c^{(j-1)} $ for $ j \in \{ 0, \ldots, s-1 \} $, $ \vec b^{(0)} = \vec b $, and $ \vec b^{(s)} = \vec c $. This subdivision procedure is introduced in order to overcome the so-called ``hump problem'' \cite{MolerVanLoan03}, which is a transient error growth with the polynomial order of approximation $ d $ sometimes observed at large $ t $. The code uses an automatic procedure to determine the number of substeps $ s $, as well as $ d $ in each substep, needed to achieve a desired accuracy.  

As stated above, in the case of the L96-driven flow, we interface the code of Caliari and Kandolf with a Fortran code to evaluate matrix-vector products involving the generator matrix $ \boldsymbol{L} $ (computed via~\eqref{eqWXL96Trunc} and~\eqref{eqWAT2}). Due to the sparse structure of that matrix,  it is possible to evaluate matrix-vector products $ \boldsymbol{L} \vec b $ without explicit formation of $ \boldsymbol{L} $, at the expense of a moderate computation cost overhead. This is particularly advantageous for reducing memory use at large values of the spectral truncation parameter $ \ell $, but typically requires coding the evaluation of $ \boldsymbol{L} \vec b $ using nested \texttt{for} loops which are inefficient in an array-oriented language such as Matlab. On the other hand, these loops can be efficiently coded in a compiled language such as Fortran using multithreading and other high-performance features, allowing scalability of the method to large $ \ell $. A similar approach can be applied in more general settings to take advantage of any special structure that the generator matrix might possess.

\subsection{\label{appDensity}Density estimation}

Following \cite{BerryHarlim16,BerryEtAl15,GiannakisEtAl15,Giannakis17}, we compute the kernel bandwidth functions $ r_{\epsilon,N} $ in~\eqref{eqKVB} using kernel density estimation. First, we introduce an integer-valued function $ I : \mathfrak{ X } \times \{ 0, \ldots, N- 1\} \mapsto \{ 0, \ldots, N-1\} $ such that $ I( v, j ) $ is equal to the time index $ i $ of the $ j $-th nearest point in the training dataset $ \{ v_i \}_{i=0}^{N-1} $ to an arbitrary data point $ v \in \mathfrak{ X}  $ measured with respect to the $ L^2 $ norm $ \lVert v - v_j \rVert_{g_X} $. Fixing a positive integer parameter $ k_\text{nn} $, we then define the bandwidth function $ \tilde r_N : A \mapsto \mathbb{ R }_+ $ such that $ \tilde r_N^2( a ) = \sum_{j=2}^{k_\text{nn}} \lVert F( a ) - v_{I(F(a),j)} \rVert^2 / ( k_\text{nn} -1 )  $ and the kernel $ \tilde K_\epsilon : A \times A\mapsto \mathbb{ R }_+ $ given by 
\begin{displaymath}
  \tilde K_{\epsilon,N}( a, b ) = \exp\left( - \frac{ \lVert F( a ) - F( b ) \rVert^2 }{ \epsilon \tilde r_{\epsilon,N}( a ) \tilde r_{\epsilon,N}( b ) }\right).
\end{displaymath}
Using this kernel, we select a value for the bandwidth parameter $ \epsilon $ and compute an estimate $ m_{A,\epsilon,N} $ of the manifold dimension $ m_A $ using the approach outlined in Section~\ref{secTuning}. We then compute the function $   \sigma_{A,\epsilon,N} : A \mapsto \mathbb{ R }_+ $ given by 
\begin{displaymath}
  \sigma_{A,\epsilon,N}(a) =  \frac{ 1 }{ N ( \pi \epsilon  \tilde r^2_N( a ) )^{\frac{m_{A,\epsilon,N}}{2}} } \sum_{n=0}^{N-1} \tilde K_\epsilon( a, a_n ),
\end{displaymath}
 and define the bandwidth function $ r_{\epsilon,N} $ used in the variable-bandwidth kernel $ K_{\epsilon,N} $ in~\eqref{eqKVB} as $ r_{\epsilon,N} = \sigma_{A,\epsilon,N}^{-1/m_{A,\epsilon,N}} $. Note that the optimal bandwidth parameter $ \epsilon $ and the dimension estimate $ m_{A,\epsilon,N} $ from $ K_\epsilon $ are generally not equal to the corresponding values computed using $ K_{\epsilon,N} $, though the dimension estimates from the two approaches are generally in reasonably good agreement. Throughout this paper we work with the neighborhood size $ k_\text{nn} = 8 $ which was also used in \cite{BerryEtAl15,GiannakisEtAl15,Giannakis17}. 

By compactness of $ A $ and smoothness of $ \tilde K_\epsilon $, $ \sigma_{A,\epsilon,N} $ and $ r_{\epsilon,N} $ are both smooth functions for all $N $.  Moreover, as $ N \to \infty $,  $ \sigma_{A,\epsilon,N}$ and $ r_{\epsilon,N} $ converge  pointwise and  $ \alpha $-a.s. to smooth functions $ \sigma_{A,\epsilon} $ and $ r_\epsilon $,  respectively, and by properties of Gaussian integrals on compact manifolds, $ \sigma_{A,\epsilon} $ converges as $ \epsilon \to 0 $ to the sampling density $ \sigma_A $, as desired. Furthermore, the error $ \sigma_{A,\epsilon} - \sigma_A $ is uniformly $ O( \epsilon ) $; the latter is a necessary condition for Lemma~\ref{lemmaP}(ii) to hold \cite{BerryHarlim16}. Note that besides the density estimation procedure outlined above any other technique achieving this level of accuracy can be employed in the definition of the bandwidth functions $ r_{\epsilon,N} $ in~\eqref{eqKVB}.

\subsection{\label{appNumEig}Kernel eigenvalue problems}

The eigenvalue problems for the finite-rank operators $ P_{\epsilon,N} $ and $ \hat P_{\epsilon,N} $ are equivalent to the eigenvalue problems for the $ N \times N $ matrices $ \boldsymbol{P} = [ N^{-1} p_{\epsilon,N}( a_m, a_n ) ]_{m,n} $ and  $ \hat{\boldsymbol{P}} = [ N^{-1} \hat p_{\epsilon,N}( a_m, a_n ) ]_{m,n} $; that is, in the notation of Section~\ref{secKernelOp}, we have
\begin{displaymath}
  \boldsymbol{P} \vec \phi_k^{(\epsilon,N)} = \Lambda_k^{(\epsilon,N)} \vec\phi_k^{(\epsilon,N)}, \quad \hat{\boldsymbol{P}} \vec \psi_k^{(\epsilon,N)} = \Lambda_k^{(\epsilon,N)} \vec\psi_k^{(\epsilon,N)}.
\end{displaymath}   
Numerically, it is generally preferable to solve the eigenvalue problem for $\hat{\boldsymbol{P}} $ to take advantage of the symmetry of that matrix, and compute the eigenvectors of $ \boldsymbol{P} $ via $ \vec \phi_k^{(\epsilon,N)} = \boldsymbol{ B}^{1/2} \vec \psi_k^{(\epsilon,N)} $, where $ \boldsymbol{ B } $ is the $ N \times N $ diagonal matrix with diagonal elements $ B_{nn} = \beta_{\epsilon,N}(a_n) $. The numerical results presented in this paper were obtained via this approach, using Matlab's \texttt{eigs} solver for sparse arrays. For symmetric matrices such as $ \hat{\boldsymbol{P}} $, this solver utilizes an implicitly restarted Lanczos method implemented in the ARPACK library \cite{LehoucqEtAl98}.  

As is customary, we take advantage of the exponential decay of the kernel $ K_{\epsilon,N} $ and sparsify $ \hat{\boldsymbol{P}} $. Specifically, we start by computing the kernel values $ K_{\epsilon,N}( a_m, a_n ) $ on the training dataset, and for each state $a_m $ retain the $ k_\text{nn} $ largest values  $ K_{\epsilon,N}( a_m, a_n ) $, where $ k_\text{nn} $ is a neighborhood size parameter (different from the one used for density estimation in~\ref{appDensity}). Using these values, we form an $ N \times N $, sparse, symmetric kernel matrix $ \boldsymbol{ K } $ whose nonzero entries are $ K_{mn} = K_{\epsilon,N}( a_m, a_n ) $ if  $ a_m $ is in the $ k_\text{nn} $ neighborhood of $ a_n $ or $ a_m $ is in the $ k_\text{nn} $ neighborhood of $ a_m $. We then perform the diffusion maps normalization procedure as described in Section~\ref{secKernelOp} to construct $ \hat{\boldsymbol{P}} $ based on $ \boldsymbol{ K } $. A pseudocode for this procedure can be found in \cite{Giannakis17}. In the L96 experiments of Section~\ref{secL96} we use the value $ k_\text{nn} = \text{10,000} $ which amounts to approximately $ 8\% $ and $ 2\% $ of the training datasets for the $ F_\text{L96}=4 $ and $ F_\text{L96} = 5 $ cases, respectively. A Matlab code for computing $ \hat{\boldsymbol{P}} $ and its eigenvalues and eigenvectors is available for download at the first author's personal website (\url{http://cims.nyu.edu/~dimitris}).     
   
\subsection{\label{appKoopEig}Koopman eigenvalue problem}

As discussed in the main text, the Koopman eigenvalue problems in Definitions~\ref{defEig} and~\ref{defEigDat} are equivalent to matrix generalized eigenvalue problems of the form~\eqref{eqEig}, involving the $  \ell \times \ell $ stiffness and mass matrices $ \boldsymbol{ A } $ and $ \boldsymbol{ B } $, respectively. An obvious issue with these eigenvalue problems is that the tensor product bases of $ H^1 $ and $ H^1_{\epsilon,N} $ employed in our schemes generally require large values of the spectral truncation parameter $ \ell $ for accurate numerical solutions, leading to high computation and memory cost. Arguably the most straightforward way of overcoming this issue (at least in the context of a Matlab implementation) is to take advantage of any sparsity that  $ \boldsymbol{ A} $ might possess (note that $ \boldsymbol{ B }$ is trivially sparse since our bases are orthogonal). Such sparsity is present in both the vortical flow examples of Section~\ref{secExamples} (see~\eqref{eqWXMov}, \eqref{eqWAMov}, and~\eqref{eqWXSwitch}) and the L96-driven flows of Section~\ref{secL96} (see~\eqref{eqWXL96Trunc} and~\eqref{eqWAT2}). In such cases, so long as $ \boldsymbol{ A } $ can be stored in memory as a sparse array, \eqref{eqEig} can be solved efficiently using Matlab's \texttt{eigs} ARPACK-based solver. This solver has an option \texttt{LR} which allows one to compute a subset of the  eigenvalues $ \lambda_k $ with the largest real part, which, according to Remark~\ref{rkDirichlet} is consistent with our Dirichlet energy ordering criterion in Definition~\ref{defCoherent}. However, in practice we find that this option can be slow to converge, and faster convergence is generally achieved using option \texttt{SM} targeting eigenvalues with the smallest modulus. All of the eigenvalues and eigenfunction results reported in this paper were computed using the latter approach, which carries a risk of missing certain eigenvalues with small real part but large modulus.
   
As the problem size $ \ell $ increases, the approach described above requiring explicit formation of $ \boldsymbol{ A } $, will eventually become infeasible. For example, in the case of the L96-driven flow in Section~\ref{secL96} with $ F_\text{L6} = 5 $, we were not able to increase the spectral truncation parameters $ \ell_A $ and $ \ell_{X_1} $ beyond 500 and 20, respectively, despite the sparsity of the generator matrix. This limitation can in principle be overcome using iterative solvers such as ARPACK, since what is required by such solvers are functions for computing matrix-vector products or solving linear systems involving $ \boldsymbol{ A } $ and $ \boldsymbol{ B } $, and these functions can be coded so as not to require explicit formation of $ \boldsymbol{A} $ (see \ref{appNumLeja}). In practice, however, we found that attempting to compute via this approach in Matlab's \texttt{eigs} solver led to unstable behavior, with  ARPACK iterations frequently failing to converge. Addressing this issue is beyond the scope of this work, but with appropriate technical modifications (perhaps avoiding use of high-level functions such as \texttt{eigs}) it should be possible to overcome the problem size limitations our current code experiences.

\section{\label{appMoving}The spectrum of the generator for the moving Gaussian vortex flow}

In this Appendix, we discuss the properties of the eigenfunctions of class~2 for the moving Gaussian vortex flow in Section~\ref{secMovingVortex}, and also establish the existence of a continuous part of the spectrum for this system. First, note that the streamfunction $ \zeta $ in~\eqref{eqZetaMoving} is invariant under transformations of $ M $ that leave $ x_1 - a$ unchanged. In particular, we can define the transformation $ \Gamma : M \mapsto \tilde X= \mathbb{ T }^2 $ such that $ \Gamma( a, x_1, x_2 ) = ( \Gamma_1( a, x_1, x_2 ), \Gamma_2( a, x_1, x_2 ) )$ with $ \Gamma_1( a, x_1, x_2 ) =  x_1 - a $ and $ \Gamma_2( a, x_1, x_2 ) = x_2 $, and consider that $ \zeta $ is the pullback of the streamfunction  $ \tilde \zeta( \tilde x_1, \tilde x_2 ) = e^{\kappa ( \cos \tilde x_1 + \cos \tilde x_2 ) }  $ on $ \tilde X$, where $\tilde x_1 $ and $ \tilde x_2 $ are canonical angle coordinates.  That is, we have $ \zeta  = \tilde \zeta \circ \Gamma  $. This transformation can be thought of as a Galilean change of frame of reference to a frame comoving with the vortex center. In particular, the generating vector field $ w $ of the dynamics on $ M$ is projectible under that transformation; i.e., $ \Gamma_* w = \tilde v $ is a well-defined vector field on $ \tilde X $, and its components in the $ \{ \frac{ \partial\; }{ \partial \tilde x_1 }, \frac{ \partial\; }{ \partial \tilde x_2 } \} $ coordinate basis are given by  
\begin{equation}
  \label{eqWTilde}
  \begin{aligned}
    \tilde v^{(1)} & = \frac{ \partial \Gamma_1 }{ \partial a } u + \frac{ \partial \Gamma_1 }{ \partial x_1 } v^{(1)} + \frac{ \partial \Gamma_1 }{ \partial x_2 } v^{(2)} = - \omega - \frac{ \partial \tilde \zeta }{ \partial \tilde x_2 }, \\ 
    \tilde v^{(2)} & = \frac{ \partial \Gamma_2 }{ \partial a } u + \frac{ \partial \Gamma_2 }{ \partial x_1 } v^{(1)} + \frac{ \partial \Gamma_2 }{ \partial x_2 } v^{(2)} = \frac{ \partial \tilde \zeta }{ \partial \tilde x_1 },
  \end{aligned}
\end{equation}
respectively. Denoting the flow on $ \tilde X $ generated by $ \tilde v $ by $ \tilde \Omega_t $, one can check that $ \Gamma \circ \Omega_t = \tilde \Omega_t \circ \Gamma $. Therefore, $ \tilde \Omega_t $ preserves the pushforward measure $ \tilde \mu = \Gamma_* \mu $, and the spectrum of $ \tilde w $ includes the spectrum of (an appropriate skew-adjoint extension of) $ \tilde v $. In particular, if $ \tilde z \in L^2( \tilde X, \tilde \mu ) $ is an eigenfunction of $ \tilde v $ at eigenvalue $ \lambda $, then $ z = \tilde z \circ \Gamma $ is an eigenfunction of $ \tilde w $, also at eigenvalue $ \lambda $. Note that the pushforward measure $ \tilde \mu $ associated with the submersion $ \Gamma $ is the normalized Haar measure on the two-torus, i.e., $ d \tilde \mu = d \tilde x_1 \wedge d \tilde x_2 / ( 2 \pi ) ^2 $.  (Note that in this section we abuse notation and do not employ a distinct symbol to represent the skew-adjoint  extension of $ \tilde v $ generating the Koopman group on $ L^2( \tilde X, \tilde \mu ) $, but the distinction is important since the spectrum of a non-closed operator such as $ \tilde v $ is equal to $ \mathbb{ C } $; see, e.g., \citep[][Section~2.2]{Schmudgen12}.)
  
Inspecting~\eqref{eqWTilde}, we see that the flow on $ \tilde X $ consists of a superposition of a stationary  Gaussian vortex flow and a free stream (i.e., a harmonic velocity field; see Example~\ref{exStream}) along the negative $ \tilde x_1 $ direction associated with the Galilean transformation $ x_1 \mapsto \tilde x_1 = x_1 - \omega t $. In this domain, the equation of motion for Lagrangian tracers is $ d \tilde x / dt  = \tilde v( \tilde x( t ) ) $, where $ \tilde x( t ) = ( \tilde x_1( t ), \tilde x_2( t ) ) $. For our choice of the $ \kappa $ and $ \omega $ parameters, $ \tilde v^{(1)} $ is everywhere negative so that tracers move along the negative $ \tilde x_1 $ direction without stagnating or turning around. This, in conjunction with the fact that $ \tilde v $ possesses the reflection symmetry  $ \tilde v( -\tilde x_1, x_2 ) = \tilde v( \tilde x_1, x_2 ) $, implies that the streamlines of the flow are closed. In particular, the $ \tilde x_2 $ coordinate of the streamline passing through the point $  ( \xi_1, \xi_2 ) \in \tilde X $ can be expressed as a function  $ s_{\xi_1 \xi_2 }( \tilde x_1 ) $ by solving the ODE
\begin{displaymath}
  \frac{ ds_{\xi_1\xi_2} }{  d \tilde x_1 } = - \frac{ \partial_{\tilde x_1} \tilde \zeta( \tilde x_1, s_{\xi_1\xi_2}( \tilde x_ 1 ) ) }{ \omega + \partial_{\tilde x_2} \tilde \zeta( \tilde x_1 , s_{\xi_1\xi_2}( \tilde x_ 1))},  \quad s_{\xi_1\xi_2}( \xi_1 ) = \xi_2.
\end{displaymath}
 
The collection of all such streamlines partitions $ \tilde X $ into a one-dimensional foliation such that the vector field $ \tilde v $ is tangent to the leaves; that is,  $ \tilde v $ is a vertical vector field with respect to this foliation \cite{ONeill66}. In particular, picking the closed line $ Y = \{ ( \tilde x_1, \tilde x_2 ) \in \tilde X \mid \tilde x_1 = 0 \}  $, we can construct a submersion $ S : \tilde X \mapsto Y $ sending every point in streamline passing through $ ( \tilde x_1, \tilde x_2 ) $ to the point $ y(\tilde x_1, \tilde x_2) = s_{\tilde x_1 \tilde x_2}( 0 ) \in Y $. The preimages of this map correspond to the leaves of the foliation, and are streamlines by construction.  

Since $ \tilde v $ is vertical, we have $ S_* \tilde v = 0 $, so that $ \tilde v $ is trivially projectible and the projected ``dynamics'' on $ Y $ reduce to the identity map (i.e., $ S \circ \tilde \Omega_t = S$).  As a result, the elements of any orthonormal basis $ \{ z^Y_k \} $ of $ L^2( Y, S_* \mu_* ) $ (note that $ S_*\mu_*$ is the normalized Haar measure on $ Y $) are orthonormal eigenfunctions of $ S_* \tilde v $ at eigenvalue 0, and correspondingly the functions $ \{ z_k^Y \circ S \circ \Gamma \} $ are orthonormal eigenfunctions of $ \tilde w $ in $ L^2( M,\mu ) $, also at eigenvalue 0. Composing $ S $ with $ \Gamma $, we obtain the submersion $ \pi_Y : M \mapsto Y $, $ \pi_Y = S \circ \Gamma $, and the corresponding subspace $ \mathcal{ D }_Y = \overline{\spn\{ z_k^Y \circ \pi_Y \} } \subset H $ discussed in Section~\ref{secMovingVortex}. Note that this submersion is associated with a two-dimensional foliation of $ M $. 

Next, to establish that $ \tilde v $, and hence $ \tilde w $, also have continuous spectrum, it suffices to demonstrate that $ \tilde \Omega_t $ is conjugate by diffeomorphism to a skew rotation $ \hat \Omega_t $ on $ \hat X = \mathbb{ T }^2 $ \cite{BroerTakens93}. In canonical angle coordinates $ \{ \hat x_1, \hat x_2 \} $, the vector field $ \hat v $ generating this skew rotation takes the form 
\begin{equation}
  \label{eqSkew}
  \hat v\rvert_{(\hat x_1,\hat x_2)} = \hat \omega( \hat x_2 ) \frac{ \partial\; }{ \partial \hat x_1 },
\end{equation}
where $ \hat \omega $ is a smooth function on $ \mathbb{ T }^1 $. Note that the flow $ ( \hat X, \hat \Omega_t ) $ generated by~\eqref{eqSkew} takes place along lines  of constant $ \hat x_2 $ coordinate  with an $ \hat x_2 $-dependent frequency. To map the original flow $ ( \tilde X, \tilde \Omega_t ) $ taking place along curved streamlines to this skew rotation, fix canonical angle coordinates $ ( \tilde x_1, \tilde x_2 ) $ on $ \hat X $ with $ \tilde x_i \in [ 0, 2 \pi ) $,  and define $ \tau( \tilde x_1, \tilde x_2 ) $ to be the time taken for a Lagrangian tracer released at the point $ ( 0,  y( \tilde x_1, \tilde x_2 )  ) \in \tilde X $ (i.e., the intercept of the streamline passing through $ ( \tilde x_1, \tilde x_2 ) $ with the $ \tilde x_1 = 0 $ line) to reach the point $ ( \tilde x_1, \tilde x_2 ) $. Define also $ \bar \tau( y ) $ to be the time taken for the tracer released at $ ( 0, y ) $ to first return to this point; i.e., $ \bar \tau( y ) = \lim_{\tilde x_1 \to 2 \pi } \tau( \tilde x_1, y ) $. Note that in the domain of definition of the $ ( \tilde x_1, \tilde x_2 ) $ coordinates we have $ \tilde v( \tau ) = 1 $. With these definitions, we introduce the diffeomorphism $ R : \tilde X \mapsto \hat X $ such that $ R( \tilde x_1, \tilde x_2 ) = ( \hat x_1, \hat x_2 ) = ( R_1( \tilde x_1, \tilde x_2 ), R_2( \tilde x_1, \tilde x_2 ) ) $, where
\begin{displaymath}
  R_1( \tilde x_1, \tilde x_2 ) = 2 \pi \frac{ \tau( x_1, \tilde x_2 ) }{ \bar \tau( y( \tilde x_1, \tilde x_2 ) ) }, \quad R_2( \tilde x_1, \tilde x_2 ) = y( \tilde x_1, \tilde x_2 ).
\end{displaymath}  
Because $ \bar \tau( y( \tilde x_1, \tilde x_2 ) ) $ and $ R_2( \tilde x_1, \tilde x_2 ) $ are constant along the streamlines in $ \tilde X $, this map rectifies these streamlines into ``straight'' lines in $ \hat X $ at constant $ \hat x_2 $ coordinate. Moreover, $ \tilde v $ is projectible under this map, and because $ \tilde v( \tau ) = 1 $, it maps to the vector field $ \hat v = R_* v  $ such that $ \hat v \rvert_{R(\tilde x_1, \tilde x_2)} = \hat \omega( \hat x_2 )  \frac{ \partial \; }{ \partial \tilde x_1 } $ with $ \omega( \hat x_2 ) = 2 \pi / \bar \tau( \hat x_2) $ (note that the components $ \hat v_j $ of $ \hat v $ in the $ \{ \frac{ \partial \ }{ \partial \hat x_j } \} $ coordinate basis are given by $ \hat v_j = \tilde v( R_j ) $, and $ \tilde v( \bar \tau )  = \tilde v( R_2 ) = 0 $). It is also possible to verify that $ R \circ \tilde \Omega_t = \hat \Omega_t \circ R $, which implies (in conjunction with the fact that $ R $ is a diffeomorphism) that $ \tilde v $ and $ \hat v $ have the same spectra.  

The skew rotation generated by the vector field in~\eqref{eqSkew} is a special case of the flows studied by Broer and Takens in \cite{BroerTakens93}. There, it is shown that for nonconstant $ \hat \omega $ the generator has continuous spectrum associated with nonconstant functions along the integral curves of $ \hat v $ (i.e., the lines $ \hat x_2 = \text{const.} $). Intuitively, if $ f $ were an eigenfunction of $ \hat v $ then there should exist a time $ t \neq 0 $ such that $ f \circ \hat \Omega_t $ is mapped back into itself, but this is not possible if $ \hat \omega $ is nonconstant and $ \hat v( f ) \neq 0 $, for the the values of $ f $ at different $ \hat x_2 $ coordinates map back to themselves at different times $ 2 \pi / \hat \omega( \hat x_2 ) $. 

The subspace of functions in $L^2( \hat X, \hat \xi ) $ (where $ \hat \xi $ is the normalized Haar measure) associated with the continuous spectrum is the orthogonal complement of $ \ker \hat v $. Correspondingly, the subspace of $ L^2( \tilde X, \tilde \xi ) $ associated with the continuous spectrum of $ \tilde v $ is $ \ker \tilde v^\perp $, and pulling back this subspace to $ M $ we obtain a subspace $ \mathcal{ C } \subset H $ associated with the continuous spectrum of the generator of the full skew-product system. This subspace lies in the orthogonal complement of the discrete spectrum subspace  $ \mathcal{D} \subset H $ identified in Section~\ref{secMovingVortex}; in fact, we also have the decomposition $ \mathcal{D}^\perp = \mathcal{C} \otimes \mathcal{D} $ (note that for any $ f \in \mathcal{C} $ and $ g \in \mathcal{ D } $ the product $ f g $ lies in $ \mathcal{D}^\perp$ \cite{DasGiannakis17}). In summary, the full $ L^2 $ space of the skew-product system on $ M $ associated with the moving vortex flow admits the decomposition 
\begin{displaymath}
  H = \mathcal{ D } \oplus \mathcal{D}^\perp = \mathcal{D} \oplus \mathcal{C} \otimes \mathcal{D}.
\end{displaymath}        

\section*{References}

\end{document}